\newcommand{\dsym}{LSym^{\delta}}
\newcommand{\dalg}{DAlg^{\delta}}
\newcommand{\calg}{DAlg}
\newcommand{\compdalg}{\widehat{\dalg}}
\newcommand{\compcalg}{\widehat{\calg}}
\newcommand{\env}{\text{Env}^{\Prism}}
\newcommand{\compenv}{\widehat{\text{Env}}^{\Prism}}
\newcommand{\compcalgenv}{\widehat{\text{Env}}}
\newcommand{\calgenv}{\text{Env}}
\newcommand{\grenv}{\text{Env}^{Gr}}
\newcommand{\fil}{F^{\geq 0}}
\newcommand{\gr}{Gr^{\geq 0}}
\newcommand{\pair}{F^{\{0,1\}}}
\newcommand{\dsymneut}{LSym^{\delta, [0]}}
\newcommand{\fdsym}{LSym^{\delta, \{0,1\}}}
\newcommand{\Z}{\mathbb{Z}}
\newcommand{\N}{\mathbb{N}}
\newcommand{\Perf}{\text{Perf}}
\newcommand{\Mod}{\text{Mod}}
\newcommand\rightthreearrow{%
        \mathrel{\vcenter{\mathsurround0pt
                \ialign{##\crcr
                        \noalign{\nointerlineskip}$\rightarrow$\crcr
                        \noalign{\nointerlineskip}$\rightarrow$\crcr
                        \noalign{\nointerlineskip}$\rightarrow$\crcr
                }%
        }}%
}
\theoremstyle{definition}
\newtheorem{theorem}{Theorem}[subsection]
\newtheorem{corollary}[theorem]{Corollary}
\newtheorem{lemma}[theorem]{Lemma}
\newtheorem{definition}[theorem]{Definition}
\newtheorem{observation}[theorem]{Observation}
\newtheorem{warning}[theorem]{Warning}
\newtheorem{remark}[theorem]{Remark}
\newtheorem{example}[theorem]{Example}
\newtheorem{proposition}[theorem]{Proposition}
\newtheorem{construction}[theorem]{Construction}
\newtheorem{notation}[theorem]{Notation}
\newtheorem{variant}[theorem]{Variant}
\DeclareSymbolFontAlphabet{\mathbb}{AMSb} %to ensure that the meaning of \mathbb does not change
\DeclareSymbolFontAlphabet{\mathbbl}{bbold}
\newcommand{\Prism}{{\mathlarger{\mathbbl{\Delta}}}}
\title{Derived $\delta$-Rings and Relative Prismatic Cohomology}
\author{Adam Holeman}
\date{}							% Activate to display a given date or no date
\begin{document}
\maketitle

\begin{abstract} 
We characterize the relative prismatic cohomology of Bhatt and Scholze by a universal property by endowing it with the additional structure of a ``derived $\delta$-ring". This involves introducing an analogue of prismatic envelopes in the setting of filtered derived commutative rings and applying this construction to the Hodge filtration on infinitesimal cohomology. In particular, we recover relative prismatic cohomology from infinitesimal cohomology via a purely algebraic process.
\end{abstract}

\tableofcontents
\newpage

%----------------------------------------------------------------------------------------------------------------------------------------------------------------------------------------------------------------------------------------------------

\section{Introduction}

Fix a prime $p$. To any $p$-adic formal scheme $X$, one can attach a plethora of cohomology theories, each of which captures a combination of geometric and arithmetic information present in $X$. One of the basic aims of $p$-adic Hodge theory is to compare these cohomology theories, and in doing so, explicate precisely what information is retained by each such theory. In \cite{Bhatt-Scholze}, Bhatt and Scholze introduced the theory of \emph{relative prismatic cohomology}, and established ample evidence that this theory occupies a privileged position within the landscape of such theories: most other known integral $p$-adic cohomology theories can be recovered from relative prismatic cohomology via a well-defined specialization procedure. 

Let $(A,I)$ be a prism, and denote by $\overline{A} := A/I$. For any $\overline{A}$-algebra $R$, the relative prismatic cohomology $\Prism_{R/A}$ is a commutative algebra object in the derived $\infty$-category of $A$-modules (which we will denote by $\Mod_{A}$), but in certain circumstances this object admits additional structures which universally characterize it. Namely,

\begin{proposition} \label{uni} (Proposition 7.10 in \cite{Bhatt-Scholze}) Suppose $(A,I)$ is a perfect prism and $R$ is a quasi-regular semi-perfectoid $\overline{A}$-algebra. Then $\Prism_{R/A}$ is discrete and naturally admits the structure of a prism over $(A,I)$. Moreover, it is the final object in the relative (or absolute) prismatic site of $R$.
\end{proposition}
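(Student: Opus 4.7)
The plan is to establish the three assertions in sequence—discreteness, the prism structure, and finality—with discreteness serving as the main technical input and the remaining claims following formally. For discreteness I would first reduce to showing that the Hodge-Tate specialization $\overline{\Prism}_{R/A} := \Prism_{R/A} \otimes^{L}_{A} \overline{A}$ is concentrated in cohomological degree zero. The Hodge-Tate comparison equips $\overline{\Prism}_{R/A}$ with a conjugate filtration whose graded pieces are the derived exterior powers $\wedge^{i} L_{R/\overline{A}}[-i]$ (up to Breuil-Kisin twists). Since $R$ is quasi-regular semi-perfectoid over the perfectoid ring $\overline{A}$, the shifted cotangent complex $L_{R/\overline{A}}[-1]$ is $p$-completely flat and concentrated in degree zero, so each graded piece lies in degree zero. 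Convergence of the complete conjugate filtration then gives discreteness of $\overline{\Prism}_{R/A}$. Because $(A,I)$ is perfect, $I = (d)$ with $d$ distinguished and $\Prism_{R/A}$ derived $(p,d)$-complete; applying a derived Nakayama argument to the cofiber sequence $\Prism_{R/A} \xrightarrow{d} \Prism_{R/A} \to \overline{\Prism}_{R/A}$ upgrades discreteness of the specialization to discreteness of $\Prism_{R/A}$ itself, with $d$ acting as a nonzerodivisor.

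With discreteness in hand, the prism structure follows formally: the $\delta$-structure inherited from the construction of $\Prism_{R/A}$ as a derived $\delta$-algebra over $A$ descends to an ordinary $\delta$-ring structure; the ideal $I \cdot \Prism_{R/A} = (d)$ is a Cartier divisor because $d$ is a nonzerodivisor; distinguishedness of $d$ is inherited from $A$; and derived $(p,d)$-completeness becomes classical $(p,d)$-completeness. Together these verify the prism axioms.

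For finality, I would exploit the construction of $\Prism_{R/A}$ as a prismatic envelope: resolve $R$ by a simplicial polynomial $\overline{A}$-algebra and take the animated prismatic envelope inside an ambient $\delta$-ring lift to $A$. Given any prism $(B,J)$ over $(A,I)$ together with a structure map $R \to B/J$, the universal property of the envelope produces a unique map $\Prism_{R/A} \to B$ of prisms over $(A,I)$ compatible with the augmentation to $R$. This exhibits $\Prism_{R/A}$ as final in the relative prismatic site; the absolute case follows because any absolute prism covering $R$ factors canonically through $(A,I)$ via the perfect prism structure on the base.

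The principal obstacle is the discreteness step, and specifically the claim that quasi-regular semi-perfectoidness forces each $\wedge^{i} L_{R/\overline{A}}[-i]$ to be concentrated in degree zero. This requires careful handling of $p$-complete flatness and of derived exterior powers in the $p$-complete setting, where intuition from the finite projective case must be checked rather than assumed. Once this input is secured, the passage to $\Prism_{R/A}$, the verification of prism axioms, and the finality argument are comparatively routine.
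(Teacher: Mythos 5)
The paper does not prove this proposition: it is quoted verbatim from \cite{Bhatt-Scholze} (Proposition 7.10) as motivation, and the body of the paper is devoted to generalizing it rather than reproving it. So the only meaningful comparison is with the original argument of Bhatt--Scholze, and your proposal is essentially a faithful sketch of that argument: discreteness via the conjugate filtration on the Hodge--Tate specialization plus derived Nakayama, the prism axioms read off from discreteness and regularity of $d$, and initiality (finality in the opposite category defining the site) via the universal property of prismatic envelopes. The one technical point you flag as the ``principal obstacle'' is in fact the easy part once stated correctly: quasi-regular semi-perfectoidness gives $L_{R/\overline{A}} \simeq M[1]$ with $M$ $p$-completely flat, and the d\'ecalage isomorphism $\wedge^{i}(M[1]) \simeq (\Gamma^{i}M)[i]$ places each graded piece $\wedge^{i}L_{R/\overline{A}}[-i] \simeq \Gamma^{i}M$ in degree zero. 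The points that deserve more care than you give them are: (a) passing from discreteness of each finite stage of the conjugate filtration to discreteness of the ($p$-completed) colimit, which uses the bounded $p^{\infty}$-torsion hypothesis built into the definition of quasi-regular semi-perfectoid; and (b) \emph{uniqueness} of the map $\Prism_{R/A} \to B$ in the finality step, which requires knowing that maps out of the envelope are determined by the induced map on Hodge--Tate quotients, not just constructing some map. Your reduction of the absolute site to the relative one via the equivalence between perfect prisms and perfectoid rings is correct.

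It is worth noting that the present paper's machinery yields the universality clause by a genuinely different route: Theorem \ref{main comparison} identifies $\Prism_{R/A}$ with $L\Prism_{R/A}$, which is \emph{defined} as a left adjoint to $- \otimes_{A}\overline{A}$ on $(p,I)$-complete derived $\delta$-$A$-algebras, so initiality among prisms under $(A,I)$ with a map from $R$ to the Hodge--Tate quotient is automatic once discreteness is known; what the adjunction does not give for free is discreteness itself, for which one still needs your conjugate-filtration argument (or the paper's Corollary \ref{envelopes of regular sequences} applied to a perfectoid presentation of $R$).
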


This paper generalizes Proposition \ref{uni} to arbitrary animated commutative rings $R$ and arbitrary prisms $(A,I)$, yielding a universal characterization of relative prismatic cohomology in full generality. The universal property makes use of the notion of \emph{derived commutative rings} originally due to Akhil Mathew, and systematically studied by Arpon Raksit in \cite{Raksit}. For the remainder of this introduction, we will only discuss these objects informally, referring to the body of the paper for more details.

\subsection{\v{C}ech--Alexander Complexes}

In the case that $R$ is a smooth $\overline{A}$-algebra, relative prismatic cohomology is defined as the derived global sections of the structure sheaf on the relative prismatic site $(R/A)_{\Prism}$. A useful tool that features prominently in the study of $\Prism_{R/A}$ is the notion of a \v{C}ech--Alexander complex. This is a special instance of the generality that for any ringed site $(\mathcal{C}, \mathcal{O})$, given a cover of the final object $\mathcal{F} \xrightarrow{f} \star$ in the associated topos $Shv(\mathcal{C})$, one can compute the derived global sections of $\mathcal{O}$ via the \v{C}ech complex of $f$:
	\[ R\Gamma(\mathcal{C}, \mathcal{O}) \simeq lim_{\Delta} Hom_{Shv(\mathcal{C})}(\text{\v{C}}ech(f)^{\star}, \mathcal{O}). \]
	
The key insight in the case of prismatic cohomology is that there is a very simple procedure for producing a cover of the final object.

\begin{enumerate}
\item Choose a surjection $P \to R$ where $P$ is a polynomial $A$-algebra. Denote by $J$ the kernel of this surjection.
\item Let $F$ denote the free $\delta$-$A$-algebra on $P$. 
\item The pair $(F, J\cdot F)$ then receives a map of $\delta$-pairs from $(A,I)$, and so one can form the prismatic envelope $F\{\frac{J\cdot F}{I}\}$.
\end{enumerate}

The resulting object $F\{\frac{J \cdot F}{I}\}$ naturally resides within the prismatic site $(R/A)_{\Prism}$, and the sheaf represented by this object is a cover of the final object in the associated topos. In this way, one obtains \v{C}ech-theoretic access to relative prismatic cohomology.

Our strategy for studying $\Prism_{R/A}$ is to imitate the above framework, but with one key difference: we will never make a choice of resolution. Let us begin by explaining how the first step can be recast in an entirely choice-free way.

 The pair $J \to P$ arising in the first step of the construction can be rewritten in terms of the (derived) infinitesimal cohomology of $R$ with respect to $P$ together with its Hodge filtration:
	\[ (J \to P) \simeq (F^{1}_{H} \mathbbl{\Pi}_{R/P} \to \mathbbl{\Pi}_{R/P}). \]
Hodge-filtered infinitesimal cohomology can be computed using a \v{C}ech--Alexander complex, and so the pair $J \to P$ should be viewed as a mere approximation to the pair $F^{1}_{H} \mathbbl{\Pi}_{R/A} \to \mathbbl{\Pi}_{R/A}$, the Hodge-filtered derived infinitesimal cohomology of $R$ with respect to $A$. We will take this as our choice-free avatar for the first step of the above construction.

Of course, the derived infinitesimal cohomology $\mathbbl{\Pi}_{R/A}$ is not generally a discrete commutative ring, rather it is a (typically non-connective) $\mathbb{E}_{\infty}$-ring object in the derived category $\Mod_{A}$. We are thus confronted with the task of making sense of the notion of $\delta$-rings and prismatic envelopes in the context of higher algebra. We offer no definition of $\delta$-rings at the level of general $\mathbb{E}_{\infty}$-rings, but rather work in the setting of derived commutative rings. Forthcoming work of Benjamin Antieau (\cite{Antieau}) studies derived infinitesimal cohomology from this perspective, and we will adopt this viewpoint.

Towards these ends, we introduce an $\infty$-category of derived $\delta$-$A$-algebras over any base $\delta$-ring $A$, denoted by $\dalg(\Mod_{A})$. This category generalizes the notion of animated $\delta$-rings (as in \cite{Bhatt-Lurie}) to the non-connective setting. The category $\dalg(\Mod_{A})$ can be identified with the $\infty$-category of derived commutative $A$-algebras $\calg(\Mod_{A})$ (as in \cite{Raksit}) equipped with a lift of Frobenius modulo $p$ (see Theorem \ref{frobenius lifts on delta rings}). We then prove:

\begin{theorem} The forgetful functor
	\[ \dalg(\Mod_{A}) \to \calg(\Mod_{A}) \]
admits both left and right adjoints. We will denote the left adjoint by $Free^{\delta}_{A}$.
\end{theorem}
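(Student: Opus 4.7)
The plan is to realize both $\calg(\Mod_A)$ and $\dalg(\Mod_A)$ as presentable $\infty$-categories of algebras over sifted-colimit-preserving monads on $\Mod_A$ (following Raksit's framework) and then apply the presentable adjoint functor theorem on each side. Concretely, $\calg(\Mod_A)$ is algebras over the polynomial monad $LSym$ and $\dalg(\Mod_A)$ is algebras over the enlarged polynomial monad $\dsym$, with the forgetful functor $U$ induced by a canonical map of monads $LSym \to \dsym$ reflecting the inclusion of the commutative algebra operations into the $\delta$-operations.

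For the left adjoint, I would invoke either of two essentially equivalent routes. A map of monads $\phi: T \to T'$ on a presentable $\infty$-category automatically yields an adjunction $\phi_! \dashv \phi^*$ between the respective categories of algebras; applied here, this produces $Free^{\delta}_{A} := \phi_!$ as a left adjoint to $U = \phi^*$. Alternatively, $U$ preserves small limits (computed on underlying $A$-modules on both sides) and is accessible, so the presentable adjoint functor theorem directly produces the left adjoint.

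For the right adjoint, the adjoint functor theorem reduces matters to showing that $U$ preserves all small colimits. Sifted colimits are automatic: both $LSym$ and $\dsym$ preserve sifted colimits by construction, so the forgetful functors from $\dalg$ and $\calg$ to $\Mod_A$ create them, and thus $U$ preserves them. For finite coproducts, one must verify that the coproduct $B \sqcup_A C$ formed in $\dalg$ has underlying derived commutative $A$-algebra the tensor product $B \otimes_A C$. Classically, this is the standard fact that the tensor product of two $\delta$-$A$-algebras carries a canonical $\delta$-structure making it the coproduct in $\delta$-algebras, deducible from the formulas for $\delta$ on sums and products together with the observation that $\delta(a-b)$ lies in the ideal generated by $a-b$ modulo $p$-divisible error; the derived analogue is the assertion that $\dsym$ respects the symmetric monoidal structure on $\Mod_A$, which should follow from the construction of $\dsym$ by the same symmetric-monoidal machinery used for $LSym$.

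The main obstacle is precisely this coproduct preservation. Sifted colimit preservation is formal once the polynomial-monad framework is in place, but coproduct preservation requires genuine input from the $\delta$-structure and is not a consequence of presentability alone. The cleanest conceptual route is to exploit the characterization of $\dalg$ as derived commutative $A$-algebras equipped with a homotopy-coherent lift of Frobenius modulo $p$ (as previewed in the introduction), since Frobenius lifts transport across tensor products via the formula $\phi_{B \otimes C} = \phi_B \otimes \phi_C$ and this automatically upgrades to a $\delta$-structure on the coproduct; absent that characterization at this point in the paper, the alternative is a direct verification that $\dsym$ is symmetric monoidal at the level of free $\delta$-algebras, from which coproduct preservation in $\dalg$ follows by extending along sifted colimits.
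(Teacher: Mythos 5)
Your overall architecture coincides with the paper's: the forgetful functor preserves limits and sifted colimits for formal monadic reasons, so the adjoint functor theorem supplies the left adjoint immediately, and the right adjoint reduces to showing preservation of binary coproducts; the bar resolution then reduces that to free algebras. You have also correctly located the crux in the statement that the comparison map $\dsym_{\Z_{p}}(M)\otimes\dsym_{\Z_{p}}(N)\to\dsym_{\Z_{p}}(M\oplus N)$ is an equivalence. (Your first suggested route, via the Frobenius-lift description of $\dalg$, would be circular at this stage: that description is itself established only after the adjoints exist, and in any case presupposes the monadic construction.)

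The gap is in how you close the free-algebra case. You propose to verify the equivalence "at the level of free $\delta$-algebras" and then extend "along sifted colimits," but sifted colimits only propagate the statement from finitely generated free $\Z_{p}$-modules to \emph{connective} $M$ and $N$. The bar resolution leaves you needing the statement for free algebras on arbitrary, possibly non-connective, underlying modules, and $\dsym_{\Z_{p}}$ on non-connective input is a right-left extension, not a sifted colimit of its values on free modules. The paper's proof supplies exactly the missing ingredient: the comparison map refines to a map of filtered objects
\[ LSym^{\delta,\leq\star}_{\Z_{p}}(M)\otimes_{\Z_{p}}LSym^{\delta,\leq\star}_{\Z_{p}}(N)\to LSym^{\delta,\leq\star}_{\Z_{p}}(M\oplus N) \]
(tensor product by Day convolution), each filtered piece is excisively polynomial, and excisively polynomial functors are determined by their restriction to connective objects. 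Only after that reduction does the sifted-colimit argument take over and land you in the classical case of polynomial $\delta$-rings. Without invoking the $\delta$-degree filtration and its additive polynomiality — which is why the paper builds $\dsym_{\Z_{p}}$ as a colimit of right-left extended subfunctors in the first place — the non-connective case of coproduct preservation is not established.
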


It therefore makes mathematical sense to contemplate the free $\delta$-$A$ algebra on $\mathbbl{\Pi}_{R/A}$. This will be refined to a filtered statement (see Notation \ref{filtered free delta}) in the course of the paper, incorporating the Hodge filtration on infinitesimal cohomology.

The final remaining input is to define a satisfactory analogue of prismatic envelopes in the setting of (filtered) derived commutative rings. Importing the prism condition into the non-connective setting presents many subtleties, but in the relative setting we can sidestep most of these technicalities by declaring that the usual rigidity condition on maps of prisms continues to hold. In particular, as soon as we specify that our 'prisms' receive a map from $(A,I)$, the datum of the Cartier divisor is completely determined by $I$ itself, and we will take the $\infty$-category of $(p,I)$-complete $\delta$-$A$-algebras, $\compdalg_{A}$ as our analogue of prisms.

Recall, the prismatic envelope takes as input a $\delta$-pair $(B,J)$ receiving a map from $(A,I)$ and associates to it the universal prism under $(A,I)$ receiving a map from $(B,J)$. Viewing the ideal $J$ as defining a filtration, we will recast the pair $(B,J)$ as a $\delta$-ring object in filtered modules over $I^{\star}A$, denoted $\dalg(\fil \Mod_{I^{\star}A})$. Given the above discussion surrounding rigidity of maps of prisms, we thus arrive at our definition of prismatic envelopes:

\begin{definition} The functor
	\[ I^{\star} \colon \compdalg(\Mod_{A}) \to \dalg(\fil \Mod_{I^{\star}A}) \]
 which endows an object $B \in \compdalg_{A}$ with the $I$-adic filtration admits a left adjoint, $\env_{I}$, which we refer to as the derived $I$-adic envelope.
\end{definition} 

Our candidate construction for prismatic cohomology then may be described as
	\[ L \Prism_{R/A} := \env_{I}(Free^{\delta}_{A}(F^{\star}_{H} \mathbbl{\Pi}_{R/A})) \]
which is a precise choice-free incarnation of the \v{C}ech--Alexander approach to computing relative prismatic cohomology. Using this construction, we prove the following generalization of Proposition 1.0.1:

\begin{theorem} The functor
	\[ L \Prism_{-/A} \colon \compcalg(\Mod_{\overline{A}}) \to \compdalg(\Mod_{A}) \]
described above is left adjoint to $- \otimes_{A} \overline{A}$. Moreover, for any $p$-complete animated commutative $\overline{A}$-algebra $R$, there is a canonical isomorphism
	\[ L\Prism_{R/A} \to \Prism_{R/A} \]
where $\Prism_{R/A}$ is the derived prismatic cohomology of \cite{Bhatt-Scholze}.

\end{theorem}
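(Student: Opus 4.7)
The plan is to split the theorem into two parts: the adjunction, and the identification with the Bhatt--Scholze relative prismatic cohomology. For the adjunction, the strategy is to recognize $L\Prism_{-/A}$ as a composition of three left adjoints and compute the composite right adjoint directly. By construction, $L\Prism_{-/A}$ factors as (i) Hodge-filtered derived infinitesimal cohomology $R \mapsto F^{\star}_H \mathbbl{\Pi}_{R/A}$, landing in filtered derived commutative $A$-algebras; (ii) the free $\delta$-ring functor $Free^{\delta}_A$, which is a left adjoint by the theorem stated earlier; and (iii) the $I$-adic envelope $\env_I$, a left adjoint by definition. Granting that (i) is also a left adjoint, I would compute the composite right adjoint on $B \in \compdalg(\Mod_A)$ by first equipping $B$ with its $I$-adic filtration $I^{\star}B$, forgetting the $\delta$-structure, and applying the right adjoint of (i). The latter should, on universal-property grounds, send a filtered $A$-algebra $F^{\star}$ to the $\overline{A}$-algebra $F^0/F^1$, since a filtered map out of Hodge-filtered infinitesimal cohomology is precisely the datum of an $\overline{A}$-algebra map from $R$ to the ``zeroth infinitesimal quotient'' $F^0/F^1$. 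Applied to $I^{\star}B$ this yields $B/IB \simeq B \otimes_A \overline{A}$, as desired.

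For the comparison, both $L\Prism_{-/A}$ and $\Prism_{-/A}$ preserve enough colimits to reduce to the case of a $p$-completely polynomial $\overline{A}$-algebra: the former because Part 1 exhibits it as a left adjoint, the latter because it is defined by left Kan extension from polynomial $\overline{A}$-algebras. I would first produce a natural comparison map $L\Prism_{R/A} \to \Prism_{R/A}$ by exhibiting $\Prism_{R/A}$ as a $(p,I)$-complete derived $\delta$-$A$-algebra equipped with the canonical augmentation $R \to \Prism_{R/A} \otimes_A \overline{A}$, and then invoking the universal property from Part 1. I would then verify this map is an isomorphism when $R = \overline{A}\langle x_1, \ldots, x_n\rangle$: there the Hodge-filtered infinitesimal cohomology admits an explicit description, and unwinding the free $\delta$-ring and the $I$-adic envelope one finds that $L\Prism_{R/A}$ becomes the $(p,I)$-completion of the free $\delta$-$A$-algebra $A\{x_1, \ldots, x_n\}$, which matches Bhatt--Scholze's computation.

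The principal obstacle will be the universal property of Hodge-filtered derived infinitesimal cohomology at the filtered derived level, i.e.\ the identification of the right adjoint in (i). This is where Antieau's forthcoming framework will be decisive: one must set up the Hodge filtration derived-functorially and verify that the classical characterization of infinitesimal thickenings persists after deriving. A secondary subtlety will be the polynomial-case computation, which, though explicit in principle, requires carefully tracking the Hodge filtration through both the free $\delta$-ring and the envelope to confirm the expected collapse to the $(p,I)$-completed free $\delta$-$A$-algebra.
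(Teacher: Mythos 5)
Your treatment of the adjunction is fine and agrees with the paper's: there $L\Prism_{-/A}$ is essentially \emph{defined} as the left adjoint to $-\otimes_{A}\overline{A}$ (Observation \ref{prismatic cohomology}), and the factorization through $F^{\star}_{H}\mathbbl{\Pi}_{-/A}$, $Free^{\delta}_{A}$ and the $I$-adic envelope is exactly the identification of right adjoints you describe: the right adjoint of Hodge-filtered infinitesimal cohomology is $gr^{0}$, which applied to the $I$-adic filtration on $B$ returns $B\otimes_{A}\overline{A}$.

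The comparison argument, however, has a genuine gap concentrated in your base case. For $R=\overline{A}\langle x_{1},\dots,x_{n}\rangle$ the object $\Prism_{R/A}$ is \emph{not} the $(p,I)$-completed free $\delta$-$A$-algebra $A\{x_{1},\dots,x_{n}\}^{\wedge}$: that $\delta$-ring is only a cover of the final object of the prismatic site (the zeroth term of the \v{C}ech--Alexander complex), while $\Prism_{R/A}$ is the totalization of its \v{C}ech nerve and has nonvanishing higher cohomology (for instance the $q$-de Rham complex of the affine line). Consistently, the paper computes $L\Prism_{\overline{A}\langle x\rangle/A}\simeq LSym^{\delta}_{A}(\overline{A}[-1])$, a non-connective object. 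The case in which the two sides can be compared directly is therefore not the polynomial case but the opposite one: quotients $A/J$ with $J=(I,x_{1},\dots,x_{n})$ generated by a $(p,I)$-completely regular sequence, where both sides are identified with the prismatic envelope $A\{\frac{J}{I}\}^{\wedge}_{(p,I)}$ via Corollary \ref{envelopes of regular sequences}. To pass from there to smooth algebras you need a descent mechanism that your proposal does not supply. The paper establishes \v{C}ech--Alexander descent for $L\Theta_{R/A}$ (Proposition \ref{descent for theta}), using the conjugate filtration built from the vertical filtration on $gr^{\star}_{H}\mathbbl{\Pi}$ to reduce to descent for $LSym^{i}_{R}(L_{R/A}[-1])$, and then shows that $Free^{\delta}_{A}$ commutes with the resulting totalization (Theorem \ref{prismatic Cech-Alexander}), which requires the skeletal-filtration argument together with the fact that the excisively polynomial filtration on $LSym^{\delta}$ forces preservation of finite totalizations (Theorem \ref{non-linear techy extension result}). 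Without these steps, the reduction to polynomial (or smooth) algebras by sifted colimits leaves you with a comparison you have no means of verifying.
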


As an application this result, we give a completely formal proof of affineness of the relative prismatization, as established in Section 7.3 of \cite{Bhatt-Lurie2}.

\begin{lemma} For any $p$-complete animated commutative $\overline{A}$-algebra $R$, there is a canonical equivalence
	\[ WCart_{Spf(R)/A} \simeq Spf(L\Prism_{R/A}). \]
\end{lemma}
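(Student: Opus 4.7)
The plan is to identify both sides as representable functors on $\compdalg_{A}$ and apply the Yoneda lemma, reducing the statement to the universal property established in the preceding theorem.

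By definition, the formal spectrum $Spf(L\Prism_{R/A})$ represents the functor on $\compdalg(\Mod_{A})$ sending a $(p,I)$-complete derived $\delta$-$A$-algebra $B$ to
\[ Maps_{\compdalg(\Mod_{A})}(L\Prism_{R/A},\, B). \]
The adjunction in the preceding theorem rewrites this mapping space as
\[ Maps_{\compcalg(\Mod_{\overline{A}})}(R,\, B \otimes_{A} \overline{A}). \]
So the first side is understood with essentially no work.

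For the other side, I would identify the functor of points of $WCart_{Spf(R)/A}$, restricted to test objects in $\compdalg(\Mod_{A})$, with this same assignment $B \mapsto Maps_{\compcalg(\Mod_{\overline{A}})}(R,\, B \otimes_{A} \overline{A})$. Here is where the rigidity philosophy emphasized in the introduction enters: once we pass to the slice under $(A,I)$, the Cartier divisor datum of a ``relative prism'' is determined by $I$, so a $B$-valued point of $WCart_{Spf(R)/A}$ is precisely a map of $\overline{A}$-algebras $R \to B \otimes_{A} \overline{A}$ (together with the ambient $\delta$-$A$-algebra structure on $B$, which is already recorded by $B$ being an object of $\compdalg(\Mod_{A})$). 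Combining these two identifications yields the claimed equivalence.

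The bulk of the genuine work is not the abstract adjunction application (which is immediate) but this second step: verifying that the definition of $WCart_{Spf(R)/A}$ from Section 7.3 of \cite{Bhatt-Lurie2} really is captured by the simple functor of points $B \mapsto Maps_{\compcalg(\Mod_{\overline{A}})}(R, B \otimes_{A} \overline{A})$ on $\compdalg(\Mod_{A})$. I expect this to rest on unwinding Bhatt--Lurie's definition and using the rigidity of maps of prisms under $(A,I)$, together with the fact that $\compdalg(\Mod_{A})$ is the correct category of ``relative prisms'' here. Once this identification is in hand, the lemma is a formal consequence of Yoneda.
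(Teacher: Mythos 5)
Your approach does not work as written, because both sides of the asserted equivalence are presheaves on $(p,I)$-nilpotent animated \emph{commutative} $A$-algebras, not on $\compdalg(\Mod_{A})$. You declare that $Spf(L\Prism_{R/A})$ ``represents the functor on $\compdalg(\Mod_{A})$ sending $B$ to $Map_{\compdalg(\Mod_{A})}(L\Prism_{R/A}, B)$,'' but that is not what $Spf$ means: the functor of points of $Spf(L\Prism_{R/A})$ on a commutative test ring $S$ is $Map_{\calg(\Mod_{A})}(L\Prism_{R/A}, S)$, with the $\delta$-structure on the source forgotten. Likewise, $WCart_{Spf(R)/A}(B)$ is by definition $Spf(R)(\overline{W(B)}) \simeq Map_{\calg(\Mod_{\overline{A}})}(R, \overline{W(B)})$; your claim that a $B$-point is ``precisely a map of $\overline{A}$-algebras $R \to B \otimes_{A} \overline{A}$'' replaces $\overline{W(B)}$ by $\overline{B}$ and is simply not the definition. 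The Witt vectors are the entire content of the comparison, and they never appear in your sketch; an appeal to rigidity of maps of prisms does not substitute for them, and restricting both presheaves to a subcategory of $\delta$-rings would in any case not establish an equivalence of presheaves on the actual test category.

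The missing ingredient is the adjunction between the forgetful functor $\dalg(\Mod_{A}) \to \calg(\Mod_{A})$ and the derived Witt vectors $W$ (Definition \ref{derived Witt vectors}, transported to the slice over $A$ via Lemma \ref{adjunctions on slice cats}). With it the proof is a short chain of equivalences on a commutative test object $S$:
\[ Map_{\calg(\Mod_{A})}(L\Prism_{R/A}, S) \simeq Map_{\dalg(\Mod_{A})}(L\Prism_{R/A}, W(S)) \simeq Map_{\calg(\Mod_{\overline{A}})}(R, \overline{W(S)}) \simeq WCart_{Spf(R)/A}(S), \]
where the first step is the Witt adjunction and the second is the universal property of $L\Prism_{-/A}$ that you correctly invoke. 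So your instinct that the lemma is a formal consequence of the universal property of $L\Prism_{-/A}$ is right, but the correct formal skeleton routes through $W$, not through a Yoneda argument over $\compdalg(\Mod_{A})$.
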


\subsection{Outline}
Our main task in Section 2 is to construct a satisfactory theory of derived $\delta$-rings. In Section 2.1, we review the necessary background on derived commutative rings and Goodwillie calculus following \cite{Raksit} and \cite{Brantner-Mathew}. In Section 2.2, we investigate a non-linear enhancement of the results of Section 2.1 which apply to problem of extending functors of polynomial rings rather than functors of finitely generated vector spaces. This section is somewhat lengthy, so we refer the reader to Construction \ref{non-linear right-left extension} for the main takeaway. In Section 2.3, we apply the results of the preceding sections to derived the free $\delta$-ring monad, thereby introducing the notion of a derived $\delta$-ring (see Definition \ref{derived delta rings}). In Section 2.4, we extend several classical results about $\delta$-rings to the derived setting, and study filtrations and completions of derived $\delta$-rings.

Our main task in Section 3 is to give a conceptual construction of relative prismatic cohomology utilizing the theory of derived $\delta$-rings. In Section 3.1, we review the theory of Hodge-filtered derived infinitesimal cohomology from the perspective of derived commutative rings. The Gauss-Manin connection on infinitesimal cohomology is studied, from which we construct the vertical filtration (see Theorem \ref{vertical filtration}) which plays an important technical role later on. In Section 3.2, we introduce the notion of $I$-adic envelopes and relate them to prismatic envelopes in the cases of a regular sequence (see Corollary \ref{envelopes of regular sequences}). In Section 3.3, we apply the tools of the preceding sections to give a universal construction of relative prismatic cohomology, and compare it to the site-theoretic theory of \cite{Bhatt-Scholze}.

\subsection{Acknowledgements}

It is a pleasure to thank Ben Antieau, Deven Manam, Kirill Magidson, and Noah Riggenbach for many enlightening conversations on this work and relevant related topics. Noah Riggenbach and Ben Antieau also offered extensive feedback on early drafts of the paper which significantly improved both the mathematical content and exposition. I am also grateful for the hospitality of Northwestern University, where the author was supported by the National Science Foundation under Grant No. DMS-2102010 during the Winter and Spring quarters of 2021-2022.

\subsection{Conventions}

Throughout the paper, we use the language of higher category theory and higher algebra as developed in \cite{LurieHA}, \cite{Lurie}, and \cite{kerodon}. All statements and constructions should be interepreted in a homotopy-invariant sense. For a commutative ring $A$, we will denote by $\Mod_{A}$ the $\infty$-category of $A$-module spectra, and by $CAlg(\Mod_{A})$ the $\infty$-category of $\mathbb{E}_{\infty}$-ring objects in $\Mod_{A}$. If $A$ is discrete and we wish to reference the 1-category of $A$-modules or algebras, we will decorate our categories $\Mod_{A}^{\heartsuit}$ or $CAlg(\Mod_{A})^{\heartsuit}$ with the $\heartsuit$ symbol to make this explicit.

%----------------------------------------------------------------------------------------------------------------------------------------------------------------------------------------------------------------------------------------------------

\section{Derived $\delta$-Rings}

Fix a prism $(A,I)$ and an $\overline{A} := A/I$ algebra $R$. The relative prismatic cohomology $\Prism_{R/A}$ enjoys not only the structure of an $\mathbb{E}_{\infty}$-ring in $\Mod_{A}$, but also comes equipped with a (relative) Frobenius endomorphism $\varphi \colon \Prism_{R/A} \otimes_{A, \varphi_{A}} A \to \Prism_{R/A}$. In the case that $R$ is smooth over $\overline{A}$, this extra piece of structure arises from the fact that the structure sheaf on the relative prismatic site $(R/A)_{\Prism}$ takes values in $\delta$-rings. Since $\delta$-rings themselves come equipped with a notion of Frobenius, this structure assembles over the derived global sections. If $R$ is not smooth, one can consider the left Kan extension.

When working in positive or mixed characteristic, there are two predominant paradigms in which to express commutative multiplicative structures: $\mathbb{E}_{\infty}$-rings and animated (i.e. simplicial) commutative rings. Each of these theories supports its own generalization of the Frobenius endomorphism, but neither of these generalizations are directly related to the endomorphism $\varphi$ of prismatic cohomology. The setting of \emph{derived commutative rings} provides an intermediary between these two paradigms, and supports a natural extension of the animated Frobenius into the non-connective setting. It is in this context in which the endomorphism $\varphi$ is most naturally expressed.

Our goal in this section is to introduce the notion of a $\delta$-ring object in derived commutative rings. We will begin in Section 2.1 by reviewing some of the basic theory of derived commutative rings, referring to \cite{Raksit} and \cite{Brantner-Mathew} for many proofs. The basic theme of this section is roughly to provide a suite of techniques for extending functors defined on the heart of a $t$-structure to the entire stable $\infty$-category in question. In Section 2.2, we turn our attention towards a generalization of the techniques encountered in Section 2.1 fit to handle extension of functors defined on algebra objects in the heart of the $t$-structure. Such techniques yield access to non-connective generalizations of truncated Witt vectors, as well as Frobenius endomorphisms. In Section 2.3, we will then apply these techniques to construct an $\infty$-category of derived $\delta$-rings. These will be seen to be equivalent to derived commutative rings equipped with a lift of the Frobenius modulo $p$, generalizing the analogous statement for animated $\delta$-rings.

\subsection{Recollections on Derived Commutative Rings}

Throughout this paper we will need to work extensively with functors $\Mod_{A} \to \mathcal{D}$ where $\Mod_{A}$ is the derived category of a $\delta$-ring, and $\mathcal{D}$ is some presentable $\infty$-category (often with additional structures). Naming such functors and the concomitant coherences can be rather unwieldy. In the favorable situations encountered in this work, we will often be able to begin by constructing a functor
	\[ \Mod_{A}^{fpp} \to \mathcal{D} \] 
from the category of finitely presented projective $A$-modules, and then we will take an assortment of Kan extensions. The main example of this procedure we will encounter is the construction of the $LSym^{\delta}$-monad, where the reduction is made transparent by adopting the framework of \emph{derived algebraic contexts}, first introduced in \cite{Raksit}. The purpose of this subsection is to review the basic facts from \cite{Raksit} and fix notation which will be in use throughout the paper.

\begin{definition} A \emph{derived algebraic context} consists of a presentable symmetric monoidal stable $\infty$-category $\mathcal{C}$, a $t$-structure $(\mathcal{C}_{\geq 0}, \mathcal{C}_{\leq 0})$ compatible with the symmetric monoidal structure, and a small full subcategory $\mathcal{C}^{0} \subset \mathcal{C}^{\heartsuit}$ satisfying
\begin{itemize}
\item The $t$-structure is right complete.
\item The subcategory $\mathcal{C}^{0}$ is a symmetric monoidal subcategory, closed under symmetric powers in $\mathcal{C}^{\heartsuit}$.
\item The subcategory $\mathcal{C}^{0}$ is closed under finite coproducts and $\mathcal{P}_{\Sigma}(\mathcal{C}^{0}) \simeq \mathcal{C}_{\geq 0}$.
\end{itemize}

Compatibility of the $t$-structure with the symmetric monoidal structure means $\mathcal{C}_{\leq0}$ is closed under filtered colimits, the unit object is connective, and the tensor product of connective objects is again connective.

\end{definition}

For the most part, we will be primarily interested in the following two examples.

\begin{example} Let $k$ be a commutative ring. The derived $\infty$-category $\mathcal{C} := \Mod_{k}$ equipped with its usual symmetric monoidal structure and $t$-structure, along with the full subcategory $\mathcal{C}^{0}:= \Mod_{k}^{fpp}$ of finitely presented projective $k$-modules is a derived algebraic context.
\end{example}

The next example of interest takes place on the $\infty$-category of filtered complexes, whose definition we briefly recall. View $\Z_{\geq 0}$ as a poset with the usual ordering and endow it with the monoid structure arising from addition.
Given a commutative ring $k$, we may endow the functor category
	\[ \fil \Mod_{k} := Fun(\Z^{op}_{\geq 0}, \Mod_{k}) \]
with the Day convolution symmetric monoidal structure.
Observe that the natural evaluation functors
	\[ ev_{i} \colon \fil \Mod_{k} \to \Mod_{k}, \quad i \in \Z_{\geq 0} \]
admit fully faithful left adjoint \emph{insertion} functors
	\[ ins^{i} \colon \Mod_{k} \to \fil \Mod_{k}. \]
Concretely, these functors may be understood as follows: 

\[ ins^{i}(M)^{j} = \begin{cases}
				M & j \leq i, \\
				0 & j > i \\
				\end{cases} \]
with structure maps the identity in weights less than or equal to $i$.

\begin{example} \label{0,1 filtered algebras} We may endow $\fil \Mod_{k}$ with the neutral $t$-structure defined by
	\[ \fil \Mod_{k, \leq 0} = \{ M^{\cdot} \in \fil \Mod_{k} \big| M^{i} \in \Mod_{k, \leq 0}\} \]
	\[ \fil \Mod_{k, \geq 0} = \{ M^{\cdot} \in \fil \Mod_{k} \big| M^{i} \in \Mod_{k, \geq 0}\} \]
Equipped with this $t$-structure, $\fil \Mod_{k}$ enjoys the structure of a derived algebraic context.
The compact projective generators in this example are insertions of compact projective $k$-modules.

\end{example}

\begin{variant} In the above examples, we could replace $\Z_{\geq 0}$ with $\Delta^{1, op}$ (the opposite of the 1-simplex).
Endowing $\Delta^{1, op}$ with the symmetric monoidal structure $\star$ given by
	\[ 0 \star 0 = 0, \quad 0 \star 1 = 1 \star 0 = 1 \star 1 = 1. \]
Given a commutative ring $k$, we may endow the functor category
	\[ \pair \Mod_{k} := Fun(\Delta^{1, op}, \Mod_{k}) \]
with the Day convolution symmetric monoidal structure.
Mimicking the above construction yields a derived algebraic context on $\pair \Mod_{k}$.

\end{variant}

\begin{variant} Replacing $\Z_{\geq 0}$ with $\Z_{\geq 0}^{op}$ in the definition of $\fil \Mod_{k}$ yields \emph{increasing} filtrations, which we denote by $F_{\geq 0} \Mod_{k}$. We may replicate the above definition in this setting as well.

\end{variant}

\begin{variant} Replacing $\Z_{\geq 0}$ with $\Z_{\geq 0}^{disc}$ (the underlying discrete category) in the definition of $\fil \Mod_{k}$ yields the $\infty$-category of \emph{graded objects} in $\Mod_{k}$, denoted by $\gr \Mod_{k}$. We may replicate the above definition in this setting as well. To avoid confusion, we will distinguish graded insertions from filtered insertions notationally by
	\[ (n): \Mod_{k} \to \gr \Mod_{k} \]
and recall their explicit description:
\[ M(n)^{i} = \begin{cases}
			M & i = n, \\
			0 & else.
		\end{cases} \]
As above, the compact projective objects in the heart are precisely the insertions of the compact projective $k$-modules.
\end{variant}

We now turn our attention towards the construction of a class of algebra objects in general derived algebraic contexts. These algebra objects are built by extending the symmetric algebra monad on the heart to the entire category, a process which makes use of Goodwillie calculus.

\begin{notation} \label{sifted colimit preserving functors} For $\infty$-categories $\mathcal{C}, \mathcal{D}$, we denote by $Fun_{\Sigma}(\mathcal{C}, \mathcal{D})$ the full subcategory of $Fun(\mathcal{C}, \mathcal{D})$ spanned by those functors which preserve sifted colimits.

\end{notation}

\begin{definition}[Definition 3.21 in \cite{Brantner-Mathew}] \label{right extendable functors} Let $(\mathcal{C}, \mathcal{C}_{\leq 0}, \mathcal{C}_{\geq 0})$ be a derived algebraic context, and denote by $\Perf_{\mathcal{C}, \geq 0}$ the subcategory of compact coconnective objects in $\mathcal{C}$. Let $\mathcal{D}$ be an $\infty$-category with sifted colimits and all limits. We say a functor
	\[ F \colon \mathcal{C}^{0} \to \mathcal{D} \]
is \emph{right extendable} if the right Kan extension 
	\[ F^{R} \colon \Perf_{\mathcal{C}, \leq 0} \to \mathcal{D} \]
preserves finite coconnective geometric realizations - i.e. those finite geometric realizations of objects in $\Perf_{\mathcal{C}, \leq 0}$ whose colimit in $\mathcal{C}$ remains coconnective. 
\end{definition}

The utility of right-extendable functors is due to the following Theorem, which allows us to extend such functors to functors on the entire derived category of a ring.

\begin{theorem}[Proposition 3.14 in \cite{Brantner-Mathew}] Let $k$ be a commutative ring, and $\mathcal{D}$ an $\infty$-category with sifted colimits. Denote by $Fun_{\sigma}(\Perf_{k, \leq 0}, \mathcal{D})$ the full subcategory of $Fun(\Perf_{k, \leq 0}, \mathcal{D})$ spanned by those functors which preserve finite coconnective geometric realizations. Then the restriction functor
	\[ Fun_{\Sigma}(\Mod_{k}, \mathcal{D}) \to Fun_{\sigma}(Perf_{k, \leq 0}, \mathcal{D}) \]
is an equivalence, with inverse given by left Kan extension. The source of this functor is as in Notation \ref{sifted colimit preserving functors}.
\end{theorem}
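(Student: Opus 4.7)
The plan is to prove the equivalence by constructing the inverse as a left Kan extension along $\Perf_{k, \leq 0} \hookrightarrow \Mod_k$, with the bulk of the work going into verifying that the extension lands in $Fun_{\Sigma}$.

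The restriction direction is essentially formal. Finite coconnective geometric realizations of simplicial objects in $\Perf_{k, \leq 0}$ are, by definition, sifted colimits in $\Mod_k$ (geometric realizations are indexed by $\Delta^{op}$) whose colimit happens to land back in $\Perf_{k, \leq 0}$. Thus any sifted-colimit-preserving $G \colon \Mod_k \to \mathcal{D}$ automatically restricts to a $\sigma$-preserving functor on $\Perf_{k, \leq 0}$.

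For the converse, given $F \in Fun_{\sigma}(\Perf_{k, \leq 0}, \mathcal{D})$ I would form the pointwise left Kan extension $F^L \colon \Mod_k \to \mathcal{D}$ and verify three things: (i) $F^L|_{\Perf_{k, \leq 0}} \simeq F$, which is automatic since the inclusion is fully faithful; (ii) $F^L$ preserves sifted colimits; and (iii) for any $G \in Fun_{\Sigma}(\Mod_k, \mathcal{D})$, the counit $(G|_{\Perf_{k, \leq 0}})^L \to G$ is an equivalence. Items (ii) and (iii) both rest on a key lemma: every $M \in \Mod_k$ admits a canonical sifted-colimit presentation by objects of $\Perf_{k, \leq 0}$. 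This lemma in turn follows by combining right-completeness of the $t$-structure (which writes $M$ as a filtered colimit of its Postnikov truncations $\tau_{\geq -n} M$) with compact generation of each $\Mod_{k, \geq -n}$ by appropriately-shifted objects of $\Mod_k^{fpp}$, all of which lie inside $\Perf_{k, \leq 0}$.

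The main obstacle will be item (ii). The cleanest route I see is to factor the construction through $\mathcal{P}_{\Sigma}(\Perf_{k, \leq 0})$: by the universal property of $\mathcal{P}_{\Sigma}$, $F$ extends uniquely to a sifted-colimit-preserving $\tilde{F} \colon \mathcal{P}_{\Sigma}(\Perf_{k, \leq 0}) \to \mathcal{D}$, while the tautological inclusion $\Perf_{k, \leq 0} \hookrightarrow \Mod_k$ extends to a sifted-colimit-preserving functor $L \colon \mathcal{P}_{\Sigma}(\Perf_{k, \leq 0}) \to \Mod_k$. The hard part is then to argue that $L$ is a reflective localization whose inverted morphisms are exactly generated by the comparison maps $|X_\bullet|_{\mathcal{P}_{\Sigma}} \to |X_\bullet|_{\Mod_k}$ for $X_\bullet$ a simplicial diagram in $\Perf_{k, \leq 0}$ with coconnective $\Mod_k$-realization. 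Granted this, the $\sigma$-condition on $F$ is precisely what permits $\tilde{F}$ to descend through $L$, exhibiting $F^L$ as a composite of two sifted-colimit-preserving functors, hence sifted-colimit-preserving itself. The two round-trip identities then follow formally from the universal properties, identifying both composites with the identity under restriction to $\Mod_k^{fpp}$.
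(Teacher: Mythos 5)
The paper does not actually prove this statement; it is quoted verbatim from Brantner--Mathew (their Proposition 3.14), so there is no in-paper argument to compare against. Judged on its own terms, your proposal gets the formal scaffolding right: restriction lands in $Fun_{\sigma}$ because a finite coconnective geometric realization is in particular a sifted colimit in $\Mod_{k}$; the pointwise left Kan extension exists because the comma category $\Perf_{k,\leq 0}\times_{\Mod_{k}}(\Mod_{k})_{/M}$ admits finite coproducts and is therefore sifted; and your ``key lemma'' (right completeness of the $t$-structure plus $\Mod_{k,\geq -n}\simeq \mathcal{P}_{\Sigma}(\Mod_{k}^{fpp}[-n])$) correctly supplies generation of $\Mod_{k}$ under sifted colimits by $\Perf_{k,\leq 0}$, which is what makes restriction conservative and the counit argument go through.

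The gap is that the step you label ``the hard part'' --- that $L\colon \mathcal{P}_{\Sigma}(\Perf_{k,\leq 0})\to \Mod_{k}$ is a localization whose inverted morphisms are exactly those generated by the comparison maps of finite coconnective geometric realizations --- is not a reduction of the theorem but a restatement of it, and you offer no argument for it. Generation (your key lemma) only gives essential surjectivity of the comparison; the substance of the theorem is that no relations beyond the finite coconnective realizations are imposed, i.e.\ that the saturation of those comparison maps under sifted colimits and two-out-of-three (note: it must be this sifted-compatible saturation, not the usual strongly saturated class under all colimits, for the universal property with respect to $Fun_{\Sigma}(-,\mathcal{D})$ to descend) exhausts the $L$-equivalences. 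That is exactly where Brantner--Mathew do their work: roughly, one first uses $\Mod_{k,\leq 0}\simeq \mathrm{Ind}(\Perf_{k,\leq 0})$ to handle filtered colimits of coconnective objects for free, and then inducts up the Whitehead tower, exhibiting each stage as a geometric realization of coconnective objects whose finite skeleta have coconnective realizations, so that only relations of the allowed type are used. Without some version of that argument, your proof establishes only that \emph{if} $\Mod_{k}$ is the asserted localization, then the equivalence follows formally.
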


\begin{construction} \label{linear right-left extension} Let $\mathcal{D}$ be an $\infty$-category with all limits and colimits. Denote by $Fun^{ext}(\Mod_{k}^{fpp}, \mathcal{D})$ the full subcategory of $Fun(\Mod_{k}^{fpp}, \mathcal{D})$ spanned by the right extendable functors. Then composing right Kan extension along $\Mod_{k}^{fpp} \to \Perf_{k, \leq 0}$ with left Kan extension along $\Perf_{k, \leq 0} \to \Mod_{k}$ yields the \emph{right-left extension functor}
	\[ Fun^{ext}(\Mod_{k}^{fpp}, \mathcal{D}) \xrightarrow{(-)^{RL}} Fun_{\Sigma}(\Mod_{k}, \mathcal{D}). \]
We will revisit this construction and generalize it in the next section.
\end{construction}

Our main tool for determining when a functor is right-extendable, and thus amenable to the previous construction, is through the notion of excisively polynomial functors which are always right extendable.

\begin{definition} For $n \geq 0$, we denote by $\mathbb{P}_{n}$ the power set of $\{0,..., n\}$, and for $m \leq n$, $\mathbb{P}_{n}^{\leq m}$ (respectively $\mathbb{P}_{n}^{\geq m}$) the subset of $\mathbb{P}_{n}$ consisting of those subsets of cardinality less than (greater than) $m$. Given an $\infty$-category $\mathcal{C}$, an \emph{$n$-cube} in $\mathcal{C}$ is a diagram
	\[ \chi: \mathbb{P}_{n} \to \mathcal{C}. \]
We say an $n$-cube $\chi$ is 
\begin{itemize}
\item \emph{cocartesian} if it is a colimit diagram.
\item \emph{cartesian} if it is a limit diagram.
\item \emph{strongly cocartesian} if it is left Kan extended from its restriction to $\mathbb{P}_{n}^{\leq 1}$.
\end{itemize}
Let $\mathcal{D}$ be a stable $\infty$-category. A functor $F: \mathcal{C} \to \mathcal{D}$ is \emph{$n$-excisive} if it carries strongly cocartesian $(n+1)$ cubes to cartesian cubes (which in the context of stable $\infty$-categories agree with cocartesian cubes).

\end{definition}

\begin{definition} Let $\mathcal{C}$ be a cocomplete $\infty$-category and $\mathcal{D}$ an idempotent complete additive $\infty$-category. A functor
	\[ F: \mathcal{C} \to \mathcal{D} \]
is said to be \emph{additively polynomial of degree $0$} if it is constant. Inductively, we say that $F$ is \emph{additively polynomial of degree $n$} if for every object $X$ of $\mathcal{C}$, the derivative functor
	\[ D_{X}F := fib( F(X \oplus -) \to F(-)) \]
is additively polynomial of degree $n-1$. If we do not wish to specify the degree, we will speak of $F$ as being \emph{additively polynomial}. We say $F$ is \emph{excisively polynomial} if it is additively polynomial and preserves finite geometric realizations.
\end{definition}

\begin{theorem} Given a derived algebaic context $(\mathcal{C}, \mathcal{C}_{\leq 0}, \mathcal{C}_{\geq 0})$, and a cocomplete stable $\infty$-category $\mathcal{D}$, the restriction functor 
	\[ res: Fun_{\Sigma}^{ep}(\mathcal{C}, \mathcal{D}) \to Fun_{\Sigma}^{ep}(\mathcal{C}_{\geq 0}, \mathcal{D}) \]
is an equivalence. Here the superscript $ep$ indicates we are only considering excisively polynomial functors, and the subscript $\Sigma$ is as in Notation \ref{sifted colimit preserving functors}. Furthermore, left Kan extension along the inclusion $\mathcal{C}^{\heartsuit} \to \mathcal{C}_{\geq 0}$ induces an equivalence
	\[ Fun^{ap}_{\Sigma}(\mathcal{C}^{\heartsuit}, \mathcal{D}) \xrightarrow{\simeq} Fun^{ep}_{\Sigma}(\mathcal{C}_{\geq 0}, \mathcal{D}). \]
The superscript $ap$ above refers to additively polynomial functors.
\end{theorem}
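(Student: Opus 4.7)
The plan is to prove both equivalences by extending functors from their defining domains, using Goodwillie calculus along with stability of $\mathcal{D}$ and right-completeness of the $t$-structure to produce and control the extensions.

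For the first equivalence, restriction $res$ preserves the relevant conditions: strongly cocartesian cubes in $\mathcal{C}_{\geq 0}$ remain so in $\mathcal{C}$ (since $\mathcal{C}_{\geq 0}$ is closed under finite colimits), and the inclusion $\mathcal{C}_{\geq 0} \hookrightarrow \mathcal{C}$ preserves colimits (as left adjoint to $\tau_{\geq 0}$), so $n$-excisive sifted-colimit-preserving functors restrict to functors of the same type. The content lies in constructing the inverse. Given $F \in Fun^{ep}_\Sigma(\mathcal{C}_{\geq 0}, \mathcal{D})$ which is $n$-excisive, I invoke Goodwillie's splitting for functors into a stable target to write $F \simeq \bigoplus_{k=0}^n D_k F$ with $D_k F(X) \simeq (cr_k F)(X, \ldots, X)_{h\Sigma_k}$ assembled from the symmetric multilinear cross-effect $cr_k F \colon \mathcal{C}_{\geq 0}^k \to \mathcal{D}$. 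Each $cr_k F$ is a reduced $k$-multilinear functor that preserves colimits in each variable, and by right-completeness every object of $\mathcal{C}$ is a shift of a connective object, so I extend by the formula
	\[ \widetilde{cr_k F}(\Sigma^{-n_1} X_1, \ldots, \Sigma^{-n_k} X_k) := \Omega^{n_1 + \cdots + n_k} (cr_k F)(X_1, \ldots, X_k) \]
for $X_i \in \mathcal{C}_{\geq 0}$. Well-definedness follows from compatibility of $cr_k F$ with $\Sigma$ in each variable, and reassembling the extended cross-effects via symmetric coinvariants yields $\tilde F \colon \mathcal{C} \to \mathcal{D}$. Each homogeneous summand is $k$-excisive by construction, so $\tilde F$ is $n$-excisive, and the restriction $\tilde F|_{\mathcal{C}_{\geq 0}} \simeq F$ by naturality of the Goodwillie splitting.

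For the second equivalence, left Kan extension along the fully faithful inclusion $\iota : \mathcal{C}^\heartsuit \hookrightarrow \mathcal{C}_{\geq 0}$ satisfies $res_\iota \circ Lan_\iota \simeq \mathrm{id}$ automatically. The task is therefore to verify (i) $Lan_\iota$ carries $ap$ $\Sigma$-preserving functors to $ep$ $\Sigma$-preserving ones, and (ii) every $ep$ $\Sigma$-preserving functor on $\mathcal{C}_{\geq 0}$ arises this way. For (i), observe that ``$ep$'' and ``$ap$'' coincide on $\mathcal{C}_{\geq 0}$ for sifted-colimit-preserving functors (since finite geometric realizations are instances of sifted colimits, hence automatically preserved). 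The $ap$ condition is preserved because the derivative $D_X F = \mathrm{fib}(F(X \oplus -) \to F(-))$ commutes with sifted colimits in both $X$ and its input (using stability of the fiber in $\mathcal{D}$), so vanishing of iterated derivatives on $\mathcal{C}^0 \subset \mathcal{C}^\heartsuit$ propagates to all of $\mathcal{C}_{\geq 0}$. For (ii), both $F$ and the Kan extension of $F|_{\mathcal{C}^\heartsuit}$ are sifted-colimit-preserving and agree on the generating subcategory $\mathcal{C}^0 \subset \mathcal{C}^\heartsuit$, so they agree on all of $\mathcal{C}_{\geq 0} = \mathcal{P}_\Sigma(\mathcal{C}^0)$.

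The main obstacle is justifying the Goodwillie decomposition $F \simeq \bigoplus_k D_k F$ in the first equivalence, since the source $\mathcal{C}_{\geq 0}$ is not stable. Standard Goodwillie calculus gives the Taylor tower $F \to P_n F \to P_{n-1} F \to \cdots$, but the splitting into homogeneous layers relies on the target $\mathcal{D}$ being stable together with enough symmetric group averaging, and one must verify that this splitting interacts correctly with sifted-colimit-preservation. A secondary concern is showing that the $k$-homogeneous extension produced by symmetric coinvariants of the extended multilinear cross-effect is genuinely $k$-excisive on $\mathcal{C}$; this is a standard consequence of the fact that homotopy orbits of a $k$-linear functor yield a $k$-excisive functor, but requires attention to the present setting.
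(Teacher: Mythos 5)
The paper does not actually prove this statement; it cites Proposition 4.2.15 of Raksit and Proposition 3.35 of Brantner--Mathew, so you are attempting to reprove external results. Your treatment of the second equivalence (extension from $\mathcal{C}^{\heartsuit}$ to $\mathcal{C}_{\geq 0}$) is essentially the standard argument and is fine in outline: $Lan_{\iota}$ of an additively polynomial functor remains additively polynomial because the derivative $D_{X}F = \mathrm{fib}(F(X\oplus -)\to F(-))$ commutes with sifted colimits when $\mathcal{D}$ is stable, and any sifted-colimit-preserving $F$ on $\mathcal{C}_{\geq 0}\simeq \mathcal{P}_{\Sigma}(\mathcal{C}^{0})$ is recovered from its restriction.

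The first equivalence, however, contains a genuine gap that you partially flag yourself but do not repair. The decomposition $F \simeq \bigoplus_{k=0}^{n} D_{k}F$ is false in general: the Goodwillie tower of an $n$-excisive functor between stable $\infty$-categories does not split integrally, and stability of $\mathcal{D}$ plus ``symmetric group averaging'' only produces a splitting after rationalization (the transfer argument requires inverting the orders of the symmetric groups). The paper itself contains a counterexample in spirit: the fiber sequence $\mathscr{W}_{1}\xrightarrow{V}\mathscr{W}_{2}\to\mathscr{W}_{1}$ of Example \ref{derived truncated Witt vectors} exhibits a degree-$p$ additively polynomial functor as a non-split extension of lower-degree pieces. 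Since your construction of the inverse functor rests entirely on this splitting, it does not go through. A second, independent problem is that even granting an extension of each $F$, you have only argued essential surjectivity of $res$; an equivalence also requires full faithfulness, i.e.\ uniqueness of the extension together with control of mapping spaces, which the homogeneous-layer construction does not address. The argument in Brantner--Mathew avoids both issues: one shows that an excisively polynomial, sifted-colimit-preserving functor automatically preserves finite totalizations (this is exactly Lemma \ref{techy extension result} in the paper), then uses right completeness to write every $X\in\mathcal{C}$ as $\mathrm{colim}_{n}\,\tau_{\geq -n}X$ with each $\tau_{\geq -n}X$ a finite limit of connective objects; this simultaneously forces the value of any extension (giving uniqueness) and constructs it. If you want to salvage your approach without that lemma, you would have to induct up the Taylor tower, extending $P_{n-1}F$, $D_{n}F$, and the connecting map separately rather than asserting a direct sum decomposition.
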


\begin{proof} See Proposition 4.2.15 in \cite{Raksit}, and Proposition 3.35 in \cite{Brantner-Mathew}.
\end{proof}

\begin{theorem} Fix a derived algebraic context $\mathcal{C}$. Then the functors
	\[ End^{ep}_{\Sigma}(\mathcal{C}_{\geq 0}) \xrightarrow{ F \to \tau_{\leq 0} \circ F \circ \iota} End^{ap}_{\Sigma}(\mathcal{C}^{\heartsuit}) \]
and
	\[ End^{ep}_{\Sigma}(\mathcal{C}_{\geq 0}, \mathcal{C}_{\geq 0}) \xrightarrow{ i_{\star}} Fun^{ep}_{\Sigma}(\mathcal{C}_{\geq 0}, \mathcal{C}) \simeq End^{ep}_{\Sigma}(\mathcal{C}) \]
are both monoidal left adjoints. The right adjoint to the first functor will be denoted by $L$ (which stands for the 'derived approximation' following \cite{Antieau}).

\end{theorem}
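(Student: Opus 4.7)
The strategy is to leverage the equivalences established in the previous theorem to exhibit both functors as composites whose adjointness is manifest, with the principal content being the monoidality of the first functor. Throughout, the source and target are presentable $\infty$-categories, so the adjoint functor theorem applies once we verify colimit-preservation.

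For the second functor $i_{\star}$, monoidality is nearly formal. Under the equivalence $Fun^{ep}_{\Sigma}(\mathcal{C}_{\geq 0}, \mathcal{C}) \simeq End^{ep}_{\Sigma}(\mathcal{C})$ afforded by restriction, the composition monoidal structure on $End^{ep}_{\Sigma}(\mathcal{C})$ transports to the product $(H_1, H_2) \mapsto \widetilde{H_1} \circ H_2$, where $\widetilde{H_1}$ denotes the essentially unique excisively polynomial, sifted-colimit preserving extension of $H_1$ to $\mathcal{C}$. For $F, G \in End^{ep}_{\Sigma}(\mathcal{C}_{\geq 0})$, one has $\widetilde{iF}|_{\mathcal{C}_{\geq 0}} = iF$ by uniqueness, so $\widetilde{iF} \circ iG = iFG = i_{\star}(FG)$, giving monoidality. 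The functor $i_{\star}$ is post-composition with the inclusion $i$, which is left adjoint to $\tau_{\geq 0}$; hence $i_{\star}$ preserves colimits and admits a right adjoint modeled by post-composition with $\tau_{\geq 0}$, using that $\tau_{\geq 0}$ preserves limits and that $\mathcal{C}_{\geq 0}$ is closed under colimits in $\mathcal{C}$.

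For the first functor I would decompose as
\[ End^{ep}_{\Sigma}(\mathcal{C}_{\geq 0}) \xrightarrow{F \mapsto F \iota} Fun^{ap}_{\Sigma}(\mathcal{C}^{\heartsuit}, \mathcal{C}_{\geq 0}) \xrightarrow{\tau_{\leq 0} \circ -} End^{ap}_{\Sigma}(\mathcal{C}^{\heartsuit}). \]
The first arrow is an equivalence: it is the restriction of the equivalence in the previous theorem (for $\mathcal{D} = \mathcal{C}$) to the sub-$\infty$-categories of functors landing in $\mathcal{C}_{\geq 0}$, which is legitimate because $\mathcal{C}_{\geq 0}$ is closed under sifted colimits in $\mathcal{C}$, so $Lan_{\iota}$ preserves this condition. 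The second arrow is post-composition with $\tau_{\leq 0}$, the left adjoint to $\iota$; thus the resulting functor is itself a left adjoint, with right adjoint post-composition with $\iota$. Compatibility with additive polynomiality on both sides is straightforward: the canonical splitting of $Y \to X \oplus Y$ produces a direct-sum decomposition $\overline{F}(X \oplus Y) \simeq \overline{F}(Y) \oplus D_X \overline{F}(Y)$ that both $\iota$ and $\tau_{\leq 0}$ preserve. Composing the right adjoints yields the formula $L(\overline{G}) = Lan_{\iota}(\iota \overline{G})$ as advertised.

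The main obstacle is to verify monoidality of the first functor. Transporting the source composition across the equivalence, the product on $Fun^{ap}_{\Sigma}(\mathcal{C}^{\heartsuit}, \mathcal{C}_{\geq 0})$ becomes $(\overline{F}, \overline{G}) \mapsto Lan_{\iota}(\overline{F}) \circ \overline{G}$, and the monoidality constraint evaluated at $X \in \mathcal{C}^{\heartsuit}$ unwinds to
\[ \tau_{\leq 0}\bigl( Lan_{\iota}(\overline{F})(\overline{G} X) \bigr) \simeq \tau_{\leq 0}\bigl( Lan_{\iota}(\overline{F})(\iota \tau_{\leq 0} \overline{G} X) \bigr). \]
This reduces to the following key claim: for any $F \in End^{ep}_{\Sigma}(\mathcal{C}_{\geq 0})$ and $Y \in \mathcal{C}_{\geq 0}$, the unit map $Y \to \iota \tau_{\leq 0} Y$ induces an equivalence after applying $\tau_{\leq 0} \circ F$. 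Equivalently, for $Z \in \mathcal{C}_{\geq 1}$ the map $F(0) \to F(Z)$ must be a $\pi_0$-isomorphism. My approach would be to use the Taylor tower of an excisively polynomial functor, whose layers are classified by multilinear cross-effects valued in iterated tensor products; the derived algebraic context axiom that $\mathcal{C}^0$ is closed under symmetric powers forces tensor products of objects in $\mathcal{C}_{\geq 1}$ to remain in $\mathcal{C}_{\geq 1}$, so each positive-degree Taylor layer applied to $Z$ lies in $\mathcal{C}_{\geq 1}$ and contributes nothing to $\pi_0$. Sifted colimit preservation then reassembles the statement across the tower, completing the monoidality check.
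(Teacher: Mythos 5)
The paper offers no argument of its own here (it cites Antieau's Proposition 2.17 and Corollary 2.20), so your proposal has to stand on its own merits. Your architecture is the right one and matches the cited approach: factor the first functor through the equivalence $End^{ep}_{\Sigma}(\mathcal{C}_{\geq 0}) \simeq Fun^{ap}_{\Sigma}(\mathcal{C}^{\heartsuit}, \mathcal{C}_{\geq 0})$ and then post-compose with $\tau_{\leq 0}$, handle the second functor by transport along left Kan extension, and reduce monoidality of the first to the statement that $\tau_{\leq 0}\circ F$ inverts the unit $Y \to \iota\tau_{\leq 0}Y$ for every $F \in End^{ep}_{\Sigma}(\mathcal{C}_{\geq 0})$ and every $Y \in \mathcal{C}_{\geq 0}$. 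That key claim is correctly identified and is true.

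The gap is in your proof of the key claim. The sentence ``Equivalently, for $Z \in \mathcal{C}_{\geq 1}$ the map $F(0) \to F(Z)$ must be a $\pi_0$-isomorphism'' is not an equivalence: it is the special case $\tau_{\leq 0}Y \simeq 0$. For general $Y$ the cofiber of $Y \to \iota\tau_{\leq 0}Y$ is $1$-connective, but $F$ is not exact, so you cannot isolate that cofiber; and your Taylor-tower connectivity estimate only yields $\pi_0 F(Z) \cong \pi_0 F(0)$ for $1$-connective $Z$. If you try to run the same tower argument on the map $D_nF(Y) \to D_nF(\iota\tau_{\leq 0}Y)$, multilinearity of the cross-effects gives you a cofiber built from terms with at least one $1$-connective input, hence a $1$-connective cofiber — which gives surjectivity on $\pi_0$ of each layer but not injectivity (the obstruction sits in $\pi_1$ of that cofiber), so the argument does not close up. The claim should instead be proved from the presentation $\mathcal{C}_{\geq 0} \simeq \mathcal{P}_{\Sigma}(\mathcal{C}^0)$: $\tau_{\leq 0}\circ F$ is a sifted-colimit-preserving functor into the $1$-category $\mathcal{C}^{\heartsuit}$, so writing $Y \simeq |P_{\bullet}|$ with $P_{\bullet}$ in $\mathcal{C}^0$ one gets $\pi_0F(Y) \simeq \mathrm{coeq}(\pi_0F(P_1) \rightrightarrows \pi_0F(P_0))$, which depends only on $\mathrm{coeq}(P_1 \rightrightarrows P_0) \simeq \pi_0 Y$; equivalently, every sifted-colimit-preserving functor from $\mathcal{P}_{\Sigma}(\mathcal{C}^0)$ to a $1$-category factors canonically through $\tau_{\leq 0}$. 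A secondary, more minor point: the assertion that the Taylor layers are ``valued in iterated tensor products'' is unjustified for a general derived algebraic context (and unnecessary — connectivity of the multilinear cross-effects, which are retracts of values of $F$, is all you would need); likewise ``right adjoint $=$ post-composition with $\iota$'' (resp.\ $\tau_{\geq 0}$) should be stated with care, since $\iota$ and $\tau_{\geq 0}$ do not preserve geometric realizations on the nose.
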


\begin{proof} See Proposition 2.17 and Corollary 2.20 in \cite{Antieau}.
\end{proof}

\begin{definition}[Definition 4.1.2 in \cite{Raksit}] \label{filtered monad} Let $\mathcal{C}$ be an $\infty$-category. A \emph{filtered monad} on $\mathcal{C}$ is a lax monoidal functor
	\[ \Z_{\geq 0}^{\times} \to End(\mathcal{C}) \]
where $\Z_{\geq 0}^{\times}$ is viewed as a monoidal category via multiplication. Given a full monoidal subcategory $\mathcal{E} \subset End(\mathcal{C})$, we define a \emph{filtered } $\mathcal{E}$-\emph{monad} to be a lax monoidal functor
	\[ \Z_{\geq 0}^{\times} \to \mathcal{E}. \]

\end{definition}

\begin{theorem} Let $\mathcal{C}$ be a cocomplete $\infty$-category and $F$ a filtered $\mathcal{E}$-monad on $\mathcal{C}$, where $\mathcal{E}$ is a subcategory such that every $f \in \mathcal{E}$ commutes with sequential colimits. Then $colim_{\Z_{\geq 0}}F$ is a monad on $\mathcal{C}$. 
\end{theorem}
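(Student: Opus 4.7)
The plan is to construct the multiplication and unit for a monad structure on $F_\infty := \operatorname{colim}_{n \in \mathbb{Z}_{\geq 0}} F(n)$ directly from the lax monoidal data defining $F$, and then to organize the higher coherences via the (lax) monoidality of the colimit functor. The crucial input from the hypothesis is that every $F(m) \in \mathcal{E}$ preserves sequential colimits in $\mathcal{C}$, which is exactly what is needed to interchange $F(m)$ with the colimit defining $F_\infty$.

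First I would establish the identification
\[ F_\infty \circ F_\infty \;\simeq\; \operatorname{colim}_{(m,n) \in \mathbb{Z}_{\geq 0}^2} F(m) \circ F(n). \]
Colimits in $\operatorname{End}(\mathcal{C})$ are computed pointwise, so the outer colimit commutes with composition on the left for free; commuting each $F(m)$ past the colimit $\operatorname{colim}_n F(n)$ on the inside uses precisely the sequential colimit preservation provided by $\mathcal{E}$. The lax monoidal structure maps $F(m) \circ F(n) \to F(mn)$ then induce
\[ \mu: F_\infty \circ F_\infty \;\simeq\; \operatorname{colim}_{(m,n)} F(m)\circ F(n) \;\longrightarrow\; \operatorname{colim}_{(m,n)} F(mn) \;\longrightarrow\; F_\infty, \]
where the second map is induced by the multiplication functor $\mathbb{Z}_{\geq 0}^2 \to \mathbb{Z}_{\geq 0}$. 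The unit of the would-be monad is the composite $\operatorname{id}_\mathcal{C} \to F(1) \to F_\infty$, built from the lax monoidal unit and the transition map into the colimit.

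To handle all higher coherences uniformly, I would package the argument abstractly: a filtered monad on $\mathcal{C}$ is, by definition, an $\mathbb{E}_1$-algebra in $\operatorname{Fun}(\mathbb{Z}_{\geq 0}^\times, \operatorname{End}(\mathcal{C}))$ under the Day convolution structure (using multiplication on the source and composition on the target). One then shows that the colimit functor $\operatorname{colim}: \operatorname{Fun}(\mathbb{Z}_{\geq 0}, \operatorname{End}(\mathcal{C})) \to \operatorname{End}(\mathcal{C})$, restricted to the full subcategory of diagrams valued in $\mathcal{E}$, is (lax) monoidal for these structures — this is essentially the content of the computation above, applied to arbitrary diagrams rather than to $F$ alone. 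Since monoidal functors carry $\mathbb{E}_1$-algebras to $\mathbb{E}_1$-algebras, and an $\mathbb{E}_1$-algebra in $\operatorname{End}(\mathcal{C})$ is exactly a monad on $\mathcal{C}$, this upgrades $F_\infty$ to a monad.

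The main obstacle is making precise the monoidality of this colimit functor at the level of $\infty$-operads. The difficulty is asymmetric: composition in $\operatorname{End}(\mathcal{C})$ always commutes with pointwise colimits in its second argument, but only commutes with a colimit in its first argument when the left-hand functor preserves that colimit — which is exactly the role played by the hypothesis on $\mathcal{E}$. A more hands-on alternative is to verify the associativity and unit identities for $F_\infty$ by direct computation, deducing each one from the corresponding coherence of the lax monoidal structure on $F$ together with the sequential colimit interchange, at the cost of some explicit bookkeeping.
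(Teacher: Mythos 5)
Your proposal is correct and is essentially the argument the paper defers to: the paper's ``proof'' is just a citation to Proposition 4.1.4 of Raksit, whose proof proceeds exactly as you describe, by showing that the sequential colimit functor restricted to $\mathcal{E}$-valued diagrams is monoidal for Day convolution versus composition (the hypothesis on $\mathcal{E}$ being precisely what lets $F(m)$ pass the inner colimit) and hence carries algebra objects to algebra objects. You have correctly identified both the key interchange step and the asymmetry that makes the hypothesis necessary.
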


\begin{proof} This is Proposition 4.1.4 in \cite{Raksit}.
\end{proof}

\begin{observation} Given a full monoidal subcategory $\mathcal{E} \subset End(\mathcal{C})$, denote by $F_{\geq 0} Alg(\mathcal{E})$ the $\infty$-category of filtered $\mathcal{E}$-monads (see Definition \ref{filtered monad}). The composite 
	\[ End^{ap}_{\Sigma}(\mathcal{C}^{\heartsuit}) \xrightarrow{L} End^{ep}_{\Sigma}(\mathcal{C}_{\geq 0}) \to  End^{ep}_{\Sigma}(\mathcal{C}) \]
is lax monoidal, and thus induces a functor
	\[ F_{\geq 0} Alg(End^{ap}(\mathcal{C}^{\heartsuit})) \xrightarrow{L} F_{\geq0 } Alg(End^{ep}_{\Sigma}(\mathcal{C})) \]
on filtered monad objects therein.

\end{observation}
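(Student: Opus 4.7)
The plan is straightforward: verify that each leg of the composite is lax monoidal, so that the composite inherits a canonical lax monoidal structure, and then invoke the tautological functoriality of $F_{\geq 0}\mathrm{Alg}(-)$ with respect to lax monoidal morphisms. No serious new input should be needed beyond the preceding theorem.

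First, I would produce the lax monoidal structure on $L$. The previous theorem identifies $L$ as the right adjoint to the functor $F \mapsto \tau_{\leq 0} \circ F \circ \iota$, which is asserted there to be a \emph{strong} monoidal left adjoint. The standard $\infty$-categorical mechanism (cf. \cite{LurieHA}, Corollary 7.3.2.7) then equips $L$ with a canonical lax monoidal refinement: doctrinally, right adjoints to strong monoidal functors between presentable monoidal $\infty$-categories are automatically lax monoidal. Second, the same theorem asserts that $i_\star$ is itself a (strong) monoidal functor, hence in particular lax monoidal. Composition of maps of $\infty$-operads stays in the world of $\infty$-operads, and composition of lax and strong yields lax, so $i_\star \circ L$ carries a canonical lax monoidal structure.

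Having established the lax monoidality of the composite, the second half of the observation is formal. By Definition \ref{filtered monad}, an object of $F_{\geq 0}\mathrm{Alg}(\mathcal{E})$ is by definition a lax monoidal functor $\mathbb{Z}_{\geq 0}^{\times} \to \mathcal{E}$. Post-composition with any lax monoidal functor $G \colon \mathcal{A} \to \mathcal{B}$ sends lax monoidal functors out of $\mathbb{Z}_{\geq 0}^{\times}$ into $\mathcal{A}$ to lax monoidal functors out of $\mathbb{Z}_{\geq 0}^{\times}$ into $\mathcal{B}$, producing
$$F_{\geq 0}\mathrm{Alg}(\mathcal{A}) \longrightarrow F_{\geq 0}\mathrm{Alg}(\mathcal{B}).$$
Applying this to $G = i_\star \circ L$ yields the asserted functor.

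I do not expect a real obstacle here, since the observation is essentially a bookkeeping statement that packages earlier results into a form convenient for the rest of the paper. The only point requiring care is ensuring that the adjunction producing $L$ genuinely lives at the level of $\infty$-operads rather than merely underlying $\infty$-categories, so that the lax monoidal structure on $L$ is a coherent structure rather than merely pointwise data. This however is exactly the content of the adjoint functor theorem for maps of $\infty$-operads in \cite{LurieHA}, applied in the setting of the preceding theorem where the left adjoint has already been constructed as a monoidal functor.
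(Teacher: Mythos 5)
Your proposal is correct and matches the intended justification: the paper states this as an Observation precisely because the argument is the formal one you give, namely that $L$ is lax monoidal as the right adjoint of the strong monoidal functor $F \mapsto \tau_{\leq 0}\circ F \circ \iota$ (via Corollary 7.3.2.7 of \cite{LurieHA}), $i_{\star}$ is strong monoidal by the preceding theorem, and post-composition of a lax monoidal functor $\Z_{\geq 0}^{\times} \to \mathcal{E}$ with a lax monoidal functor preserves the filtered-monad structure. No gap.
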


\begin{example} Fix a derived algebraic context $(\mathcal{C}, \mathcal{C}_{\leq 0}, \mathcal{C}_{\geq 0})$, and consider the symmetric algebra functor on the heart $Sym_{\mathcal{C}}^{\heartsuit}$. We will choose to view this not as an endomorphism of $\mathcal{C}^{\heartsuit}$, but rather as a functor
	\[ \mathcal{C}^{\heartsuit} \to \mathcal{C} \]
by postcomposing with the inclusion of the heart into $\mathcal{C}$. This functor is not additively polynomial, but it has a filtration by additively polynomial subfunctors
	\[ Sym_{\mathcal{C}}^{\heartsuit, \leq n} \in Fun^{ap}_{\Sigma}(\mathcal{C}^{\heartsuit}, \mathcal{C}). \]
	
These filtered pieces organize into a filtered monad structure refining the monad structure on $Sym_{\mathcal{C}}^{\heartsuit}$, and applying the preceding observation yields a filtered monad on $\mathcal{C}$, denoted by $LSym_{\mathcal{C}}^{\leq \star}$. The colimit of this filtered monad is denoted by $LSym_{\mathcal{C}}$, and referred to as the \emph{derived symmetric algebra monad}. The $\infty$-category of algebras over this monad is denoted by $\calg(\mathcal{C})$, and objects therein are referred to as \emph{derived commutative rings} in $\mathcal{C}$.
\end{example}

\begin{lemma} \label{commuting limits and colimits in dacs} Let $\mathcal{C}$ be a stable infinity category which admits sifted colimits, and let $T$ be a sifted colimit preserving monad thereon. Then sifted colimits in $\Mod_{T}(\mathcal{C})$ commute with finite limits.  In particular, this applies to $DAlg(\mathcal{C})$ for any derived algebraic context $\mathcal{C}$.
\end{lemma}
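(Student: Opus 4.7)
The plan is to reduce the assertion to the corresponding statement in $\mathcal{C}$ itself via the forgetful functor $U \colon \Mod_{T}(\mathcal{C}) \to \mathcal{C}$, which under our hypotheses creates both sifted colimits and finite limits.

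First I would establish that in a stable $\infty$-category any colimit which exists commutes with finite limits. For any small $\infty$-category $J$ such that $\mathcal{C}$ admits $J$-shaped colimits, the functor category $\mathcal{C}^{J}$ inherits a stable structure from $\mathcal{C}$ (finite limits and colimits computed pointwise), and the diagonal $\mathcal{C} \to \mathcal{C}^{J}$ is exact. Its left adjoint $\mathrm{colim}_{J} \colon \mathcal{C}^{J} \to \mathcal{C}$ preserves finite colimits; as a finite-colimit-preserving functor between stable $\infty$-categories it is automatically exact, hence commutes with finite limits. Specializing to sifted $J$ gives the required commutation in $\mathcal{C}$.

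Second, I would check that $U$ creates sifted colimits and finite limits. That $U$ creates sifted colimits is precisely the statement that $T$ preserves them: an algebra structure on $\mathrm{colim}\, U X_{\bullet}$ is produced from the algebra structures on $X_{\bullet}$ together with the equivalence $T(\mathrm{colim}\, U X_{\bullet}) \simeq \mathrm{colim}\, T(U X_{\bullet})$, and one checks uniqueness and functoriality in the standard way. That $U$ creates finite limits is automatic from monadicity: any monadic forgetful functor creates whatever limits exist in the base, and finite limits exist in $\mathcal{C}$ because it is stable.

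Combining the two steps, a finite limit of a sifted diagram of sifted colimits in $\Mod_{T}(\mathcal{C})$ is computed under $U$ as a finite limit of sifted colimits in $\mathcal{C}$, which by the first step equals the sifted colimit of the corresponding finite limits in $\mathcal{C}$, which in turn is computed under $U$ from the sifted colimit of finite limits in $\Mod_{T}(\mathcal{C})$. The ``in particular'' clause follows because $\calg(\mathcal{C}) = \Mod_{LSym_{\mathcal{C}}}(\mathcal{C})$ and the derived symmetric algebra monad preserves sifted colimits, being the colimit of a filtered diagram of sifted-colimit-preserving functors. The step that genuinely uses the hypotheses, rather than being formal, is the first: stability is what upgrades the familiar fact that filtered colimits commute with finite limits to the assertion that \emph{all} sifted colimits (in particular geometric realizations) commute with finite limits, and it is this upgrade that powers the entire argument.
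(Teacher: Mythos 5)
Your proof is correct, and the overall strategy is the same as the paper's: transfer the question to $\mathcal{C}$ along the forgetful functor, which creates both sifted colimits and finite limits, and then invoke stability of $\mathcal{C}$. Where you genuinely differ is in how stability enters. The paper writes an arbitrary simplicial object as a filtered colimit of its $n$-skeleta, thereby reducing geometric realizations to \emph{finite} colimits, and then uses only that finite limits commute with finite colimits in a stable $\infty$-category (together with the standard commutation of filtered colimits with finite limits). You instead observe that $\mathrm{colim}_{J} \colon \mathcal{C}^{J} \to \mathcal{C}$ is a left adjoint between stable $\infty$-categories, hence preserves finite colimits, hence is exact, hence preserves the pointwise-computed finite limits. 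Your route is cleaner and strictly more general: it shows that \emph{all} colimits that exist in a stable $\infty$-category commute with finite limits, with no case analysis on the shape of the sifted diagram and no skeletal filtration. The paper's skeletal reduction buys nothing extra here, though the same skeletal/coskeletal manipulation is the tool that recurs later in the paper (e.g.\ in Lemma \ref{techy extension result} and Theorem \ref{prismatic Cech-Alexander}) in situations where one must genuinely work with finite (co)limits rather than appeal to abstract exactness.
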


\begin{proof} It suffices to check that finite limits commute with geometric realizations, and since any simplicial object can be written as a filtered colimit of $n$-skeletal simplicial objects, it suffices to check that finite limits commute with finite geometric realizations. By construction, the forgetful functor
	\[ F \colon \Mod_{T}(\mathcal{C}) \to \mathcal{C} \]
reflects sifted colimits and limits. Stability of $\mathcal{C}$ implies that finite limits commute with finite colimits in $\mathcal{C}$, from which we conclude.
\end{proof}

\begin{lemma} \label{techy extension result} Let $\mathcal{A}$ be an $\infty$-category which admits finite colimits and a final object, and let $\mathcal{D}$ be a cocomplete stable $\infty$-category. Suppose $F: \mathcal{A} \to \mathcal{D}$ is an $n$-excisive functor which preserves filtered colimits. Then $F$ also preserves finite totalizations and all geometric realizations.

\end{lemma}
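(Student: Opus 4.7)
The plan is to reduce both preservation statements to the cube-theoretic observation that an $n$-excisive functor with target a stable $\infty$-category preserves strongly cocartesian cubes of sufficiently large dimension; the filtered-colimit hypothesis then bootstraps this into preservation of arbitrary geometric realizations.

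For geometric realizations: given a simplicial object $X_\bullet$ in $\mathcal{A}$, the skeletal filtration identifies $|X_\bullet|$ with the sequential (and hence filtered) colimit $\mathrm{colim}_{m}\, |\mathrm{sk}_{m} X_\bullet|$ of its finite realizations. Since $F$ preserves filtered colimits by hypothesis, the problem reduces to preservation of each $|\mathrm{sk}_{m} X_\bullet|$, which is a finite colimit. The key input is that every $n$-excisive functor is automatically $(n+k)$-excisive for all $k \geq 0$, so $F$ carries strongly cocartesian cubes of every dimension $\geq n+1$ in $\mathcal{A}$ to cartesian cubes in $\mathcal{D}$, which in the stable setting coincide with cocartesian cubes. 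Each finite realization $|\mathrm{sk}_{m} X_\bullet|$ admits, via the Reedy/latching decomposition of $\Delta^{op}_{\leq m}$, a presentation as a colimit assembled from strongly cocartesian cubes, and by repackaging these into a single cube of sufficiently large dimension (padding by the final object if necessary), the desired preservation follows. The finite totalization claim is dual: a finite totalization is the limit of a strongly cartesian cube, which in stable $\mathcal{D}$ is equivalently cocartesian, so the same argument yields preservation.

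The main obstacle is the explicit cube-theoretic repackaging step: showing that each finite realization or totalization can be exhibited as the colimit (respectively limit) of a strongly cocartesian (respectively cartesian) cube of dimension at least $n+1$. This is essentially a combinatorial Reedy-category manipulation on $\Delta^{op}$ (respectively $\Delta$), requiring careful bookkeeping of latching and matching objects. The cleanest route is via the observation that an iterated pushout of $N$ maps with a common source can be realized as the colimit of a strongly cocartesian cube whose dimension one controls by choice of $N$, allowing one to always land in the regime where the $(N-1)$-excisivity of $F$ applies.
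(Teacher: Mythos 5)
There are two genuine gaps. First, for geometric realizations: the reduction to finite skeleta via filtered colimits is fine, but the ``repackaging'' step you flag as the main obstacle is not mere bookkeeping --- it fails as described. A strongly cocartesian $(n+1)$-cube is generated by $n+1$ maps out of a single common source, and its total colimit is an iterated pushout over that one base; the latching-object decomposition of $|\mathrm{sk}_m X_\bullet|$ is instead a chain of successive pushouts along \emph{different} bases, so it does not produce a strongly cocartesian cube. Moreover, even where one does have a strongly cocartesian cube $\chi$, $n$-excisivity only identifies $F(\chi(\mathbf{1}))$ with the colimit of $F$ applied to the \emph{entire} punctured cube, including $F$ of all the intermediate pushout vertices; it does not let you commute $F$ past the presentation by the generating maps (otherwise every $n$-excisive functor to a stable target would preserve pushouts, i.e.\ be excisive). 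Second, the claim that finite totalizations are ``dual'' does not typecheck: $\mathcal{A}$ is only assumed to admit finite colimits and a final object, not to be stable, and $n$-excisivity is a condition on strongly \emph{cocartesian} cubes in the source; it says nothing about cartesian cubes in $\mathcal{A}$, so there is no dual statement to invoke.

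The paper's proof avoids both problems by a different route: since $F=P_nF$ and the target is stable, the finite Goodwillie tower reduces the claim to the homogeneous layers $D_kF$, which by the classification of homogeneous functors have the form $G(X,\dots,X)_{h\Sigma_k}$ for a symmetric functor $G$ preserving colimits in each variable. Preservation of all geometric realizations then follows from left cofinality of the diagonal $\Delta^{op}\to(\Delta^{op})^k$, and preservation of finite totalizations from exactness of $G$ in each variable (which preserves the condition of being right Kan extended from $(\Delta_{\leq m})^k$) together with cofinality of $\Delta\to\Delta^k$. Any repair of your argument will essentially have to reintroduce this multilinearization step; cubes alone do not see geometric realizations or totalizations.
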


\begin{proof} The proof of Proposition 3.37 in \cite{Brantner-Mathew} carries over verbatim to this more general setting. We recall the proof here.

Recall (from Theorem 1.8 in \cite{Goodwillie}) the inclusion of the $n$-excisive functors $Fun^{ep, n}(\mathcal{A}, \mathcal{D}) \to Fun(\mathcal{A}, \mathcal{D})$ admits a left adjoint $P_{n}$, and as $n$ varies, these adjoints organize into a tower
	\[ ...\to P_{n+1} \to P_{n} \to P_{n-1} \to ... \]
We will denote by $D_{n}(F)$ the fiber of $P_{n}(F) \to P_{n-1}(F)$, and recall that this is an $n$-homogeneous functor. The import of these recollections for us is two-fold:

\begin{itemize}
\item We can prove the claim inductively, and thus reduce to verifying the conclusion for $D_{n}(F)$.
\item Proposition 6.1.4.14 in \cite{LurieHA} posits the existence of a symmetric functor $G: (\mathcal{A}^{n} \times E\Sigma_{n})/\Sigma_{n} \to \mathcal{D}$ which preserves colimits in each variable such that
	\[ D_{n}(F) \simeq G(X,..., X)_{h \Sigma_{n}}. \]
\end{itemize}

Passing to homotopy coinvariants is exact, and thus it suffices to prove that the functor $\mathcal{A} \to \mathcal{D}$ given by $X \to G(X,..., X)$ preserves geometric realizations and finite totalizations. The first follows immediately from the fact that $\Delta^{op} \to (\Delta^{op})^{n}$ is left cofinal (Lemma 5.5.8.4 in \cite{Lurie}).

If $X^{\star} \colon \Delta \to \mathcal{A}$ is right Kan extended from $\Delta_{\leq m}$, then $(X^{\star},..., X^{\star}) \colon (\Delta)^{n} \to \mathcal{A}^{n}$ is also right Kan extended from $(\Delta_{\leq m})^{n}$. Appealing to exactness in each variable of $G$, $G(X^{\star},..., X^{\star})$ is also right Kan extended from $(\Delta_{\leq m})^{n}$, and thus
	\[G(Tot(X^{\cdot}), ..., Tot(X^{\cdot})) \simeq lim_{(\Delta)^{n}} G(X^{\cdot},..., X^{\cdot}) \simeq Tot(G(X^{\cdot},..., X^{\cdot})) \]
where the last equivalence follows from right cofinality of $\Delta \to (\Delta)^{n}$.
\end{proof}

\subsection{Functors of $k$-Algebras}

Recall, the truncated Witt vectors may be viewed as a functor
	\[ W_{n} : CAlg^{\heartsuit}_{\Z_{p}} \to CAlg^{\heartsuit}_{\Z_{p}} \]
By Kan extending from polynomial rings, one obtains a notion of \emph{animated} Witt vectors:
\begin{center}
\begin{tikzcd}
Poly_{\Z_{p}} \arrow[r, "W_{n}"] \arrow[d, "i"] 
	& \mathfrak{a}CAlg(\Mod_{\Z_{p}}) \\
\mathfrak{a}CAlg(\Mod_{\Z_{p}}) \arrow[ur, "LW_{n}"]
\end{tikzcd}
\end{center}
These animated Witt vectors play a useful role in the study of animated $\delta$-rings (see e.g. Appendix A in \cite{Bhatt-Lurie2}). In our investigation of derived $\delta$-rings, we would thus like a notion of truncated Witt vectors for derived commutative rings, but the techniques of the preceding section no longer apply to this context. Indeed, all the results encountered thus far have dealt with the extension of functors defined on the compact projective objects in the heart of a derived algebraic context, not algebra objects therein.
The goal of this section is to establish results which will allow us to extend functors of polynomial algebras to functors of derived algebras.  

Our strategy consists of reducing to the context of the previous section by precomposing with the free algebra functor. 
More precisely, fix a ring $k$. Given a functor
	\[ F: Poly_{k} \to \mathcal{D} \]
we obtain a new functor
	\[ G := F \circ Sym_{k}^{\heartsuit} : \Mod_{k}^{fpp} \to \mathcal{D} \]
which is now amenable to the techniques of the preceding section. Moreover, one can recover $F$ from $G$ by equipping $G$ with a natural piece of extra structure: namely that of a right $Sym^{\heartsuit}_{k}$-module, where the action map is induced by the monad multiplication on $Sym^{\heartsuit}_{k}$. Indeed, when endowed with this extra structure, we obtain an identification (see the proof of Lemma \ref{right-left extensions are extensions}) of functors
	\[ F \simeq colim_{\Delta^{op}} Bar_{\star}(G, Sym^{\heartsuit}_{k}, -). \]
More generally, given a monad $T$ on an $\infty$-category $\mathcal{C}$, the two sided Bar construction yields a functor
	\[ R\Mod_{T}(Fun(\mathcal{C}, \mathcal{D})) \xrightarrow{ G \to |Bar_{\star}(G,T,-)|} Fun(L\Mod_{T}(\mathcal{C}), \mathcal{D}) \]
where we are appealing to the natural right tensoring of $Fun(\mathcal{C}, \mathcal{D})$ over $End(\mathcal{C})$ in order to make sense of the left hand side. 
In this way, we reduce the problem of extending functors of polynomial algebras to that of understanding the preservation of right $Sym^{\heartsuit}_{k}$-modules under the extension procedures of the preceding section.

%Review of basic concepts

We begin by briefly reviewing the necessary notions regarding right-tensorings of categories following \cite{LurieHA}. 
\begin{definition}[See Definition 4.2.2.2 in \cite{LurieHA}] Let $\mathcal{C}$ be an $\infty$-category. A \emph{right action object} is a natural transformation
	\[ \alpha: M' \to M \]
in $Fun(N(\Delta^{op}), \mathcal{C})$ satisfying the following two properties:
\begin{itemize}
\item $M$ is a monoid object in $\mathcal{C}$ (see Definition 4.1.2.5 in \cite{LurieHA})
\item The maps $M'([n]) \to M'(\{n\})$ and $M'([n]) \to M([n])$ witness $M'([n])$ as the product
	\[ M'([n]) \simeq M'(\{n\}) \times M([n]) \simeq M'([0]) \times M([n]). \]
\end{itemize}
Following the standard abuse of terminology, we will refer to the $\infty$-category $M'([0])$ as being endowed with a right action of $M([1])$. We denote by $RMon(\mathcal{C})$ the full subcategory of $Fun(N(\Delta^{op}) \times \Delta^{1}, \mathcal{C})$ spanned by the right action objects.
\end{definition}

\begin{definition}[See Proposition 4.2.2.9 in \cite{LurieHA}] A \emph{right-tensoring} of an $\infty$-category $\mathcal{M}$ over a monoidal category $\mathcal{C}$ is the data of a right action object $M' \to M$ in $Cat_{\infty}$ equipped with equivalences
	\[ M \simeq \mathcal{C} \quad \text{ and } \quad M'([0]) \simeq \mathcal{M}. \]
\end{definition}

\begin{remark}[Variant 4.2.1.36 and Proposition 4.2.2.9 in \cite{LurieHA}] \label{operadic tensoring} There exists an $\infty$-operad $\mathcal{RM}^{\otimes}$ whose underlying $\infty$-category $\mathcal{RM}$ is the discrete simplicial set $\{\mathfrak{a}, \mathfrak{m}\}$ such that for any $\infty$-category $\mathcal{C}$ with finite products, there is a canonical equivalence
	\[Mon_{\mathcal{RM}}(\mathcal{C}) \simeq RMon(\mathcal{C}). \]
In particular, a right tensoring of an $\infty$-category $\mathcal{M}$ over a monoidal category $\mathcal{C}$ is equivalent to a co-Cartesian fibration of $\infty$-operads
	\[ \mathcal{M}^{\otimes} \to \mathcal{RM}^{\otimes} \]
equipped with equivalences
	\[ \mathcal{M}_{\mathfrak{a}} \simeq \mathcal{C} \quad \text{ and } \quad \mathcal{M}_{\mathfrak{m}} \simeq \mathcal{M}. \]
%A \emph{weak enrichment} of $\mathcal{M}$ over $\mathcal{C}$ is a fibration of $\infty$-operads
%	\[ \mathcal{M}^{\otimes} \to \mathcal{RM}^{\otimes} \]
%such that $\mathcal{M}_{\mathfrak{a}} \simeq \mathcal{C}$ and $\mathcal{M}_{\mathfrak{m}} \simeq \mathcal{M}$.
\end{remark}

For our purposes, all right-tensored (and left-tensored) categories will arise from (the obvious analogue of) the following construction.

\begin{construction} \label{simplicial monoids} Fix two quasi-categories $\mathcal{C}, \mathcal{M} \in sSet$, and suppose $\mathcal{C}$ is endowed with the structure of a simplicial monoid and $\mathcal{M}$ is equipped with a right action of $\mathcal{C}$ in the category of simplicial sets. The monoid structure on the simplicial set $\mathcal{C}$ yields a simplicial object in quasi-categories
	\[M_{\star} := \left(... \mathcal{C} \times \mathcal{C} \rightthreearrow \mathcal{C} \rightrightarrows [0]\right). \]
Unstraightening this diagram yields a monoidal $\infty$-category $\mathcal{C}^{\otimes}$ whose underlying $\infty$-category is $\mathcal{C}$.

Similarly, the right action of $\mathcal{C}$ on $\mathcal{M}$ yields a simplicial object in quasi-categories
	\[M'_{\star} := \left(... \mathcal{C} \times \mathcal{C} \times \mathcal{M} \rightthreearrow \mathcal{C} \times \mathcal{M} \rightrightarrows \mathcal{M}\right) \]
and the projection maps $\mathcal{C}^{n} \times \mathcal{M} \to \mathcal{C}^{n}$  induce a map of simplicial quasi-categories $M' \to M$. This yields a right action object in $Cat_{\infty}$ which we will denote by $\mathcal{M}_{\mathcal{C}}^{\otimes}$. The right action object $\mathcal{M}_{\mathcal{C}}^{\otimes}$ encodes a right tensoring of $\mathcal{M}$ over $\mathcal{C}$. 
\end{construction}

\begin{definition} A \emph{lax map of right tensored categories} $p,q \colon \mathcal{M}^{\otimes}, \mathcal{N}^{\otimes} \to \mathcal{RM}^{\otimes}$ is a map of $\infty$-operads over $\mathcal{RM}^{\otimes}$
	\[ \mathcal{M}^{\otimes} \to \mathcal{N}^{\otimes}. \]
A \emph{(strict) map of right tensored categories} as above is a lax map of right tensored categories which takes $p$-coCartesian edges to $p\circ q$-coCartesian edges.
\end{definition}

\begin{example} For general $\infty$-categories $\mathcal{C}$ and $\mathcal{D}$, $Fun(\mathcal{C}, \mathcal{D})$ is right tensored over $End(\mathcal{C})$, and this tensoring arises as in Construction \ref{simplicial monoids}.
\end{example}

\begin{example} \label{main example} Let $k$ be a commutative ring and $\mathcal{D}$ a presentable $\infty$-category. Let $End_{\Sigma}(\Mod_{k}^{\heartsuit})$ denote the full subcategory of $End(\Mod_{k}^{\heartsuit})$ spanned by the endomorphisms which preserve 1-sifted colimits. We denote by $\mathcal{E}$ the full subcategory of $End_{\Sigma}(\Mod_{k}^{\heartsuit})$ spanned by those endomorphisms $f$ such that the image of $f$ under the composite
	\[ End_{\Sigma}(\Mod_{k}^{\heartsuit}) \to Fun_{\Sigma}(\Mod_{k}^{\heartsuit}, \Mod_{k}) \simeq Fun(\Mod_{k}^{fpp}, \Mod_{k}) \]
is right-extendable in the sense of Definition \ref{right extendable functors}. Lemma \ref{monoidality of extendable endomorphisms} below exhibits $\mathcal{E}$ as a monoidal subcategory of $End_{\Sigma}(\Mod_{k}^{\heartsuit})$.
	
In this context, Construction \ref{simplicial monoids} endows the category of right-extendable functors $Fun_{\Sigma}^{ext}(\Mod_{k}^{\heartsuit}, \mathcal{D})$ with a right tensoring over $\mathcal{E}$, $Fun_{\sigma}(\Perf_{k, \leq 0}, \mathcal{D})$ with a right tensoring over $End_{\sigma}(\Perf_{k, \leq 0})$, and $Fun_{\Sigma}(\Mod_{k}, \mathcal{D})$ with a right tensoring over $End_{\Sigma}(\Mod_{k})$.
\end{example}

\begin{lemma} \label{monoidality of extendable endomorphisms} Let $\mathcal{E}$, $End_{\Sigma}(\Mod_{k}^{\heartsuit})$ be as in Example \ref{main example}. Then the inclusion $\mathcal{E} \to End_{\Sigma}(\Mod_{k}^{\heartsuit})$ is monoidal. 
\end{lemma}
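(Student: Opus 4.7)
The plan is to reformulate membership in $\mathcal{E}$ so that monoidality becomes essentially a formal check. I would first argue that an endomorphism $f \in End_{\Sigma}(\Mod_{k}^\heartsuit)$ lies in $\mathcal{E}$ if and only if the composite $\bar f \colon \Mod_{k}^{fpp} \to \Mod_{k}^\heartsuit \hookrightarrow \Mod_{k}$ admits a sifted-colimit-preserving extension $\tilde f \colon \Mod_{k} \to \Mod_{k}$, canonically realized as the right-left extension $\tilde f = \bar f^{RL}$ of Construction \ref{linear right-left extension}. Furthermore, $\tilde f$ restricts back along $\Mod_{k}^\heartsuit \hookrightarrow \Mod_{k}$ to $f$ itself, since $\tilde f|_{\Mod_{k}^\heartsuit}$ and $f$ are two sifted-colimit-preserving functors $\Mod_{k}^\heartsuit \to \Mod_{k}$ that agree on $\Mod_{k}^{fpp}$, and hence on all of $\Mod_{k}^\heartsuit = \mathcal{P}_\Sigma(\Mod_{k}^{fpp})$.

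Granted this reformulation, the lemma reduces to a direct verification. The identity functor lies in $\mathcal{E}$ since it extends to the identity on $\Mod_{k}$, which trivially preserves sifted colimits. For closure under composition, suppose $f, g \in \mathcal{E}$ with extensions $\tilde f, \tilde g \colon \Mod_{k} \to \Mod_{k}$. The composite $\tilde f \circ \tilde g$ preserves sifted colimits. On $\Mod_{k}^\heartsuit$, the restriction $\tilde g|_{\Mod_{k}^\heartsuit} = g$ takes values in $\Mod_{k}^\heartsuit$, so for any $M \in \Mod_{k}^\heartsuit$,
\[ (\tilde f \circ \tilde g)(M) = \tilde f(g(M)) = f(g(M)) = (f \circ g)(M). \]
Thus $\tilde f \circ \tilde g$ is a sifted-colimit-preserving extension of $f \circ g$, witnessing $f \circ g \in \mathcal{E}$.

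The main technical obstacle will be justifying the reformulation, especially the converse implication: that any sifted-colimit-preserving extension of $\bar f$ witnesses right-extendability of $\bar f$. Since $\tilde f|_{\Perf_{k, \leq 0}}$ preserves finite coconnective geometric realizations by the Brantner--Mathew theorem, it will suffice to identify this restriction with the right Kan extension $\bar f^{R}$. This should follow from a density argument: every object of $\Perf_{k, \leq 0}$ is built from $\Mod_{k}^{fpp}$ by iterated finite coconnective geometric realizations, so any two extensions of $\bar f$ to $\Perf_{k, \leq 0}$ preserving finite coconnective geometric realizations must agree canonically, identifying $\tilde f|_{\Perf_{k, \leq 0}}$ with $\bar f^R$ and completing the reformulation.
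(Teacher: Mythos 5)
There is a genuine gap, and it sits exactly where you predicted: the converse direction of your reformulation. Your density claim — that every object of $\Perf_{k,\leq 0}$ is built from $\Mod_{k}^{fpp}$ by iterated finite coconnective geometric realizations — is false. Geometric realizations of diagrams of discrete modules are always \emph{connective}, so an object such as $k[-1]$ (coconnective, not connective) can never be reached this way; coconnective perfect complexes are instead built from finitely presented projectives by finite \emph{totalizations}, which is the presentation the paper actually uses. Because of this, a sifted-colimit-preserving extension $\tilde f$ of $\bar f$ has no reason to agree on $\Perf_{k,\leq 0}$ with the right Kan extension $\bar f^{R}$: the functor $\tilde f|_{\Perf_{k,\leq 0}}$ preserves finite coconnective geometric realizations, but Definition \ref{right extendable functors} is a condition on $\bar f^{R}$ specifically, and nothing identifies the two. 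So "admits a sifted-colimit-preserving extension" is, a priori, weaker than "right extendable," and your composition argument only establishes the weaker property for $f\circ g$.

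The real content of the lemma — which your argument bypasses — is the compatibility of right Kan extension with composition: one must show that $(i\circ f\circ g)^{R}$ preserves finite coconnective geometric realizations, and the subtle point is comparing $(i\circ f\circ g)^{R}(X)$ with $(i\circ f)^{R}$ applied to $(i\circ g)^{R}(X)$. The paper does this by presenting $X\in\Perf_{k,\leq 0}$ as a finite totalization of discrete finitely presented projectives, writing $(i\circ g)^{R}(X)$ as a filtered colimit of coconnective compact objects, and using that $f$ preserves $1$-sifted colimits together with compactness of functors out of finite diagrams to commute $(i\circ f)^{R}$ past that filtered colimit. Some argument of this shape (or an alternative proof that any sifted-colimit-preserving extension of $\bar f$ forces $\bar f^{R}$ itself to be well behaved) is needed; as written, your proof does not close.
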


\begin{proof} Both categories in question are 1-categories, and so it suffices to show that the composite of two extendable endomorphisms is once again extendable. Suppose $f,g \in \mathcal{E}$. We must show that the right Kan extension
	\[ (i \circ f \circ g)^{R} \colon \Perf_{k, \leq 0} \to \Mod_{k} \]
preserves finite coconnective geometric realizations. Before verifying this, we make some preliminary observations.

First observe that for any finite simplicial set $K$, $Fun(K, \Mod_{k})$ is compactly generated by $Fun(K, \Perf_{k})$. Since limits and colimits are computed pointwise, any functor $F \colon K \to \Mod_{k}$ which factors through $\Perf_{k}$ is compact. Indeed, given a directed diagram $\{G_{n}\}$, and a natural transformation $F \to colim_{n} G_{n}$, for each vertex $k \in K$ we obtain a factorization $F(k) \to G_{n_{k}}(k)$ for some $n_{k}$. Since $K$ was assumed to be finite, we can find a uniform $n_{k}$ so as to attain the desired factorization. To see that such functors generate the category, we appeal to stability to reduce to checking that any functor $F$ satisfying the property that for all $G \in Fun(K, \Perf_{k})$, $Hom(G, F) \simeq 0$ is equivalent to the zero functor. This follows immediately from the fact that evaluation at a vertex $i \in K$, viewed as a functor $Fun(K, \Mod_{k}) \xrightarrow{ev_{i}} \Mod_{k}$ admits a left adjoint, which reduces us to a pointwise calculation.

We next claim that given $X \in \Perf_{k, \leq 0}$, for any presentation of $(i \circ g)^{R}(X)$ as a filtered colimit of coconnective compact objects
	\[ (i \circ g)^{R}(X) \simeq colim_{n} Y^{n}, \]
the canonical map
	\[ colim_{n} (i \circ f)^{R}(Y^{n}) \to (i \circ f \circ g)^{R}(X) \]
is an equivalence. Indeed, we can present $X$ as a finite totalization of objects in $\Mod_{k}^{\heartsuit}$ - $X \simeq Tot(X_{\star})$, and then witness (using the above observation) the diagram $X_{\star}$ as a filtered colimit of finite cosimplicial objects in $\Perf_{\Mod_{k}^{\heartsuit}}$, which we will denote by $Z^{\star}_{\star}$. Then we have
	\[ (i \circ f \circ g)^{R}(X) \simeq Tot(i \circ f \circ g(X_{\star})) \simeq colim_{n} Tot(i \circ f \circ g(Z^{n}_{\star})) \]
and since $f$ preserves 1-sifted colimits, any presentation of $g(Z^{n}_{\star})$ as a filtered colimit of compact objects is preserved by $f$, whence the claim.

We now return to the main thread of the proof. Fix a diagram $X_{\star} \colon \Delta^{\leq m, op} \to \Perf_{k, \leq 0}$ such that $|X_{\star}|$ remains coconnective. Our task is to show that the canonical map
	\[ \big| (i \circ f \circ g)^{R}(X_{\star}) \big| \to (i \circ f \circ g)^{R}(\big|X_{\star}\big|) \]
is an equivalence. Since $g$ preserves discrete objects and $\Mod_{k, \leq 0} \subset \Mod_{k}$ is closed under limits, $(i \circ g)^{R}(X_{\star})$ is coconnective and we can write the diagram $(i \circ g)^{R}(X_{\star})$ as a filtered colimit of diagrams $Y^{n}_{\star}$ in $\Perf_{k, \leq 0}$. Appealing to the claim of the preceding paragraph, we see that
\begin{center}
\begin{align*}
	 \big| (i \circ f \circ g)^{R}(X_{\star}) \big| & \simeq colim_{n} \big| (i \circ f)^{R}(Y^{n}_{\star}) \big| \\
	 							     & \simeq colim_{n} (i \circ f)^{R}( \big|Y^{n}_{\star} \big|) \\
								     & \simeq (i \circ f \circ g)^{R}( \big| X_{\star} \big|)
\end{align*}
\end{center}
where the second equivalence follows from extendability of $i \circ f$, and the third from extendability of $i \circ g$.
\end{proof}

\begin{remark} The reason we restrict our attention to $\Mod_{k}$ as opposed to a general derived algebraic context is so that we may write any coconnective perfect object in $\Mod_{k}$ as a finite totalization of finitely presented projective $k$-modules. The above proof relies on this fact, which does not hold in general derived algebraic contexts. This may be avoided by replacing $\mathcal{E}$ with the category of filtered functors $\fil End^{ext}(\mathcal{C}^{\heartsuit})$ consisting of endomorphisms $\mathcal{C}^{\heartsuit} \to \mathcal{C}^{\heartsuit}$ with a filtration whose filtered pieces preserve $\mathcal{C}^{0}$ and are extendable. Working with filtered endomorphisms introduces mild complications in the forthcoming constructions, so we content ourselves with the simpler setting of $\Mod_{k}$ for this paper.
\end{remark}

%General Kan extension techniques

\begin{lemma} \label{monoidal right Kan} Let $\mathcal{C}' \xrightarrow{i} \mathcal{C}$ be a full subcategory, and suppose that the right Kan extension of any functor $F \colon \mathcal{C}' \to \mathcal{C}$ exists. Then the composite
	\[ End(\mathcal{C}') \xrightarrow{ i \circ -} Fun(\mathcal{C}', \mathcal{C}) \xrightarrow{Ran_{\mathcal{C}'}^{\mathcal{C}}} End(\mathcal{C}) \]
is lax monoidal.
\end{lemma}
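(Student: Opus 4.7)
The plan is to recognize the composite $\Phi := Ran_{\mathcal{C}'}^{\mathcal{C}} \circ (i \circ -)$ as an internal endomorphism construction in a bimodule. First I would observe that $Fun(\mathcal{C}', \mathcal{C})$ naturally carries a left $End(\mathcal{C})$-module structure by postcomposition and a commuting right $End(\mathcal{C}')$-module structure by precomposition, making it an $(End(\mathcal{C}), End(\mathcal{C}'))$-bimodule. With these structures, the functor $i^{*} \colon End(\mathcal{C}) \to Fun(\mathcal{C}', \mathcal{C})$ given by $F \mapsto F \circ i$ is strictly left $End(\mathcal{C})$-linear, and the postcomposition functor $(i \circ -) \colon End(\mathcal{C}') \to Fun(\mathcal{C}', \mathcal{C})$ is strictly right $End(\mathcal{C}')$-linear. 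By the universal property of the right Kan extension, the right adjoint $Ran_{\mathcal{C}'}^{\mathcal{C}}$ of $i^{*}$ is identified with the internal hom $\underline{Hom}_{End(\mathcal{C})}(i, -)$, and in particular $\Phi(F) \simeq \underline{Hom}_{End(\mathcal{C})}(i, i \circ F)$.

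With this framing, the lax monoidal structure on $\Phi$ is forced to be the standard composition of enriched Homs. Concretely, for $F, G \in End(\mathcal{C}')$, I would construct the binary comparison map $\Phi(F) \circ \Phi(G) \to \Phi(F \circ G)$ via the universal property of $Ran_{\mathcal{C}'}^{\mathcal{C}}$ as the map whose adjunct is the composite
\[ \Phi(F) \circ \Phi(G) \circ i \xrightarrow{\Phi(F) \cdot \epsilon_{G}} \Phi(F) \circ i \circ G \xrightarrow{\epsilon_{F} \cdot G} i \circ F \circ G, \]
where $\epsilon_{H} \colon Ran_{\mathcal{C}'}^{\mathcal{C}}(i \circ H) \circ i \to i \circ H$ is the counit of the adjunction (an equivalence, since $i$ is fully faithful). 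The unit map $\mathrm{id}_{\mathcal{C}} \to \Phi(\mathrm{id}_{\mathcal{C}'}) = Ran_{\mathcal{C}'}^{\mathcal{C}}(i)$ is the one adjoint to the identity of $i$.

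To promote these binary comparison maps to a coherent lax monoidal functor in the $\infty$-categorical sense, I would invoke Lurie's framework for lax linear functors between module $\infty$-categories (see Section 7.3.2 of \cite{LurieHA}): the adjunction $i^{*} \dashv Ran_{\mathcal{C}'}^{\mathcal{C}}$ upgrades to an adjunction of left $End(\mathcal{C})$-module $\infty$-categories, with $i^{*}$ strict and its right adjoint canonically lax linear. Combining this lax $End(\mathcal{C})$-linear structure with the strict right $End(\mathcal{C}')$-linearity of $(i \circ -)$ then yields a lax monoidal functor $End(\mathcal{C}') \to End(\mathcal{C})$ whose underlying functor is $\Phi$. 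The main technical obstacle is the operadic bookkeeping: one has to exhibit $Fun(\mathcal{C}', \mathcal{C})$ as an $(End(\mathcal{C}), End(\mathcal{C}'))$-bimodule at the level of $\infty$-operads and then extract the induced lax monoidal structure on $\Phi$, for which one would use the $\mathcal{LM}^{\otimes}$ and $\mathcal{BM}^{\otimes}$-operads of Sections 4.2--4.3 of \cite{LurieHA}. Once this setup is in place, the coherence of the multiplication maps constructed above is automatic from the coherence of the adjunction counit.
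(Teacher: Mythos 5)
Your identification of the underlying functor and of the binary structure maps is correct: $\mathrm{Ran}_{\mathcal{C}'}^{\mathcal{C}}$ is right adjoint to restriction along $i$, hence computes the morphism object $\underline{Hom}(i,-)$ for the postcomposition action of $End(\mathcal{C})$ on $Fun(\mathcal{C}',\mathcal{C})$, and the comparison map $\Phi(F)\circ\Phi(G)\to\Phi(F\circ G)$ you construct from the counits is the same multiplication that the paper's proof produces. The gap is in the coherence step. Corollary 7.3.2.7 of \cite{LurieHA}, applied to the $\mathcal{LM}$-monoidal functor with algebra part $\mathrm{id}_{End(\mathcal{C})}$ and module part $i^{*}$, gives you that $\mathrm{Ran}_{\mathcal{C}'}^{\mathcal{C}}$ is \emph{lax left $End(\mathcal{C})$-linear}; it does not give a lax monoidal structure on the composite $\Phi=\mathrm{Ran}_{\mathcal{C}'}^{\mathcal{C}}\circ(i\circ-)$. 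The passage from ``lax left $\mathcal{A}$-linear $\underline{Hom}(m,-)$ together with strictly right $\mathcal{B}$-linear $m\cdot(-)$'' to ``lax monoidal $\underline{Hom}(m,m\cdot-)\colon\mathcal{B}\to\mathcal{A}$'' is a genuine bimodule-level construction (an enriched-hom/endomorphism-monad statement in the spirit of Section 4.7.1 of \cite{LurieHA}). It is true, but it is not among the results you cite, and its coherent assembly into a map of $\infty$-operads is precisely the content at issue --- it is not ``automatic from the coherence of the adjunction counit.'' As written, your proposal produces the multiplication maps but not their higher coherences.

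The paper sidesteps this by exploiting a feature special to the situation: since $i$ is fully faithful, $\mathrm{Ran}_{\mathcal{C}'}^{\mathcal{C}}(i\circ F)$ restricts back to $i\circ F$ and therefore carries $\mathcal{C}'$ into $\mathcal{C}'$, so the composite factors through the full monoidal subcategory $End_{\mathcal{C}'}(\mathcal{C})\subset End(\mathcal{C})$ of endofunctors preserving $\mathcal{C}'$. Restriction $End_{\mathcal{C}'}(\mathcal{C})\to End(\mathcal{C}')$ is a map of simplicial monoids, hence strictly monoidal, and the factored functor is exhibited as its right adjoint; a single application of Corollary 7.3.2.7 then delivers the fully coherent lax monoidal structure, and one composes with the monoidal inclusion $End_{\mathcal{C}'}(\mathcal{C})\to End(\mathcal{C})$. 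To complete your version, you should either prove the general bimodule lemma about $\underline{Hom}(m,m\cdot-)$ or reduce to this factorization.
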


\begin{proof} Consider the full subcategory $End_{\mathcal{C}'}(\mathcal{C}) \xrightarrow{j} End(\mathcal{C})$ spanned by those endofunctors which preserve the subcategory $\mathcal{C}'$. The composite in question then factors through $End_{\mathcal{C}'}(\mathcal{C})$ as follows:

\begin{center}
\begin{tikzcd}
End_{\mathcal{C}'}(\mathcal{C}) \arrow[r, "j"]
	& End(\mathcal{C}) \\
End(\mathcal{C}') \arrow[r, "i \circ -"] \arrow[u, "R"]
	& Fun(\mathcal{C}', \mathcal{C}) \arrow[u, "Ran_{\mathcal{C}'}^{\mathcal{C}}"]
\end{tikzcd}
\end{center}

By inspection $End_{\mathcal{C}'}(\mathcal{C})$ is a monoidal subcategory of $End(\mathcal{C})$, and the natural restriction functor
	\[ res \colon End_{\mathcal{C}'}(\mathcal{C}) \to End(\mathcal{C}) \]
is a map of simplicial monoids, and thus canonically refines to a strictly monoidal functor.

It thus suffices to show that $R$ is a right adjoint to $res$ by Corollary 7.3.2.7 in \cite{LurieHA}. Let us construct the co-unit of the adjunction. Right Kan extension is right adjoint to restriction, and so we have a unit natural transformation
	\[ \hat{\epsilon} \colon res \circ Ran_{\mathcal{C}'}^{\mathcal{C}} \to Id_{Fun(\mathcal{C}', \mathcal{C})}. \]
Since $i \circ -$ is fully faithful, it suffices to construct a natural transformation of the form
	\[ i\epsilon \colon (i \circ res \circ R) \to (i \circ -). \]
We have identifications $i \circ res \circ R \simeq res \circ j \circ R \simeq res \circ Ran_{\mathcal{C}'}^{\mathcal{C}} \circ i$, and thus we declare
	\[ i \epsilon := \hat{\epsilon} \circ i. \]
To verify that the natural transformation $\epsilon$ is the counit of an adjunction between $R$ and $res$, we must verify that for any $f \in End(\mathcal{C}')$ and $g \in End_{\mathcal{C}'}(\mathcal{C})$, the composite
	\[ Hom_{End_{\mathcal{C}'}(\mathcal{C})}(f, R(g)) \xrightarrow{res} Hom_{End(\mathcal{C}')}(res(f), res(R(g))) \xrightarrow{\epsilon_{g} \circ - } Hom_{End(\mathcal{C}')}(res(f), g) \]
is a homotopy equivalence. This composite fits into the following commutative diagram of spaces

\begin{center}
\begin{tikzcd}
Hom_{End_{\mathcal{C}'}(\mathcal{C})}(f, R(g)) \arrow[r, "\epsilon \circ res(-)"] \arrow[d, "\simeq"]
		& Hom_{End(\mathcal{C}')}(res(f), g) \arrow[d, "\simeq"] \\
Hom_{End(\mathcal{C})}(j(f), j(R(g))) \arrow[r, "\hat{\epsilon} \circ res(-)"]
		& Hom_{Fun(\mathcal{C}', \mathcal{C})}(i(res(f)), i(g)). \\
\end{tikzcd}
\end{center}
The bottom arrow is a homotopy equivalence since $\hat{\epsilon}$ is the counit of an adjunction between $Ran_{\mathcal{C}'}^{\mathcal{C}}$ and $res$, from which we conclude.
\end{proof}

\begin{lemma} \label{structured right Kan} Let $\mathcal{C}' \xrightarrow{i} \mathcal{C}$ be as in the preceding lemma, and let $\mathcal{D}$ be an $\infty$-category such that the right Kan extension of any functor $F \in Fun(\mathcal{C}', \mathcal{D})$ along $i$ exists. Then the right Kan extension
	\[ Fun(\mathcal{C}', \mathcal{D}) \xrightarrow{Ran_{\mathcal{C}'}^{\mathcal{C}}} Fun(\mathcal{C}, \mathcal{D}) \]
refines to a lax map of right tensored categories, where we equip the domain and target with their natural tensorings over $End(\mathcal{C}')$ and $End(\mathcal{C})$ respectively. This refinement is such that the induced lax monoidal map $End(\mathcal{C}') \to End(\mathcal{C})$ is precisely the composite of the preceding lemma.
\end{lemma}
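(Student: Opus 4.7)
The plan is to mirror the strategy of the preceding lemma, this time working one categorical level higher: identify $Ran_{\mathcal{C}'}^{\mathcal{C}}$ as the right adjoint of a strict map of right-tensored categories, appeal to an $\infty$-operadic adjoint functor theorem to refine it to a lax map of right-tensored categories, and then postcompose with the monoidal inclusion $j \colon End_{\mathcal{C}'}(\mathcal{C}) \hookrightarrow End(\mathcal{C})$ to land in the asserted tensoring.

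First, I would endow $Fun(\mathcal{C}, \mathcal{D})$ with a right tensoring over $End_{\mathcal{C}'}(\mathcal{C})$ by restricting the natural $End(\mathcal{C})$-tensoring along $j$, and then check that the precomposition functor
    \[ res_{\mathcal{C}'}^{\mathcal{C}} \colon Fun(\mathcal{C}, \mathcal{D}) \to Fun(\mathcal{C}', \mathcal{D}) \]
refines to a strict map of right-tensored categories lying over the strict monoidal restriction $res \colon End_{\mathcal{C}'}(\mathcal{C}) \to End(\mathcal{C}')$ from the preceding lemma. Using the simplicial description of Construction \ref{simplicial monoids}, this amounts to the evident identity $F \circ g_1 \circ \cdots \circ g_n \circ i = F \circ i \circ res(g_1) \circ \cdots \circ res(g_n)$ for $g_\bullet \in End_{\mathcal{C}'}(\mathcal{C})$ and $F \in Fun(\mathcal{C}, \mathcal{D})$, which manifestly supplies compatibility with every face map of the relevant bar objects in $Cat_\infty$.

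Next, I would invoke the relative adjoint functor theorem for $\infty$-operadic maps (Section 7.3.2 of \cite{LurieHA}): since $res_{\mathcal{C}'}^{\mathcal{C}}$ admits the pointwise right adjoint $Ran_{\mathcal{C}'}^{\mathcal{C}}$ by hypothesis, and since on endomorphisms $res$ admits the right adjoint $R$ produced in the proof of Lemma \ref{monoidal right Kan}, the right adjoint $Ran_{\mathcal{C}'}^{\mathcal{C}}$ canonically refines to a lax map of right-tensored categories lying over $R \colon End(\mathcal{C}') \to End_{\mathcal{C}'}(\mathcal{C})$. Because $j$ is strict monoidal, postcomposing this $End_{\mathcal{C}'}(\mathcal{C})$-tensoring along $j$ returns the original $End(\mathcal{C})$-tensoring on $Fun(\mathcal{C}, \mathcal{D})$, so the resulting lax map from $Fun(\mathcal{C}', \mathcal{D})$ to $Fun(\mathcal{C}, \mathcal{D})$ lies over $j \circ R$, which is exactly the lax monoidal composite produced in Lemma \ref{monoidal right Kan}.

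The principal obstacle is executing the operadic adjunction step cleanly: promoting a strict map of $\mathcal{RM}^{\otimes}$-algebras with fiberwise right adjoints to a lax map of $\mathcal{RM}^{\otimes}$-algebras requires a Beck-Chevalley style compatibility between the adjunction on endomorphisms and the adjunction on functor categories. In our situation both adjunctions are instances of $(res, Ran)$ along the single inclusion $i$, so the desired compatibility should ultimately reduce to the basic interchange property of right Kan extension with itself, verifiable by unwinding Construction \ref{simplicial monoids} at the level of simplicial objects in $Cat_\infty$.
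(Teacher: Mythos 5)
Your proposal is correct and follows essentially the same route as the paper: re-tensor $Fun(\mathcal{C},\mathcal{D})$ over $End_{\mathcal{C}'}(\mathcal{C})$, exhibit restriction as a strict map of right-tensored categories, apply Corollary 7.3.2.7 of \cite{LurieHA} together with the adjunction $(res, R)$ from Lemma \ref{monoidal right Kan}, and postcompose with the strict inclusion $j$. Your closing worry about a Beck--Chevalley compatibility is unnecessary — Corollary 7.3.2.7 needs only the existence of the fiberwise right adjoints, which you have already supplied.
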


\begin{proof} First, observe that we may also endow $Fun(\mathcal{C}, \mathcal{D})$ with a right tensoring over $End_{\mathcal{C}'}(\mathcal{C})$ (see the notation in the proof of Lemma \ref{monoidal right Kan}) via Construction \ref{simplicial monoids}. Since $End_{\mathcal{C}'}(\mathcal{C}) \subset End(\mathcal{C})$ is a sub-simplicial monoid, the identity $Fun(\mathcal{C}, \mathcal{D}) \to Fun(\mathcal{C}, \mathcal{D})$ refines to a (strict) map of right tensored categories relating the two tensorings.

It thus suffices to prove that right Kan extension refines to a lax map of right tensored categories where $Fun(\mathcal{C}, \mathcal{D})$ is equipped with the tensoring over $End_{\mathcal{C}'}(\mathcal{C})$. This is an immediate application of Corollary 7.3.2.7 in \cite{LurieHA}, and the proof of Lemma \ref{monoidal right Kan}.
\end{proof}

The analogous situation for left Kan extensions is slightly more subtle, as the left adjoint to a monoidal functor is only oplax monoidal. Nevertheless, the following lemma will suffice for our purposes.

\begin{lemma} \label{monoidal left Kan} Let $\mathcal{C} \simeq Ind_{\kappa}(\mathcal{C}')$ be a compactly generated $\infty$-category, and let $i \colon \mathcal{C}' \to \mathcal{C}$ denote the inclusion. Then the composite
	\[ End(\mathcal{C}') \xrightarrow{i \circ -} Fun(\mathcal{C}', \mathcal{C}) \xrightarrow{Lan_{\mathcal{C}'}^{\mathcal{C}}} End(\mathcal{C}) \]
is strictly monoidal.
\end{lemma}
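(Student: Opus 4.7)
The plan is to exhibit the composite as an inverse to a strict monoidal equivalence followed by a strict monoidal inclusion. First, I introduce the full subcategory $End^{\kappa}_{\mathcal{C}'}(\mathcal{C}) \subseteq End(\mathcal{C})$ spanned by those endofunctors of $\mathcal{C}$ which both preserve $\kappa$-filtered colimits and preserve the subcategory $i(\mathcal{C}')$. Since composition and the identity preserve both of these properties, $End^{\kappa}_{\mathcal{C}'}(\mathcal{C})$ is a sub-simplicial monoid of $End(\mathcal{C})$, and the inclusion therefore refines canonically to a strict monoidal functor in the sense of Construction \ref{simplicial monoids}.

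Next, restriction along $i$ defines a map of simplicial monoids $res \colon End^{\kappa}_{\mathcal{C}'}(\mathcal{C}) \to End(\mathcal{C}')$, and hence a strict monoidal functor. The universal property of $\kappa$-ind-completion identifies restriction $Fun^{\kappa}(\mathcal{C}, \mathcal{C}) \to Fun(\mathcal{C}', \mathcal{C})$ as an equivalence with inverse $Lan_{\mathcal{C}'}^{\mathcal{C}}$. For $f \in End(\mathcal{C}')$, the fully faithful inclusion $i$ ensures that $Lan_{\mathcal{C}'}^{\mathcal{C}}(i \circ f)$ restricts canonically to $i \circ f$ on $\mathcal{C}'$, whose values lie in $i(\mathcal{C}')$; consequently the extension preserves $i(\mathcal{C}')$. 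Thus $Lan_{\mathcal{C}'}^{\mathcal{C}} \circ (i \circ -)$ factors through $End^{\kappa}_{\mathcal{C}'}(\mathcal{C})$ and provides a section of $res$. Since $res$ is an equivalence onto its essential image, this section is in fact the inverse equivalence to $res$, and as such is strict monoidal.

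Putting these ingredients together, the composite in the statement is identified with
\[ End(\mathcal{C}') \xrightarrow{res^{-1}} End^{\kappa}_{\mathcal{C}'}(\mathcal{C}) \hookrightarrow End(\mathcal{C}), \]
a composition of strict monoidal functors, and hence itself strict monoidal. The main subtlety I anticipate is verifying that left Kan extension really lands in $End^{\kappa}_{\mathcal{C}'}(\mathcal{C})$ rather than merely $End^{\kappa}(\mathcal{C})$, but this is an immediate consequence of the fact that left Kan extension along a fully faithful functor preserves the original values up to canonical equivalence. The contrast with Lemma \ref{monoidal right Kan} is that there we could only realize $Ran_{\mathcal{C}'}^{\mathcal{C}}$ as a right adjoint to a monoidal functor (yielding laxness via Corollary 7.3.2.7 in \cite{LurieHA}), whereas here the universal property of $Ind_{\kappa}$ upgrades $Lan_{\mathcal{C}'}^{\mathcal{C}}$ to an honest inverse of a strict monoidal equivalence.
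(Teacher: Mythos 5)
Your argument is correct, and it diverges from the paper's at the final step. Both proofs begin by factoring the composite through the subcategory of endofunctors of $\mathcal{C}$ preserving $i(\mathcal{C}')$; the paper then exhibits $L := Lan_{\mathcal{C}'}^{\mathcal{C}}\circ(i\circ -)$ as a \emph{left adjoint} to the strictly monoidal restriction functor, obtains an oplax monoidal structure via Corollary 7.3.2.12 of \cite{LurieHA}, and verifies that the comparison maps $L(f_{1}\circ\cdots\circ f_{n})\to L(f_{1})\circ\cdots\circ L(f_{n})$ are equivalences by an explicit pointwise computation over the filtered categories $\mathcal{C}'\downarrow c$ (using that each $L(f_{i})$ preserves filtered colimits). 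You instead cut down further to the sub-simplicial monoid $End^{\kappa}_{\mathcal{C}'}(\mathcal{C})$ of $\kappa$-continuous, $\mathcal{C}'$-preserving endofunctors, observe via the universal property of $Ind_{\kappa}$ that restriction is a monoidal \emph{equivalence} onto $End(\mathcal{C}')$ with inverse $L$, and conclude since inverses of monoidal equivalences are monoidal. Your route avoids the pointwise colimit computation entirely, at the cost of invoking the (standard, but not completely formal) fact that a monoidal functor which is an equivalence of underlying $\infty$-categories admits a monoidal inverse, and of the same strictness conventions for corestriction of $res$ that the paper already accepts in Lemma \ref{monoidal right Kan}. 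The paper's computation has the side benefit of producing the explicit formula $L(f)(c)\simeq \mathrm{colim}_{\mathcal{C}'\downarrow c}\, f(c')$ and of running in visible parallel with the lax-monoidality argument for right Kan extension; your argument makes it conceptually transparent \emph{why} the left Kan extension is strongly rather than merely oplax monoidal, namely because on $\kappa$-continuous endofunctors it is literally inverse to restriction.
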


\begin{proof} As in the proof of Lemma \ref{monoidal right Kan}, we factor the composite through $End_{\mathcal{C}'}(\mathcal{C}) \xrightarrow{j} End(\mathcal{C})$. This yields a functor $L \colon End(\mathcal{C}') \to End_{\mathcal{C}'}(\mathcal{C})$, which is easily seen to be left adjoint to $res \colon End_{\mathcal{C}'}(\mathcal{C}) \to End(\mathcal{C}')$ via an argument analogous to that of Lemma \ref{monoidal right Kan}.

To establish the claim, we may reduce (by Corollary 7.3.2.12 in \cite{LurieHA}) to checking that for any finite collection of elements $f_{1},..., f_{n} \in End(\mathcal{C}')$, the canonical map
	\[ L(f_{1} \circ ... \circ f_{n}) \to L(f_{1}) \circ ... \circ L(f_{n}) \]
is an equivalence, which we can check pointwise. By hypothesis, left Kan extension induces an equivalence
	\[ Fun(\mathcal{C}', \mathcal{C}) \simeq End_{\omega}(\mathcal{C}) \]
and so in particular, each $L(f_{i})$ preserves filtered colimits. Given any $c \in \mathcal{C}$, the category $\mathcal{C}' \downarrow c$ is filtered, and thus we see
	\[ L(f_{1}) \circ ... \circ L(f_{n})(c) \simeq colim_{\mathcal{C}' \downarrow c} \left(f_{1} \circ ... \circ f_{n}(c')\right) \simeq L(f_{1} \circ ... \circ f_{n})(c) \]
as desired.
\end{proof}

\begin{lemma} \label{structured left Kan} Let $\mathcal{C} \simeq Ind_{\kappa}(\mathcal{C}')$ be as above and $\mathcal{D}$ be an $\infty$-category such that the left Kan extension of any functor $F \colon \mathcal{C}' \to \mathcal{D}$ along $i$ exists. Then left Kan extension
	\[ Fun(\mathcal{C}', \mathcal{D}) \xrightarrow{Lan_{\mathcal{C}'}^{\mathcal{C}}} Fun(\mathcal{C}, \mathcal{D}) \]
refines to a strict map of right tensored categories, where we equip the domain and target with their natural tensorings over $End(\mathcal{C}')$ and $End(\mathcal{C})$ respectively. This refinement is such that the induced monoidal map $End(\mathcal{C}') \to End(\mathcal{C})$ is precisely the composite of the preceding lemma.
\end{lemma}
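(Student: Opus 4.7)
The plan is to closely mirror the proof of Lemma \ref{structured right Kan}, exploiting the \emph{strict} (rather than merely lax) monoidality of left Kan extension established in Lemma \ref{monoidal left Kan} to obtain a strict (rather than lax) map of right tensored categories.

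I would begin by factoring $Lan_{\mathcal{C}'}^{\mathcal{C}}$ through $End_{\mathcal{C}'}(\mathcal{C})$, the full subcategory of $End(\mathcal{C})$ spanned by those endofunctors preserving $\mathcal{C}'$. Equipping $Fun(\mathcal{C}, \mathcal{D})$ with its tensoring over this subcategory via Construction \ref{simplicial monoids}, and observing that $End_{\mathcal{C}'}(\mathcal{C}) \hookrightarrow End(\mathcal{C})$ is a sub-simplicial monoid, the identity on $Fun(\mathcal{C}, \mathcal{D})$ refines to a strict map of right tensored categories relating the two tensorings. This reduces the problem to showing that $Lan_{\mathcal{C}'}^{\mathcal{C}}$ refines to a strict map of right tensored categories covering the strict monoidal functor $L \colon End(\mathcal{C}') \to End_{\mathcal{C}'}(\mathcal{C})$ from the proof of Lemma \ref{monoidal left Kan}.

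Next I would observe that restriction $res \colon Fun(\mathcal{C}, \mathcal{D}) \to Fun(\mathcal{C}', \mathcal{D})$ is manifestly a strict map of right tensored categories, covering the strict monoidal restriction on endomorphism categories. Applying the relative adjoint functor theorem (Corollary 7.3.2.7 in \cite{LurieHA}) to the adjunction $L \dashv res$ between the endomorphism categories together with the pointwise adjunction $Lan \dashv res$, one obtains a canonical refinement of $Lan$ to an oplax map of right tensored categories covering $L$.

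Finally, to upgrade this oplax structure to a strict one, it suffices to verify pointwise that for each $F \in Fun(\mathcal{C}', \mathcal{D})$ and $g \in End(\mathcal{C}')$, the canonical comparison map
\[ Lan_{\mathcal{C}'}^{\mathcal{C}}(F \circ g) \to Lan_{\mathcal{C}'}^{\mathcal{C}}(F) \circ L(g) \]
is an equivalence; the iterated case for $(g_1, \ldots, g_n)$ follows by induction, using that each $L(g_i)$ preserves all colimits in $\mathcal{C}$ (as recorded in the proof of Lemma \ref{monoidal left Kan}). Evaluating at $c \in \mathcal{C}$, the left side is $\mathrm{colim}_{(c' \to c) \in \mathcal{C}'_{/c}} F(g(c'))$ by the colimit formula for left Kan extension. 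For the right side, $L(g)(c) = \mathrm{colim}_{(c' \to c)} g(c')$, and since $Lan_{\mathcal{C}'}^{\mathcal{C}}(F)$ preserves colimits (being a left adjoint) and restricts to $F$ on $\mathcal{C}'$ (and each $g(c')$ lies in $\mathcal{C}'$), the right side also evaluates to $\mathrm{colim}_{(c' \to c)} F(g(c'))$. The main obstacle is not the underlying mathematical content---the strictness is transparent from the colimit formula---but rather the $\infty$-categorical bookkeeping required to produce the refined module structure on $Lan$ and to verify the coherences within the $\infty$-operadic framework of Remark \ref{operadic tensoring}.
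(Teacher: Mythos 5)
Your proposal follows essentially the same route as the paper: factor through $End_{\mathcal{C}'}(\mathcal{C})$, invoke the monoidal adjunction machinery of HA 7.3.2 (the paper cites Corollary 7.3.2.12, the left-adjoint counterpart of 7.3.2.7, which packages your ``upgrade oplax to strict'' step as its extra hypothesis), and verify the comparison maps pointwise via the colimit formula. One small justification slip: $Lan_{\mathcal{C}'}^{\mathcal{C}}(F)$ is not a left adjoint in general and need not preserve all colimits; what makes the pointwise computation work is that $\mathcal{C}'_{/c}$ is ($\kappa$-)filtered and left Kan extension into $Ind_{\kappa}(\mathcal{C}')$ preserves $\kappa$-filtered colimits, which is exactly the argument recorded in the proof of Lemma \ref{monoidal left Kan}.
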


\begin{proof} This is exactly the same as the proof of Lemma \ref{structured right Kan}, except that rather than appealing to Corollary 7.3.2.7 in \cite{LurieHA}, we must appeal to Corollary 7.3.2.12 in loc. cit., which has an additional hypothesis. This reduces to checking that for any $f \in Fun(\mathcal{C}', \mathcal{D})$ and any $g_{1},..., g_{n} \in End(\mathcal{C}')$, the canonical map
	\[ Lan_{\mathcal{C}'}^{\mathcal{C}}(f \circ g_{1} \circ ... \circ g_{n}) \to Lan_{\mathcal{C}'}^{\mathcal{C}}(f) \circ L(g_{1}) \circ ... \circ L(g_{n}) \]
is an equivalence, which follows by arguing pointwise as in the proof of Lemma \ref{monoidal left Kan}.
\end{proof}

%------------------------------------FUNCTORS OF k-ALGEBRAS-----------------------------------------------

\begin{proposition} Let $\mathcal{C}$ be a derived algebraic context, and denote by $\Perf_{\mathcal{C}, \leq 0}$ the subcategory of coconnective compact objects. Let $j : \Perf_{\mathcal{C}, \leq 0} \to \mathcal{C}$ denote the inclusion. Then the functor
	\[ End_{\sigma}(\Perf_{\mathcal{C}, \leq 0}) \xrightarrow{ (j \circ -)^{L}} End_{\Sigma}(\mathcal{C}) \]
is (strictly) monoidal, where the superscript $L$ denotes left Kan extension along $j$.
\end{proposition}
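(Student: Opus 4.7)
The plan is to reduce the statement to a corollary of the Brantner--Mathew equivalence, using the same template as the proof of Lemma \ref{monoidal left Kan}: rather than working directly with left Kan extension, I would invert a manifestly strictly monoidal restriction functor and deduce strict monoidality of the inverse.

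First I would introduce an intermediate monoidal category $End_{\Sigma, \Perf}(\mathcal{C}) \subset End_{\Sigma}(\mathcal{C})$, consisting of those sifted-colimit-preserving endomorphisms of $\mathcal{C}$ that preserve the subcategory $\Perf_{\mathcal{C}, \leq 0}$. The inclusion $End_{\Sigma, \Perf}(\mathcal{C}) \hookrightarrow End_{\Sigma}(\mathcal{C})$ is a sub-simplicial-monoid, hence strictly monoidal. Restriction along $j$ defines a map $res \colon End_{\Sigma, \Perf}(\mathcal{C}) \to End(\Perf_{\mathcal{C}, \leq 0})$, which is strictly monoidal because composition of endomorphisms commutes with restriction along a full subcategory. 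Moreover the image lies in $End_{\sigma}(\Perf_{\mathcal{C}, \leq 0})$, since a sifted-colimit-preserving functor $\mathcal{C} \to \mathcal{C}$ in particular preserves those finite coconnective geometric realizations which already exist inside $\Perf_{\mathcal{C}, \leq 0}$.

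Next I would verify that $res$ is an equivalence whose inverse is precisely $(j \circ -)^{L}$. Specializing the equivalence $Fun_{\Sigma}(\mathcal{C}, \mathcal{C}) \simeq Fun_{\sigma}(\Perf_{\mathcal{C}, \leq 0}, \mathcal{C})$ from the Brantner--Mathew theorem (with $\mathcal{D} = \mathcal{C}$), post-composition with $j$ embeds $End_{\sigma}(\Perf_{\mathcal{C}, \leq 0})$ as the full subcategory of $Fun_{\sigma}(\Perf_{\mathcal{C}, \leq 0}, \mathcal{C})$ spanned by functors factoring through $\Perf_{\mathcal{C}, \leq 0}$; under left Kan extension this subcategory is carried into $End_{\Sigma, \Perf}(\mathcal{C})$, because for any $F \in End_{\sigma}(\Perf_{\mathcal{C}, \leq 0})$ the identity $(j \circ F)^{L} \circ j \simeq j \circ F$ (full faithfulness of $j$) forces $(j \circ F)^{L}$ to send $\Perf_{\mathcal{C}, \leq 0}$ back into itself. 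Restricting the Brantner--Mathew equivalence to these full subcategories yields the desired equivalence $res \simeq ((j \circ -)^{L})^{-1}$.

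Finally, the inverse of a strictly monoidal equivalence of monoidal $\infty$-categories is strictly monoidal: this follows formally, e.g.\ by applying Corollary 7.3.2.7 of \cite{LurieHA} to the adjoint equivalence $(res, (j \circ -)^{L})$ exactly as in the proofs of Lemmas \ref{monoidal right Kan} and \ref{monoidal left Kan}. Composing the strictly monoidal inverse $(j \circ -)^{L} \colon End_{\sigma}(\Perf_{\mathcal{C}, \leq 0}) \to End_{\Sigma, \Perf}(\mathcal{C})$ with the strictly monoidal inclusion $End_{\Sigma, \Perf}(\mathcal{C}) \hookrightarrow End_{\Sigma}(\mathcal{C})$ gives the desired result. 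The only step requiring real care is the compatibility check that $(j \circ F)^{L}$ actually preserves $\Perf_{\mathcal{C}, \leq 0}$ as a functor on all of $\mathcal{C}$ — but this is immediate from the restriction identity $(j \circ F)^{L} \circ j \simeq j \circ F$ rather than needing a separate sifted colimit argument, since we only need to check the property on objects of $\Perf_{\mathcal{C}, \leq 0}$ itself.
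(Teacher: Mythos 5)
Your proof is correct and follows essentially the same route as the paper, which simply cites Lemma \ref{monoidal left Kan} together with the observation that left Kan extension identifies the monoidal subcategories $End_{\sigma}(\Perf_{\mathcal{C}, \leq 0})$ and $End_{\Sigma}(\mathcal{C})$: in both cases one factors through the endofunctors of $\mathcal{C}$ preserving $\Perf_{\mathcal{C}, \leq 0}$ and exhibits left Kan extension as the inverse of a strictly monoidal restriction functor. If anything your version is slightly more careful, since Lemma \ref{monoidal left Kan} as stated assumes $\mathcal{C} \simeq Ind_{\kappa}(\mathcal{C}')$ and computes pointwise over filtered slice categories, whereas here $\mathcal{C}$ is only generated by $\Perf_{\mathcal{C}, \leq 0}$ under sifted colimits; your substitution of the Brantner--Mathew equivalence for that pointwise computation is exactly the adjustment needed to make the citation honest.
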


\begin{proof} This is an immediate application of Lemma \ref{monoidal left Kan}, noting that $End_{\sigma}(\Perf_{\mathcal{C}, \leq 0}) \subset End(\Perf_{\mathcal{C}, \leq 0})$ and $End_{\Sigma}(\mathcal{C}) \subset End(\mathcal{C})$ are both monoidal subcategories and that left Kan extension identifies these subcategories (see Proposition 3.13 in \cite{Brantner-Mathew}).
\end{proof}

Denote by $\mathcal{E} := End^{ext}_{\Sigma}(\Mod_{k}^{\heartsuit})$ the category of extendable endomorphisms, as defined in Example \ref{main example}.
Let $\mathcal{D}$ be a presentable $\infty$-category, and denote by $Fun_{\Sigma}^{ext}(\Mod_{k}^{\heartsuit}, \mathcal{D})$ the full subcategory of $Fun_{\Sigma}(\Mod_{k}^{\heartsuit}, \mathcal{D})$ spanned by the right-extendable functors. Our task is to show that the right-left extension functor
	\[ Fun_{\Sigma}^{ext}(\Mod_{k}^{\heartsuit}, \mathcal{D}) \to Fun_{\Sigma}(\Mod_{k}, \mathcal{D}) \]
respects the natural tensorings of these categories over $\mathcal{E}$ and $End_{\Sigma}(\Mod_{k})$ respectively. The primary observation of this section is captured in the following Proposition.

\begin{proposition} \label{structured extensions} Let $\mathcal{D}$ a complete presentable $\infty$-category. Then the right-left extension functor of Construction \ref{linear right-left extension}
	\[ (-)^{RL}: Fun^{ext}_{\Sigma}(\Mod_{k}^{\heartsuit}, \mathcal{D}) \to Fun_{\Sigma}(\Mod_{k}, \mathcal{D}) \]
refines to a lax map of right-tensored $\infty$-categories
	\[ (-)^{RL}: \left(Fun^{ext}_{\Sigma}(\Mod_{k}^{\heartsuit}, \mathcal{D})\right)^{\otimes}_{\mathcal{E}} \to \left(Fun_{\Sigma}(\Mod_{k}, \mathcal{D})\right)^{\otimes}_{End_{\Sigma}(\mathcal{C})} \]
where we are using the notation of Construction \ref{simplicial monoids} to depict the tensorings in question.
\end{proposition}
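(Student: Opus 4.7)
The plan is to realize $(-)^{RL}$ as the composite of a right Kan extension and a left Kan extension, and to refine each step to a (lax or strict) map of right-tensored categories using the preceding two structural lemmas. By Construction \ref{linear right-left extension}, $(-)^{RL}$ factors through the intermediate category $Fun_{\sigma}(\Perf_{k, \leq 0}, \mathcal{D})$ as
\[ Fun^{ext}_{\Sigma}(\Mod_{k}^{\heartsuit}, \mathcal{D}) \xrightarrow{Ran_{j}} Fun_{\sigma}(\Perf_{k, \leq 0}, \mathcal{D}) \xrightarrow{Lan_{i}} Fun_{\Sigma}(\Mod_{k}, \mathcal{D}), \]
where $j \colon \Mod_{k}^{fpp} \to \Perf_{k, \leq 0}$ and $i \colon \Perf_{k, \leq 0} \to \Mod_{k}$ are the natural inclusions. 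Each intermediate category carries its natural right tensoring, and the three tensoring monoidal categories are linked by functors on endomorphism categories provided by Lemmas \ref{monoidal right Kan} and \ref{monoidal left Kan}.

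First I would apply Lemma \ref{structured right Kan} to the inclusion $j$, which exhibits $Ran_{j}$ as a lax map of right-tensored categories over the natural tensorings by $End(\Mod_{k}^{fpp})$ and $End(\Perf_{k, \leq 0})$. The key verification is that this restricts appropriately to the subcategories of interest: restricting to extendable functors forces the output to lie in $Fun_{\sigma}(\Perf_{k, \leq 0}, \mathcal{D})$ by Definition \ref{right extendable functors}, while Lemma \ref{monoidality of extendable endomorphisms} combined with the argument of Lemma \ref{monoidal right Kan} shows that the action of $\mathcal{E}$ factors through $End_{\sigma}(\Perf_{k, \leq 0})$. Next, Lemma \ref{structured left Kan} applied to $i$ shows that $Lan_{i}$ refines to a strict map of right-tensored categories over $End(\Perf_{k, \leq 0})$ and $End(\Mod_{k})$. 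This restricts to the subcategories $End_{\sigma}(\Perf_{k, \leq 0})$ and $End_{\Sigma}(\Mod_{k})$, since left Kan extension along $i$ implements the equivalence $Fun_{\sigma}(\Perf_{k, \leq 0}, -) \simeq Fun_{\Sigma}(\Mod_{k}, -)$ of Proposition 3.13 in \cite{Brantner-Mathew}, and similarly on endomorphism categories.

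The composite of a lax map and a strict map of right-tensored $\infty$-categories is again a lax map, yielding the desired refinement of $(-)^{RL}$. By construction the induced lax monoidal functor $\mathcal{E} \to End_{\Sigma}(\Mod_{k})$ is the composite of the lax monoidal map from Lemma \ref{monoidal right Kan} with the strict monoidal map from Lemma \ref{monoidal left Kan}. I expect the main obstacle to be careful bookkeeping: one must keep track of how the various sub-categorical conditions (extendability, preservation of $1$-sifted versus sifted colimits, compatibility with the tensorings) interact with the Kan extensions, and verify that each inclusion of a subcategory can be promoted to a map of tensored categories over the corresponding monoidal subcategory. Once the preceding lemmas are in hand, each such check reduces to a direct application of their statements together with the compact generation and sifted cocompleteness of the categories in play.
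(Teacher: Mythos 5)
Your proposal is correct and follows essentially the same route as the paper: both factor $(-)^{RL}$ through the Kan extensions along $\Mod_{k}^{fpp} \to \Perf_{k,\leq 0} \to \Mod_{k}$, invoke Lemmas \ref{structured right Kan} and \ref{structured left Kan} to refine each stage to a (lax, resp. strict) map of right-tensored categories, and then check that everything restricts to the subcategories cut out by extendability and preservation of sifted colimits. The only cosmetic difference is that the paper performs the restriction to subcategories once at the end (via strict maps of tensored categories induced by the inclusions), whereas you restrict at each intermediate stage.
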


\begin{proof} Lemmas \ref{structured right Kan} and \ref{structured left Kan} imply that the composite
	\[ Fun(\Mod_{k}^{fpp}, \mathcal{D}) \xrightarrow{(-)^{R}} Fun(\Perf_{k, \leq 0}, \mathcal{D}) \xrightarrow{(-)^{L}} Fun(\Mod_{k}, \mathcal{D}) \]
admits a canonical refinement to a lax map of right-tensored categories, and our task it to show that this refinement restricts to the subcategories in question. The inclusions $Fun^{ext}(\Mod_{k}^{fpp}, \mathcal{D}) \to Fun(\Mod_{k}^{fpp}, \mathcal{D})$ and $Fun_{\Sigma}(\Mod_{k}, \mathcal{D}) \to Fun(\Mod_{k}, \mathcal{D})$ both admit natural refinements to strict maps of right-tensored categories where the domains of these inclusions are tensored over the categories $\mathcal{E}$ and $End_{\Sigma}(\mathcal{C})$ respectively. We thus obtain a natural factorization over $\mathcal{RM}^{\otimes}$ as indicated in the diagram

\begin{center}
\begin{tikzcd}
\left(Fun^{ext}(\Mod_{k}^{fpp}, \mathcal{D})\right)^{\otimes}_{\mathcal{E}} \arrow[r, dashed, "(-)^{RL}"] \arrow[d] 
	& \left(Fun_{\Sigma}(\Mod_{k}, \mathcal{D})\right)^{\otimes}_{End_{\Sigma}(\Mod_{k})} \arrow[d] \\
\left(Fun(\Mod_{k}^{fpp}, \mathcal{D})\right)^{\otimes}_{\mathcal{E}} \arrow[r, "(-)^{RL}"]
	& \left(Fun(\Mod_{k}, \mathcal{D})\right)^{\otimes}_{End_{\Sigma}(\Mod_{k})}
\end{tikzcd}
\end{center}
and it follows formally that the dashed arrow is in fact a map of $\infty$-operads.
\end{proof}

It follows formally then that right-left extension induces a functor on right $\mathcal{E}$-module objects in $Fun^{ext}_{\Sigma}(Mod_{k}^{\heartsuit}, \mathcal{D})$ to right $End_{\Sigma}(Mod_{k})$-module objects in $Fun_{\Sigma}(Mod_{k}, \mathcal{D})$. The symmetric algebra monad on $Mod_{k}^{\heartsuit}$ is an $\mathcal{E}$-monad, and so in particular, right left extension induces a functor
	\[ R\Mod_{Sym^{\heartsuit}_{k}}(Fun^{ext}_{\Sigma}(Mod_{k}^{\heartsuit}, \mathcal{D})) \to R\Mod_{LSym_{k}}(Fun_{\Sigma}(Mod_{k}, \mathcal{D})). \]
This is our main tool for extending functors from $Poly^{fg}_{k}$ to all of $DAlg(Mod_{k})$.

\begin{definition} \label{right-extendable functor of algebras} Let $\mathcal{D}$ a presentable $\infty$-category. We will say that a functor
	\[ F \colon Poly^{fg}_{k} \to \mathcal{D} \]
is \emph{right extendable} if the composite $F \circ Sym^{\heartsuit}_{k} \colon \Mod_{k}^{fpp} \to \mathcal{D}$ is right extendable in the sense of Definition \ref{right extendable functors}. We will denote by $Fun^{ext}(Poly^{fg}_{k}, \mathcal{D})$ the full subcategory of $Fun(Poly^{fg}_{k}, \mathcal{D})$ spanned by the right extendable functors.
\end{definition}

\begin{construction}[Non-linear right-left extensions] \label{non-linear right-left extension} Let $\mathcal{D}$ a presentable $\infty$-category.  We construct a functor $Fun^{ext}(Poly^{fg}_{k}, \mathcal{D}) \simeq Fun^{ext}_{\Sigma}(DAlg(\Mod_{k})^{\heartsuit}, \mathcal{D}) \to Fun_{\Sigma}(DAlg(\Mod_{k}), \mathcal{D})$ by tracing around the following diagram:

\begin{center}
\begin{tikzcd}

Fun^{ext}_{\Sigma}(DAlg(\Mod_{k})^{\heartsuit}, \mathcal{D}) \arrow[rr, dashed, "(-)^{RL}"] \arrow[d, " - \circ Sym_{k}^{\heartsuit}"]
		&& Fun_{\Sigma}(DAlg(\Mod_{k}), \mathcal{D}) \\
		
R\Mod_{Sym^{\heartsuit}_{k}}(Fun^{ext}_{\Sigma}(\Mod_{k}^{\heartsuit}, \mathcal{D})) \arrow[rr, "(-)^{RL}"]
		&& R\Mod_{LSym_{k}}(Fun_{\Sigma}(\Mod_{k}, \mathcal{D}))  \arrow[u, "\left|\text{Bar}_{\star}(-\text{,} LSym_{k} \text{,} -)\right|"]
\end{tikzcd}
\end{center}
where $Bar_{\star}(-, LSym_{\mathcal{C}}, -)$ is the two-sided Bar construction. We refer to the dashed arrow as the \emph{non-linear right-left extension functor}.
\end{construction}

We now investigate the basic properties of this construction. Our first task is to justify the use of the term 'extension'.

\begin{lemma} \label{right-left extensions are extensions} Suppose $F \in Fun^{ext}_{\Sigma}(DAlg(\Mod_{k})^{\heartsuit}, \mathcal{D})$ is a right extendable functor. Then there is a natural equivalence of functors
	\[ F^{RL} \big|_{DAlg(\Mod_{k})^{\heartsuit}} \simeq F. \]
\end{lemma}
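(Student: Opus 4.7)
The plan is to verify the equivalence first on free polynomial algebras $LSym_{k}(V)$ for $V \in \Mod_{k}^{fpp}$, and then reduce the general case to the free case via a simplicial polynomial resolution of $R$ whose underlying simplicial module is flat.

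For the first step, I would consider $R = LSym_{k}(V) = Sym_{k}^{\heartsuit}(V)$ with $V \in \Mod_{k}^{fpp}$. By Construction \ref{non-linear right-left extension}, $F^{RL}(LSym_{k} V) \simeq \bigl|\mathrm{Bar}_{\star}(G^{RL}, LSym_{k}, LSym_{k}(V))\bigr|$, where $G^{RL} := (F \circ Sym_{k}^{\heartsuit})^{RL}$ carries its right $LSym_{k}$-module structure from Proposition \ref{structured extensions}. I would then exploit the standard collapse of the bar resolution of a free $T$-algebra: the augmented simplicial object $G^{RL}(LSym_{k}^{\bullet+1} V) \to G^{RL}(V)$ admits the extra degeneracy $G^{RL}(\eta)$ coming from the unit of the monad $LSym_{k}$, and the right-module unit axiom $\alpha_{G^{RL}} \circ G^{RL}(\eta) = \mathrm{id}$ makes it split augmented in $\mathcal{D}$. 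This yields $F^{RL}(LSym_{k} V) \simeq G^{RL}(V) = G(V) = F(Sym_{k}^{\heartsuit}(V)) = F(LSym_{k} V)$ for $V \in \Mod_{k}^{fpp}$. The extension to a free $k$-module $V$ of arbitrary rank follows by writing $V$ as a filtered colimit of fpp submodules and using that $F$, $F^{RL}$, and $LSym_{k}$ all commute with filtered colimits.

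Next, for arbitrary $R \in DAlg(\Mod_{k})^{\heartsuit} \simeq CAlg_{k}^{\heartsuit}$, I would choose a simplicial resolution $P_{\bullet} \to R$ in $CAlg_{k}^{\heartsuit}$ such that each $P_{n} = LSym_{k}(V_{n})$ for $V_{n}$ a free $k$-module and such that the underlying simplicial $k$-module of $P_{\bullet}$ is a projective resolution of $R$. Since $LSym_{k}$ preserves sifted colimits, so does the forgetful functor $DAlg(\Mod_{k}) \to \Mod_{k}$, and therefore the geometric realization $|P_{\bullet}|$ computed in $DAlg(\Mod_{k})$ has underlying $k$-module $R$, forcing $|P_{\bullet}| \simeq R$ in $DAlg(\Mod_{k})$ itself. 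Sifted colimit preservation of both $F$ and $F^{RL}$, combined with the first step applied levelwise, then gives $F(R) \simeq |F(P_{\bullet})| \simeq |F^{RL}(P_{\bullet})| \simeq F^{RL}(R)$.

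The hard part will be constructing the resolution $P_{\bullet}$ so that the underlying simplicial $k$-module is actually flat. The naive monadic bar resolution $P_{n} = (Sym_{k}^{\heartsuit})^{n+1}(R)$ does not satisfy this: its levels are $Sym_{k}^{\heartsuit}$ applied to non-flat modules, where $Sym_{k}^{\heartsuit}$ and $LSym_{k}$ genuinely disagree, so $|P_{\bullet}|$ computed in $DAlg(\Mod_{k})$ would acquire spurious higher homotopy. One would first produce a free simplicial resolution of $R$ at the level of $k$-modules and then lift it to a compatible simplicial resolution by polynomial algebras; this construction is standard but is the technical crux of the reduction.
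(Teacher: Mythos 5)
Your core mechanism --- the bar resolution of a free algebra collapses via the extra degeneracy supplied by the monad unit --- is exactly the paper's argument, and your reduction from general discrete rings to polynomial algebras via sifted-colimit preservation is also how the paper proceeds (it invokes $1$-sifted colimit preservation of both sides abstractly rather than writing down a resolution). Three remarks. First, the paper begins by using the augmentation of the bar construction to produce the natural transformation $F^{RL}\big|_{DAlg(\Mod_{k})^{\heartsuit}} \to F$ and only then checks it is an equivalence on $Poly^{fg}_{k}$; with an objectwise choice of resolution you get an equivalence $F^{RL}(R) \simeq F(R)$ for each $R$ but not obviously a \emph{natural} one, so you should either construct the comparison map first as the paper does, or use a functorial resolution (e.g.\ the comonadic one by free commutative rings on underlying sets). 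Second, your diagnosis of the naive monadic bar resolution is off: $Bar_{\star}(Sym^{\heartsuit}_{k}, Sym^{\heartsuit}_{k}, R)$ is split augmented over $R$ on underlying modules by the unit, so its realization in $DAlg(\Mod_{k})$ is $R$ with no spurious homotopy; its actual defect is that its levels are symmetric algebras on non-projective modules, hence do not lie in $Poly_{k}$, so your free case does not apply to them termwise. Third, the ``technical crux'' you flag is standard: the comonadic resolution by free commutative rings on underlying sets is levelwise polynomial on free modules and split at the level of underlying sets, hence its underlying simplicial module already resolves $R$.
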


\begin{proof} Denote by $G = (F \circ Sym^{\heartsuit}_{k})^{RL}$, and note that $G \big|_{\Mod_{k}^{\heartsuit}} \simeq F \circ Sym^{\heartsuit}_{k}$. In particular, for any $P \in DAlg(\Mod_{k})^{\heartsuit}$, the right left extension is the geometric realization of the simplicial object
	\[ Bar_{\star}(F \circ Sym^{\heartsuit}_{k}, Sym^{\heartsuit}_{k}, P) \simeq F(Bar_{\star}(Sym^{\heartsuit}_{k}, Sym^{\heartsuit}_{k}, P)). \]
The Bar resolution of $P$ as a left $Sym^{\heartsuit}_{k}$-module admits a natural augmentation to $P$ itself, and thus by applying $F$ we obtain a natural transformation
	\[ F^{RL}\big|_{\Mod_{k}^{\heartsuit}} \to F. \]
To check it is an isomorphism, it suffices to check on the subcategory $Poly^{fg}_{k}$ since both functors preserve 1-sifted colimits. Restricted to this subcategory, the augmentation of the Bar complex admits a natural splitting given by the unit of the $Sym^{\heartsuit}_{k}$-module structure on $P$ (this splitting only exists on the level of underlying complexes since the unit is not multiplicative, but this is sufficient for our purposes), from which we conclude.
\end{proof}

\begin{observation} The non-linear right-left extension of a functor manifestly preserves sifted colimits. This property combined with the conclusion of the preceding lemma immediately implies that the restriction of $F^{RL}$ to the connective objects is precisely the left Kan extension of $F$ to animated commutative rings:
	\[ F^{RL} \big|_{DAlg(\Mod_{k})} \simeq Lan_{Poly^{fg}_{k}}^{\mathfrak{a}CAlg_{k}}(F). \]
\end{observation}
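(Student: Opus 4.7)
The plan is to invoke the universal property of animated commutative rings as the $\infty$-category freely generated under sifted colimits by $Poly^{fg}_{k}$, together with the identification on the heart already established in Lemma \ref{right-left extensions are extensions}. So the argument has three ingredients: sifted colimit preservation of $F^{RL}$, agreement with $F$ on $Poly^{fg}_{k}$, and the universal property of $\mathfrak{a}CAlg_{k}$.

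First I would verify that $F^{RL}$, viewed as a functor on $DAlg(\Mod_{k})$, preserves sifted colimits. Unwinding Construction \ref{non-linear right-left extension}, $F^{RL}(P)$ is the geometric realization of $Bar_{\star}(G, LSym_{k}, P)$, where $G := (F \circ Sym_{k}^{\heartsuit})^{RL}$ is obtained from the linear right-left extension of Construction \ref{linear right-left extension}. The linear right-left extension lands in $Fun_{\Sigma}(\Mod_{k}, \mathcal{D})$, so $G$ preserves sifted colimits in its argument, and $LSym_{k}$ likewise preserves sifted colimits by construction. Hence the bar simplicial object $Bar_{\star}(G, LSym_{k}, -)$ is termwise sifted-colimit-preserving, and since geometric realization commutes with sifted colimits in $\mathcal{D}$, we conclude that $F^{RL}$ preserves sifted colimits.

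Next, Lemma \ref{right-left extensions are extensions} identifies $F^{RL}\big|_{DAlg(\Mod_{k})^{\heartsuit}}$ with $F$. Restricting further along $Poly^{fg}_{k} \hookrightarrow DAlg(\Mod_{k})^{\heartsuit} \subset \mathfrak{a}CAlg_{k} \subset DAlg(\Mod_{k})$, we see that $F^{RL}\big|_{Poly^{fg}_{k}} \simeq F$.

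Finally, since $\mathfrak{a}CAlg_{k} \simeq \mathcal{P}_{\Sigma}(Poly^{fg}_{k})$ is freely generated under sifted colimits by $Poly^{fg}_{k}$, restriction along the inclusion induces an equivalence between $Fun_{\Sigma}(\mathfrak{a}CAlg_{k}, \mathcal{D})$ and $Fun(Poly^{fg}_{k}, \mathcal{D})$, with inverse given by left Kan extension. Both $F^{RL}\big|_{\mathfrak{a}CAlg_{k}}$ and $Lan_{Poly^{fg}_{k}}^{\mathfrak{a}CAlg_{k}}(F)$ preserve sifted colimits and restrict to $F$ on $Poly^{fg}_{k}$, so they are canonically equivalent, yielding the displayed identification. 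The only real subtlety, and hence the main place care is needed, is the first step: one must check that the bar construction and the structured right-left extension of Proposition \ref{structured extensions} interact correctly with sifted colimits in the right module variable; once this is in place the rest is a direct application of the universal property of $\mathcal{P}_{\Sigma}$.
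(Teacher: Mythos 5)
Your proposal is correct and follows the same route the paper intends: sifted-colimit preservation of the extension, agreement with $F$ on $Poly^{fg}_{k}$ via Lemma \ref{right-left extensions are extensions}, and the universal property of $\mathfrak{a}CAlg_{k} \simeq \mathcal{P}_{\Sigma}(Poly^{fg}_{k})$. You merely make explicit what the paper calls ``manifest,'' namely the termwise sifted-colimit preservation of the bar construction (where one should also note that the forgetful functor $DAlg(\Mod_{k}) \to \Mod_{k}$ preserves sifted colimits), and this is a faithful filling-in of the omitted details.
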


In order to understand the behavior of the non-linear right-left extension on non-connective objects, it is useful to understand what types of limits are preserved by the extension. Our main result in this direction is the following:

\begin{theorem} \label{non-linear techy extension result} Suppose $\mathcal{D}$ is a stable $\infty$-category and $T$ is a sifted colimit preserving monad thereon. If $F \in Fun(Poly^{fg}_{k}, L\Mod_{T}(\mathcal{D}))$ satisfies the property that $F \circ Sym_{k}^{\heartsuit}$ admits an exhaustive filtration by excisively polynomial subfunctors. Then $F^{RL}$ preserves finite totalizations.
\end{theorem}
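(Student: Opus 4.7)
The plan is to decompose the preservation of finite totalizations by $F^{RL}$ through the layers of the bar construction that defines it. Writing $G := (F \circ Sym_k^{\heartsuit})^{RL}$, by Construction \ref{non-linear right-left extension} we have $F^{RL}(R) \simeq |Bar_\star(G, LSym_k, R)|$. Lemma \ref{commuting limits and colimits in dacs} lets us swap the geometric realization with any finite totalization in $L\Mod_T(\mathcal{D})$, so I would reduce the theorem to showing that for each $n$, the $n$-th term $G \circ LSym_k^n$ preserves finite totalizations (working on the underlying objects in $\Mod_k$, which is justified since the forgetful functor $DAlg(\Mod_k) \to \Mod_k$ preserves limits). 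This in turn splits into two sub-claims: that $LSym_k^n$ preserves finite totalizations in $\Mod_k$, and that $G$ preserves them as a functor $\Mod_k \to L\Mod_T(\mathcal{D})$.

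For the first sub-claim I would write $LSym_k \simeq \mathrm{colim}_m LSym_k^{\leq m}$, observe via Lemma \ref{techy extension result} that each excisively polynomial, sifted-colimit preserving $LSym_k^{\leq m}$ preserves finite totalizations, and invoke stability of $\Mod_k$ so that the filtered colimit $LSym_k$ inherits this property; iterating handles $LSym_k^n$. For the second sub-claim I would use the hypothesized exhaustive filtration $\{H_m\}$ of $F \circ Sym_k^{\heartsuit}$ by excisively polynomial subfunctors, verify that each right-left extension $H_m^{RL}$ remains excisively polynomial and sifted-colimit preserving (so Lemma \ref{techy extension result} again yields preservation of finite totalizations), and then pass to the filtered colimit $G \simeq \mathrm{colim}_m H_m^{RL}$ using Lemma \ref{commuting limits and colimits in dacs}.

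Combining these, for a finite cosimplicial $R^{\bullet}$ in $DAlg(\Mod_k)$ with totalization $R$, the chain $F^{RL}(R) \simeq |G(LSym_k^{\star+1}(R))| \simeq |\mathrm{Tot}(G \circ LSym_k^{\star+1}(R^{\bullet}))| \simeq \mathrm{Tot}(|G \circ LSym_k^{\star+1}(R^{\bullet})|) \simeq \mathrm{Tot}(F^{RL}(R^{\bullet}))$ uses the level-wise preservation in the middle equivalences and the colimit-limit swap at the end. The main technical obstacle I anticipate is justifying both that $G \simeq \mathrm{colim}_m H_m^{RL}$ and that each $H_m^{RL}$ is genuinely excisively polynomial on all of $\Mod_k$: right Kan extension is not a priori filtered-colimit preserving, so this step must be argued using the structural theorems of Section 2.1 identifying excisively polynomial functors on the stable category with additively polynomial functors on the heart, under which the filtered colimit structure becomes transparent.
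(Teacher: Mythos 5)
Your proposal is correct and follows essentially the same route as the paper's proof: both reduce to the termwise statement for the bar construction via Lemma \ref{commuting limits and colimits in dacs}, and both deduce that $G$ and $LSym_k$ preserve finite totalizations from the excisively polynomial filtrations via Lemma \ref{techy extension result}. You spell out the filtered-colimit bookkeeping that the paper leaves implicit, but the argument is the same.
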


\begin{proof} Fix a diagram
	\[ X_{\star} \colon \Delta^{\leq n} \to DAlg(\Mod_{k}). \]
We first observe that the existence of the hypothesized filtration on $F \circ Sym^{\heartsuit}_{k}$ guarantees that
	\[ G := (F \circ Sym^{\heartsuit}_{k})^{RL} \]
preserves finite totalizations by Lemma \ref{techy extension result}, as does $LSym_{k}$. In particular, the canonical map of simplicial objects in $L\Mod_{T}(\mathcal{D})$
	\[ Bar_{\star}(G, LSym_{k}, Tot(X_{\star})) \to Tot(Bar_{\star}(G, LSym_{k}, X_{\star})) \]
is an equivalence, since the $n$-simplices of the Bar resolution are canonically identified with $G \circ (LSym_{k})^{\circ n}(-)$. By Lemma \ref{commuting limits and colimits in dacs}, geometric realizations commute with finite limits in $L\Mod_{T}(\mathcal{D})$, and thus we obtain
\begin{center}
\begin{align*}
F^{RL}(Tot(X_{\star})) & := \big| Bar_{\star}(G, LSym_{k}, Tot(X_{\star})) \big|\\
				  &  \simeq \big| Tot(Bar_{\star}(G, LSym_{k}, X_{\star})) \big| \\
				  & \simeq Tot\left(\big| Bar_{\star}(G, LSym_{k}, X_{\star}) \big|\right) \\
				  & \simeq Tot(F^{RL}(X_{\star}))
\end{align*}
\end{center}
as desired.
\end{proof}

\begin{example} \label{derived truncated Witt vectors} Consider the $n$-truncated Witt vectors
	\[ W_{n} \colon Poly^{fg}_{\Z} \to DAlg(\Mod_{\Z}). \]
We claim that $W_{n}$ is right extendable. Since the forgetful functor $DAlg(\Mod_{\Z}) \to \Mod_{\Z}$ reflects sifted colimits and limits, it suffices to verify right-extendability of the composite
	\[ \Mod^{fpp}_{\Z} \xrightarrow{Sym^{\heartsuit}_{\Z}} Poly^{fg}_{\Z} \xrightarrow{W_{n}} DAlg(\Mod_{\Z}) \to \Mod_{\Z} \]
which we will denote by $\mathscr{W}_{n}$. The Verschiebung fits in to a fiber sequence
	\[ \mathscr{W}_{1} \xrightarrow{V^{n}} \mathscr{W}_{n} \to \mathscr{W}_{n-1} \]
in $Fun(\Mod^{fpp}_{\Z}, \Mod_{\Z})$. Right Kan extending yields a fiber sequence
	\[ \mathscr{W}_{1}^{R} \to \mathscr{W}_{n}^{R} \to \mathscr{W}_{n-1}^{R}. \]
Observe that we have a natural identification $\mathscr{W}_{1} \simeq Sym^{\heartsuit}_{\Z}$, and thus $\mathscr{W}_{1}$ is right extendable. Proceeding inductively, the above fiber sequence implies that $\mathscr{W}_{n}^{R}$ preserves finite coconnective geometric realizations, and thus $W_{n}$ is right extendable as desired.
\end{example}

%----------------------------------------------------------------------------------------------------------------------------------------------------------------------------------------------------------------------------------------------------

\subsection{The $\dsym$-Monad}

Let $\mathfrak{a}CAlg^{\delta}(\Mod_{\Z_{p}})$ denote the $\infty$-category of animated $\delta$-rings over $\Z_{p}$ (see e.g. Appendix A of \cite{Bhatt-Lurie2}). We will denote by $\Mod_{\Z_{p}}$ the derived $\infty$-category of $\Z_{p}$.

Recall from Proposition A.20 of \cite{Bhatt-Lurie2} that the forgetful functor
	\[ \mathfrak{a}CAlg^{\delta}(\Mod_{\Z_{p}}) \to \Mod_{\Z_{p}, \geq 0} \]
admits a left adjoint, and in fact the resulting adjunction is monadic. Our goal in this section is to extend this monad to the entire derived category, using the results of Section 2.1.

To accomplish this, we may restrict the monad to free $\Z_{p}$-modules, where we will denote the monad by $Sym^{\delta}_{\Z_{p}}$. Our strategy is to introduce a filtration of $Sym^{\delta}_{\Z_{p}}$ by subfunctors $Sym^{\delta, \leq n}_{\Z_{p}}$ such that the following three conditions hold:
\begin{itemize}
\item The filtration is exhaustive.
\item Each filtered piece $Sym^{\delta, \leq n}_{\Z_{p}}$ is additively polynomial.
\item The assignment $n \to Sym^{\delta, \leq n}_{\Z_{p}}$ may be endowed with the structure of a filtered monad refining the monad structure on $Sym^{\delta}_{\Z_{p}}$.
\end{itemize}

\begin{notation} \label{free delta notation} Let $S$ be a set. We will denote by $\Z_{p}\left< S\right>$ the free $\Z_{p}$-module generated by $S$. Evaluating $Sym^{\delta}_{\Z_{p}}$ on $\Z_{p}\left<S\right>$ yields
	\[ Sym^{\delta}_{\Z_{p}}(\Z_{p}\left<S\right>) \simeq \Z_{p}\left[x_{s,i} \big| s \in S, i \in \Z_{\geq 0} \right] \]
where the right hand side is the module underlying the polynomial algebra on the depicted generators. In this notation the $\delta$-operator acts by $\delta(x_{s,i}) = x_{s, i+1}$.
\end{notation}

\begin{definition} Given a monomial
	\[ f = \lambda x_{s_{1}, i_{1}}^{j_{1}}...x_{s_{n}, i_{n}}^{j_{n}} \]
in $Sym^{\delta}_{\Z_{p}}(\Z_{p}\left<S\right>)$, we define the \emph{$\delta$-degree} of $f$ to be
	\[ deg_{\delta}(f) := \sum_{k=1}^{n} p^{i_{k}}j_{k}. \]
We extend this to general polynomials in the obvious way: $deg_{\delta}(f_{1} + f_{2}) = \text{max}\{deg_{\delta}(f_{1}), deg_{\delta}(f_{2}) \}.$

\end{definition}

\begin{definition} For a set $S$, we define
	\[ Sym^{\delta, \leq n}_{\Z_{p}}(\Z_{p}\left<S\right>) \subset Sym^{\delta}_{\Z_{p}}(\Z_{p}\left<S\right>)\]
to be the submodule generated by those polynomials whose $\delta$-degree is less than or equal to $n$.
\end{definition}

\begin{observation} Given a map of free $\Z_{p}$-modules $f\colon M \to N$, the associated map of $\delta$-rings $Sym^{\delta}_{\Z_{p}}(f)$ does not necessarily preserve $\delta$-degree (to see this, take any non-trivial element of the kernel of $f$). However, it does preserve the filtration by $\delta$-degree, since it can only decrease the $\delta$-degree. In particular, we may view the filtration as a functor
	\[ Sym^{\delta, \leq \star}_{\Z_{p}} \colon \Mod_{\Z_{p}}^{free} \to Fun(\Z_{\geq 0}^{op}, \Mod_{\Z_{p}}) = F_{\geq 0} \Mod_{\Z_{p}} \]
from free $\Z_{p}$-modules into filtered complexes (with increasing filtrations).

\end{observation}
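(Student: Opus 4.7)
The plan is to verify the single substantive claim, namely that for any $\Z_{p}$-linear map $f \colon M \to N$ of free $\Z_{p}$-modules, the associated $\delta$-ring map $Sym^{\delta}_{\Z_{p}}(f)$ sends $Sym^{\delta, \leq n}_{\Z_{p}}(M)$ into $Sym^{\delta, \leq n}_{\Z_{p}}(N)$ for every $n \geq 0$. The passage to a functor into filtered complexes is then purely formal: each $M \mapsto Sym^{\delta, \leq n}_{\Z_{p}}(M)$ becomes functorial in $M$, and the natural inclusions $Sym^{\delta, \leq n}_{\Z_{p}} \hookrightarrow Sym^{\delta, \leq n+1}_{\Z_{p}}$ assemble these into an increasing filtration.

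First I would carry out a chain of reductions. Since $Sym^{\delta, \leq n}_{\Z_{p}}(M)$ is by definition the $\Z_{p}$-linear span of monomials of $\delta$-degree at most $n$, the $\Z_{p}$-linearity of $Sym^{\delta}_{\Z_{p}}(f)$ reduces the claim to monomials. The elementary estimate
\[ deg_{\delta}(PQ) \leq deg_{\delta}(P) + deg_{\delta}(Q), \]
which follows by expanding $P$ and $Q$ as sums of monomials and using that $\delta$-degree is additive on products of monomials, combined with multiplicativity of $Sym^{\delta}_{\Z_{p}}(f)$, further reduces the claim to the per-generator bound
\[ deg_{\delta}(Sym^{\delta}_{\Z_{p}}(f)(x_{s, i})) \leq p^{i}. \]
I would then induct on $i$: the base case $i = 0$ is immediate since $Sym^{\delta}_{\Z_{p}}(f)(x_{s, 0}) = f(x_{s, 0})$ is a $\Z_{p}$-linear combination of generators $x_{t, 0}$ of $\delta$-degree $1$. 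For the inductive step, the identity $x_{s, i+1} = \delta(x_{s, i})$ and the $\delta$-equivariance of $Sym^{\delta}_{\Z_{p}}(f)$ yield $Sym^{\delta}_{\Z_{p}}(f)(x_{s, i+1}) = \delta(Sym^{\delta}_{\Z_{p}}(f)(x_{s, i}))$, so matters reduce to the sub-lemma that $deg_{\delta}(\delta(g)) \leq p \cdot deg_{\delta}(g)$ for every $g \in Sym^{\delta}_{\Z_{p}}(N)$.

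This sub-lemma is the main obstacle, since $\delta$ is neither additive nor multiplicative, and I would prove it by induction on the complexity of $g$ as an expression built from scalars and generators via $+$, $\cdot$, and $\delta$. The atomic cases are handled directly: for $g = x_{t, j}$, we have $\delta(x_{t, j}) = x_{t, j+1}$ of $\delta$-degree $p^{j+1} = p \cdot p^{j}$; for $g = \lambda \in \Z_{p}$, $\delta(\lambda) \in \Z_{p}$ has $\delta$-degree at most $0 = p \cdot 0$. For a product $g = ab$, the identity
\[ \delta(ab) = a^{p} \delta(b) + b^{p} \delta(a) + p \delta(a) \delta(b) \]
combined with the inductive hypothesis and the product estimate above bounds each summand by $p(deg_{\delta}(a) + deg_{\delta}(b)) = p \cdot deg_{\delta}(ab)$. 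For a sum $g = a + b$, the identity
\[ \delta(a + b) = \delta(a) + \delta(b) - \sum_{k=1}^{p-1} \frac{1}{p}\binom{p}{k} a^{k} b^{p-k} \]
controls the principal terms via the inductive hypothesis, while each correction term $a^{k} b^{p-k}$ has $\delta$-degree at most $k \cdot deg_{\delta}(a) + (p-k) \cdot deg_{\delta}(b) \leq p \cdot \max(deg_{\delta}(a), deg_{\delta}(b)) = p \cdot deg_{\delta}(a + b)$, completing the induction.
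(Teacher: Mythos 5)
Your argument is correct and fills in details the paper leaves implicit: the Observation is justified in the text only by the one-line remark that the induced map ``can only decrease the $\delta$-degree,'' and the quantitative input you isolate --- that applying $\delta$ multiplies $\delta$-degree by at most $p$ --- is precisely the inequality half of the paper's subsequent Proposition on $\delta$-degrees, which is proved there by the same strategy you use (reduce to monomials via the sum rule for $\delta$, then induct using $\delta(ab) = a^{p}\delta(b) + b^{p}\delta(a) + p\,\delta(a)\delta(b)$). One small presentational caveat: in your structural induction the asserted equalities $deg_{\delta}(ab) = deg_{\delta}(a) + deg_{\delta}(b)$ and $deg_{\delta}(a+b) = \max\{deg_{\delta}(a), deg_{\delta}(b)\}$ can fail when cancellation occurs, so the induction should be understood as running over the canonical expansion of $g$ into distinct monomials (where products occur only among generators and scalars, and sums only of distinct monomials), or equivalently as bounding $deg_{\delta}(\delta(g))$ by $p$ times the formal degree of the chosen expression, which dominates $deg_{\delta}(g)$. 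Either reading is immediate and the conclusion stands.
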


\begin{lemma} 
The functor 
	\[ Sym_{\Z_{p}}^{\delta, \leq \star} \colon \Mod_{\Z_{p}}^{free} \to Fun(\Z_{\geq 0}^{op}, \Mod_{\Z_{p}}) \]
satisfies
	\[ Sym^{\delta, \leq n}_{\Z_{p}}(M \oplus N) \simeq \bigoplus_{i + j = n} Sym^{\delta, \leq i}_{\Z_{p}}(M) \otimes_{\Z_{p}} Sym^{\delta, \leq j}_{\Z_{p}}(N). \]

\end{lemma}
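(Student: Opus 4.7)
The claim is really a statement about free modules with chosen bases, so I would first reduce to that setting: write $M = \Z_p\langle S\rangle$ and $N = \Z_p\langle T\rangle$, whence $M \oplus N = \Z_p\langle S \sqcup T\rangle$. By Notation \ref{free delta notation} we then have a concrete description
\[ Sym^{\delta}_{\Z_p}(M \oplus N) \;=\; \Z_p\bigl[x_{s,i},\, y_{t,j} \,\big|\, s \in S,\, t \in T,\, i,j \geq 0\bigr], \]
and every monomial factors uniquely as a product of a monomial in the $x$-variables and a monomial in the $y$-variables. The proof will consist of matching the $\delta$-degree filtration on the right with the tensor-product decomposition induced by this bivariable factorization.

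The two inputs I would verify separately are the following. First, the underlying multiplicative identification
\[ Sym^{\delta}_{\Z_p}(M \oplus N) \;\simeq\; Sym^{\delta}_{\Z_p}(M) \otimes_{\Z_p} Sym^{\delta}_{\Z_p}(N) \]
as $\delta$-algebras, which is formal: the free $\delta$-algebra functor is a left adjoint, so it carries coproducts of modules to coproducts of $\delta$-algebras, and coproducts of $\delta$-algebras are computed as underlying tensor products. Under this identification, a simple tensor $f \otimes g$ corresponds to the product $f \cdot g$ where $f$ involves only $x$'s and $g$ only $y$'s. Second, additivity of $\delta$-degree under multiplication: from the defining formula $\deg_\delta\bigl(\prod x_{s_k,i_k}^{a_k}\bigr) = \sum_k p^{i_k} a_k$, multiplying two monomials adds exponents coordinate-by-coordinate, so $\deg_\delta(fg) = \deg_\delta(f) + \deg_\delta(g)$. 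Crucially, because $x$-monomials and $y$-monomials involve disjoint variables, the $\delta$-degree of a general monomial in $M \oplus N$ is the sum of the $\delta$-degree of its $x$-part and its $y$-part.

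Combining these two facts gives the decomposition. Under the multiplicative identification above, the subspace $Sym^{\delta,\leq i}_{\Z_p}(M) \otimes Sym^{\delta,\leq j}_{\Z_p}(N)$ is carried into $Sym^{\delta,\leq i+j}_{\Z_p}(M \oplus N)$ by additivity, and conversely every monomial of $\delta$-degree $\leq n$ in $M \oplus N$ determines, via the unique factorization into its $x$- and $y$-parts, a pair of $\delta$-degrees $(\deg_\delta(f), \deg_\delta(g))$ summing to a value $\leq n$. Indexing this decomposition by the pair $(i,j)$ with $i + j = n$ obtained from the splitting yields precisely the identification asserted by the lemma, naturally in the filtration parameter $n$, hence as an isomorphism of objects in $F_{\geq 0}\Mod_{\Z_p}$.

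The only genuine pitfall is the bookkeeping to ensure that summing over $i + j = n$ (as opposed to $i+j \leq n$) matches the filtered piece at level $n$; this is a consequence of the fact that once one has identified $Sym^{\delta}_{\Z_p}(M \oplus N)$ with the Day convolution of $Sym^{\delta,\leq\star}_{\Z_p}(M)$ and $Sym^{\delta,\leq\star}_{\Z_p}(N)$ via the multiplicative map, the monomial basis splits the filtration level-wise. Beyond this, the argument is a direct combinatorial verification, and all functoriality in $M$ and $N$ is automatic since everything is computed on free modules and the splitting into $x$- and $y$-parts is natural.
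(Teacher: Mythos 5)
Your proposal is correct and is precisely the ``simple unwinding of definitions'' that the paper offers as its entire proof: reduce to bases, use that the free $\delta$-algebra carries $\oplus$ to $\otimes$, and observe that $\deg_\delta$ is additive on products of monomials in disjoint variables. You also rightly flag the one genuine subtlety, which the paper glosses over: the displayed $\bigoplus_{i+j=n}$ must be read as the Day-convolution term (equivalently, the internal sum of the images $Sym^{\delta,\leq i}_{\Z_p}(M)\otimes Sym^{\delta,\leq j}_{\Z_p}(N)$ inside $Sym^{\delta}_{\Z_p}(M)\otimes Sym^{\delta}_{\Z_p}(N)$), since a literal external direct sum over $i+j=n$ would count each monomial of total $\delta$-degree strictly less than $n$ more than once.
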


\begin{proof} This is a simple unwinding of definitions.
\end{proof}

\begin{corollary} Let $i \colon \Mod_{\Z_{p}}^{free} \to \Mod_{\Z_{p}}$ be the inclusion. Then $i \circ Sym^{\delta, \leq n}$ is additively polynomial of degree $n$ for all $n$ (where we view $Sym^{\delta, \leq n}$ as an endomorphism of $\Mod_{\Z_{p}}^{free}$).

\end{corollary}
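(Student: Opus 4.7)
The plan is to prove this by induction on $n$, using the preceding lemma's additivity formula as the main input.

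For the base case $n = 0$: the only monomials in $Sym^{\delta}_{\Z_{p}}(\Z_{p}\left<S\right>)$ of $\delta$-degree $0$ are the constants, since each generator $x_{s,i}$ contributes $p^{i} \geq 1$ to the $\delta$-degree. Hence $Sym^{\delta, \leq 0}_{\Z_{p}}(M) \simeq \Z_{p}$ for every free $\Z_{p}$-module $M$, so $i \circ Sym^{\delta, \leq 0}_{\Z_{p}}$ is the constant functor with value $\Z_{p}$, which is additively polynomial of degree $0$ by definition.

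For the inductive step, assume the result for all $k < n$, and fix $X \in \Mod_{\Z_{p}}^{free}$. I would compute
\[
D_{X}(i \circ Sym^{\delta, \leq n}_{\Z_{p}})(Y) \;=\; \text{fib}\!\left(Sym^{\delta, \leq n}_{\Z_{p}}(X \oplus Y) \to Sym^{\delta, \leq n}_{\Z_{p}}(Y)\right)
\]
using the preceding lemma, which identifies the source with $\bigoplus_{i+j=n} Sym^{\delta, \leq i}_{\Z_{p}}(X) \otimes_{\Z_{p}} Sym^{\delta, \leq j}_{\Z_{p}}(Y)$. Since the projection $X \oplus Y \to Y$ is split by the inclusion $Y \hookrightarrow X \oplus Y$, the induced map on $Sym^{\delta, \leq n}_{\Z_{p}}$ is a split surjection; under the direct sum decomposition the summand $i = 0$ (where $Sym^{\delta, \leq 0}_{\Z_{p}}(X) \simeq \Z_{p}$) maps isomorphically onto $Sym^{\delta, \leq n}_{\Z_{p}}(Y)$. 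Thus the fiber is naturally identified with
\[
D_{X}(i \circ Sym^{\delta, \leq n}_{\Z_{p}})(Y) \;\simeq\; \bigoplus_{i=1}^{n} Sym^{\delta, \leq i}_{\Z_{p}}(X) \otimes_{\Z_{p}} Sym^{\delta, \leq n-i}_{\Z_{p}}(Y).
\]

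As a functor of $Y$, this is a finite direct sum of terms of the form $C_{i} \otimes_{\Z_{p}} Sym^{\delta, \leq n-i}_{\Z_{p}}(Y)$, where $C_{i} := Sym^{\delta, \leq i}_{\Z_{p}}(X)$ is constant in $Y$ and $n - i \leq n - 1$. Tensoring by a constant $\Z_{p}$-module is additive and preserves the class of additively polynomial functors of any given degree, and finite direct sums of additively polynomial functors of degree $\leq n-1$ are again of degree $\leq n-1$ (since the derivative commutes with direct sums). Combined with the inductive hypothesis, this shows $D_{X}(i \circ Sym^{\delta, \leq n}_{\Z_{p}})$ is additively polynomial of degree $n-1$ for every $X$, which completes the induction. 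The only non-bookkeeping step is the identification of the fiber above, which is purely formal once the splitting of $X \oplus Y \to Y$ is invoked; I do not anticipate any genuine obstacle.
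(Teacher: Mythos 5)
Your proof is correct and follows essentially the same route as the paper: both compute $D_{X}(i \circ Sym^{\delta,\leq n}_{\Z_{p}})$ via the additivity formula of the preceding lemma, identify the fiber with $\bigoplus_{i+j=n,\, j\neq n} Sym^{\delta,\leq i}_{\Z_{p}}(X) \otimes_{\Z_{p}} Sym^{\delta,\leq j}_{\Z_{p}}(-)$, and conclude by induction together with exactness of tensoring. You simply spell out the base case and the split-surjection identification of the fiber, which the paper leaves implicit.
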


\begin{proof} Appealing to the preceding lemma, we can express the derivative at $X$ via
\begin{align*}
D_{X}(i \circ Sym_{\Z_{p}}^{\delta, \leq n})(M) & := fib(Sym^{\delta, \leq n}_{\Z_{p}}(X \oplus M) \to Sym_{\Z_{p}}^{\delta, \leq n}(M)) \\
	& \simeq fib\left( \bigoplus_{i + j = n} (Sym_{\Z_{p}}^{\delta, \leq i}(X) \otimes_{\Z_{p}} Sym_{\Z_{p}}^{\delta, \leq j}(M)) \to Sym_{\Z_{p}}^{\delta, \leq n}(M) \right) \\
	& \simeq \bigoplus_{i + j = n, j \not = n} Sym_{\Z_{p}}^{\delta, \leq i}(X) \otimes_{\Z_{p}} Sym_{\Z_{p}}^{\delta, \leq j}(M)
\end{align*}
from which the claim now follows by induction on $n$ along with the fact that tensoring is exact.
\end{proof}

\begin{proposition} \label{delta degree} Fix an element $f \in Sym^{\delta}_{\Z_{p}}(\Z_{p}\left<S\right>)$ of $\delta$-degree $n$. Then
	\[ deg_{\delta}(\delta(f)) = pn. \]

\end{proposition}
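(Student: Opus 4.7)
The plan is to equip the polynomial ring $A := Sym^{\delta}_{\Z_p}(\Z_p\langle S\rangle) \cong \Z_p[x_{s,i}]$ with the $\Z_{\geq 0}$-grading in which $x_{s,i}$ has weight $p^i$. Directly from the definitions, $deg_{\delta}(f)$ equals the largest weight $w$ for which the weight-$w$ component $f_w$ of $f$ is nonzero. Since $\phi$ is a ring map and $\phi(x_{s,i}) = x_{s,i}^p + p x_{s,i+1}$ is homogeneous of weight $p^{i+1}$, the Frobenius multiplies weights by $p$, i.e., $\phi(A_w) \subseteq A_{pw}$. The strategy is to exploit the identity $p\delta(f) = \phi(f) - f^p$ together with this homogeneity to isolate the leading weight component of $\delta(f)$, and then verify directly that this component is nonzero.

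First, I would extract the weight-$pn$ component of both sides of $p\delta(f) = \phi(f) - f^p$, where $n = deg_{\delta}(f)$ and $f_n$ is the leading piece. On the one hand, weight-$pn$ contributions to $\phi(f) = \sum_w \phi(f_w)$ come only from $w = n$, giving $\phi(f_n)$. On the other hand, in the expansion $f^p = (\sum_w f_w)^p$ a weight-$pn$ contribution from a product $f_{w_1}\cdots f_{w_p}$ with each $w_i \leq n$ forces $w_i = n$ throughout, giving $f_n^p$. Subtracting and dividing by $p$ in the torsion-free ring $A$, the weight-$pn$ component of $\delta(f)$ equals $\delta(f_n)$, and no higher weight components appear. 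This reduces the proposition to showing that $\delta(f_n) \neq 0$ whenever $f_n \in A_n$ is a nonzero homogeneous element with $n \geq 1$.

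This last injectivity claim is the main obstacle, and I would address it using a ``new variable'' trick. Let $I \geq 0$ be the maximal index such that some $x_{s, I}$ appears with positive exponent in $f_n$. Because $f_n^p$ only involves variables $x_{s,i}$ with $i \leq I$, the variables $x_{s,I+1}$ appear in $p\delta(f_n) = \phi(f_n) - f_n^p$ solely through $\phi(f_n)$. A chain-rule computation (using that $\partial \phi(x_{s', i'})/\partial x_{s, I+1}$ equals $p$ when $(s', i') = (s, I)$ and vanishes otherwise, and evaluating at $x_{\cdot, I+1} = 0$) identifies the linear coefficient of $x_{s, I+1}$ in $\delta(f_n)$ with $\phi_B(\partial f_n / \partial x_{s, I})$, where $B := \Z_p[x_{s,i} : i \leq I]$ and $\phi_B \colon B \to B$ is the ring endomorphism sending $x_{s,i} \mapsto x_{s,i}^p + p x_{s,i+1}$ for $i < I$ and $x_{s,I} \mapsto x_{s,I}^p$. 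The endomorphism $\phi_B$ reduces modulo $p$ to the Frobenius $x_{s,i} \mapsto x_{s,i}^p$ on $\F_p[x_{s,i}: i \leq I]$, which is injective; a $p$-adic induction (using $p$-adic separatedness of $B$) then shows $\phi_B$ itself is injective. Meanwhile, maximality of $I$ together with the fact that $\Z_p$ has characteristic zero yields $\partial f_n / \partial x_{s^*, I} \neq 0$ for some $s^*$, so $\phi_B(\partial f_n/\partial x_{s^*, I}) \neq 0$ and hence $\delta(f_n) \neq 0$, completing the proof.
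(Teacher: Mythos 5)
Your proof is correct, and it takes a genuinely different route from the paper's. The paper first reduces to monomials via the addition formula for $\delta$ and then inducts on a monomial by peeling off one variable at a time with the product formula $\delta(xy)=x^{p}\delta(y)+y^{p}\delta(x)+p\delta(x)\delta(y)$. You instead encode the $\delta$-degree as the top piece of a grading in which $x_{s,i}$ has weight $p^{i}$, use $p\delta=\phi-(-)^{p}$ together with the homogeneity $\phi(A_{w})\subseteq A_{pw}$ to identify the weight-$pn$ component of $\delta(f)$ with $\delta(f_{n})$ (and to rule out higher weights), and then prove the nonvanishing $\delta(f_{n})\neq 0$ by extracting the coefficient of the fresh variable $x_{s,I+1}$ and reducing to injectivity of the variable-wise $p$-th power endomorphism $\phi_{B}$, which follows from injectivity of Frobenius on $\F_{p}[x_{s,i}]$ plus $p$-adic separatedness. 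What your approach buys is a clean treatment of possible cancellation among monomials of maximal $\delta$-degree: the paper's reduction to monomials, as written, only directly yields the upper bound $\deg_{\delta}(\delta(f))\leq pn$ when $f$ has several top-degree monomials, since $\delta(f_{1})+\delta(f_{2})+(\text{correction})$ could a priori cancel in degree $pn$, whereas your derivative argument excludes this. The cost is that your nonvanishing step is less elementary than the paper's term-by-term induction. Two minor points: the degenerate case $n=0$ (where $\delta(f_{n})$ may well vanish) should be set aside explicitly, the statement there being trivial; and what you need from ``maximality of $I$'' in the final step is only that some $x_{s^{*},I}$ occurs with positive exponent, maximality being used earlier to ensure $f_{n}^{p}$ and the $B$-coefficients do not involve $x_{\cdot,I+1}$.
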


\begin{proof} We first reduce to the case that $f$ is a monomial, which follows immediately from the $\delta$-ring relation
	\[ \delta(f_{1} + f_{2}) = \delta(f_{1}) + \delta(f_{2}) + \dfrac{f_{1}^{p} + f_{2}^{p} - (f_{1} + f_{2})^{p}}{p}. \]

We may thus take $f$ to be a monomial, in which case we will proceed by induction on $n$. Notice that the base case is immediate from the definition of $\delta$-degree. If $f = x_{s, j}$ for some $j$, the claim is obvious. Otherwise we may write
	\[ f = x_{s,j} \tilde{f} \]
for some $s \in S$ where $deg_{\delta}(\tilde{f}) \leq (n-1)$. Applying the $\delta$ operation then yields
	\[ \delta(f) = x_{s,0}^{p} \delta(\tilde{f}) + \tilde{f}^{p} \delta(x_{s,0}) + p \delta(x_{s,0}) \delta(\tilde{f}). \]
Analyzing each of the summands individually and applying the induction hypothesis reveals that each term is precisely of degree $pn$, as desired.
\end{proof}

\begin{corollary} The functor
	\[ \Z_{\geq 0} \xrightarrow{Sym_{\Z_{p}}^{\delta, \leq \star}} End(\Mod_{\Z_{p}}^{free}) \]
is lax-monoidal. Furthermore, for any $n$ and $m$, the diagram of natural transformations
\begin{center}
\begin{tikzcd}
Sym^{\delta, \leq n}_{\Z_{p}} \circ Sym^{\delta, \leq m}_{\Z_{p}} \arrow[r, "i"] \arrow[d, "\mu"] 
	& Sym^{\delta}_{\Z_{p}} \circ Sym^{\delta}_{\Z_{p}} \arrow[d, "\mu"] \\
Sym^{\delta, \leq nm}_{\Z_{p}} \arrow[r, "i"] 
	& Sym^{\delta}_{\Z_{p}}
\end{tikzcd}
\end{center}
commutes, where $\mu$ is the monad structure.

In particular, $Sym_{\Z_{p}}^{\delta, \leq \star}$ enjoys the structure of filtered monad which is furthermore compatible with the monad structure on $Sym_{\Z_{p}}^{\delta}$.

\end{corollary}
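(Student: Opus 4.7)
The plan is to reduce both claims to a single technical statement: that the monad multiplication $\mu \colon Sym^{\delta}_{\Z_p} \circ Sym^{\delta}_{\Z_p} \to Sym^{\delta}_{\Z_p}$ restricts to natural transformations
\[ \mu_{n,m} \colon Sym^{\delta, \leq n}_{\Z_p} \circ Sym^{\delta, \leq m}_{\Z_p} \to Sym^{\delta, \leq nm}_{\Z_p}. \]
Once this is established, the commutativity of the square in the statement is tautological (both paths are restrictions of the same transformation $\mu$ to a subfunctor), and the coherences needed to promote $Sym^{\delta, \leq \star}_{\Z_p}$ to a lax monoidal functor $\Z_{\geq 0}^{\times} \to End(\Mod_{\Z_p}^{free})$ are inherited from those of the monad $Sym^{\delta}_{\Z_p}$, since the filtered pieces $Sym^{\delta, \leq n}_{\Z_p}$ are (by construction) subfunctors and the structure maps are determined by restricting $\mu$ and the unit $\eta$. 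The unit $\mathbbm{1}_{\Z_{\geq 0}} = 0 \mapsto Sym^{\delta, \leq 1}_{\Z_p}$ — i.e., the existence of $Id \to Sym^{\delta, \leq 1}_{\Z_p}$ — follows from the fact that the unit $\eta \colon Id \to Sym^{\delta}_{\Z_p}$ sends each generator $s$ to $x_{s,0}$, which has $\delta$-degree $p^0 \cdot 1 = 1$.

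The main work is therefore the bound on $\mu$. I would first recall from the construction of the free $\delta$-algebra monad that, in the notation \ref{free delta notation}, the map $\mu$ applied to a generator $x_{s,i} \in Sym^{\delta}_{\Z_p}(Sym^{\delta}_{\Z_p}(\Z_p\langle S\rangle))$ (with $s \in Sym^{\delta}_{\Z_p}(\Z_p \langle S \rangle)$) yields $\delta^{i}(s)$ computed in the inner $\delta$-ring. Since $\deg_\delta$ of a sum of monomials equals the maximum of their $\delta$-degrees, $\mathbb{Z}_{p}$-linearity of $\mu$ reduces the verification to the case where $f \in Sym^{\delta, \leq n}_{\Z_p}$ is a single outer monomial
\[ f = \lambda \cdot x_{s_1, i_1}^{j_1} \cdots x_{s_k, i_k}^{j_k}, \qquad \sum_{l} p^{i_l} j_l \leq n, \]
with each inner entry satisfying $\deg_\delta(s_l) \leq m$. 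Then
\[ \mu(f) = \lambda \cdot \delta^{i_1}(s_1)^{j_1} \cdots \delta^{i_k}(s_k)^{j_k}. \]

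From here I would apply Proposition \ref{delta degree} iteratively to obtain $\deg_\delta(\delta^{i_l}(s_l)) \leq p^{i_l} \deg_\delta(s_l) \leq p^{i_l} m$ (with the inequality accounting for the passage from the strictly monomial Proposition to general elements via the max rule). A short direct inspection of the definition of $\deg_\delta$ shows that for monomials it is additive under multiplication, and therefore for general polynomials $\deg_\delta(h^j) \leq j \cdot \deg_\delta(h)$ and $\deg_\delta(h_1 h_2) \leq \deg_\delta(h_1) + \deg_\delta(h_2)$. Combining these,
\[ \deg_\delta(\mu(f)) \leq \sum_{l} j_l \cdot p^{i_l} m = m \cdot \sum_{l} p^{i_l} j_l \leq mn, \]
which is the desired bound.

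The main obstacle, such as it is, is the bookkeeping around Proposition \ref{delta degree}: that proposition gives an exact equality for monomials, but $\delta^{i_l}(s_l)$ is generally a sum (since $s_l$ is). One must be careful to record that $\deg_\delta$ on sums is a max and that the $\delta$-operator only increases $\delta$-degree multiplicatively by a factor of $p$ on each summand, so that the bound survives. No homotopical input is required beyond the strict 1-categorical filtered monad structure, which will then be fed into the machinery of Section 2.1 (via the observation following Definition \ref{filtered monad}) to produce the sought-after derived $\delta$-ring monad in the next subsection.
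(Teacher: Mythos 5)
Your proposal is correct and follows essentially the same route as the paper: both reduce everything to showing that $\mu \circ i$ factors through $Sym^{\delta, \leq nm}_{\Z_{p}}$ (whence the square commutes and the lax monoidal coherences are inherited formally, since the filtered pieces are pointwise subobjects and such a factorization is unique), and both establish the factorization by a $\delta$-degree count on monomials using Proposition \ref{delta degree}. The only cosmetic difference is that the paper arranges the composite as $Sym^{\delta, \leq n}_{\Z_{p}}(\Z_{p}\langle \tilde{S}\rangle)$ with $\tilde{S}$ the monic monomials of degree $\leq m$, so the inner entries are genuine monomials and the degree computation is an equality, whereas you allow general inner elements and track the max-over-summands bookkeeping explicitly; both are fine.
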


\begin{proof} Since the structure maps of the filtration $i \colon Sym^{\delta, \leq n}_{\Z_{p}} \to Sym^{\delta}_{\Z_{p}}$ are pointwise monomorphisms, it suffices to prove that the composite 
	\[ Sym^{\delta, \leq n}_{\Z_{p}} \circ Sym^{\delta, \leq m}_{\Z_{p}} \xrightarrow{i} Sym^{\delta}_{\Z_{p}} \circ Sym^{\delta}_{\Z_{p}} \xrightarrow{\mu} Sym^{\delta}_{\Z_{p}} \]
factors through $Sym^{\delta, \leq nm}$. Indeed, if such a factorization exists, it is necessarily unique, and the lax monoidality follows formally.

We can check the factorization pointwise, so fix $M = \Z_{p}\left<S\right>$. Define
	\[ \tilde{S} := \left\{\text{monic monomials of $\delta$-degree } \leq m \text{ in } Sym_{\Z_{p}}^{\delta}(M)\right\} \]
so in particular, we obtain
	\[ Sym^{\delta, \leq n} \circ Sym^{\delta, \leq m}(M) \simeq Sym^{\delta, \leq n}(\Z_{p}\left< \tilde{S}\right>). \]
	
Fix a monomial $f = \lambda f_{s_{1}, i_{1}}^{j_{1}}... f_{s_{n}, i_{n}}^{j_{n}}$ in $Sym^{\delta, \leq n}(\Z_{p}\left< \tilde{S}\right>)$ (see Notation \ref{free delta notation}). Our task is to show that the degree of $\mu \circ i(f)$ is bounded above by $nm$. Unpacking definitions, we observe that
	\[ \mu \circ i(f) =  \lambda (\delta^{i_{1}}(f_{s_{1}}))^{j_{1}}... (\delta^{i_{n}}(f_{s_{n}}))^{j_{n}} \]
whose degree is (courtesy of Proposition \ref{delta degree})  given by
	\[ deg_{\delta}(\mu \circ i(f)) = \sum_{k=1}^{n} j_{k} p^{i_{k}} deg_{\delta}(f_{s_{k}}). \]
By hypothesis, we know that $\sum_{k=1}^{n} j_{k} p^{i_{k}} \leq m$, and $deg_{\delta}(f_{s_{k}}) \leq n$ for all $k$, from which we conclude.
\end{proof}

\begin{theorem} Denote by $LSym^{\delta, \leq n}_{\Z_{p}}$ the right-left extension of $Sym_{\Z_{p}}^{\delta, \leq n}$ to $\Mod_{\Z_{p}}$. Consider the colimit
	\[ LSym_{\Z_{p}}^{\delta} := colim_{n} LSym^{\delta, \leq n} \in End_{\Sigma}(\Mod_{\Z_{p}}). \]
Then $LSym_{\Z_{p}}^{\delta}$ inherits the structure of a monad. We refer to this as the \emph{derived} $\delta$-\emph{algebra monad}.

\end{theorem}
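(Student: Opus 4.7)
The plan is to combine the concrete combinatorial input of the preceding corollary---which endows $Sym^{\delta, \leq \star}_{\Z_{p}}$ with the structure of a filtered monad whose $n$-th filtered piece is additively polynomial of degree $n$---with the general machinery developed in Section 2.1. First I would promote the filtered monad $Sym^{\delta, \leq \star}_{\Z_{p}}$ on $\Mod_{\Z_{p}}^{free}$ to a filtered monad object in the monoidal $\infty$-category $End^{ap}_{\Sigma}(\Mod_{\Z_{p}}^{\heartsuit})$. Every $\Z_{p}$-module is a sifted colimit of finitely generated free modules, so each $Sym^{\delta, \leq n}_{\Z_{p}}$ admits a unique sifted-colimit-preserving extension to an endofunctor of $\Mod_{\Z_{p}}^{\heartsuit}$, and both its additive polynomiality and the lax-monoidal structure in $n$ persist under this extension.

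Next I would apply the lax monoidal composite
\[ End^{ap}_{\Sigma}(\Mod_{\Z_{p}}^{\heartsuit}) \xrightarrow{L} End^{ep}_{\Sigma}(\Mod_{\Z_{p}, \geq 0}) \to End^{ep}_{\Sigma}(\Mod_{\Z_{p}}) \]
appearing in the Observation preceding the Example on $LSym_{\mathcal{C}}^{\leq \star}$. Since this composite is lax monoidal, it carries filtered monads to filtered monads, yielding a filtered monad in $End^{ep}_{\Sigma}(\Mod_{\Z_{p}})$. Unwinding the construction of right-left extensions identifies the $n$-th filtered piece of this output with $LSym^{\delta, \leq n}_{\Z_{p}}$: both are characterized as the unique sifted-colimit-preserving, excisively polynomial extension of the corresponding additively polynomial functor on the heart, by Proposition 4.2.15 of \cite{Raksit}.

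Finally, I would invoke Proposition 4.1.4 of \cite{Raksit}. Since $End^{ep}_{\Sigma}(\Mod_{\Z_{p}})$ consists of sifted-colimit-preserving endomorphisms (hence in particular of sequential-colimit-preserving endomorphisms), the hypothesis that every filtered piece commute with sequential colimits is automatically verified, and so the colimit $LSym^{\delta}_{\Z_{p}} := colim_{n}\, LSym^{\delta, \leq n}_{\Z_{p}}$ inherits a monad structure on $\Mod_{\Z_{p}}$.

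The main obstacle I expect is the identification step in the second paragraph---verifying that the image of $Sym^{\delta, \leq n}_{\Z_{p}}$ under $L$ (followed by postcomposition with the inclusion $\Mod_{\Z_{p}, \geq 0} \to \Mod_{\Z_{p}}$) literally reproduces the right-left extension $LSym^{\delta, \leq n}_{\Z_{p}}$. This follows formally from the uniqueness of excisively polynomial sifted-colimit-preserving extensions, but compatibly threading the filtered monad structure through this identification requires careful bookkeeping with the various adjunctions involved.
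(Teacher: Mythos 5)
Your proposal is correct and follows essentially the same route as the paper: the paper's own proof is just the observation that the filtered monad structure on $Sym^{\delta,\leq\star}_{\Z_{p}}$ (from the preceding corollary) is carried to a filtered monad on $\Mod_{\Z_{p}}$ by the lax monoidal derived approximation of the earlier Observation, followed by an appeal to Proposition 4.1.4 of \cite{Raksit}. You simply spell out the intermediate identifications (extension to the heart, the agreement of $L$ with the right-left extension via uniqueness of excisively polynomial sifted-colimit-preserving extensions) that the paper leaves implicit.
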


\begin{proof} We have already equipped $LSym^{\delta, \leq \star}_{\Z_{p}}$ with the structure of a filtered monad. The theorem then follows by appealing to Proposition 4.1.4 of \cite{Raksit}.
\end{proof}

\begin{definition} \label{derived delta rings} We denote by $\dalg(\Mod_{\Z_{p}})$ the $\infty$-category of $\dsym_{\Z_{p}}$-algebras, and refer to this as the $\infty$-category of \emph{derived} $\delta$-\emph{rings} over $\Z_{p}$.

\end{definition}

\begin{observation} \label{connective delta rings} Denote by
	\[ \dalg(\Mod_{\Z_{p}})_{\geq 0} := \dalg(\Mod_{\Z_{p}}) \times_{\Mod_{\Z_{p}}} \Mod_{\Z_{p}, \geq 0}. \]
It follows formally that the forgetful functor $\dalg(\Mod_{\Z_{p}})_{\geq 0} \to \Mod_{\Z_{p}, \geq 0}$ preserves sifted colimits and admits a left adjoint. In particular, the adjunction is monadic and by Proposition A.20 in \cite{Bhatt-Lurie2}, we obtain a canonical identification
	\[ \dalg(\Mod_{\Z_{p}})_{\geq 0} \simeq \mathfrak{a}CAlg^{\delta}(Mod_{\Z_{p}}) \]
where the right hand side is the $\infty$-category of animated $\delta$-rings as introduced in Appendix A of \cite{Bhatt-Lurie2}.

\end{observation}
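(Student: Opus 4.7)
The plan is to realize $\dalg(\Mod_{\Z_p})_{\geq 0}$ as the Eilenberg--Moore category for the restriction of the monad $LSym_{\Z_p}^{\delta}$ to $\Mod_{\Z_p, \geq 0}$, and then identify this restricted monad with the free animated $\delta$-ring monad by comparing their values on finitely generated free modules.

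First I would verify that $LSym_{\Z_p}^{\delta}$ sends connective objects to connective objects. Each filtered piece $Sym_{\Z_p}^{\delta, \leq n}$ is additively polynomial and takes values in the heart on $\Mod_{\Z_p}^{fpp}$. By the equivalence $Fun_\Sigma^{ap}(\mathcal{C}^\heartsuit, \mathcal{D}) \simeq Fun_\Sigma^{ep}(\mathcal{C}_{\geq 0}, \mathcal{D})$ recorded earlier, the restriction of the right--left extension $LSym_{\Z_p}^{\delta, \leq n}$ to $\Mod_{\Z_p, \geq 0}$ agrees with the left Kan extension from the heart, which manifestly preserves connectivity. The filtered colimit $LSym_{\Z_p}^{\delta} = \mathrm{colim}_n\, LSym_{\Z_p}^{\delta, \leq n}$ then also preserves connectivity. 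Consequently the monad restricts to a sifted-colimit-preserving monad $T$ on $\Mod_{\Z_p, \geq 0}$, and the fiber product defining $\dalg(\Mod_{\Z_p})_{\geq 0}$ is canonically the category of $T$-algebras. The forgetful functor therefore admits the free $T$-algebra functor as left adjoint, is conservative, preserves sifted colimits, and is monadic by the Barr--Beck--Lurie theorem (Theorem 4.7.3.5 in \cite{LurieHA}).

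For the final identification, Proposition A.20 of \cite{Bhatt-Lurie2} presents $\mathfrak{a}CAlg^\delta(\Mod_{\Z_p})$ as monadic over $\Mod_{\Z_p, \geq 0}$ via a sifted-colimit-preserving free functor $T^{\mathrm{an}}$. Both $T$ and $T^{\mathrm{an}}$ evaluate on $\Z_p\langle S\rangle$ for finite $S$ to the classical free $\delta$-ring $\Z_p[x_{s,i}]_{s \in S,\, i \geq 0}$ (by Notation \ref{free delta notation} together with the explicit description of the animated free $\delta$-ring). Since $\Mod_{\Z_p, \geq 0} \simeq \mathcal{P}_\Sigma(\Mod_{\Z_p}^{fpp})$, two sifted-colimit-preserving endofunctors that agree on finitely generated free modules are canonically equivalent; the equivalence upgrades to an equivalence of monads because both are characterized as left adjoints to their respective forgetful functors with matching values on the generators. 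Passing to algebras yields the desired equivalence $\dalg(\Mod_{\Z_p})_{\geq 0} \simeq \mathfrak{a}CAlg^\delta(\Mod_{\Z_p})$.

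The sole substantive step is the connectivity preservation of $LSym_{\Z_p}^\delta$, which rests on the compatibility of right--left extension with left Kan extension from the heart for additively polynomial functors; everything else is formal Barr--Beck bookkeeping together with the universal property of $\mathcal{P}_\Sigma$.
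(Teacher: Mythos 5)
Your proposal is correct and follows essentially the same route the paper intends: the paper's ``Observation'' offers no details beyond ``it follows formally'' plus a citation of Proposition A.20 of \cite{Bhatt-Lurie2}, and your argument is precisely the formal unwinding of that claim (connectivity preservation of $LSym^{\delta}_{\Z_{p}}$ via agreement of the right--left extension with left Kan extension on connective objects, Barr--Beck--Lurie, and comparison of the two monads on finitely generated free modules). The only point worth tightening is the final upgrade from an equivalence of underlying functors to an equivalence of monads; the cleanest phrasing is that both monads are the left Kan extension (animation) of the \emph{same} 1-categorical free $\delta$-ring monad on $\Mod_{\Z_{p}}^{fpp}$, and left Kan extension along $\Mod_{\Z_{p}}^{fpp}\hookrightarrow\Mod_{\Z_{p},\geq 0}$ is monoidal for composition, so the monad structures match as well.
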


\begin{proposition} \label{limits and colimits} The forgetful functor
	\[ \dalg(\Mod_{\Z_{p}}) \xrightarrow{F} CAlg(\Mod_{\Z_{p}}) \]
preserves small limits and colimits.
\end{proposition}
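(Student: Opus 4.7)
My plan is to treat limits, sifted colimits, and finite coproducts separately, reducing the coproduct case to a tensor-product decomposition for the free derived $\delta$-algebra on a direct sum.

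For limits and sifted colimits, observe that both forgetful functors $\dalg(\Mod_{\Z_p}) \to \Mod_{\Z_p}$ and $\calg(\Mod_{\Z_p}) \to \Mod_{\Z_p}$ are monadic by definition, and that the monads $\dsym_{\Z_p}$ and $LSym_{\Z_p}$ each preserve sifted colimits (being filtered colimits of additively polynomial functors produced by the right-left extension machinery of Section 2.1). Consequently, each forgetful functor to $\Mod_{\Z_p}$ creates limits and sifted colimits. Since the two factorizations through $F$ yield the same underlying $\Mod_{\Z_p}$-object, $F$ preserves both classes.

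The interesting case is that of finite coproducts; since every small colimit in a cocomplete $\infty$-category can be built from finite coproducts and sifted colimits, this reduces the problem to binary coproducts. Any $A \in \dalg(\Mod_{\Z_p})$ is a geometric realization of free derived $\delta$-algebras via its Bar resolution $Bar_\bullet(\dsym_{\Z_p}, \dsym_{\Z_p}, A)$. Since binary coproducts commute with sifted colimits in each variable in both $\dalg$ and $\calg$, it suffices to check that for free objects the coproduct $\dsym_{\Z_p}(M) \sqcup_{\dalg} \dsym_{\Z_p}(N)$ agrees with $\dsym_{\Z_p}(M) \otimes_{\Z_p} \dsym_{\Z_p}(N)$ under $F$. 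Because $\dsym_{\Z_p}$ is a left adjoint, the coproduct in $\dalg$ satisfies $\dsym_{\Z_p}(M) \sqcup_{\dalg} \dsym_{\Z_p}(N) \simeq \dsym_{\Z_p}(M \oplus N)$. So the entire problem reduces to establishing the exponential identity $\dsym_{\Z_p}(M \oplus N) \simeq \dsym_{\Z_p}(M) \otimes_{\Z_p} \dsym_{\Z_p}(N)$ in $\calg(\Mod_{\Z_p})$ for all $M, N \in \Mod_{\Z_p}$.

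The exponential identity is the main obstacle. On finitely generated projective modules it follows from the filtered statement $Sym_{\Z_p}^{\delta, \leq n}(M \oplus N) \simeq \bigoplus_{i+j=n} Sym_{\Z_p}^{\delta, \leq i}(M) \otimes_{\Z_p} Sym_{\Z_p}^{\delta, \leq j}(N)$ already proved on free modules. To extend this to all of $\Mod_{\Z_p}$, I would observe that both $LSym_{\Z_p}^{\delta, \leq n}(M \oplus N)$ and $\bigoplus_{i+j=n} LSym_{\Z_p}^{\delta, \leq i}(M) \otimes_{\Z_p} LSym_{\Z_p}^{\delta, \leq j}(N)$ are additively polynomial, right-extendable bifunctors on $\Mod_{\Z_p}^{fpp} \times \Mod_{\Z_p}^{fpp}$, and by uniqueness of the right-left extension (applied one variable at a time) the identification propagates to $\Mod_{\Z_p} \times \Mod_{\Z_p}$. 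Passing to the filtered colimit over $n$ and using that $\otimes_{\Z_p}$ commutes with filtered colimits yields the required identity. Combined with the previous paragraph, this proves preservation of finite coproducts, and hence of all small colimits.
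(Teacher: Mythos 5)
Your proposal is correct and follows essentially the same route as the paper: handle limits and sifted colimits via the monadic forgetful functors to $\Mod_{\Z_{p}}$, reduce binary coproducts to free algebras via the Bar resolution, and verify the exponential identity $\dsym_{\Z_{p}}(M\oplus N)\simeq \dsym_{\Z_{p}}(M)\otimes_{\Z_{p}}\dsym_{\Z_{p}}(N)$ by refining it to the filtered statement for $LSym^{\delta,\leq\star}_{\Z_{p}}$ and using (excisive) polynomiality of the filtered pieces to propagate the identification from free $\Z_{p}$-modules. Your one-variable-at-a-time extension argument is just a slightly more explicit version of the paper's reduction to the connective and then free case.
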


\begin{proof} We follow the proof of Proposition 4.2.27 in \cite{Raksit} essentially verbatim.

We first observe that $F$ preserves limits and sifted colimits. Indeed, the composite
	\[ \dalg(\Mod_{\Z_{p}}) \xrightarrow{F} CAlg(\Mod_{\Z_{p}}) \to  \Mod_{\Z_{p}} \]
preserves limits and sifted colimits by construction. On the other hand, the forgetful functor
	\[ CAlg(\Mod_{\Z_{p}}) \to \Mod_{\Z_{p}} \]
is conservative, and thus $F$ must also preserves limits and sifted colimits. It remains to show that $F$ preserves binary coproducts.

Since $\dalg(\Mod_{\Z_{p}})$ is the category of algebras over the $\dsym_{\Z_{p}}$ monad, for any object $A \in \dalg(\Mod_{\Z_{p}})$ we have a canonical Bar resolution
	\[ A \simeq colim_{\Delta^{op}} (\dsym_{\Z_{p}})^{\circ (\star + 1)}(A) \]
which reduces us to verifying that $F$ preserves coproducts of free algebras.

So fix free algebras $A = \dsym_{\Z_{p}}(M)$ and $B = \dsym_{\Z_{p}}(N)$. Our task is to verify that the natural map
	\[ F(A) \otimes_{\Z_{p}} F(B) \to F(A \sqcup B) \]
is an equivalence. Since $A$ and $B$ are free, we see that
	\[ A \sqcup B \simeq \dsym_{\Z_{p}}(M \oplus N). \]
Under this identification, we can refine the comparison map of commutative algebra objects
	\[ \dsym_{\Z_{p}}(M) \otimes \dsym_{\Z_{p}}(N) \to \dsym_{\Z_{p}}(M \oplus N) \]
to a map of filtered algebras
	\[ LSym_{\Z_{p}}^{\delta, \leq \star}(M) \otimes_{\Z_{p}} LSym_{\Z_{p}}^{\delta, \leq \star}(N) \to LSym_{\Z_{p}}^{\delta, \leq \star}(M \oplus N) \]
where the tensor product is now given by Day convolution. Since the filtered pieces are excisively polynomial, we can reduce to the case where $M$ and $N$ are connective, and thus to the case that $M$ and $N$ are free $\Z_{p}$-algebras, where the claim is immediate.
\end{proof}

\begin{definition} \label{derived Witt vectors} The right adjoint to the forgetful functor $\dalg(Mod_{\Z_{p}}) \to \calg(Mod_{\Z_{p}})$ will be denoted by $W$, and referred to as the \emph{derived Witt vectors} functor. The left adjoint to the forgetful functor will be denoted by $Free^{\delta}_{\Z_{p}}$ and referred to as the \emph{free} $\delta$-\emph{ring} functor.
\end{definition}

\begin{observation} \label{animated vs derived Witt vectors} The conclusions of Proposition \ref{limits and colimits} apply equally as well to connective objects, yielding left and right adjoints to the forgetful functor
	\[ \dalg(\Mod_{\Z_{p}})_{\geq 0} \to \mathfrak{a}CAlg(Mod_{\Z_{p}}). \]
The identification
	\[ \dalg(\Mod_{\Z_{p}})_{\geq 0} \simeq \mathfrak{a}CAlg^{\delta}(Mod_{\Z_{p}}) \]
of Observation \ref{connective delta rings} then identifies the right adjoint to the above with the animated Witt vectors of \cite{Bhatt-Lurie2}.
\end{observation}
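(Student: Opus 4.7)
The plan is three steps: first verify that the proof of Proposition \ref{limits and colimits} adapts verbatim to the connective subcategories; second, apply the adjoint functor theorem to obtain left and right adjoints; third, identify the right adjoint with animated Witt vectors via uniqueness of adjoints.

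For the first step, both $\dalg(\Mod_{\Z_p})_{\geq 0}$ and $\mathfrak{a}CAlg(\Mod_{\Z_p})$ are realized as pullbacks of the unrestricted categories along $\Mod_{\Z_p, \geq 0} \hookrightarrow \Mod_{\Z_p}$, and compatibility of the $t$-structure with the tensor product ensures that $\Mod_{\Z_p, \geq 0}$ is closed under small limits and sifted colimits. Preservation of limits and sifted colimits by the forgetful $F_{\geq 0}$ therefore follows from the corresponding assertion of Proposition \ref{limits and colimits}. For binary coproducts, I would repeat the Bar-resolution argument: every connective derived $\delta$-ring is a geometric realization of free ones, which remain connective because the monad $\dsym_{\Z_p}$ is a colimit of its filtered pieces $LSym_{\Z_p}^{\delta, \leq n}$, each of which is an excisively polynomial sifted-colimit-preserving functor left Kan extended from $\Mod_{\Z_p}^{fpp}$ and hence connectivity-preserving. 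This reduces the problem to comparing $\dsym_{\Z_p}(M) \otimes_{\Z_p} \dsym_{\Z_p}(N)$ with $\dsym_{\Z_p}(M \oplus N)$ for connective $M, N$; the filtered Day-convolution argument of Proposition \ref{limits and colimits} then reduces further to the case of free finitely-generated $\Z_p$-modules, where the claim is immediate.

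With limits and colimits both preserved by a functor between presentable $\infty$-categories, the adjoint functor theorem supplies both left and right adjoints to $F_{\geq 0}$. The equivalence $\dalg(\Mod_{\Z_p})_{\geq 0} \simeq \mathfrak{a}CAlg^{\delta}(\Mod_{\Z_p})$ of Observation \ref{connective delta rings} arises by monadic comparison over $\Mod_{\Z_p, \geq 0}$, and the two underlying forgetful functors down to $\mathfrak{a}CAlg(\Mod_{\Z_p})$ are intertwined by this equivalence since the free derived $\delta$-ring monad (in either formulation) factors through the free animated commutative ring monad by the adjunction of Proposition \ref{limits and colimits}. By uniqueness of right adjoints, the resulting right adjoint to $F_{\geq 0}$ coincides with the right adjoint to $\mathfrak{a}CAlg^{\delta}(\Mod_{\Z_p}) \to \mathfrak{a}CAlg(\Mod_{\Z_p})$, which is by definition the animated Witt vectors of \cite{Bhatt-Lurie2}. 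I do not anticipate any serious obstacle: the only nontrivial point is the connectivity verification in the binary coproduct step, which is immediate from the excisively polynomial character of the filtered pieces $LSym_{\Z_p}^{\delta, \leq n}$.
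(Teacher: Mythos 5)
Your argument is correct and follows essentially the route the paper intends: the observation is stated without proof precisely because the argument of Proposition \ref{limits and colimits} carries over to connective objects once one notes (as you do) that the free $\delta$-ring monad preserves connectivity, and the identification with animated Witt vectors is then uniqueness of right adjoints under the equivalence of Observation \ref{connective delta rings}. The one imprecision is your claim that $\Mod_{\Z_{p}, \geq 0}$ is ``closed under small limits'': limits of connective objects computed in $\Mod_{\Z_{p}}$ (e.g.\ totalizations) need not be connective; rather, limits in the connective subcategories are connective covers of the ambient ones, and since this holds simultaneously for $\dalg(\Mod_{\Z_{p}})_{\geq 0}$ and $\mathfrak{a}CAlg(\Mod_{\Z_{p}})$, the forgetful functor still preserves limits, so your conclusion stands.
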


%------------------------------------------------------------------------------FROBENIUS LIFTS----------------------------------------------------------------

\subsection{Constructions and Examples}

In the connective setting (see Appendix A in \cite{Bhatt-Lurie2}), there is an equivalence between animated $\delta$-rings and animated rings equipped with a lift of Frobenius. This is proven by appealing to the left Kan extension of the length 2 Witt vector functor and \emph{defining} animated $\delta$-rings to be animated rings equipped with a section of the canonical projection $\pi \colon W_{2}(R) \to R$. The desired description then follows by appealing to the usual pullback square expressing the Frobenius $W_{2}(R) \to R$ as the pullback of $Frob\colon R \to R \otimes^{\mathbb{L}} \mathbb{F}_{p}$ along the reduction mod $p$ map.

An analogous description holds in the derived setting: derived $\delta$-rings are nothing other than derived commutative rings equipped with a lift of Frobenius (see Proposition \ref{frobenius lifts on delta rings} below). To prove this however, we will not follow the above outline. Rather, we will use our monadic definition of $\delta$-rings. 

For any category $\mathcal{C}$, one can concoct a new $\infty$-category consisting of objects of $\mathcal{C}$ equipped with an endomorphism by contemplating functors from the classifying space of the monoid $\N$ into $\mathcal{C}$: $Fun(B\N, \mathcal{C})$. Any functorial assignment of an endomorphism to an object of $\mathcal{C}$ can then be expressed as a section of the evaluation map $ev_{\star} \colon Fun(B\N, \mathcal{C}) \to \mathcal{C}$. 
	
\begin{construction} \label{Frobenius on derived algebras} Consider the functor
	\[ Frob \colon Poly^{fg}_{\mathbb{F}_{p}} \to Fun(B\N, Poly_{\mathbb{F}_{p}}) \xrightarrow{ i \circ -} Fun(B\N, DAlg(\Mod_{\mathbb{F}_{p}})) \]
which assigns a polynomial $\mathbb{F}_{p}$-algebra its Frobenius endomorphism. We claim that $Frob$ is right-extendable in the sense of Definition \ref{right-extendable functor of algebras}, which is to say that the right Kan extension of $Frob \circ Sym^{\heartsuit}_{\mathbb{F}_{p}}$ to perfect coconnective $\mathbb{F}_{p}$-modules preserves finite coconnective geometric realizations. The composite
	\[ Fun(B\N, DAlg(\Mod_{\mathbb{F}_{p}})) \xrightarrow{ev_{\star}} DAlg(\Mod_{\mathbb{F}_{p}}) \to \Mod_{\mathbb{F}_{p}} \]
reflects sifted colimits and limits, and thus it suffices to prove that the composite
	\[ \Mod_{\mathbb{F}_{p}}^{fpp} \xrightarrow{Sym^{\heartsuit}_{\mathbb{F}_{p}}} Poly^{fg}_{\mathbb{F}_{p}} \xrightarrow{Frob} Fun(B\N, DAlg(\Mod_{\mathbb{F}_{p}})) \to \Mod_{\mathbb{F}_{p}} \]
is right extendable, but this is precisely $Sym^{\heartsuit}_{\mathbb{F}_{p}}$, which we know to be right-extendable.

Construction \ref{non-linear right-left extension} thus yields an extension of $Frob$ to a functor of the form
	\[ DAlg(\Mod_{\mathbb{F}_{p}}) \to Fun(B\N, DAlg(\Mod_{\mathbb{F}_{p}})) \]
which we will continue to denote by $Frob$.

\end{construction}

\begin{remark} The functor $Frob$ as defined above preserves small limits and colimits. Indeed, since $B\N$ has only a single object, evaluation at this object reflects both limits and colimits
	\[ ev_{\star} \colon Fun(B\N, \calg(\Mod_{\mathbb{F}_{p}})) \to \calg(\Mod_{\mathbb{F}_{p}}). \]
By construction, $Frob$ is a section of $ev_{\star}$, and so the claim follows at once.

\end{remark}

\begin{definition} Let $\calg(\Mod_{\Z_{(p)}})^{Frob}$ denote the pullback

\begin{center}
\begin{tikzcd}
\calg(\Mod_{\Z_{(p)}})^{Frob} \arrow[r] \arrow[d] 
	& Fun(B\N, \calg(\Mod_{\Z_{(p)}})) \arrow[d, " \pi_{\star}"] \\
\calg(\Mod_{\mathbb{F}_{p}}) \arrow[r, "Frob"]
	& Fun(B\N, \calg(\Mod_{\mathbb{F}_{p}}))
\end{tikzcd}
\end{center}
where $\pi \colon \calg(\Mod_{\Z_{(p)}}) \to \calg(\Mod_{\mathbb{F}_{p}})$ is reduction modulo $p$.
\end{definition}

\begin{theorem} \label{frobenius lifts on delta rings} The $\infty$-category $\dalg$ is equivalent to the category of derived algebras equipped with a Frobenius lift.

\end{theorem}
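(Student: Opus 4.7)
The plan is to bootstrap from the connective equivalence between animated $\delta$-rings and animated commutative rings equipped with a Frobenius lift (Appendix A of \cite{Bhatt-Lurie2}) to the derived setting, using the monadic descriptions of both sides together with the non-linear right-left extension machinery of Section 2.2.

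First I would construct a comparison functor $\Phi \colon \dalg(\Mod_{\Z_{(p)}}) \to \calg(\Mod_{\Z_{(p)}})^{Frob}$. The classical formula $\varphi(x) := x^{p} + p\delta(x)$ defines a functorial Frobenius lift on every free $\delta$-algebra on a finitely generated free $\Z_{(p)}$-module. Packaged as a functor from this category of heart-level free $\delta$-algebras into $Fun(B\mathbb{N}, \calg(\Mod_{\Z_{(p)}}))$, one verifies right extendability in the sense of Definition \ref{right-extendable functor of algebras}: after composing with the forgetful functor to $\Mod_{\Z_{(p)}}$, one recovers $Sym^{\delta, \heartsuit}_{\Z_{(p)}}$, which is exhausted by the additively polynomial filtration $Sym^{\delta, \leq n}_{\Z_{(p)}}$ exactly as in Example \ref{derived truncated Witt vectors}. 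Construction \ref{non-linear right-left extension}, applied with the $LSym^{\delta}$-monad in place of $LSym$, then extends this assignment to a functor $\dalg(\Mod_{\Z_{(p)}}) \to Fun(B\mathbb{N}, \calg(\Mod_{\Z_{(p)}}))$, and by the uniqueness of right-left extension its reduction mod $p$ agrees with the Frobenius of Construction \ref{Frobenius on derived algebras}. The universal property of the pullback defining $\calg^{Frob}$ then produces the desired $\Phi$.

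Second, I would prove $\Phi$ is an equivalence. By Proposition \ref{limits and colimits} the forgetful functor $\dalg \to \calg$ preserves all small limits and colimits, hence is monadic with a sifted-colimit-preserving monad; analogously $\calg(\Mod_{\Z_{(p)}})^{Frob} \to \calg(\Mod_{\Z_{(p)}})$ is monadic, since it is a pullback along the limit- and colimit-preserving functor $Frob$ of Construction \ref{Frobenius on derived algebras}. Thus $\Phi$ fits into a comparison of monadic adjunctions over $\calg(\Mod_{\Z_{(p)}})$, and both monads preserve sifted colimits. Restricted to connective objects, Observation \ref{connective delta rings} together with Appendix A of \cite{Bhatt-Lurie2} identifies $\Phi_{\geq 0}$ with the classical equivalence between animated $\delta$-rings and animated commutative rings equipped with a Frobenius lift. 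Since both monads are characterized by their restriction to the connective part (via right-left extension), this connective equivalence propagates to an equivalence of monads, and hence of categories of algebras.

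The main obstacle I anticipate is the construction and coherence checks in the first step: identifying $\Phi$ as a genuine functor of $\infty$-categories lifting the Frobenius of Construction \ref{Frobenius on derived algebras} requires carefully tracking that both functors are non-linear right-left extensions of the same heart-level assignment, and a fully rigorous treatment may proceed by exhibiting an equivalence of monads directly — e.g. by applying Construction \ref{non-linear right-left extension} to the natural transformation of classical monads encoding Frobenius lifts on animated $\delta$-rings. Either way, the essential content is the classical connective equivalence combined with the universal property of non-linear right-left extension; modulo these coherence verifications, the theorem is a formal consequence.
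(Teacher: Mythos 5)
Your proposal is correct in substance and rests on the same engine as the paper's proof -- monadicity plus the fact that the relevant monads are (right-left) extensions of classically identified monads on a small generating subcategory -- but the decomposition is noticeably different, and heavier. The paper never constructs a comparison functor $\Phi$ and never invokes the full animated equivalence of Bhatt--Lurie: it shows directly that $\calg(\Mod_{\Z_{(p)}})^{Frob} \to \Mod_{\Z_{(p)}}$ (note: over modules, not over $\calg$) is monadic, observes that since the forgetful functor to $\calg(\Mod_{\Z_{(p)}})$ preserves small limits and colimits the resulting monad on $\Mod_{\Z_{(p)}}$ is the right-left extension of its restriction to $\Mod_{\Z_{(p)}}^{free}$, and then identifies that restriction with $Sym^{\delta}$ by the classical $1$-categorical computation. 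Both categories are then algebras over the same monad and the theorem follows with no further coherence checks. Your route instead (i) extends the Frobenius lift $\varphi = F^{p} + p\delta$ from free $\delta$-rings via a version of Construction \ref{non-linear right-left extension} for the $LSym^{\delta}$-monad -- a variant the paper does not set up, though the machinery of Section 2.2 does accommodate it -- and (ii) argues with monadicity over $\calg(\Mod_{\Z_{(p)}})$, where "determined by the connective restriction" is a weaker and less immediate statement than "right-left extended from free modules," and where you must separately verify that the monad of $\calg^{Frob} \to \calg$ is a non-linear right-left extension and that the identification of monads respects the monad structures (this is where the monoidality lemmas \ref{monoidal right Kan}--\ref{structured left Kan} enter). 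None of this is wrong, and your own closing remark correctly locates the burden in these coherence checks; but working over $\Mod_{\Z_{(p)}}$ as the paper does collapses both of your steps into the single classical identification of $Sym^{\delta}$ with the free-commutative-ring-with-Frobenius-lift monad on free modules, which is the cleaner argument.
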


\begin{proof} Since both $Frob$ and $\pi_{\star}$ preserve small limits and colimits, so too does the top horizontal arrow
	\[ i\colon \calg(\Mod_{\Z_{(p)}})^{Frob} \to Fun(B\N, \calg(\Mod_{\Z_{(p)}})). \]
Postcomposing with $ev_{0}\colon Fun(B\N, \calg(\Mod_{\Z_{(p)}})) \to \calg(\Mod_{\Z_{(p)}})$ which also preserves such limits and colimits, we deduce that $\calg(\Mod_{\Z_{(p)}})^{Frob} \to \Mod_{\Z_{(p)}}$ admits a left adjoint and the resulting adjunction is monadic by Barr-Beck-Lurie. Since $\calg(\Mod_{\Z_{(p)}})^{Frob} \to \calg(\Mod_{\Z_{(p)}})$ preserves small limits and colimits, the monad is left right extended from its restriction to $\Mod_{\Z_{(p)}}^{free}$, and thus it suffices to identify the monad restricted to $\Mod_{\Z_{(p)}}^{free}$ with $Sym^{\delta}$, but this is classical.
\end{proof}

%---------------------------------------------------------------------------- COMPLETIONS--------AND---------FILTRATIONS---------------------------

It will be useful to contemplate not only $\delta$-algebras, but filtrations and completions on such objects.

\begin{variant} We define the $\infty$-category of \emph{filtered $\delta$-rings} as the fiber product 
 \[  \dalg(\fil \Mod_{\Z_{p}}) :=  \calg(\fil \Mod_{\Z_{p}}) \times_{\calg(\Mod_{\Z_{p}})} \dalg(\Mod_{\Z_{p}}).\]
 Similarly, the $\infty$-category of  \emph{$\delta$-pairs} is denoted by $ \dalg(\pair \Mod_{\Z_{p}})$, and defined analogously.
\end{variant}

\begin{lemma} \label{filtered sym} The forgetful functor $F\colon \dalg(\fil \Mod_{\Z_{p}}) \to \fil \Mod_{\Z_{p}}$ preserves small limits and sifted colimits. In particular, it admits a left adjoint, denoted by $\dsymneut_{\Z_{p}}$, and the resulting adjunction is monadic.
\end{lemma}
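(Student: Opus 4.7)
The plan is to reduce all three assertions to already-established facts by systematically exploiting the pullback definition $\dalg(\fil \Mod_{\Z_p}) = \calg(\fil \Mod_{\Z_p}) \times_{\calg(\Mod_{\Z_p})} \dalg(\Mod_{\Z_p})$. First I would factor $F$ through $\calg(\fil \Mod_{\Z_p})$ as
\[
\dalg(\fil \Mod_{\Z_p}) \xrightarrow{p} \calg(\fil \Mod_{\Z_p}) \xrightarrow{U} \fil \Mod_{\Z_p}.
\]
The functor $U$ preserves small limits and sifted colimits, since $\fil \Mod_{\Z_p}$ carries the structure of a derived algebraic context (Example \ref{0,1 filtered algebras}). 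The functor $p$ is the base change of $\dalg(\Mod_{\Z_p}) \to \calg(\Mod_{\Z_p})$ along the underlying-object functor $\calg(\fil \Mod_{\Z_p}) \to \calg(\Mod_{\Z_p})$; because limits and sifted colimits in pullbacks of $\infty$-categories are computed componentwise, Proposition \ref{limits and colimits} implies that $p$ preserves these as well. The composite $F = U \circ p$ therefore preserves small limits and sifted colimits.

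For the existence of a left adjoint, I would first note that $\dalg(\fil \Mod_{\Z_p})$ is presentable: it is a pullback of presentable $\infty$-categories along accessible limit-preserving functors. Since $F$ preserves all small limits and is accessible (sifted-colimit preservation being more than enough), the adjoint functor theorem supplies the desired left adjoint $\dsymneut_{\Z_p}$. Monadicity then follows from Barr--Beck--Lurie, where the condition concerning $F$-split simplicial objects is automatic from sifted-colimit preservation; the essential content is conservativity of $F$.

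To verify conservativity, suppose $\varphi$ is a morphism in $\dalg(\fil \Mod_{\Z_p})$ whose image in $\fil \Mod_{\Z_p}$ is an equivalence. Then the $\calg(\fil \Mod_{\Z_p})$-component of $\varphi$ is an equivalence by conservativity of $U$ (which is monadic). The pullback compatibility identifies the image in $\calg(\Mod_{\Z_p})$ of the $\dalg(\Mod_{\Z_p})$-component of $\varphi$ with the image of the $\calg(\fil \Mod_{\Z_p})$-component under $\calg(\fil \Mod_{\Z_p}) \to \calg(\Mod_{\Z_p})$, so this image is an equivalence. Conservativity of $\dalg(\Mod_{\Z_p}) \to \calg(\Mod_{\Z_p})$ — which follows from monadicity of $\dalg(\Mod_{\Z_p}) \to \Mod_{\Z_p}$ together with conservativity of $\calg(\Mod_{\Z_p}) \to \Mod_{\Z_p}$ — then upgrades this to an equivalence in $\dalg(\Mod_{\Z_p})$. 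Both components of $\varphi$ being equivalences in the respective factors forces $\varphi$ itself to be an equivalence.

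The main obstacle is largely bookkeeping: one needs to pin down the correct forgetful $\calg(\fil \Mod_{\Z_p}) \to \calg(\Mod_{\Z_p})$ (induced by the strong symmetric monoidal evaluation at weight $0$) and to unwind the pullback of $\infty$-categories so that equivalences, limits, and sifted colimits are genuinely detected componentwise. Once this is in place, the argument reduces entirely to invoking Proposition \ref{limits and colimits}, the derived algebraic context structure on $\fil \Mod_{\Z_p}$, and Barr--Beck--Lurie.
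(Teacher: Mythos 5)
Your proof is correct and follows essentially the same route as the paper: both factor the forgetful functor through $\calg(\fil \Mod_{\Z_p})$ and use the fiber-product definition of $\dalg(\fil \Mod_{\Z_p})$ together with Proposition \ref{limits and colimits} (and the componentwise computation of limits and sifted colimits in pullbacks of presentable $\infty$-categories) to see that the projection preserves them. Your extra paragraphs spelling out presentability, the adjoint functor theorem, and the conservativity check for Barr--Beck--Lurie are details the paper leaves implicit, but they add nothing beyond the paper's argument.
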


\begin{proof} Since the forgetful functor $ \calg(\fil \Mod_{\Z_{p}}) \to \fil \Mod_{\Z_{p}}$ is conservative and preserves limits and sifted colimits, it suffices to prove that the forgetful functor
	\[  \dalg(\fil \Mod_{\Z_{p}}) \to  \calg(\fil \Mod_{\Z_{p}}) \]
preserves limits and sifted colimits. This follows from the definition of $ \dalg(\fil \Mod_{\Z_{p}})$ as a fiber product since all the $\infty$-categories in question are presentable, and the functors in the defining diagram preserve both limits and colimits (see Proposition \ref{limits and colimits}, as well as Propositions 5.5.3.13 and 5.5.3.18 in \cite{Lurie}).
\end{proof}

\begin{notation} \label{filtered free delta} The proof of Lemma \ref{filtered sym} establishes the existence of a left adjoint to the forgetful functor
	\[ \dalg(\fil \Mod_{\Z_{p}}) \to \calg(\fil \Mod_{\Z_{p}}) \]
which we will denote by $Free^{\delta, [0]}_{\Z_{p}}$.

\end{notation}

\begin{definition} \label{delta A algebras} Let $A$ be a (possibly derived) $\delta$-ring. We define the $\infty$-category of derived $\delta$-$A$-algebras to be the slice category
	\[ \dalg(\Mod_{A}) := \dalg(\Mod_{\Z_{p}})_{A \downarrow}. \]
\end{definition}

Let $(A,I) \in \dalg(F^{\{0,1\}}\Mod_{\Z_{p}})$. We now wish to define $I$-complete $\delta$-rings. In order to phrase this in the level of generality that we need, we introduce some notation.

\begin{notation} \label{I-adic filtration}
Let $p_{!}\colon \calg(\pair \Mod_{A}) \to \calg(\fil \Mod_{A})$ denote the left adjoint to restriction. We will denote by $I^{\star}A$ the filtered algebra given by $p_{!}(A,I)$ and refer to this as the $I$-adic filtration on $A$.
\end{notation}

\begin{definition} We say a map of $A$-modules $M \to N$ is an $A/I$-equivalence if $M \otimes_{A} A/I \to N \otimes_{A} A/I$ is an equivalence. Let $S$ denote the class of $A/I$-equivalences. We define the subcategory of \emph{$I$-adically complete} $A$-modules to be the full subcategory of $\Mod_{A}$ spanned by the $S$-local objects. This subcategory will be denoted by $\widehat{\Mod}^{I}_{A}$, or if $I$ is clear from context, simply by $\widehat{\Mod}_{A}$.
\end{definition}

\begin{observation} If $M$ is an $I$-adically complete $A$-module, then the $I$-adic filtration
	\[ ins^{0}(M) \otimes_{A} I^{\star}A \]
is in fact filtration complete. Indeed, the functor
	\[ I^{\star} \colon \Mod_{A} \to \fil \Mod_{A} \]
sends $A/I$-equivalences to $gr^{\star}$-equivalences, and the complete filtered modules are precisely the $gr^{\star}$-local filtered objects.

\end{observation}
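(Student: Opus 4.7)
The plan is to verify the two ingredients mentioned in the observation and supply the missing link via a direct limit computation. The cleanest route uses the concrete characterization of filtration completeness as the vanishing $\lim_{n} F^{n} \simeq 0$, which is exactly equivalent to being $gr^{\star}$-local.

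First, I would identify the underlying filtered module. By the Day convolution on $\fil \Mod_{A}$ together with the explicit description of $p_{!}$ in Notation \ref{I-adic filtration}, the filtered object $ins^{0}(M) \otimes_{A} I^{\star}A$ has $n$-th piece $I^{n} \otimes_{A} M$, with structure maps induced by the inclusions $I^{n+1} \hookrightarrow I^{n}$. Computing its associated graded via symmetric monoidality of $gr^{\star}$ yields $M \otimes_{A} gr^{\star}(I^{\star}A)$, and since $gr^{\star}(I^{\star}A)$ is by construction an $A/I$-algebra, each graded piece factors through $- \otimes_{A} A/I$. This establishes the first ingredient: $I^{\star}$ sends $A/I$-equivalences to $gr^{\star}$-equivalences.

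Next, I would verify the vanishing $\lim_{n}(I^{n} \otimes_{A} M) \simeq 0$ directly. The cofiber sequence $I^{n} \to A \to A/I^{n}$ of $A$-modules, tensored with $M$, yields the cofiber sequence
\[ I^{n} \otimes_{A} M \to M \to (A/I^{n}) \otimes_{A} M. \]
Taking the limit over $n$, which commutes with fibers in the stable $\infty$-category $\Mod_{A}$, produces a fiber sequence
\[ \lim_{n}(I^{n} \otimes_{A} M) \to M \to \lim_{n}(A/I^{n} \otimes_{A} M). \]
The right-hand term is by definition the $I$-adic completion of $M$, and the hypothesis that $M$ is $I$-complete forces the middle map to be an equivalence, whence the fiber vanishes. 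Combining this with the second ingredient---that complete filtered modules are precisely the $gr^{\star}$-local ones---concludes the argument.

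The main obstacle will be the bookkeeping in the first step, namely matching the abstractly defined object $ins^{0}(M) \otimes_{A} I^{\star}A$ with the concrete filtered module $\{I^{n} \otimes_{A} M\}_{n}$ and identifying $gr^{\star}(I^{\star}A)$ as an $A/I$-algebra; this requires carefully unwinding the Day convolution and the formula for $p_{!}$ in the derived setting, where the usual set-theoretic description of the $I$-adic filtration must be replaced by its derived analogue. Once this is in place, the remaining manipulations are formal properties of stable $\infty$-categories.
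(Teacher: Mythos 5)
Your overall strategy---verifying completeness of $ins^{0}(M)\otimes_{A}I^{\star}A$ directly by computing $\lim_{n}F^{n}$ rather than routing through the locality formalism---is a legitimate alternative to the paper's terse argument, but as written it has two genuine gaps. First, the identification of the $n$-th filtered piece with the literal ideal power $I^{n}\otimes_{A}M$, with transition maps induced by the inclusions $I^{n+1}\hookrightarrow I^{n}$, is not correct in this setting: $I^{\star}A=p_{!}(A,I)$ is the \emph{derived} $I$-adic filtration, whose weight-$n$ piece is $LSym^{n}_{A}(I)$, and the Warning immediately following this Observation states explicitly that this does not agree with the classical $I$-adic filtration unless $I$ is locally generated by a regular sequence. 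Your computation is valid when $I$ is a generalized Cartier divisor (so that $LSym^{n}_{A}(I)\simeq I^{\otimes_{A}n}$), which is the case of interest later in the paper, but you should either restrict to that case or carry $LSym^{n}_{A}(I)\otimes_{A}M$ through the argument.

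Second, and more seriously, the step ``the right-hand term is by definition the $I$-adic completion of $M$'' conflates two notions. The paper defines $I$-adic completeness of $M$ as locality with respect to the class of $A/I$-equivalences, not as the assertion that $M\to\lim_{n}\left((A/I^{n})\otimes_{A}M\right)$ is an equivalence. These agree for a Cartier divisor: locally $I=(d)$, and locality with respect to $A/d$-equivalences is equivalent to the vanishing of $\lim(\cdots\xrightarrow{d}M\xrightarrow{d}M)$, which is exactly $\lim_{n}(I^{\otimes n}\otimes_{A}M)$. But that equivalence is precisely the content your argument needs, and it must be proved rather than invoked ``by definition''; once it is in place, your fiber-sequence manipulation is no longer needed, since the vanishing of $\lim_{n}(I^{\otimes n}\otimes_{A}M)$ \emph{is} the filtration-completeness of $I^{\star}M$. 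Note also that your first ingredient---that $I^{\star}$ carries $A/I$-equivalences to $gr^{\star}$-equivalences, which you establish correctly via the observation that $gr^{\star}(I^{\star}A)$ is an $A/I$-algebra---plays no role in your completeness verification, whereas the paper's route leans entirely on the locality characterizations on both sides rather than on an explicit limit computation.
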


\begin{warning} If $(A,I)$ is a discrete commutative ring equipped with an ideal, what we refer to as the $I$-adic filtration on a discrete $A$-module $M$ does not agree with the classical notion unless $I$ is locally generated by a regular sequence.
\end{warning}

\begin{definition} Let $A$ be a derived $\delta$-ring, and $I \to A$ a generalized Cartier divisor (a tensor invertible $A$-module equipped with a map of $A$-modules to $A$). The $\infty$-category of $(p,I)$-complete derived $\delta$-$A$-algebras is defined as the fiber product
	\[ \widehat{\dalg}(\Mod_{A}) := \dalg(\Mod_{A}) \times_{\Mod_{A}} \widehat{\Mod_{A}}. \]
\end{definition}

\begin{observation} The inclusions $\dalg(\Mod_{A}) \to \dalg(\Mod_{\Z_{p}})$ and $\widehat{\dalg}(\Mod_{A}) \to \dalg(\Mod_{A})$ preserve limits and sifted colimits. It follows formally that the forgetful functors $\dalg(\Mod_{A}) \to \Mod_{A}$ and $\widehat{\dalg}(\Mod_{A}) \to \widehat{\Mod}_{A}$ preserve limits and sifted colimits, and thus the resulting adjunctions are monadic. We will denote the left adjoints by $LSym^{\delta}_{A}$ and $\widehat{LSym}^{\delta}_{A}$ respectively.

\end{observation}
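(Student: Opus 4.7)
My plan is to handle the two inclusions separately, deduce preservation for the forgetful functors to the module categories by composition, and invoke Barr--Beck--Lurie to conclude monadicity.

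For the first inclusion $\dalg(\Mod_A) \to \dalg(\Mod_{\Z_p})$, recall from Definition \ref{delta A algebras} that $\dalg(\Mod_A)$ is by definition the undercategory $\dalg(\Mod_{\Z_p})_{A/}$. It is a standard fact about $\infty$-categorical undercategories that the forgetful $\mathcal{C}_{A/} \to \mathcal{C}$ preserves arbitrary limits and arbitrary connected colimits---the point being that for a weakly contractible indexing diagram, the colimit in $\mathcal{C}_{A/}$ is computed in $\mathcal{C}$. Since every sifted $\infty$-category is weakly contractible, this subsumes sifted colimits.

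For the second inclusion $\widehat{\dalg}(\Mod_A) \to \dalg(\Mod_A)$, the pullback definition identifies it with the first projection from $\dalg(\Mod_A) \times_{\Mod_A} \widehat{\Mod}_A$. Since $\widehat{\Mod}_A \hookrightarrow \Mod_A$ is fully faithful (being right adjoint to $(p,I)$-completion) and limit-preserving, and since $\dalg(\Mod_A) \to \Mod_A$ preserves limits (by the first step combined with Proposition \ref{limits and colimits}), the inclusion is a fully faithful, limit-preserving full subcategory inclusion, with limits in the pullback computed componentwise. For sifted colimits, the question reduces to verifying that a sifted colimit of $(p,I)$-complete derived $\delta$-$A$-algebras, formed at the level of $\dalg(\Mod_A)$, remains $(p,I)$-complete on underlying modules. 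I expect this to be the main technical obstacle, and would address it by reducing to the module-level statement using that $\dalg(\Mod_A) \to \Mod_A$ preserves sifted colimits, together with the relevant closure properties of $\widehat{\Mod}_A$ under those sifted colimits that arise from $\delta$-algebra diagrams (as opposed to arbitrary module diagrams, where the closure can genuinely fail).

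With the inclusions in place, composing with $\dalg(\Mod_{\Z_p}) \to \Mod_{\Z_p}$ (which preserves limits and sifted colimits by the sifted-colimit-preserving monad structure on $LSym^{\delta}_{\Z_p}$ constructed in Section 2.3, together with Proposition \ref{limits and colimits}) and using that $\Mod_A \to \Mod_{\Z_p}$ and $\widehat{\Mod}_A \to \Mod_{\Z_p}$ reflect limits and sifted colimits yields preservation for both $\dalg(\Mod_A) \to \Mod_A$ and $\widehat{\dalg}(\Mod_A) \to \widehat{\Mod}_A$. Monadicity now follows from Barr--Beck--Lurie upon checking conservativity (inherited from $\dalg(\Mod_{\Z_p}) \to \Mod_{\Z_p}$) and existence of left adjoints (via the adjoint functor theorem, the forgetful functors being accessible and limit-preserving between presentable $\infty$-categories); these left adjoints are named $LSym^{\delta}_{A}$ and $\widehat{LSym}^{\delta}_{A}$.
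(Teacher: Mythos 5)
Your handling of the first inclusion (undercategory projection preserves all limits and all weakly contractible, hence sifted, colimits) and of the final reduction to the adjoint functor theorem plus Barr--Beck--Lurie is correct and is exactly the formal argument the paper leaves implicit. The gap is in the completed case, and although you have correctly located it as the technical crux, the repair you sketch will not work: the inclusion $\widehat{\dalg}(\Mod_{A}) \to \dalg(\Mod_{A})$ does \emph{not} preserve sifted colimits, and restricting attention to diagrams of $\delta$-algebras does not help. Already filtered colimits fail: the colimit of the completed free $\delta$-algebras $\widehat{LSym}^{\delta}_{A}(A^{n})$ along the evident inclusions has underlying module an increasing union of completions, which is not itself $(p,I)$-complete (compare $\mathrm{colim}(\Z_{p} \xrightarrow{p} \Z_{p} \to \cdots) \simeq \Q_{p}$ at the module level). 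Since filtered colimits are sifted, there is no closure property of $\widehat{\Mod}_{A}$ under "sifted colimits arising from $\delta$-algebra diagrams" to appeal to.

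The conclusions you actually need --- that $\widehat{\dalg}(\Mod_{A}) \to \widehat{\Mod}_{A}$ preserves limits and sifted colimits and hence is monadic --- should be established without routing through the inclusion. Both $\widehat{\dalg}(\Mod_{A}) \subset \dalg(\Mod_{A})$ and $\widehat{\Mod}_{A} \subset \Mod_{A}$ are accessible Bousfield localizations at the $A/I$-equivalences, and these equivalences are detected on underlying modules, so the two completion functors commute with the forgetful functor. A sifted colimit in $\widehat{\dalg}(\Mod_{A})$ is therefore computed by completing the sifted colimit formed in $\dalg(\Mod_{A})$, and likewise for $\widehat{\Mod}_{A}$; combined with your first step (that $\dalg(\Mod_{A}) \to \Mod_{A}$ preserves sifted colimits), this gives the desired preservation. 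Limits are unproblematic, as you say, since complete objects are closed under limits in the ambient categories. With that substitution, your conservativity check, the adjoint functor theorem, and Barr--Beck--Lurie go through unchanged.
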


%---------------------------------------------------------------------------------------EXAMPLES---------------------------------------------------------------------------------------

\begin{example} \label{cohomology is a delta ring} 

Recall from Proposition \ref{limits and colimits} the forgetful functor $\dalg(\Mod_{\Z_{p}}) \to CAlg(\Mod_{\Z_{p}})$ preserves limits. It follows that for any sheaf of (derived) $\delta$-rings $\mathcal{F}$ on a site $\mathcal{C}$, the derived global sections $R\Gamma(\mathcal{C}, \mathcal{F})$ can be canonically promoted to a derived $\delta$-ring.

In particular, given a prism $(A,I)$, and an $A/I$-algebra $R$, the derived prismatic cohomology $\Prism_{R/A}$ may be endowed with the structure of a $(p,I)$-complete derived $\delta$-$A$-algebra.

\end{example}

\begin{example} (perfect $\delta$-rings) \label{perfect delta rings} A derived $\delta$-ring $A$ is said to be \emph{perfect} if the Frobenius endomorphism 
	\[ \varphi_{A}: A \to A \]
of Proposition \ref{frobenius lifts on delta rings} is an isomorphism.

Perfect $p$-complete animated $\delta$-rings are always isomorphic to the Witt vectors of a perfect $\mathbb{F}_{p}$-algebra, and in particular are discrete since perfect $\mathbb{F}_{p}$-algebras are always discrete (Proposition 11.6 of \cite{Bhatt-Scholze2}). In the derived setting, there exist non-discrete perfect $\mathbb{F}_{p}$-algebras (see Remark 11.7 of \cite{Bhatt-Scholze2}).
Despite the increase in generality in the derived setting, the analogous characterization of perfect $\delta$-rings still holds, as stated below.
\end{example}

\begin{proposition} Let $R$ be a $p$-complete perfect derived $\delta$-ring. Then the canonical map $R \to W(R \otimes_{\mathbb{Z}_{p}} \mathbb{F}_{p})$ is an equivalence. 
\end{proposition}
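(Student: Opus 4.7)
The plan is to reduce the statement modulo $p$ via derived Nakayama and then reformulate the reduced statement as a computation of the counit of the adjunction $F \dashv W$ on perfect derived $\F_p$-algebras. First I would verify that both $R$ and $W(R \otimes_{\Z_p} \F_p)$ are $p$-complete. The former is so by hypothesis. For the latter, one assembles $W(S)$, for $S \in \calg(\Mod_{\F_p})$, as an inverse limit of truncated Witt vectors $W_n(S)$ through the Verschiebung fiber sequences; since $p$ acts nilpotently of degree $n$ on $W_n(S)$ (because $p = FV$ on $\F_p$-algebras), the limit is automatically $p$-complete. Thus derived Nakayama reduces the claim to showing that the induced map is an equivalence after $- \otimes_{\Z_p} \F_p$.

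By the triangle identity for the adjunction $F \dashv W$, the composite $R \to W(R \otimes_{\Z_p} \F_p) \to R \otimes_{\Z_p} \F_p$ of our canonical map with the counit agrees with the standard reduction-mod-$p$ map. Tensoring with $\F_p$ and setting $S := R \otimes_{\Z_p} \F_p$, this exhibits the mod-$p$ reduction of our canonical map as a section of the counit $W(S) \otimes_{\Z_p} \F_p \to S$. Since $R$ is perfect, $S$ is a perfect derived $\F_p$-algebra, so it suffices to establish the following claim: for any perfect derived $\F_p$-algebra $S$, the counit $W(S) \otimes_{\Z_p} \F_p \to S$ is an equivalence.

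For the final step, I would invoke the pullback description of $\dalg$ from Theorem \ref{frobenius lifts on delta rings} to compute $W(S)$ explicitly. The right adjoint to the projection out of this pullback can be assembled from the right adjoint to the endomorphism-forgetting functor $Fun(B\N, \calg) \to \calg$ (which sends an object to its cofree endomorphism object), together with the constraint that the Frobenius mod $p$ match the endomorphism mod $p$. The result is a pullback presentation of $W(S)$ whose mod-$p$ reduction involves only $S$ and its Frobenius endomorphism $\varphi_S$. When $\varphi_S$ is an equivalence, i.e.\ when $S$ is perfect, this pullback collapses to $S$ itself, yielding the desired identification $W(S) \otimes_{\Z_p} \F_p \simeq S$.

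The main obstacle is this final step. Right adjoints to projections out of $\infty$-categorical pullbacks are not themselves formal pullbacks of right adjoints, so explicitly identifying $W(S)$ as the indicated pullback requires care in tracking the compatibility data encoded in $Fun(B\N, \calg(\Mod_{\F_p}))$. An alternative, potentially cleaner route would be to characterize both $R$ and $W(S)$ intrinsically as being the unique $p$-complete perfect derived $\delta$-ring with mod-$p$ reduction $S$, with uniqueness again controlled by the invertibility of Frobenius.
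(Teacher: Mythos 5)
Your opening reductions are sound and consistent with what the paper does implicitly: both sides are $p$-complete (the paper asserts this for $W$ restricted to $\mathbb{F}_p$-algebras), so derived Nakayama reduces the claim to the mod-$p$ statement, and the triangle identity does exhibit $c \otimes \mathbb{F}_p$ as a section of the counit $W(S) \otimes_{\mathbb{Z}_p} \mathbb{F}_p \to S$. (One caveat: your argument for $p$-completeness of $W(S)$ presumes $W(S) \simeq \lim_n W_n(S)$ with $W_n$ the right-left extended truncated Witt vectors; in this paper $W$ is defined abstractly as a right adjoint, so that identification is itself a claim requiring proof rather than a definition.)

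The genuine gap is the final step, and you have correctly diagnosed it yourself: the right adjoint to the projection out of the pullback defining $\calg(\Mod_{\mathbb{Z}_{(p)}})^{Frob}$ is not a formal pullback of right adjoints, so "the pullback collapses to $S$" is not an argument. The paper avoids computing $W(S)$ altogether. Its mechanism is deformation-theoretic: each projection $W_n(\overline{R}) \to W_{n-1}(\overline{R})$ (with $W_n(\overline{R}) := W(\overline{R}) \otimes_{\mathbb{Z}_p} \mathbb{Z}/p^n$) is a square-zero extension by $I_n = (p^{n-1})/(p^n) \otimes_{\mathbb{Z}_p} W_n(\overline{R})$, the existence and uniqueness of lifts along such extensions are controlled by $\mathrm{Ext}^i(L_{\overline{R}/\mathbb{F}_p}, I_n)$ for $i=1,2$, and $L_{\overline{R}/\mathbb{F}_p} \simeq 0$ because $\overline{R}$ is perfect. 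This identifies $R \otimes_{\mathbb{Z}_p} \mathbb{Z}/p^n$ with $W_n(\overline{R})$ for every $n$, and one concludes by $p$-completeness. So the ingredient you are missing is precisely the cotangent-complex obstruction theory for derived commutative rings (Section 4.4 of Raksit): perfectness enters through the vanishing of $L_{\overline{R}/\mathbb{F}_p}$, not through an explicit formula for $W$. Your closing "alternative route" (uniqueness of the $p$-complete lift of $S$) is the right instinct and is essentially the paper's argument, but you would need to replace "uniqueness controlled by invertibility of Frobenius" with "uniqueness controlled by the vanishing of the cotangent complex," which is the statement that the obstruction theory actually furnishes.
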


\begin{proof}

The very argument which establishes this fact in the animated setting carries over to our more general setting. We will content ourselves with a brief sketch. Recall, the derived Witt vectors are defined as the right adjoint to the forgetful functor
	\[ \dalg(\Mod_{\Z_{p}}) \to \calg(\Mod_{\Z_{p}}). \]
It follows formally that upon restricting to $\mathbb{F}_{p}$-algebras, the derived Witt vector functor takes values in $p$-complete $\delta$-rings. For any perfect derived $\mathbb{F}_{p}$-algebra $S$, we declare
	\[ W_{n}(S) := W(S) \otimes_{\Z_{p}} \Z/p^{n}\Z, \]
which may be identified with the derived truncated Witt vectors of Example \ref{derived truncated Witt vectors} in this case. The classification of perfect $p$-complete derived $\delta$-rings then follows from the series of observations:

\begin{enumerate}

\item Denote by $\overline{R}$ the reduction mod $p$ of $R$: $\overline{R} := R \otimes_{\Z_{p}} \mathbb{F}_{p}$. Each projection $W_{n}(\overline{R}) \to W_{n-1}(\overline{R})$ is a square zero extension by $I_{n} := (p^{n-1})/(p^{n}) \otimes_{\Z_{p}} W_{n}(\overline{R})$.

\item The existence and uniqueness of deformations across these thickenings are controlled by $Ext^{i}(L_{\overline{R}/\mathbb{F}_{p}}, I_{n})$ for $i = 1, 2$ respectively. The cotangent complex for a map of derived commutative rings is studied in Section 4.4 of \cite{Raksit}.

\item Since $\overline{R}$ is perfect,
	\[ L_{\overline{R}/\mathbb{F}_{p}} \simeq 0 \]
for all $n$.

\item Combining the previous two facts, we see that
	\[ R \otimes_{\Z_{p}} \Z/p^{n}\Z \simeq W_{n}(\overline{R}) \]
from which $p$-completeness of both sides allows us to deduce that the canonical map
	\[ R \to W(\overline{R}) \]
is an isomorphism.

\end{enumerate}

\end{proof}

\begin{example} The following example was suggested to the author by Benjamin Antieau. Let $E$ be an ordinary elliptic curve over $\mathbb{F}_{p}$. Then $R\Gamma(E, \mathcal{O}_{E})$ is a perfect $\mathbb{F}_{p}$-algebra. By applying Witt vectors pointwise to the sheaf $\mathcal{O}_{E}$, one obtains a sheaf of $p$-complete $\delta$-rings $W\mathcal{O}_{E}$ on the elliptic curve $E$. The resulting $\delta$-structure on $R\Gamma(E, W\mathcal{O}_{E})$ is easily seen to be perfect, and since the derived global sections are automatically $p$-complete, Example \ref{perfect delta rings} implies that we have a canonical equivalence of derived $\delta$-rings:
	\[ R\Gamma(E, W\mathcal{O}_{E}) \simeq W(R\Gamma(E, W\mathcal{O}_{E}) \otimes_{\Z_{p}} \mathbb{F}_{p}) \simeq W(R\Gamma(E, W\mathcal{O}_{E} \otimes \mathbb{F}_{p})) \simeq W(R\Gamma(E, \mathcal{O}_{E})). \]

\end{example}

%----------------------------------------------------------------------------------------------------------------------------------------------------------------------------------------------------------------------------------------------------

\section{Prismatic Cohomology}

Fix a $\delta$-ring $A$. The results of the previous section allow us to contemplate the free (filtered) $\delta$-$A$-algebra on an arbitrary (filtered) derived $A$-algebra. In particular, for any $A$-algebra $R$, we can apply this procedure to the Hodge-filtered derived infinitesimal cohomology $F^{\star}_{H}\mathbbl{\Pi}_{R/A}$. In the case where $(A,I)$ is a prism and $R$ is an $\overline{A} := A/I$-algebra, the Hodge filtered derived infinitesimal cohomology $F^{\star}_{H}\mathbbl{\Pi}_{R/A}$ is naturally an algebra over the $I$-adic filtration $I^{\star}A$ on $A$, and thus so is the free $\delta$-algebra thereon.

We may view
	\[\left(Free^{\delta}_{A}(\mathbbl{\Pi}_{R/A}), F^{1}_{H} Free^{\delta}_{A}(\mathbbl{\Pi}_{R/A})\right)\]
as a '$\delta$-pair' over $(A,I)$. Our task then becomes to formulate an appropriate notion of prism object in derived commutative rings, and to take the prismatic envelope of this $\delta$-pair. Following the \v{C}ech--Alexander approach to prismatic cohomology, we would then expect this prismatic envelope to compute prismatic cohomology, which we will see to be true.

The condition on prisms that the Cartier divisor be locally generated by a distinguished element is difficult to make sense of in the nonconnective setting. To work around this difficulty, we will instead appeal to the notion of rigidity of maps between prisms to define an appropriate analogue of prisms in the derived category. Recall, any map of prisms $(A,I) \to (B,J)$ satisfies the property that
	\[ I \otimes_{A} B \simeq J \]
(see Lemma 3.5 in \cite{Bhatt-Scholze}). Furthermore, this property almost characterizes prisms among those pairs $(B,J)$ where $B$ is a $(p,I)$-complete $\delta$-$A$-algebra, $J \subset B$ is an ideal, and the map $I \to B$ factors through $J$. In fact, the only further condition to check in order to guarantee $(B,J)$ is a prism is that $B$ is $I$-torsion free, a constraint which we will ignore in the setting of derived $\delta$-$A$-algebras. In particular, as soon as we specify that our prisms receive a map from $(A,I)$, the datum of the Cartier divisor is completely determined by $I$ itself, and we will take the $\infty$-category of $(p,I)$-complete $\delta$-$A$-algebras, $\compdalg(\Mod_{A})$, as our analogue of prisms. With this definition in play, the analogue of prismatic envelopes is manifest: it is left adjoint to the $I$-adic filtration functor.

%----------------------------------------------------------------------------------------------------------------------------------------------------------------------------------------------------------------------------------------------------

\subsection{Recollections on Infinitesimal Cohomology}

In this subsection, we review some basic structural properties of derived infinitesimal cohomology following the perspective of \cite{Antieau}. Beyond the definition, the most important feature for us will be the notion of the vertical filtration (Theorem \ref{vertical filtration}) on the associated graded of the Hodge filtration. It will play an important technical role in Section 3.3 (see Construction \ref{conjugate filtration for theta} and Theorem \ref{descent for theta}).

\begin{definition} Recall that the functor
	\[ \calg(\fil \Mod_{A}) \xrightarrow{gr^{0}} \calg(\Mod_{A}) \]
admits a left adjoint (see \cite{Antieau}, as well as \cite{Raksit} for the analogous description of de Rham cohomology). Let $F^{\star}\mathbbl{\Pi}_{-/A}$ denote the left adjoint. We refer to this functor as \emph{Hodge-filtered derived infinitesimal cohomology}.
\end{definition}

It is shown in \cite{Antieau} that the associated graded of the Hodge filtration may be identified with
	\[ gr^{\star}_{H} \mathbbl{\Pi}_{R/A} \simeq LSym^{\star}_{R}(L_{R/A}[-1]). \]

\begin{example} \label{infinitesimal of regular sequence} Suppose $J \subset A$ is an ideal generated by a regular sequence. Then the Hodge filtered infinitesimal cohomology of $A/J$ with respect to $A$ may be identified
	\[ F^{\star}_{H}\mathbbl{\Pi}_{(A/J)/A} \simeq LSym^{\star}_{A}(J) \]
with the derived $J$-adic filtration on $A$. Note that this generally doesn't agree with the classical $J$-adic filtration in weights higher than 1 unless $J$ is flat.

\end{example}

Recall, for any triple of rings $A \to B \to C$, the Hodge-filtered de Rham cohomology $F^{\star}_{H} dR_{C/A}$ can be reconstructed from $F^{\star}_{H} dR_{C/B}$ and $F^{\star}_{H} dR_{B/A}$ by means of the Gauss-Manin connection. One incarnation of this structure is a refinement of the Hodge-filtration to a double filtration, whose graded pieces are expressible in terms of the cotangent complexes of $C/B$ and $B/A$. We now record the analogous construction for infinitesimal cohomology. 

Our first observation is that in the above context, $F^{\star}_{H} \mathbbl{\Pi}_{C/A}$ admits a more precise universal property. The map of $A$-algebras $B \to C$ yields a canonical map $F^{\star}_{H} \mathbbl{\Pi}_{B/A} \to F^{\star}_{H} \mathbbl{\Pi}_{C/A}$.

\begin{lemma} \label{infinitesimal cohomology of a triple} The functor
	\[ F^{\star}_{H}\mathbbl{\Pi}_{-/A} \colon \calg(\Mod_{B}) \to \calg(\fil \Mod_{F^{\star}_{H} \mathbbl{\Pi}_{B/A}}) \]
is left adjoint to $gr^{0}$.
\end{lemma}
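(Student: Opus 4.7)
The plan is to deduce the claim formally from the absolute adjunction $F^{\star}_H \mathbbl{\Pi}_{-/A} \dashv gr^0$ between $\calg(\Mod_A)$ and $\calg(\fil \Mod_A)$ by descending it to slice categories. The general principle I would invoke is that for any adjunction $L : \mathcal{C} \rightleftarrows \mathcal{D} : R$ and any object $c \in \mathcal{C}$, one obtains an induced adjunction $L_{c/} : \mathcal{C}_{c/} \rightleftarrows \mathcal{D}_{Lc/} : R_{Lc/}$, where $L_{c/}$ simply applies $L$ and $R_{Lc/}$ sends a map $Lc \to d$ to the composite $c \xrightarrow{\eta_c} RLc \to Rd$ obtained by precomposing with the unit.

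Applying this general fact to $c = B \in \calg(\Mod_A)$, I would obtain an adjunction
\[ \calg(\Mod_A)_{B/} \rightleftarrows \calg(\fil \Mod_A)_{F^{\star}_H \mathbbl{\Pi}_{B/A}/}. \]
I would then identify the left-hand side with $\calg(\Mod_B)$ and the right-hand side with $\calg(\fil \Mod_{F^{\star}_H \mathbbl{\Pi}_{B/A}})$ via the standard equivalence between algebras under a fixed base and algebras relative to that base (a consequence of the universal property of relative tensor products in the derived algebraic setting). Under these identifications, the left adjoint sends a $B$-algebra $C$ to $F^{\star}_H \mathbbl{\Pi}_{C/A}$ equipped with its canonical map from $F^{\star}_H \mathbbl{\Pi}_{B/A}$ induced by functoriality in $B \to C$, which is precisely the functor of the statement.

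The one point of care concerns the comparison of the two candidate right adjoints: $R_{Lc/}$ produces a $B$-algebra on $gr^0 D$ via the unit $B \to gr^0 F^{\star}_H \mathbbl{\Pi}_{B/A}$, whereas the functor $gr^0$ of the statement delivers an algebra over $gr^0 F^{\star}_H \mathbbl{\Pi}_{B/A}$ itself. These two structures coincide once we identify $gr^0 F^{\star}_H \mathbbl{\Pi}_{B/A} \simeq B$, which is the weight-zero case of the computation $gr^{\star}_H \mathbbl{\Pi}_{B/A} \simeq LSym^{\star}_B(L_{B/A}[-1])$ recorded after the definition, and is equivalent to the assertion that the unit of the absolute adjunction is an equivalence (i.e., that $F^{\star}_H \mathbbl{\Pi}_{-/A}$ is fully faithful). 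This is the only non-formal input; granting it, the remainder of the argument is bookkeeping about under-categories.
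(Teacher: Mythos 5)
Your proposal is correct and is essentially identical to the paper's argument: the paper likewise identifies both sides with under-categories of the absolute adjunction and invokes the standard fact (its Lemma \ref{adjunctions on slice cats}, i.e.\ the dual of Proposition 5.2.5.1 in \cite{Lurie}) that adjunctions descend to slices, with the right adjoint built from the unit. Your extra remark that identifying the induced right adjoint with $gr^{0}$ requires $gr^{0}F^{\star}_{H}\mathbbl{\Pi}_{B/A}\simeq B$ is a point the paper leaves implicit, but it does not change the route.
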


\begin{proof} We can witness both the target and the source as slice categories:
	\[ \calg(\Mod_{B}) \simeq (\calg(\Mod_{A}))_{B \downarrow} \]
and
	\[ \calg(\fil \Mod_{F^{\star}_{H} \mathbbl{\Pi}_{B/A}}) \simeq (\calg(\fil \Mod_{A}))_{F^{\star}_{H} \mathbbl{\Pi}_{B/A} \downarrow}. \]
The claim thus follows the definition of $F^{\star}_{H} \mathbbl{\Pi}_{-/A}$ as a left adjoint along with Lemma \ref{adjunctions on slice cats} below.
\end{proof}

\begin{lemma} \label{adjunctions on slice cats} Let $\mathcal{C}$ and $\mathcal{D}$ be presentable $\infty$-categories. Let $F\colon \mathcal{C} \to \mathcal{D}$ be a left adjoint to a functor $G\colon \mathcal{D} \to \mathcal{C}$. Fix an object $x \in \mathcal{C}$, and consider the induced functor
	\[ F_{x}\colon \mathcal{C}_{x \downarrow} \to \mathcal{D}_{F(x) \downarrow}. \]
Fix a unit map $\eta\colon Id_{\mathcal{C}} \to G \circ F$. Then there exists a functor
	\[ G_{x}\colon \mathcal{D}_{F(x) \downarrow} \to \mathcal{C}_{x \downarrow} \]
satisfying
	\[ G_{x}( F(x) \xrightarrow{\alpha} y) = (x \xrightarrow{\eta_{x}} G(F(x)) \xrightarrow{G(\alpha)} G(y)) \]
and $G_{x}$ is right adjoint to $F_{x}$.
\end{lemma}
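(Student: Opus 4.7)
The plan is to construct $G_x$ explicitly as a composite of standard functorial operations on coslice categories, and then verify the adjunction $F_x \dashv G_x$ via a hom-space computation that reduces directly to the given adjunction $F \dashv G$. For the construction, postcomposition with $G$ defines a functor $\mathcal{D}_{F(x) \downarrow} \to \mathcal{C}_{GF(x) \downarrow}$, and precomposition with the unit $\eta_x \colon x \to GF(x)$ defines a functor $\mathcal{C}_{GF(x) \downarrow} \to \mathcal{C}_{x \downarrow}$. Taking the composite produces the desired $G_x$, which by inspection sends $(F(x) \xrightarrow{\alpha} y)$ to $(x \xrightarrow{\eta_x} GF(x) \xrightarrow{G(\alpha)} G(y))$.

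To verify the adjunction, fix objects $(f \colon x \to z) \in \mathcal{C}_{x \downarrow}$ and $(\alpha \colon F(x) \to y) \in \mathcal{D}_{F(x) \downarrow}$. Recall that the mapping space in a coslice category is computed as a fiber of the corresponding source-restriction map on mapping spaces in the ambient category. Thus $\mathrm{Map}_{\mathcal{D}_{F(x) \downarrow}}(F_x(f), \alpha)$ identifies with the fiber of $\mathrm{Map}_{\mathcal{D}}(F(z), y) \xrightarrow{- \circ F(f)} \mathrm{Map}_{\mathcal{D}}(F(x), y)$ at the point $\alpha$, while $\mathrm{Map}_{\mathcal{C}_{x \downarrow}}(f, G_x(\alpha))$ identifies with the fiber of $\mathrm{Map}_{\mathcal{C}}(z, G(y)) \xrightarrow{- \circ f} \mathrm{Map}_{\mathcal{C}}(x, G(y))$ at $G_x(\alpha) = G(\alpha) \circ \eta_x$. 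The adjunction $F \dashv G$ supplies a natural equivalence $\mathrm{Map}_{\mathcal{D}}(F(-), -) \simeq \mathrm{Map}_{\mathcal{C}}(-, G(-))$, and by its construction from the unit $\eta$ the adjunct of $\alpha$ is precisely $G(\alpha) \circ \eta_x$. Under this equivalence the two source-restriction maps correspond and the two basepoints agree, so passing to fibers yields the required natural equivalence of mapping spaces, establishing $F_x \dashv G_x$.

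The main obstacle is purely bookkeeping: one must work at the level of functors $\mathcal{C}_{x \downarrow}^{op} \times \mathcal{D}_{F(x) \downarrow} \to \mathcal{S}$, not with mere hom-sets, and coherently combine the natural equivalence supplied by the adjunction with the concrete formula for the adjunct via the unit. Alternatively, one can bypass mapping spaces by exhibiting a unit transformation directly: the morphism $\eta_x \colon x \to GF(x)$ in $\mathcal{C}$, read as a morphism $(x \xrightarrow{\mathrm{id}} x) \to (x \xrightarrow{\eta_x} GF(x)) = G_x(F_x(\mathrm{id}_x))$ in $\mathcal{C}_{x \downarrow}$, extends by naturality of $\eta$ to a transformation $\mathrm{id}_{\mathcal{C}_{x \downarrow}} \to G_x \circ F_x$, whose universal property in $\mathcal{C}_{x \downarrow}$ is inherited from that of $\eta$ in $\mathcal{C}$.
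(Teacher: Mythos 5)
Your argument is correct, but it takes a different route from the paper: the paper's entire proof is a citation of (the categorical dual of) Proposition 5.2.5.1 in \cite{Lurie}, which asserts exactly this statement for coslice categories, whereas you supply a self-contained proof of that result. Your construction of $G_{x}$ as postcomposition with $G$ followed by precomposition with $\eta_{x}$, and the verification of the adjunction by identifying both mapping spaces as fibers of the source-restriction maps and matching them under the natural equivalence $\mathrm{Map}_{\mathcal{D}}(F(-),-)\simeq \mathrm{Map}_{\mathcal{C}}(-,G(-))$ (under which $\alpha$ corresponds to its adjunct $G(\alpha)\circ\eta_{x}$), is sound; and your fallback of exhibiting the unit $\mathrm{id}_{\mathcal{C}_{x\downarrow}}\to G_{x}\circ F_{x}$ induced by $\eta$ and checking it induces equivalences on mapping spaces is precisely the criterion of Proposition 5.2.2.8 in \cite{Lurie}, which cleanly handles the coherence bookkeeping you flag. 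The citation buys brevity and offloads the $\infty$-categorical coherence issues entirely; your argument buys transparency and makes visible the explicit formula for $G_{x}$ that the paper actually uses later (e.g.\ in Lemma \ref{infinitesimal cohomology of a triple}). Note also that presentability of $\mathcal{C}$ and $\mathcal{D}$ plays no role in either argument.
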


\begin{proof} This is (the categorical dual of) Proposition 5.2.5.1 in \cite{Lurie}.
\end{proof}

\begin{lemma} \label{infinitesimal base change} Let $A \to B \to C$ be a triple of derived commutative rings. Then there is a canonical equivalence of filtered derived $B$-algebras
	\[ F^{\star}_{H} \mathbbl{\Pi}_{C/A} \otimes_{F^{\star}_{H} \mathbbl{\Pi}_{B/A}} ins^{0}(B) \simeq F^{\star}_{H}\mathbbl{\Pi}_{C/B}. \]
\end{lemma}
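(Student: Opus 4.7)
The strategy is to show that both sides of the equivalence corepresent the same functor on $\calg(\fil\Mod_B)$, namely $D \mapsto \mathrm{Map}_{\calg(\Mod_B)}(C, gr^{0}D)$, and then conclude by Yoneda. The right hand side corepresents this functor by the very definition of $F^{\star}_{H}\mathbbl{\Pi}_{-/B}$ as the left adjoint to $gr^{0} \colon \calg(\fil\Mod_B) \to \calg(\Mod_B)$.

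For the left hand side, I would factor the computation through Lemma \ref{infinitesimal cohomology of a triple} and base change. The counit of $F^{\star}_{H}\mathbbl{\Pi}_{-/A} \dashv gr^{0}$ at $ins^{0}(B)$ supplies a canonical map $F^{\star}_{H}\mathbbl{\Pi}_{B/A} \to ins^{0}(B)$ in $\calg(\fil\Mod_A)$, endowing $ins^{0}(B)$ with the structure of a filtered $F^{\star}_{H}\mathbbl{\Pi}_{B/A}$-algebra. The base change adjunction then identifies the mapping space out of $F^{\star}_{H}\mathbbl{\Pi}_{C/A} \otimes_{F^{\star}_{H}\mathbbl{\Pi}_{B/A}} ins^{0}(B)$ in $\calg(\fil\Mod_B)$ with the mapping space of filtered $F^{\star}_{H}\mathbbl{\Pi}_{B/A}$-algebras from $F^{\star}_{H}\mathbbl{\Pi}_{C/A}$ to $D$, and Lemma \ref{infinitesimal cohomology of a triple} applied to the triple $A \to B \to C$ then identifies this further with $\mathrm{Map}_{\calg(\Mod_B)}(C, gr^{0}D)$.

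The main technical point—and the only step where some care is required—is checking that the $B$-algebra structure on $gr^{0}D$ appearing at the end of this computation (induced by the canonical map $B \to gr^{0}F^{\star}_{H}\mathbbl{\Pi}_{B/A}$) coincides with the structure inherent to $D \in \calg(\fil\Mod_B)$. This follows by applying $gr^{0}$ to the commuting triangle of filtered $A$-algebra maps $F^{\star}_{H}\mathbbl{\Pi}_{B/A} \to ins^{0}(B) \to D$ and using the identification $gr^{0} ins^{0}(B) \simeq B$ together with Lemma \ref{adjunctions on slice cats}. Once this compatibility is established, the two corepresenting objects are canonically equivalent, yielding the desired equivalence of filtered derived $B$-algebras.
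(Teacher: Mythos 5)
Your proposal is correct and matches the paper's own proof: both identify the universal properties of the two sides via the base change adjunction along $F^{\star}_{H}\mathbbl{\Pi}_{B/A} \to ins^{0}(B)$ followed by Lemma \ref{infinitesimal cohomology of a triple}. The extra compatibility check you flag is a reasonable point of care that the paper leaves implicit.
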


\begin{proof} We identify the universal property of the left hand side with that of the right hand side. Fix a filtered $B$-algebra $F^{\star}R$. Then

\begin{align*}
Hom_{\calg(\fil \Mod_{B})}&(F^{\star}_{H} \mathbbl{\Pi}_{C/A}\otimes_{F^{\star}_{H} \mathbbl{\Pi}_{B/A}} ins^{0}(B), F^{\star}R)\\
	& \simeq Hom_{\calg(\fil \Mod_{F^{\star}_{H} \mathbbl{\Pi}_{B/A}})}(F^{\star}_{H}\mathbbl{\Pi}_{C/A}, F^{\star}R) \\
	& \simeq Hom_{\calg_{B}}(C, gr^{0}R) \\
	& \simeq Hom_{\calg(\fil \Mod_{B})}( F^{\star}_{H} \mathbbl{\Pi}_{C/B}, F^{\star}R)
\end{align*}
as desired.
\end{proof}

\begin{construction} Fix $F^{\star} R \in \calg(F^{\geq 0} \Mod_{k})$. For each $i \in \Z_{\geq 0}$ we associate a filtered $F^{\star} R$ module $F^{\geq i} R$ by the formula
	\[ F^{\geq i}R := ins^{i}(F^{i}R) \otimes_{ins^{0}(F^{0}R)} F^{\star}R. \]
Observe that this is concentrated in weights $\geq i$ and in weight $j \geq i$ agrees with $F^{j}R$.
\end{construction}

\begin{construction} Let $A \to B \to C$ be a triple of rings. We define a filtration on $F^{\star}_{H}\mathbbl{\Pi}_{C/A}$ by declaring
	\[ F_{KO}^{i}(F^{\star}_{H} \mathbbl{\Pi}_{C/A}) := F^{\star}_{H} \mathbbl{\Pi}_{C/A} \otimes_{F^{\star}_{H} \mathbbl{\Pi}_{B/A}} F^{\geq i}_{H} \mathbbl{\Pi}_{B/A}. \]
Here we are taking the tensor product in $\fil \Mod_{A}$. We refer to this as \emph{the Katz-Oda filtration} (following the terminology of \cite{Guo-Li}).
\end{construction}

\begin{notation}
The Katz-Oda filtration is evidently functorial in $C$, and yields a functor
	\[ F^{\star, \star}\mathbbl{\Pi}_{-/A} \colon \calg(\Mod_{B}) \to F^{\geq (0,0)} \calg(\Mod_{A}) \]
to bi-filtered derived commutative $A$-algebras, where the first index is recording the Katz-Oda filtration and the second the internal filtration. It is helpful to unwind the notation explicitly:
	\[ F^{i, \star} \mathbbl{\Pi}_{C/A} := F^{i}_{KO}F^{\star}_{H} \mathbbl{\Pi}_{C/A} \]
	\[ F^{i, j} \mathbbl{\Pi}_{C/A} := F^{j}( F^{\star}_{H}\mathbbl{\Pi}_{C/A} \otimes_{F^{\star}_{H} \mathbbl{\Pi}_{B/A}} F^{\geq i}_{H} \mathbbl{\Pi}_{B/A}) \]
\end{notation}

\begin{lemma}[compare with Lemma 3.13 in \cite{Guo-Li}] \label{double filtration} Let $A \to B \to C$ be a triple of rings. Then $F^{\star, \star} \mathbbl{\Pi}_{C/A}$ enjoys the following properties:

\begin{enumerate}

\item For any $j$, we have $F^{0,j} \mathbbl{\Pi}_{C/A} \simeq F^{j}_{H}\mathbbl{\Pi}_{C/A}$.
\item For each $0 \leq j \leq i$, we have an identification
	\[ F^{i,j}\mathbbl{\Pi}_{C/A} \simeq F^{0}_{H}\mathbbl{\Pi}_{C/A} \otimes_{F^{0}_{H}\mathbbl{\Pi}_{B/A}} F^{i} \mathbbl{\Pi}_{B/A} \simeq F^{i,0} \mathbbl{\Pi}_{C/A}. \]
\item For each $0 \leq i \leq j$, we have a natural identification
	\[ Cofib(F^{i+1, j} \mathbbl{\Pi}_{C/A} \to F^{i, j} \mathbbl{\Pi}_{C/A}) \simeq F^{j-i} \mathbbl{\Pi}_{C/B} \otimes_{B} LSym^{i}(L_{B/A}[-1]). \]
\end{enumerate}
\end{lemma}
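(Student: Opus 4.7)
The three parts are handled in sequence, with parts (2) and (3) resting on a common weight-by-weight analysis of the auxiliary filtration $F^{\geq i}R$. Part (1) is immediate: unwinding $F^{\geq 0}R = ins^{0}(F^{0}R) \otimes_{ins^{0}(F^{0}R)} F^{\star}R \simeq F^{\star}R$ and applying this to $R = \mathbbl{\Pi}_{B/A}$ identifies $F^{0, \star}\mathbbl{\Pi}_{C/A}$ with $F^{\star}_{H}\mathbbl{\Pi}_{C/A}$, and passing to weight $j$ yields the claim.

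The key technical input for parts (2) and (3) is a description of $F^{\geq i}R$ in each weight, which I will extract by unwinding the defining formula via Day convolution: $F^{\geq i}R$ is constant at $F^{i}R$ in weights $\leq i$ (with identity transition maps), and agrees with $F^{k}R$ in weight $k \geq i$. Comparing these descriptions for $F^{\geq i}R$ and $F^{\geq i+1}R$ level by level yields a canonical cofiber sequence of filtered $F^{\star}R$-modules
\begin{equation*}
F^{\geq i+1}R \to F^{\geq i}R \to ins^{i}(gr^{i}R),
\end{equation*}
verified by direct inspection of each weight: the cofiber is $gr^{i}R$ for $k \leq i$ and $0$ for $k > i$.

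For part (3), I will tensor the above cofiber sequence with $F^{\star}_{H}\mathbbl{\Pi}_{C/A}$ over $F^{\star}_{H}\mathbbl{\Pi}_{B/A}$ and extract weight $j$. The crucial observation is that the $F^{\star}_{H}\mathbbl{\Pi}_{B/A}$-module structure on $ins^{i}(gr^{i}_{H}\mathbbl{\Pi}_{B/A})$ factors through the augmentation $F^{\star}_{H}\mathbbl{\Pi}_{B/A} \to ins^{0}(B)$, since the action of any positive-weight class pushes into higher weights of the constant-in-weight piece of $F^{\geq i}R$, where it is killed after passing to the cofiber. The base change formula of Lemma \ref{infinitesimal base change} then gives
\begin{equation*}
F^{\star}_{H}\mathbbl{\Pi}_{C/A} \otimes_{F^{\star}_{H}\mathbbl{\Pi}_{B/A}} ins^{i}(gr^{i}_{H}\mathbbl{\Pi}_{B/A}) \simeq F^{\star}_{H}\mathbbl{\Pi}_{C/B} \otimes_{ins^{0}(B)} ins^{i}(gr^{i}_{H}\mathbbl{\Pi}_{B/A}),
\end{equation*}
and a Day convolution computation shows that weight $j$ of the right-hand side (for $j \geq i$) equals $F^{j-i}_{H}\mathbbl{\Pi}_{C/B} \otimes_{B} gr^{i}_{H}\mathbbl{\Pi}_{B/A}$, since the dominant contribution to the weight-wise colimit comes from the pair $(p, q) = (j-i, i)$. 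Substituting the known identification $gr^{i}_{H}\mathbbl{\Pi}_{B/A} \simeq LSym^{i}_{B}(L_{B/A}[-1])$ yields the statement. Part (2) proceeds analogously: after using associativity to rewrite $F^{i, \star}\mathbbl{\Pi}_{C/A} \simeq F^{\star}_{H}\mathbbl{\Pi}_{C/A} \otimes_{ins^{0}(B)} ins^{i}(F^{i}_{H}\mathbbl{\Pi}_{B/A})$, the analogous Day convolution in weight $j \leq i$ collapses (the dominant contribution is now at $q = j$) to the $j$-independent expression $F^{0}_{H}\mathbbl{\Pi}_{C/A} \otimes_{B} F^{i}_{H}\mathbbl{\Pi}_{B/A}$, which is therefore equal to $F^{i, 0}\mathbbl{\Pi}_{C/A}$, establishing both equivalences of (2) at once. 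The main obstacle throughout will be carrying out the Day convolution computations carefully in the filtered derived setting: identifying the dominant term in each weight-wise colimit, and verifying that passage to cofibers and base change along $F^{\star}_{H}\mathbbl{\Pi}_{B/A} \to ins^{0}(B)$ interact correctly with the relevant module structures.
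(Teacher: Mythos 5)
Your overall strategy coincides with the paper's: part (1) is definitional, part (3) is obtained by identifying the cofiber of $F^{\geq i+1}_{H}\mathbbl{\Pi}_{B/A} \to F^{\geq i}_{H}\mathbbl{\Pi}_{B/A}$ with (the insertion of) $gr^{i}_{H}\mathbbl{\Pi}_{B/A} \simeq LSym^{i}_{B}(L_{B/A}[-1])$, base changing along the augmentation to $ins^{0}(B)$ via Lemma \ref{infinitesimal base change}, and reading off weight $j$; and part (2) is a Day-convolution collapse of the relative tensor product (the paper runs this through the Bar resolution, you invoke associativity, which amounts to the same computation). Parts (1) and (3) are fine.

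There is, however, a genuine error in your part (2): you conflate $F^{0}_{H}\mathbbl{\Pi}_{B/A} = \mathbbl{\Pi}_{B/A}$ with $gr^{0}_{H}\mathbbl{\Pi}_{B/A} = B$. Since $F^{\geq i}R := ins^{i}(F^{i}R) \otimes_{ins^{0}(F^{0}R)} F^{\star}R$, associativity rewrites $F^{i,\star}\mathbbl{\Pi}_{C/A}$ as $F^{\star}_{H}\mathbbl{\Pi}_{C/A} \otimes_{ins^{0}(F^{0}_{H}\mathbbl{\Pi}_{B/A})} ins^{i}(F^{i}_{H}\mathbbl{\Pi}_{B/A})$, i.e.\ the base is $ins^{0}(\mathbbl{\Pi}_{B/A})$, not $ins^{0}(B)$; correspondingly the weight-$j$ collapse for $j \leq i$ yields $F^{0}_{H}\mathbbl{\Pi}_{C/A} \otimes_{F^{0}_{H}\mathbbl{\Pi}_{B/A}} F^{i}_{H}\mathbbl{\Pi}_{B/A}$, which is what the lemma asserts, rather than your $F^{0}_{H}\mathbbl{\Pi}_{C/A} \otimes_{B} F^{i}_{H}\mathbbl{\Pi}_{B/A}$. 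The latter does not even typecheck without choosing a $B$-algebra structure on these objects (only the map $\mathbbl{\Pi}_{B/A} \to B$ exists, not a section), and it is not equivalent to the claimed object. Note the contrast with part (3), where passing to the cofiber genuinely kills the action of $F^{\geq 1}_{H}\mathbbl{\Pi}_{B/A}$ and so the module structure does descend to $ins^{0}(B)$; no such descent happens before taking cofibers, which is exactly the step where your part (2) goes wrong. The fix is purely notational-level and the rest of your argument survives it.
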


\begin{proof} For (1): This follows immediately from our definition.

For (2): We can compute the tensor product defining $F^{i,j}\mathbbl{\Pi}_{C/A}$ via the Bar resolution:
\begin{center}
\begin{align*}
	 F^{i,j} \mathbbl{\Pi}_{C/A} & := F^{j}(F^{\star}_{H} \mathbbl{\Pi}_{C/A} \otimes_{F^{\star}_{H} \mathbbl{\Pi}_{B/A}} F^{\geq i}_{H}\mathbbl{\Pi}_{B/A}) \\
	  			& \simeq colim_{\Delta^{op}}\left( ... \rightthreearrow F^{j}(F^{\star}_{H}\mathbbl{\Pi}_{C/A} \otimes_{A} F^{\star}_{H}\mathbbl{\Pi}_{B/A} \otimes_{A} F^{\geq i}_{H}\mathbbl{\Pi}_{B/A}) \rightrightarrows F^{j}(F^{\star}_{H} \mathbbl{\Pi}_{C/A} \otimes_{A} F^{\geq i}_{H} \mathbbl{\Pi}_{B/A})\right)
\end{align*}
\end{center}
where the tensor products adorned by $A$ are given by Day convolution. Unpacking the Day convolution expresses each term in the simplicial object as a colimit:
\begin{align*}
	 F^{j}(F^{\star}_{H} \mathbbl{\Pi}_{C/A} \otimes_{A} (F^{\star}_{H} & \mathbbl{\Pi}_{B/A})^{\otimes_{A} k}  \otimes_{A} F^{\geq i}_{H}\mathbbl{\Pi}_{B/A}) \\
	& \simeq colim_{n+m \geq j} F^{n}_{H} \mathbbl{\Pi}_{C/A} \otimes_{A} F^{m}\left( (F^{\star}_{H} \mathbbl{\Pi}_{B/A})^{\otimes_{A} k} \otimes_{A} F^{\geq i}_{H}\mathbbl{\Pi}_{B/A}\right) \\
	& \simeq F^{0}_{H}\mathbbl{\Pi}_{C/A} \otimes_{A} (F^{0}_{H} \mathbbl{\Pi}_{B/A})^{\otimes_{A} k} \otimes_{A} F^{i}_{H}\mathbbl{\Pi}_{B/A}
\end{align*}
where the colimit collapses thanks to the hypothesis that $j \leq i$ (and $F^{\geq i}_{H} \mathbbl{\Pi}_{B/A}$ is concentrated in weights $\geq i$). Inputting this back in to the Bar resolution yields the claim.

For (3): By definition we have
	\[ gr^{i}_{KO}(F^{\star}_{H}\mathbbl{\Pi}_{C/A}) \simeq F^{\star}_{H}\mathbbl{\Pi}_{C/A} \otimes_{F^{\star}_{H} \mathbbl{\Pi}_{B/A}} LSym^{i}(L_{B/A}[-1]). \]	
We can now appeal to Lemma \ref{infinitesimal base change} to rewrite the right hand side as
\begin{align*}
 F^{\star}_{H}\mathbbl{\Pi}_{C/A} \otimes_{F^{\star}_{H} \mathbbl{\Pi}_{B/A}} LSym_{B}^{i}(L_{B/A}[-1]) & \simeq (F^{\star}_{H} \mathbbl{\Pi}_{C/A} \otimes_{F^{\star}_{H} \mathbbl{\Pi}_{B/A}} B) \otimes_{B} LSym_{B}^{i}(L_{B/A}[-1]) \\
 	& \simeq F^{\star}_{H} \mathbbl{\Pi}_{C/B} \otimes_{B} LSym^{i}(L_{B/A}[-1]).
\end{align*}
where $LSym^{i}(L_{B/A}[-1])$ has weight $i$. The claim now follows by weight considerations.
\end{proof}

\begin{construction} Keeping with the notation of the preceding lemma, let us denote by 
	\[ X^{i,j} := Cone(F^{i, j+1} \mathbbl{\Pi}_{C/A} \to F^{i,j} \mathbbl{\Pi}_{C/A}). \]
Observe that we have natural maps
	\[ X^{i,j} \xrightarrow{\alpha_{i,j}} X^{i-1, j} \]
arising from the structure maps of the filtration.
In addition, the maps $F^{i,j}\mathbbl{\Pi}_{C/A} \to F^{0,j}\mathbbl{\Pi}_{C/A} \simeq F^{j}_{H}\mathbbl{\Pi}_{C/A}$ induce canonical maps
	\[ X^{i,j} \to gr^{j}_{H} \mathbbl{\Pi}_{C/A}. \]
We define the \emph{vertical filtration} on $gr^{\star}_{H} \mathbbl{\Pi}_{C/A}$ by
	\[ F_{n}^{vert}(gr^{\star}_{H} \mathbbl{\Pi}_{C/A}) := \bigoplus_{k} X^{k, k+n}. \]
The structure maps are given by the direct sum of the $\alpha_{k, k+n}$, which yields an increasing filtration.

\end{construction}

\begin{theorem}[compare with Lemma 4.7 in \cite{Li-Liu}] \label{vertical filtration} The vertical filtration endows $gr^{\star}_{H} \mathbbl{\Pi}_{C/A}$ with the structure of a derived filtered algebra object in graded modules over $gr^{\star}_{H}(\mathbbl{\Pi}_{B/A})$ - i.e. as an object in the category $ \calg(F_{\geq 0} (\gr \Mod_{gr^{\star}_{H} \mathbbl{\Pi}_{B/A}}))$. Furthermore, the filtration is exhaustive and the associated graded is given by
	\[ gr_{i}^{vert}(gr^{\star}_{H} \mathbbl{\Pi}_{C/A}) \simeq LSym^{i}_{C}(L_{C/B}[-1]) \otimes_{B} LSym_{B}^{\star}(L_{B/A}[-1]). \]

\end{theorem}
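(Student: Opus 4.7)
The plan is to realize the vertical filtration as a reindexing of the Katz-Oda bifiltration $F^{\star,\star}\mathbbl{\Pi}_{C/A}$ from Lemma \ref{double filtration}, and to deduce all three claims from a single application of part (3) of that lemma. The bifiltration is by construction a bifiltered commutative algebra: it arises via Day convolution as $F^{\star}_{H}\mathbbl{\Pi}_{C/A} \otimes_{F^{\star}_{H}\mathbbl{\Pi}_{B/A}} F^{\geq \star}_{H}\mathbbl{\Pi}_{B/A}$, where the first index records the Katz-Oda filtration from $F^{\geq \star}_{H}\mathbbl{\Pi}_{B/A}$ and the second the internal Hodge filtration. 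Passing to cofibers in the Hodge direction defines $X^{i,j} := \mathrm{Cofib}(F^{i, j+1}\mathbbl{\Pi}_{C/A} \to F^{i,j}\mathbbl{\Pi}_{C/A})$, which assembles into a graded-in-$j$ algebra equipped with a compatible filtration in $i$. The vertical filtration is then obtained by the anti-diagonal reindexing $(i,j) \mapsto (j-i, j)$: in weight $j$, the $n$-th filtered piece of $F_n^{vert}$ is $X^{j-n, j}$, and the structure map $F_{n-1}^{vert} \hookrightarrow F_n^{vert}$ in weight $j$ is precisely $\alpha_{j-n+1, j}$.

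To obtain the algebra structure asserted in the statement, I would verify that the anti-diagonal reindexing is symmetric monoidal between the relevant Day-convolution $\infty$-categories, so that it transports the bifiltered algebra structure on the $X^{i,j}$ to an algebra in filtered graded objects; the canonical map from $F^{\star, \star}\mathbbl{\Pi}_{B/A}$ sits at vertical degree zero and passes to $gr^{\star}_H \mathbbl{\Pi}_{B/A}$ after Hodge-gradation, endowing $gr^{\star}_H \mathbbl{\Pi}_{C/A}$ with the stated structure in $\calg(F_{\geq 0}(\gr \Mod_{gr^{\star}_H \mathbbl{\Pi}_{B/A}}))$. Exhaustiveness is then immediate from Lemma \ref{double filtration}(1): in weight $j$, the step $n = j$ yields $X^{0, j} = gr^{j}_H \mathbbl{\Pi}_{C/A}$, while $X^{-1, j} = 0$ caps the filtration from below.

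The main content is the associated graded computation. In weight $j$, the vertical graded piece $gr_n^{vert}$ is $\mathrm{Cofib}(\alpha_{j-n+1, j} : X^{j-n+1, j} \to X^{j-n, j})$, which is the total cofiber of the square with corners $F^{j-n+1, j+1}$, $F^{j-n+1, j}$, $F^{j-n, j+1}$, $F^{j-n, j}$ (of $\mathbbl{\Pi}_{C/A}$). Computing this total cofiber via the two Katz-Oda-direction (column) cofibers and applying Lemma \ref{double filtration}(3) with $i = j-n$ identifies these columns with $F^{n+1}_H \mathbbl{\Pi}_{C/B} \otimes_B LSym^{j-n}(L_{B/A}[-1])$ and $F^{n}_H \mathbbl{\Pi}_{C/B} \otimes_B LSym^{j-n}(L_{B/A}[-1])$ respectively. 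Their cofiber is $gr^{n}_H \mathbbl{\Pi}_{C/B} \otimes_B LSym^{j-n}(L_{B/A}[-1])$, which by the identification of the Hodge associated graded with the derived symmetric algebra on the shifted cotangent complex equals $LSym^{n}_C(L_{C/B}[-1]) \otimes_B LSym^{j-n}_B(L_{B/A}[-1])$. Summing over $j$ yields the claimed formula.

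The main obstacle is the first step: promoting the anti-diagonal reindexing to a genuinely symmetric monoidal functor at the $\infty$-categorical level, so that the resulting algebra-in-filtered-graded-modules structure on $gr^{\star}_H \mathbbl{\Pi}_{C/A}$ is extracted coherently from $F^{\star,\star}\mathbbl{\Pi}_{C/A}$ rather than only at the level of homotopy groups. Once this bookkeeping is carried out, both exhaustiveness and the associated graded identification reduce to direct applications of Lemma \ref{double filtration}.
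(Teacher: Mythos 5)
Your proposal is correct and takes essentially the same route as the paper: the paper likewise obtains the algebra structure by pulling back the Hodge-graded Katz--Oda bifiltration along the symmetric monoidal reindexing $(i,k)\mapsto(k,k+i)$, declares exhaustiveness by inspection, and identifies the associated graded by computing $\mathrm{Cone}(\alpha_{i,j})$ from Lemma \ref{double filtration}(3). Your explicit total-cofiber computation of $\mathrm{Cone}(\alpha_{i,j})$ merely spells out a step the paper leaves implicit.
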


\begin{proof} The filtration is exhaustive by inspection and the identification of the associated graded follows immediately by appealing to Lemma \ref{double filtration} (3), to identify the cone of the $\alpha_{i,j}$:
	\[ Cone(\alpha_{i,j}) \simeq LSym_{C}^{j-i +1}(L_{C/B}[-1]) \otimes_{B} LSym^{i-1}_{B}(L_{B/A}[-1]). \]	
It remains to be seen that the filtration may be endowed with a derived algebraic structure. We will denote by
	\[ gr^{(-,\star)} \colon F^{\geq (0,0)}\Mod_{A} \to \fil \gr \Mod_{A} \]
the functor which takes a doubly filtered object and passes to the associated graded in the second weight. This is obviously a morphism of derived algebraic contexts, and the vertical filtration is obtained by a simple reindexing of this construction. Namely, consider the symmetric monoidal functor
	\[ p\colon \Z_{\geq 0}^{op} \times \Z_{\geq 0}^{disc} \xrightarrow{ (i,k) \to (k, k+i)} \Z_{\geq 0} \times \Z_{\geq 0}^{disc}. \]
Pullback along $p$ yields a morphism of derived algebraic contexts $F^{\geq 0} \gr \Mod_{A}\to  F_{\geq 0} \gr \Mod_{A}$, and
	\[ F^{v}_{\star}(gr^{\star} \mathbbl{\Pi}_{R/A}) \simeq p^{\star} gr^{(-,\star)}(F^{\star}_{KO} F^{\star}_{H} \mathbbl{\Pi}_{R/A}) \]
from which we conclude.
\end{proof}

%----------------------------------------------------------------------------------------------------------------------------------------------------------------------------------------------------------------------------------------------------

\subsection{$I$-adic Envelopes}

Fix a derived algebraic context $\mathcal{C}$ and an algebra $F^{\star} A \in  \calg(\pair \mathcal{C})$.

\begin{notation} We view $F^{\star} A$ as a ring equipped with a generalized ideal, and adopt the following conventions which mimic standard notation in prismatic cohomology.

\begin{itemize} 

\item We will denote by $I := ev^{1}(F^{\star}A)$, $A := ev^{0}(F^{\star}A)$, and often write $(A,I)$ in place of $F^{\star}A$.

\item We will denote by $I^{\star}A \in \calg(F^{\geq 0}\mathcal{C})$ the $I$-adic filtration associated to $(A,I)$ as defined in Notation \ref{I-adic filtration}.

\item We will denote by $\overline{A} := gr^{0}(F^{\star}A) \simeq cofib(I \to A)$.

\item Given an $\overline{A}$-module $M$, we will denote by $M\{n\}$ the tensor product
	\[ M \otimes_{\overline{A}} gr^{n}(I^{\star}A). \]

\end{itemize}
\end{notation}

\begin{definition} \label{I-adic} 
We define the \emph{$I$-adic filtration functor}
	\[ I^{\star} \colon \calg(\Mod_{A}) \to \calg(\fil \Mod_{I^{\star}A}) \]
as the composite
	\[ \calg(\Mod_{A}) \xrightarrow{ins^{0}} \calg(\fil \Mod_{A}) \xrightarrow{ - \otimes_{A} I^{\star}A} \calg(\fil \Mod_{I^{\star}A}). \]

\end{definition}

\begin{definition} An algebra $(A,I) \in \calg(\pair\mathcal{C})$ is said to be a \emph{Cartier pair} if $I$ is a tensor invertible $A$-module.

\end{definition}

\begin{lemma} Let $(A,I)$ be a Cartier pair. Then the functor $I^{\star}$ of \ref{I-adic} preserves small limits and colimits. In particular, it admits a left adjoint.

\end{lemma}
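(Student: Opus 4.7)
The plan is to factor $I^{\star}$ as the composite
\[ \calg(\Mod_A) \xrightarrow{ins^0} \calg(\fil \Mod_A) \xrightarrow{- \otimes_A I^{\star}A} \calg(\fil \Mod_{I^{\star}A}) \]
coming from Definition \ref{I-adic}, and to verify that each factor preserves both limits and colimits. Colimit preservation will be immediate: both $ins^0$ and $ev^0$ are symmetric monoidal (one checks $ev^0$ is symmetric monoidal by the Day convolution formula $(F \otimes G)^0 \simeq F^0 \otimes G^0$), so the adjunction $ins^0 \dashv ev^0$ lifts to algebras, and base change is always left adjoint to restriction.

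For limit preservation, I will treat each factor. The left adjoint $gr^0 \colon \fil \Mod_A \to \Mod_A$ is also symmetric monoidal (direct computation using the Day convolution formula together with the fact that $gr^0(ins^0 A) \simeq A$), so the adjunction $gr^0 \dashv ins^0$ likewise lifts to algebras, making $ins^0$ a right adjoint on algebras. For the base change, limits in $\calg(\fil \Mod_{I^{\star}A})$ reduce via forgetful functors to pointwise limits in $\Mod_A$, so it suffices to verify pointwise limit preservation. A direct Day convolution calculation shows
\[ (I^{\star} R)^n \;=\; (ins^0(R) \otimes_A I^{\star}A)^n \;\simeq\; colim_{j \geq n}\, R \otimes_A I^{\otimes j} \;\simeq\; R \otimes_A I^{\otimes n}, \]
where the final equivalence uses that the indexing poset $\{j : j \geq n\}^{op}$ has terminal object $j = n$ and that the restricted diagram of nonzero values is cofinal in the full slice (since $ins^0 R$ vanishes outside weight zero). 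Because $(A, I)$ is a Cartier pair, each $I^{\otimes n}$ is an invertible $A$-module, so $- \otimes_A I^{\otimes n}$ is an autoequivalence of $\Mod_A$ and preserves limits. This gives pointwise preservation and hence limit preservation of $I^{\star}$.

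Once both limit and colimit preservation are in hand, the existence of the left adjoint follows from the adjoint functor theorem for presentable $\infty$-categories (Corollary 5.5.2.9 in \cite{Lurie}). The main hurdle is the Day convolution identification $(I^{\star} R)^n \simeq R \otimes_A I^{\otimes n}$, which requires a careful analysis of the slice category associated to left Kan extension along the addition $\Z_{\geq 0}^{op} \times \Z_{\geq 0}^{op} \to \Z_{\geq 0}^{op}$; once the pointwise description is available, invertibility of $I$ makes the rest essentially formal.
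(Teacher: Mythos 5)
Your proof is correct and rests on the same crux as the paper's: the pointwise identification $ev_{n}\circ I^{\star} \simeq - \otimes_{A} I^{\otimes_{A} n}$ together with invertibility of $I$, the paper deducing both limit and colimit preservation at once from the fact that the evaluation functors are jointly conservative and preserve limits and colimits. The only (cosmetic) difference is that you handle colimits separately via the symmetric monoidal adjunctions $ins^{0} \dashv ev^{0}$ and base change, which is fine but not needed once the pointwise description is in hand.
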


\begin{proof} Since the evaluation functors
	\[ ev_{i}\colon  \calg(\fil \Mod_{A}) \to \Mod_{A} \]
are jointly conservative and preserve small limits and colimits, it suffices to check that $ev_{i} \circ I^{\star}$ preservers small limits and colimits. But 
\[ev_{i} \circ I^{\star} \simeq - \otimes_{A} LSym_{A}^{i}(I) \simeq - \otimes_{A} I^{\otimes_{A} i}\]
is given by tensoring with a tensor invertible $A$-module, from which the claim follows.
\end{proof}

\begin{definition} The left adjoint to $I^{\star}$ is denoted by $\calgenv_{I}$. We will refer to this adjoint as the (uncompleted) \emph{derived $I$-adic envelope}. We may also restrict attention to the $\infty$-category of $(p,I)$-complete $A$-algebras, $\widehat{\calg}(\Mod_{A})$, in which case the $I$-adic filtration functor factors (by definition) through the $\infty$-category of complete filtered algebras, $ \calg(\widehat{\fil} \Mod_{A})$. In this case, we denote the left adjoint by $\compcalgenv_{I}$, and still refer to it as the derived $I$-adic envelope.

\end{definition}

\begin{variant} \label{prismatic envelope} Suppose $(A,I)$ is a Cartier pair such that the underlying algebra $A$ is equipped with a $\delta$-structure. The same construction carries through for derived $\delta$-$A$-algebras, yielding a left adjoint
	\[ \env_{I}\colon \dalg(\fil \Mod_{A}) \to \dalg(\Mod_{A}) \]
to the $I$-adic filtration functor.	We will still refer to this as the derived $I$-adic envelope, but distinguish it notationally as depicted. 
\end{variant}

It will also be useful to study the analogue of envelopes on graded algebras.

\begin{variant} \label{graded envelope} The functor
	\[  \calg(\Mod_{\overline{A}}) \xrightarrow{ins^{0}(-) \otimes_{\overline{A}} gr^{\star}(I^{\star}A)} \calg(\gr \Mod_{gr^{\star}(I^{\star}A)}) \]
also admits a left adjoint.

Indeed, the base change functor on graded algebras preserves all limits. One can check such a claim pointwise on the level of underlying modules, where we see that
	\[ (M^{\star} \otimes_{A/I} gr^{\star}(I^{\star}A))^{n} = \bigoplus_{i+j = n} M^{i} \otimes_{A/I} I^{j}/I^{j+1}. \]
Since the direct sum is finite and $I^{i}/I^{i+1}$ is an invertible $A/I$-module, the claim follows.

 We will denote the left adjoint by $\grenv_{I}$, and refer to this as the \emph{graded $I$-adic envelope}.

\end{variant}

The relationship between $\calgenv_{I}$ and $\grenv_{I}$ may be expressed as follows.

\begin{lemma} \label{graded vs filtered envelopes} The composite 
	\[ \calg(\fil \Mod_{I^{\star}A}) \xrightarrow{\calgenv_{I}} \calg(\Mod_{A}) \xrightarrow{- \otimes_{A} \overline{A}} \calg(\Mod_{\overline{A}}) \]
admits a canonical factorization:

\begin{center}
\begin{tikzcd}

 \calg(\fil \Mod_{I^{\star}A}) \arrow[r, "\calgenv_{I}"] \arrow[d, "gr^{\star}"] 
		& \calg(\Mod_{A}) \arrow[d, "- \otimes_{A} \overline{A}"] \\
 \calg(\gr \Mod_{gr^{\star}(I^{\star}A)}) \arrow[r, "\grenv_{I}"]
		& \calg(\Mod_{\overline{A}})

\end{tikzcd}
\end{center}
\end{lemma}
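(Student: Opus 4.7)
The plan is to verify commutativity by showing that the two composites have naturally equivalent right adjoints. Each of the four functors in the diagram is a left adjoint: $\calgenv_I$ is left adjoint to $I^\star$ by definition, $-\otimes_A \overline{A}$ is left adjoint to restriction along $A \to \overline{A}$, $\grenv_I$ is left adjoint to the functor $S \mapsto ins^0(S) \otimes_{\overline{A}} gr^\star(I^\star A)$ by Variant \ref{graded envelope}, and $gr^\star$ admits a right adjoint $\iota$ sending a graded algebra $N^\star$ to the filtered algebra with weight $n$ piece $N^n$ and canonically null structure maps. Consequently, both composites are themselves left adjoints, and it suffices to produce a canonical natural equivalence of their right adjoints.

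For $S \in \calg(\Mod_{\overline{A}})$, the right adjoint of $(- \otimes_A \overline{A}) \circ \calgenv_I$ returns the $I$-adic filtration $I^\star S = ins^0(S) \otimes_A I^\star A$, whose weight $n$ piece is (derivedly) $S \otimes_A I^n$. The right adjoint of $\grenv_I \circ gr^\star$ returns $\iota\bigl(S \otimes_{\overline{A}} gr^\star(I^\star A)\bigr)$, whose weight $n$ piece is $S \otimes_{\overline{A}} gr^n(I^\star A)$ with nullhomotopic structure maps. I would identify these two filtered $I^\star A$-algebras by first establishing the equivalence
\[ I^\star A \otimes_A \overline{A} \simeq \iota\bigl(gr^\star(I^\star A)\bigr) \]
of filtered $\overline{A}$-algebras, after which tensoring with $S$ over $\overline{A}$ (using $S \otimes_A - \simeq S \otimes_{\overline{A}} (\overline{A} \otimes_A -)$ for $\overline{A}$-linear $S$) yields the desired equivalence between the two right adjoints.

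On underlying filtered modules, the key equivalence is essentially computational. Tensoring the defining cofiber sequence $I \to A \to \overline{A}$ with $I^n$ produces a derived cofiber sequence $I^{n+1} \to I^n \to I^n \otimes_A \overline{A}$, identifying $I^n \otimes_A \overline{A} \simeq gr^n(I^\star A)$. Under this identification, the structure map of $I^\star A \otimes_A \overline{A}$ from weight $n+1$ to weight $n$ becomes the induced map $gr^{n+1}(I^\star A) \to gr^n(I^\star A)$ coming from $I^{n+1} \to I^n$, and this is canonically null because it factors as the successive composite in the cofiber sequence $I^{n+1} \to I^n \to gr^n(I^\star A)$.

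The main obstacle I foresee is promoting the module-level identification to an equivalence of filtered derived commutative $\overline{A}$-algebras, rather than merely of filtered complexes. Here I would invoke the fact that $gr^\star$ is symmetric monoidal (so its right adjoint $\iota$ is lax symmetric monoidal and endows $\iota(gr^\star(I^\star A))$ with a canonical filtered algebra structure), together with the universal property of $I^\star A \otimes_A \overline{A}$ as a base change of filtered algebras, to upgrade the comparison. Tracking the coherence of the nullhomotopies that witness triviality of the structure maps at the algebra level --- rather than only at the level of underlying filtered complexes --- is the principal technical point that will require care.
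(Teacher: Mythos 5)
Your proposal is correct and follows essentially the same route as the paper: both pass to the diagram of right adjoints (the paper calls your $\iota$ by the name $\beta$) and reduce to identifying the $I$-adic filtration of an $\overline{A}$-algebra with the split filtration on its associated graded. You spell out the graded-piece identification and the nullhomotopy of the transition maps more explicitly than the paper, which disposes of this step with "unwinding definitions," so the coherence point you flag is left at the same level of detail in both arguments.
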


\begin{proof}Recall that the associated graded functor
	\[ gr^{\star}\colon \calg(\fil \Mod_{A}) \to \calg(\gr \Mod_{A}) \]
admits a right adjoint which sends a graded algebra $R^{\star}$ to the filtered object 
	\[ ... \xrightarrow{0} R^{n} \xrightarrow{0} R^{n-1} \xrightarrow{0} ... \]
We will denote this right adjoint by $\beta$ for the remainder of this proof.

Unwinding definitions, one verifies that the following diagram of functors commutes:

\begin{center}
\begin{tikzcd}
\calg(\Mod_{A}) \arrow[r, "ins^{0}"]
	&\calg(\fil \Mod_{\overline{A}}) \arrow[r]
		& \calg(\fil \Mod_{A}) \arrow[r, " - \otimes_{A} I^{\star}A"] 
	 		& \calg(\fil \Mod_{I^{\star}A}) \\
\calg(\Mod_{A}) \arrow[r, "ins^{0}"] \arrow[u, "Id"]
	& \calg(\gr \Mod_{\overline{A}}) \arrow[u, "\beta"] \arrow[rr, "- \otimes_{\overline{A}} gr^{\star}(\overline{A})"]
			&	& \calg(\gr \Mod_{gr^{\star}(I^{\star}A)}) \arrow[u, "\beta"]

\end{tikzcd}
\end{center}
from which the claim follows by passing to left adjoints.
\end{proof}

We now establish a basic naturality result for envelopes which will be used in the next section.

\begin{observation} Given a map $f\colon \mathcal{C} \to \mathcal{D}$ of derived algebraic contexts, we can promote $f$ to a colimit-preserving map
	\[ \hat{f} \colon \calg(\pair \mathcal{C}) \to\calg(\pair \mathcal{D}) \]
in the obvious way. Given a Cartier pair $(A,I)$ over $\mathcal{C}$, $\hat{f}(A,I)$ is also a Cartier pair.

\end{observation}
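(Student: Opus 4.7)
The plan is to construct $\hat{f}$ in two stages and then check Cartier-invertibility is preserved. By definition of a map of derived algebraic contexts, $f$ is a symmetric monoidal colimit-preserving functor (a left adjoint). Applying $f$ pointwise along $\Delta^{1,op}$ and using the universal property of Day convolution gives a symmetric monoidal, colimit-preserving functor $\pair f \colon \pair \mathcal{C} \to \pair \mathcal{D}$. Passing to commutative algebra objects then yields $\hat{f} \colon \calg(\pair \mathcal{C}) \to \calg(\pair \mathcal{D})$ in the obvious way. For colimit preservation, note that the right adjoint $g$ of $f$ is lax symmetric monoidal, hence induces a right adjoint $\hat{g} \colon \calg(\pair \mathcal{D}) \to \calg(\pair \mathcal{C})$ by pointwise restriction; the pair $(\hat{f},\hat{g})$ is an adjunction since its underlying pair on $\pair$ is (this can also be seen by checking mapping spaces directly, or by invoking the algebra-object functoriality of adjunctions between presentably symmetric monoidal $\infty$-categories). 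Consequently $\hat{f}$ is itself a left adjoint and preserves all colimits.

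For the second assertion, unwinding notation we have $\hat{f}(A,I) = (f(A), f(I))$, where the $f(A)$-module structure on $f(I)$ is the one induced by the symmetric monoidal structure from the $A$-module structure on $I$. Cartier means that $I$ is tensor-invertible as an $A$-module, i.e.\ there exists $I^{-1} \in \Mod_A(\mathcal{C})$ together with an equivalence $I \otimes_A I^{-1} \simeq A$. Symmetric monoidality of $f$ gives a natural equivalence
\[ f(I) \otimes_{f(A)} f(I^{-1}) \simeq f(I \otimes_A I^{-1}) \simeq f(A), \]
exhibiting $f(I)$ as tensor-invertible over $f(A)$, so $\hat{f}(A,I)$ is a Cartier pair over $\mathcal{D}$. (One can justify the displayed equivalence by resolving $I \otimes_A I^{-1}$ via the two-sided bar construction $|\mathrm{Bar}_\star(I, A, I^{-1})|$, observing that $f$ preserves this geometric realization and all the underlying tensor products in $\mathcal{C}$, yielding the analogous bar presentation of $f(I) \otimes_{f(A)} f(I^{-1})$ in $\mathcal{D}$.)

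The only mild subtlety is the bookkeeping verifying that the induced functor on algebras actually preserves all colimits rather than just sifted ones; once the adjoint $\hat{g}$ is exhibited this is automatic, and I expect no genuine difficulty. No step here should be more than a formal consequence of symmetric monoidality and adjoint functor considerations.
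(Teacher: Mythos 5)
Your proposal is correct and fills in exactly the argument the paper leaves implicit (the paper offers no proof beyond the phrase ``in the obvious way''): pointwise application of $f$ along $\Delta^{1,op}$ is symmetric monoidal for Day convolution, the induced functor on algebras is a left adjoint because the lax symmetric monoidal right adjoint of $f$ induces the right adjoint on algebras, and tensor-invertibility of $I$ is preserved because the symmetric monoidal colimit-preserving $f$ commutes with the two-sided bar construction computing $I \otimes_A I^{-1}$. The one point worth making explicit is that $\calg$ here denotes derived commutative rings ($LSym$-algebras) rather than $\mathbb{E}_\infty$-algebras, so ``passing to commutative algebra objects'' requires the compatibility $f \circ LSym_{\pair\mathcal{C}} \simeq LSym_{\pair\mathcal{D}} \circ f$, which is supplied by the definition of a morphism of derived algebraic contexts (preservation of the compact projective generators and connectivity) rather than by symmetric monoidality alone.
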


It is unclear whether or not envelopes commute with the functor $f$ in the above generality, but we can say something about the case where $f$ is the functor of taking a filtered object to its associated graded.

\begin{proposition} \label{naturality for envelopes} Let $(A,I)$ be a Cartier pair in $\mathcal{C}' := \fil \mathcal{C}$, and let $(B,J) := (gr^{\star}(A), gr^{\star}(I))$ be the Cartier pair in $\mathcal{D} := \gr \mathcal{C}$ associated to the map of derived algebraic contexts $gr^{\star}\colon \mathcal{C}' \to \mathcal{D}$. Then the following diagram commutes

\begin{center}
\begin{tikzcd}

\calg(\fil \Mod_{I^{\star}A}) \arrow[r, "\calgenv_{I}"] \arrow[d, "\fil gr^{\star}"] 
	& \calg(\Mod_{A})  \arrow[d, "gr^{\star}"] \\
\calg(\fil \Mod_{J^{\star}B}) \arrow[r, "\calgenv_{J}"]
	& \calg(\Mod_{B})

\end{tikzcd}
\end{center}
The analogous diagram for graded envelopes also commutes.

\end{proposition}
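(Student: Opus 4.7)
The plan is to follow the strategy of the proof of Lemma \ref{graded vs filtered envelopes}: establish the commutation at the level of right adjoints, then pass to left adjoints via the mate correspondence. All four functors in the square are left adjoints---the envelopes by definition, and the $gr^\star$ functors because $gr^\star\colon \fil\mathcal{C} \to \gr\mathcal{C}$ is a strong symmetric monoidal colimit-preserving functor between derived algebraic contexts (which lifts to commutative algebras accordingly). In particular both composites $gr^\star \circ \calgenv_I$ and $\calgenv_J \circ \fil gr^\star$ preserve all colimits, so checking their equivalence reduces to checking on a set of generators.

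The key computational input I would establish first is the natural equivalence $\fil gr^\star \circ I^\star \simeq J^\star \circ gr^\star$ of functors $\calg(\Mod_A) \to \calg(\fil \Mod_{J^\star B})$. Since $I^\star R = ins^0(R) \otimes_A I^\star A$ and $gr^\star$ is strong symmetric monoidal, this reduces to the identification $\fil gr^\star(I^\star A) \simeq J^\star B$ in $\calg(\fil \gr \mathcal{C})$: the weight-$n$ piece of $I^\star A$ is $I^{\otimes_A n}$, and strong monoidality yields $gr^\star(I^{\otimes_A n}) \simeq (gr^\star I)^{\otimes_B n} = J^{\otimes_B n}$, which is precisely the weight-$n$ piece of $J^\star B$; compatibility of the structure maps with the filtration follows from functoriality of $gr^\star$.

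From this equivalence, together with the adjunctions $\calgenv_I \dashv I^\star$ and $\calgenv_J \dashv J^\star$, I would construct the mate natural transformation $\calgenv_J \circ \fil gr^\star \to gr^\star \circ \calgenv_I$ as the composite
\[
\calgenv_J(\fil gr^\star F) \to \calgenv_J(\fil gr^\star I^\star \calgenv_I F) \simeq \calgenv_J(J^\star gr^\star \calgenv_I F) \to gr^\star \calgenv_I F
\]
using the unit of $\calgenv_I \dashv I^\star$ and the counit of $\calgenv_J \dashv J^\star$. To conclude that this transformation is an equivalence, I would check on free filtered algebras in $\calg(\fil \Mod_{I^\star A})$, where both sides admit explicit descriptions that match up under the strong monoidality of $gr^\star$. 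The graded variant of the proposition is proven by exactly the same argument with $\fil$ replaced by $\gr$ throughout. The main obstacle will be carrying out the verification on free algebras rigorously, since the envelope $\calgenv_I$ of a free filtered $I^\star A$-algebra requires some care to compute explicitly in the derived setting; this is where one must carefully track the interaction between the symmetric monoidal structure, the Cartier divisor $I$, and the filtration.
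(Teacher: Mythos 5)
Your argument is correct in outline but takes a genuinely different route from the paper's. The paper exploits the fact that $gr^{\star}$ (and hence $\fil gr^{\star}$) is itself a \emph{left} adjoint: its right adjoint $\beta$ sends a graded algebra to the filtered object with zero structure maps, as in the proof of Lemma \ref{graded vs filtered envelopes}. Thus all four sides of the square admit right adjoints, and it suffices to check that the square of right adjoints commutes, i.e.\ $I^{\star}\circ\beta \simeq \beta_{\fil}\circ J^{\star}$; this is a pointwise computation on underlying modules using invertibility of $I$, exactly as in Variant \ref{graded envelope}, after which the original square commutes formally by passing to left adjoints. You instead use $gr^{\star}$ only in its role as a left adjoint, prove the mixed identity $\fil gr^{\star}\circ I^{\star}\simeq J^{\star}\circ gr^{\star}$, and must then show that the resulting Beck--Chevalley mate is an equivalence. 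That last step is not formal (the image of $I^{\star}$ does not generate under colimits), so the reduction to free filtered algebras is genuinely needed, and it is precisely the work the paper's argument avoids. The verification is feasible --- the envelope of a free filtered algebra is computed by an explicit pushout of free algebras (cf.\ the non-$\delta$ analogue of Lemma \ref{envelopes of free algebras}), a description manifestly preserved by the strong monoidal, colimit-preserving functor $gr^{\star}$ --- but as written your proof defers it, so you should either carry out that computation in full or switch to the right-adjoint square, which settles the matter more directly.
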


\begin{proof} Since we understand the right adjoint to $gr^{\star}$ (and thus to $\fil gr^{\star}$), we just check that the diagram of right adjoints commutes, which follows by exactly the same reasoning as employed in Lemma \ref{graded envelope}.
\end{proof}

\begin{observation} Given any derived $A$-algebra, $B$, the co-unit
	\[ \calgenv_{I}(I^{\star}B) \to B \]
is an equivalence, since $I^{\star}$ is fully faithful.
\end{observation}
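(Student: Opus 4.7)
The plan is to reduce the statement to the fully faithfulness of $I^{\star}$, and then exhibit fully faithfulness by identifying a convenient right adjoint and checking that the unit of that adjunction is an equivalence. The general principle being invoked is that for an adjunction $L \dashv R$, the counit $LR \to \mathrm{id}$ is an equivalence precisely when $R$ is fully faithful. Applied to the adjunction $\calgenv_{I} \dashv I^{\star}$, it suffices to verify that $I^{\star}$ is fully faithful.

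To produce a right adjoint to $I^{\star}$, I would observe that $I^{\star}$ is the composite of $ins^{0}$ with base change $- \otimes_{A} I^{\star}A$. Restriction of scalars along the canonical unit map of algebras $ins^{0}(A) \to I^{\star}A$ is right adjoint to base change and leaves the underlying filtered module untouched. The functor $ev_{0}$ is right adjoint to $ins^{0}$ on modules, and since Day convolution over $\Z_{\geq 0}^{op}$ satisfies $(F \otimes G)^{0} \simeq F^{0} \otimes G^{0}$ (the terminal object of the indexing sub-poset of pairs $(i,j)$ with $i+j \geq 0$ being $(0,0)$), the functor $ev_{0}$ is strong symmetric monoidal and the adjunction lifts to algebras. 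Composing these right adjoints produces a right adjoint $R \colon \calg(\fil \Mod_{I^{\star}A}) \to \calg(\Mod_{A})$ whose underlying functor is simply evaluation at filtration $0$.

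It remains to check that the unit $B \to R(I^{\star}B)$ is an equivalence, which is a direct computation using strong monoidality of $ev_{0}$:
\[ R(I^{\star}B) \simeq ev_{0}(ins^{0}(B) \otimes_{A} I^{\star}A) \simeq ev_{0}(ins^{0}(B)) \otimes_{A} ev_{0}(I^{\star}A) \simeq B \otimes_{A} A \simeq B. \]
Naturality in $B$ is immediate, and therefore $I^{\star}$ is fully faithful, so the counit $\calgenv_{I}(I^{\star}B) \to B$ is an equivalence as claimed. I do not anticipate any serious obstacle in executing this plan; the only minor bookkeeping concern is tracking the distinction between the base ring $A$ and its filtered incarnation $ins^{0}(A)$ when applying base change, but this is harmless since $ins^{0}$ is symmetric monoidal.
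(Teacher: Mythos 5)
Your proposal is correct and follows the same route as the paper, which simply asserts that the counit is an equivalence because the right adjoint $I^{\star}$ is fully faithful. Your explicit verification of fully faithfulness --- exhibiting $ev_{0}$ composed with restriction of scalars as a right adjoint to $I^{\star}$ and using strong monoidality of $ev_{0}$ together with $ev_{0}(I^{\star}A) \simeq A$ to see that the unit is an equivalence --- correctly supplies the detail the paper leaves implicit.
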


\begin{observation} \label{envelopes of 01 filtered} We could make an analogous definition of envelopes for $\{0,1\}$-filtered algebras instead of general filtered algebras, but the distinction between these two constructions is irrelevant for algebras whose filtration is induced from weight 1. Indeed, there is a canonical commutative diagram

\begin{center}
\begin{tikzcd}

\calg(\fil \Mod_{I^{\star}A}) \arrow[r, "\calgenv_{I}"] 
	& \calg(\Mod_{A}) \\
\calg(\pair \Mod_{(A,I)}) \arrow[ur, "\calgenv^{\{0,1\}}_{I}"] \arrow[u, "p_{!}"]
\end{tikzcd}
\end{center}
through the category of $(A,I)$-algebras. To see this, observe that the associated diagram of right adjoints also factors through $\{0,1\}$-filtered rings since the $I$-adic filtration on $A$ is defined as a filtration induced from the generalized Cartier divisor $I$. Many of the examples of interest will have filtration freely induced by weight 1 (the main example being the Hodge filtration on infinitesimal cohomology), and in such cases we will not distinguish between these two notions of envelope.

\end{observation}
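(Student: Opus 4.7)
The plan is to establish the claimed factorization by passing to right adjoints. Both composites $\calgenv_I \circ p_!$ and $\calgenv_I^{\{0,1\}}$ are left adjoints between presentable $\infty$-categories, so exhibiting a canonical equivalence amounts to exhibiting a canonical equivalence between their right adjoints, which are much more tractable objects to identify explicitly. The target of both functors is $\calg(\Mod_A)$, and the claim is that restricting along $p^\star$ after forming the $I$-adic filtration coincides with the $\{0,1\}$-filtered $I$-adic pair.

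First I would identify the right adjoint to $\calgenv_I \circ p_!$. By definition, $I^\star \colon \calg(\Mod_A) \to \calg(\fil \Mod_{I^\star A})$ is the right adjoint to $\calgenv_I$, and $p^\star \colon \calg(\fil \Mod_{I^\star A}) \to \calg(\pair \Mod_{(A,I)})$ is the right adjoint to $p_!$ (using that $p^\star(I^\star A) = p^\star(p_!(A,I)) \simeq (A,I)$, which is the content of the unit map being an equivalence in this specific case — easy to verify since $I^\star A$ has $F^0 = A$ and $F^1 = I$ by construction). The composite right adjoint therefore sends $B \in \calg(\Mod_A)$ to
\[ p^\star(I^\star B) \simeq p^\star(ins^0(B) \otimes_A I^\star A) \simeq (B, B \otimes_A I), \]
where the final equivalence uses that the base change $ins^0(B) \otimes_A (-)$ commutes with $p^\star$ (since $p^\star$ is given pointwise by evaluation at $0$ and $1$).

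Second I would identify the right adjoint to $\calgenv_I^{\{0,1\}}$. Exactly as in Definition \ref{I-adic}, this is the $\{0,1\}$-filtered analogue of the $I$-adic filtration functor, which sends $B$ to the pair $(B, B \otimes_A I)$ regarded as an algebra over $(A,I)$. This matches the computation of the previous paragraph on the nose.

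The main technical point — and really the only one where a subtlety could lurk — is the identity $p^\star(I^\star A) \simeq (A,I)$, i.e., checking that $p^\star p_!$ is the identity on the specific pair $(A,I)$. This is the precise content of the authors' remark that "the $I$-adic filtration on $A$ is defined as a filtration induced from the generalized Cartier divisor $I$." Once that is in hand, passing to left adjoints yields the desired factorization $\calgenv_I^{\{0,1\}} \simeq \calgenv_I \circ p_!$, and the canonical nature of the identification is automatic from the adjoint calculus.
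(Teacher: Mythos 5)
Your proposal is correct and follows the same route as the paper: the paper's entire argument is the one-line observation that the diagram of right adjoints commutes because $I^{\star}A = p_{!}(A,I)$ restricts back to $(A,I)$, and your computation of $p^{\star}(I^{\star}B) \simeq (B, B\otimes_{A} I)$ is just a fleshed-out version of that same point. No discrepancies.
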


\begin{lemma} \label{graded envelope of free algebras} Fix a Cartier pair $(A,I)$ and let $R$ be an $\overline{A}$-algebra. For any $M \in \Mod_{R}$, there is a canonical equivalence
	\[ \grenv_{I}\left(LSym_{R}^{Gr^{\geq0}}(M(n)) \otimes_{\overline{A}} gr^{\star}(I^{\star}A)\right) \simeq LSym_{R}(M\{-n\}) \]
where $\grenv_{I}$ is the restriction of the graded envelope to $R$-algebras. That is, $\grenv_{I}$ is left adjoint to
	\[ \calg(\Mod_{R}) \xrightarrow{ ins^{0}(-) \otimes_{\overline{A}(0)} gr^{\star}(I^{\star}A)} \calg(\gr \Mod_{R(0) \otimes_{\overline{A}(0)} gr^{\star}(I^{\star}A)}). \]

\end{lemma}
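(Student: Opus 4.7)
The plan is to verify the stated equivalence by chasing universal properties. For any derived $R$-algebra $B$, I will compute
\[
\mathrm{Hom}_{\calg(\Mod_R)}\left(\grenv_{I}\left(LSym_{R}^{Gr^{\geq0}}(M(n)) \otimes_{\overline{A}} gr^{\star}(I^{\star}A)\right),\, B\right)
\]
and show it is naturally equivalent to $\mathrm{Hom}_{\Mod_R}(M\{-n\}, B)$, whence the Yoneda lemma delivers the claim.

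First, applying the adjunction defining $\grenv_I$ (restricted to $R$-algebras, as in the statement), the mapping space rewrites as
\[
\mathrm{Hom}_{\calg(\gr \Mod_{R(0) \otimes gr^{\star}(I^{\star}A)})}\left(LSym_R^{Gr^{\geq 0}}(M(n)) \otimes_{\overline{A}} gr^\star(I^\star A),\, ins^0(B) \otimes_{\overline{A}} gr^\star(I^\star A)\right).
\]
By base change along $R \otimes_{\overline{A}} gr^\star(I^\star A)$, this is identified with
\[
\mathrm{Hom}_{\calg(\gr \Mod_{R})}\left(LSym_R^{Gr^{\geq 0}}(M(n)),\, ins^0(B) \otimes_{\overline{A}} gr^\star(I^\star A)\right),
\]
and by the universal property of $LSym_R^{Gr^{\geq 0}}$ (which is left adjoint to the forgetful functor from graded $R$-algebras to non-negatively graded $R$-modules), this in turn becomes
\[
\mathrm{Hom}_{\gr \Mod_{R}}\left(M(n),\, ins^0(B) \otimes_{\overline{A}} gr^\star(I^\star A)\right).
\]

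Next, I evaluate in weight $n$: since $M(n)$ is concentrated in weight $n$, the mapping space reduces to $\mathrm{Hom}_{\Mod_R}\left(M, B \otimes_{\overline{A}} gr^n(I^\star A)\right) = \mathrm{Hom}_{\Mod_R}(M, B\{n\})$. Finally, I use invertibility: because $I$ is a tensor-invertible $A$-module, $gr^n(I^\star A) \simeq I^{\otimes n} \otimes_A \overline{A}$ is an invertible $\overline{A}$-module, so we may rewrite $B\{n\} \simeq B \otimes_{\overline{A}} gr^n(I^\star A)$ and use the duality isomorphism $\mathrm{Hom}_{\Mod_R}(M, B \otimes_{\overline{A}} L) \simeq \mathrm{Hom}_{\Mod_R}(M \otimes_{\overline{A}} L^{-1}, B)$ valid for any invertible $\overline{A}$-module $L$. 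Taking $L = gr^n(I^\star A)$ gives $L^{-1} \simeq gr^{-n}(I^\star A)$ and hence the right-hand side is $\mathrm{Hom}_{\Mod_R}(M\{-n\}, B)$, as desired.

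The argument is essentially a formal adjunction chase, so no step poses a serious obstacle; the only point requiring a bit of care is the observation that the base change to $R(0) \otimes_{\overline{A}(0)} gr^\star(I^\star A)$ is exactly Weyl-adjoint to the forgetful functor on graded algebras, so that the two universal properties compose cleanly. The invertibility of $I$ is used precisely once, to convert a tensor product on the target of a Hom into a tensor product on the source.
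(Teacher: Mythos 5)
Your proposal is correct and follows essentially the same adjunction chase as the paper's own proof: unwind the $\grenv_I$ and $LSym^{Gr^{\geq 0}}$ adjunctions, use that $ev^{n}$ is right adjoint to $ins^{n}$ to isolate the weight-$n$ component, and then move the invertible module $gr^{n}(I^{\star}A)$ across the Hom. The only cosmetic difference is that you split the base-change and free-algebra adjunctions into two steps where the paper combines them.
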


\begin{proof} We will only establish the claim in the case that $R = \overline{A}$ and observe that the same proof carries through in general (with slightly more burdensome notation). We identify the universal properties of both sides. Fix an $\overline{A}$-algebra $S \in \calg_{\overline{A}}$. Recalling that $ins^{0}$ is right adjoint to $ev^{0}$ on graded objects, we obtain
\begin{align*}
& Hom_{\calg(\Mod_{\overline{A}})}(\grenv_{I}(LSym_{\overline{A}}^{Gr^{\geq0}}(M(n)) \otimes_{\overline{A}} gr^{\star}(I^{\star}A)), S)  \\
 & \simeq Hom_{Gr^{\geq0}\calg(\Mod_{gr^{\star}(I^{\star}A)})}( LSym_{\overline{A}}^{Gr^{\geq0}}(M(n)) \otimes_{\overline{A}} gr^{\star}(I^{\star}A), S(0) \otimes_{\overline{A}} gr^{\star}(I^{\star}A)) \\
 & \simeq Hom_{Gr^{\geq0}\Mod_{\overline{A}}}( M(n), S(0) \otimes_{\overline{A}} gr^{\star}(I^{\star}A))
\end{align*}
where we have used the definition of $\grenv_{I}$ and $LSym_{\overline{A}}^{Gr^{\geq0}}$ as left adjoints to rewrite the mapping spaces in the first and second equivalences respectively. We now appeal to the fact that $ev^{n}$ is also right adjoint to $ins^{n}$ to identify the preceding mapping space with $Hom_{\Mod_{\overline{A}}}(M, S \otimes_{\overline{A}} I^{n}/I^{n+1})$. Proceeding, we obtain
\begin{align*}
 Hom_{\Mod_{\overline{A}}}(M, S \otimes_{\overline{A}} I^{n}/I^{n+1}) 
 & \simeq Hom_{\Mod_{\overline{A}}}(M\{-n\}, S) \\
 & \simeq Hom_{\calg(\Mod_{\overline{A}})}(LSym_{\overline{A}}(M\{-n\}), S)
\end{align*}
as desired.
\end{proof}

We now turn our attention towards Variant \ref{prismatic envelope}, and relate the theory of envelopes in this case to the prismatic envelopes of \cite{Bhatt-Scholze}.

\begin{observation} \label{base change for envelopes} Given a map of prisms $(A,I) \to (B,J)$, we have a canonical equivalence of functors
	\[ \env_{I}(-) \otimes_{A}B \simeq \env_{J}( - \otimes_{I^{\star} A} J^{\star} B). \]
To see this, it suffices to show that the associated diagram of \emph{right} adjoints commutes:

\begin{center}
\begin{tikzcd}

\dalg(\Mod_{B}) \arrow[r] \arrow[d, "J^{\star}"]
	& \dalg(\Mod_{A}) \arrow[d, "I^{\star}"] \\
\dalg(\fil \Mod_{J^{\star}B}) \arrow[r]
	& \dalg(\fil \Mod_{I^{\star}A})

\end{tikzcd}
\end{center}
which is true by rigidity of maps of prisms. Indeed, for any $\delta$-$B$-algebra $R$, we see that
	\[ J^{\star}R := ins^{0}(R) \otimes_{B} J^{\star}B \simeq ins^{0}(R) \otimes_{B} (ins^{0}(B) \otimes_{A} I^{\star}A) \simeq I^{\star}R. \]

\end{observation}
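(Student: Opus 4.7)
The plan is to verify the claim by passing to right adjoints, as the observation itself suggests. The functor $\env_{I}(-) \otimes_{A} B$ is left adjoint to the composite
\[ \dalg(\Mod_{B}) \xrightarrow{\text{res}} \dalg(\Mod_{A}) \xrightarrow{I^{\star}} \dalg(\fil \Mod_{I^{\star}A}), \]
while $\env_{J}(-\otimes_{I^{\star}A} J^{\star}B)$ is left adjoint to the composite
\[ \dalg(\Mod_{B}) \xrightarrow{J^{\star}} \dalg(\fil \Mod_{J^{\star}B}) \xrightarrow{\text{res}} \dalg(\fil \Mod_{I^{\star}A}). \]
Thus it suffices to produce a natural equivalence between these two right-adjoint composites; the claimed equivalence of left adjoints then follows formally by uniqueness of adjoints.

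To construct this natural equivalence, I would fix a derived $\delta$-$B$-algebra $R$ and unwind Notation \ref{I-adic filtration} to write $J^{\star}R \simeq ins^{0}(R) \otimes_{B} J^{\star}B$ and $I^{\star}R \simeq ins^{0}(R) \otimes_{A} I^{\star}A$, both viewed as objects of $\dalg(\fil \Mod_{I^{\star}A})$ after restriction along $(A,I) \to (B,J)$. The critical step is to identify $J^{\star}B$ with $ins^{0}(B) \otimes_{A} I^{\star}A$ as filtered derived $\delta$-$A$-algebras. In the present framework, a prism under $(A,I)$ is just an object of $\compdalg(\Mod_{A})$, and its associated generalized Cartier divisor is forced to be $I \otimes_{A} B \to B$ — this is precisely the rigidity principle invoked in the introduction to Section 3 and is built into the definition of ``map of prisms'' here. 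Consequently $J \simeq I \otimes_{A} B$, and so the $J$-adic filtration on $B$ coincides with the base change of the $I$-adic filtration on $A$.

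Substituting this identification back into the expression for $J^{\star}R$ yields the chain of equivalences
\[ J^{\star}R \simeq ins^{0}(R) \otimes_{B} (ins^{0}(B) \otimes_{A} I^{\star}A) \simeq ins^{0}(R) \otimes_{A} I^{\star}A \simeq I^{\star}R, \]
which, by inspection, is natural in $R$ and respects the structure as filtered $I^{\star}A$-algebras. This produces the required natural equivalence of right-adjoint composites.

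The main obstacle is not conceptual but rather bookkeeping: all the equivalences above are essentially definitional, yet one must carefully keep track of the compatible $\delta$-structures, the filtrations, and the scalar restrictions throughout, and verify that the identification $J^{\star}B \simeq ins^{0}(B) \otimes_{A} I^{\star}A$ indeed lifts to $\dalg(\fil \Mod_{I^{\star}A})$ and not merely to filtered modules. Given the author's conventions, however, the latter reduces to the fact that the $I$-adic filtration functor is constructed as a left adjoint (see Notation \ref{I-adic filtration}) and therefore commutes with base change along the map of $\delta$-rings $A \to B$.
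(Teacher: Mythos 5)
Your proposal is correct and follows essentially the same route as the paper: pass to the diagram of right adjoints, use rigidity of maps of prisms to identify $J \simeq I \otimes_A B$, and hence obtain the chain of equivalences $J^{\star}R \simeq ins^{0}(R) \otimes_{B} (ins^{0}(B) \otimes_{A} I^{\star}A) \simeq I^{\star}R$ naturally in $R$. Your additional care about lifting the identification to $\delta$-structures and filtered algebras is a reasonable elaboration of what the paper leaves implicit, but it is not a different argument.
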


\begin{construction} Fix a $\delta$-Cartier pair $(A,I)$. Given a complex $N \in \Mod_{A}$, we define a map of $\delta$-$A$-algebras
	\[ \gamma_{N}\colon \dsym_{A}(N) \to \dsym_{A}(N \otimes_{A} I^{-1}) \]
as the map induced by the map of $A$-modules
	\[ N \simeq I \otimes_{A} N \otimes_{A} I^{-1} \xrightarrow{id \otimes i} I \otimes_{A} \dsym_{A}(N \otimes_{A} I^{-1}) \xrightarrow{\mu} \dsym_{A}(N \otimes_{A} I^{-1}). \]
Here, $\mu$ is the multiplication arising from the $A$-algebra structure on $\dsym_{A}(N)$.
\end{construction}

\begin{example} Suppose $(A,I)$ is a prism and $I = (d)$ is endowed with an orientation. The free $\delta$-$A$-algebra on a single generator is then given by $\dsym(A) \simeq A\{x\}$ and the map $\gamma_{A}$ of the above construction may be identified with
	\[ A\{x\} \xrightarrow{ x \to d\cdot y} A\{y\}. \]

\end{example}

\begin{lemma} \label{envelopes of free algebras} Let $(A,I)$ be a $\delta$-Cartier pair. Fix an object $(N \to M) \in F^{\{0,1\}}\Mod_{A}$, and denote by $\dsym_{A}(M)\{\frac{N}{I}\}$ the pushout in the category of $\delta$-$A$-algebras

\begin{center}
\begin{tikzcd}
\dsym_{A}(N) \arrow[r, "\gamma_{N}"] \arrow[d]
	& \dsym_{A}(N \otimes_{A} I^{-1}) \arrow[d] \\
\dsym_{A}(M) \arrow[r]
	& \dsym_{A}(M)\{\frac{N}{I}\}.
\end{tikzcd}
\end{center}
Then there is a canonical equivalence of $\delta$-$A$-algebras
	\[ \env_{I}(\fdsym_{A}(N \to M)\otimes_{A}I^{\star}A) \xrightarrow{\simeq} \dsym_{A}(M)\{\frac{N}{I}\}. \]

\end{lemma}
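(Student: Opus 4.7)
The plan is to identify both sides by showing they corepresent the same functor on $\dalg_{A}$. First I would unwind the universal property of the left-hand side: composing the adjunction $\env_{I} \dashv I^{\star}$ with the adjunctions defining $\fdsym_{A}$ and base change along $A \to I^{\star}A$, I obtain for any $R \in \dalg_{A}$ an identification
\[
\mathrm{Map}_{\dalg_{A}}\!\bigl(\env_{I}(\fdsym_{A}(N\to M)\otimes_{A}I^{\star}A),\, R\bigr) \simeq \mathrm{Map}_{\pair\Mod_{A}}\!\bigl((N\to M),\, (I\otimes_{A}R\to R)\bigr),
\]
where $(I\otimes_{A}R\to R)$ denotes the $\{0,1\}$-filtration on $R$ obtained by restricting $I^{\star}R$ along $\Delta^{1,op}\hookrightarrow\Z_{\geq 0}^{op}$ (equivalently by applying the $p_{!}$-restriction of Observation \ref{envelopes of 01 filtered}). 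A point of the right-hand side is a pair $(f\colon M \to R,\, \tilde{f}\colon N \to I\otimes_{A}R)$ together with a homotopy witnessing that $\mu \circ \tilde{f} \simeq f|_{N}$, where $\mu\colon I\otimes_{A} R \to R$ is multiplication.

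Next I would unwind the universal property of the right-hand side pushout. Mapping out of the pushout in $\dalg_{A}$ gives a fibre product of mapping spaces which, after applying the $\dsym_{A} \dashv$ forgetful adjunction, becomes
\[
\mathrm{Map}_{\Mod_{A}}(M,R)\times_{\mathrm{Map}_{\Mod_{A}}(N,R)}\mathrm{Map}_{\Mod_{A}}(N\otimes_{A}I^{-1},R).
\]
The right-hand transition map is induced by the underlying module map of $\gamma_{N}$, which by its very definition sends $g\colon N\otimes_{A} I^{-1} \to R$ to the composite $N\simeq I\otimes_{A}N\otimes_{A}I^{-1}\xrightarrow{id\otimes g} I\otimes_{A}R\xrightarrow{\mu} R$. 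Using tensor-invertibility of $I$ to canonically identify $\mathrm{Map}_{\Mod_{A}}(N\otimes_{A}I^{-1},R)\simeq\mathrm{Map}_{\Mod_{A}}(N,I\otimes_{A}R)$, this transition map becomes precisely post-composition with $\mu$. The resulting fibre product is then visibly the space of squares described at the end of the previous paragraph, and comparing through the Yoneda lemma yields the claimed equivalence.

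The main obstacle I anticipate is the $\gamma_{N}$-tracking step: verifying at the $\infty$-categorical level (not merely on $\pi_{0}$) that the map on mapping spaces induced by $\gamma_{N}$ is homotopic, under the $I$-invertibility equivalence, to post-composition by $\mu$. This is morally forced by the construction of $\gamma_{N}$, but one must thread the coherences carefully, in particular keeping track of the unit $N \to \dsym_{A}(N\otimes_{A}I^{-1})$ used to define $\gamma_{N}$ and of the fact that $I\otimes_{A}(-)$ is an equivalence on $\Mod_{A}$. Once this coherence is established, everything else is a formal manipulation of adjunctions.
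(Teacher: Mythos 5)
Your proposal is correct and follows essentially the same route as the paper: both proofs identify the two sides by computing the functor they corepresent on $\dalg(\Mod_{A})$, unwinding the pushout through the free--forgetful adjunction into a fibre product of module mapping spaces, using tensor-invertibility of $I$ to rewrite $\mathrm{Hom}_{\Mod_{A}}(N\otimes_{A}I^{-1},R)\simeq \mathrm{Hom}_{\Mod_{A}}(N,I\otimes_{A}R)$, and recognizing the result as maps of $\{0,1\}$-filtered objects $(N\to M)\to I^{\star}R$ before reassembling via the filtered adjunctions. The coherence point you flag---that the transition map induced by $\gamma_{N}$ becomes post-composition with $\mu$ under the invertibility equivalence---is precisely what the paper's third and fourth displayed equivalences quietly absorb, so your extra care there is warranted rather than a departure.
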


\begin{proof} Fix a $\delta$-$A$-algebra $R$, and denote by $(B,J)$ the filtered $\delta$-algebra 
\[ \fdsym_{A}(N \to M)\otimes_{A}I^{\star}A.\]
It suffices to construct a functorial identification of mapping spaces
	\[ Hom_{\dalg(\Mod_{A})}(\dsym_{A}(M)\{\frac{N}{I}\}, R) \xrightarrow{\simeq} Hom_{\dalg(\fil \Mod_{I^{\star}A})}((B,J), I^{\star}R). \]
The definition of $\dsym_{A}(M)\{\frac{N}{I}\}$ as a pushout identifies $Hom_{\dalg( \Mod_{A})}(\dsym_{A}(M)\{\frac{N}{I}\}, R)$ as
\begin{align*}
Hom_{\dalg(\Mod_{A})}(\dsym_{A}(M), R) & \times_{Hom_{\dalg(\Mod_{A})}(\dsym_{A}(N), R)} Hom_{\dalg(\Mod_{A})}(\dsym(N\otimes_{A} I^{-1}), R) \\
	& \simeq Hom_{\Mod_{A}}(M, R) \times_{Hom_{\Mod_{A}}(N, R)} Hom_{\Mod_{A}}(N\otimes I^{-1}, R) \\
	& \simeq Hom_{\Mod_{A}}(M,R) \times_{Hom_{\Mod_{A}}(N,R)} Hom_{\Mod_{A}}(N, I \otimes R) \\
	& \simeq Hom_{F^{\{0,1\}}\Mod_{A}}((N\to M), I^{\star}R) \\
	& \simeq Hom_{\dalg(\fil \Mod_{A})}(\fdsym_{A}(N \to M), I^{\star}R) \\
	& \simeq Hom_{\dalg(\fil \Mod_{I^{\star}A})}((B,J), I^{\star}R) 
\end{align*}
as desired. Note that the third equivalence follows from the definition of mapping spaces in the filtered derived category.
\end{proof}

\begin{corollary} \label{envelopes of regular sequences} Let $(A,I)$ be a prism. Fix an ideal $J = (I,x_{1},...,x_{n})$ where the $x_{i}$ form a $(p,I)$-completely regular sequence in $A$. Then the canonical map
	\[ \compenv_{I}(J \to A) \xrightarrow{ \simeq } A\{\frac{J}{I}\}^{\wedge}_{(p,I)} \]
is an equivalence, where $A\{\frac{J}{I}\}^{\wedge}_{(p,I)}$ is the prismatic envelope of \cite{Bhatt-Scholze}.

\end{corollary}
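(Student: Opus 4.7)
The plan is to use Lemma \ref{envelopes of free algebras} to compute the envelope on a suitable free presentation of $(J \to A)$, and to identify the result with the classical prismatic envelope via a pushout computation. By Observation \ref{envelopes of 01 filtered}, we may work throughout in the $\{0,1\}$-filtered setting. The natural comparison map $\phi \colon \compenv_I(J \to A) \to A\{J/I\}^\wedge_{(p,I)}$ of the statement arises from the universal property: since the image of $J$ in $A\{J/I\}^\wedge_{(p,I)}$ lies in $I \cdot A\{J/I\}^\wedge_{(p,I)}$, we obtain a morphism $(J \to A) \to I^\star A\{J/I\}^\wedge_{(p,I)}$ in $\dalg(\fil \Mod_{I^\star A})$, whose adjoint under the $\env_I \dashv I^\star$ adjunction is $\phi$.

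To show $\phi$ is an equivalence, I would exhibit $(J \to A)$ as a pushout of free filtered $\delta$-$A$-algebras, $(p,I)$-completely. Consider the morphism $(A^n \to A) \to (J \to A)$ in $\pair \Mod_A$ given by the identity on weight $0$ and $e_i \mapsto x_i$ on weight $1$. The main technical claim is that the $(p,I)$-complete regularity of $(x_1, \ldots, x_n)$ implies the pushout description
\[
(J \to A) \simeq I^\star A \otimes^L_{\fdsym_A(0 \to A) \otimes_A I^\star A} \bigl(\fdsym_A(A^n \to A) \otimes_A I^\star A\bigr)
\]
in $\dalg(\fil \Mod_{I^\star A})$, $(p,I)$-completely, where the map $\fdsym_A(0 \to A) \otimes_A I^\star A \to I^\star A$ sends the universal weight-$0$ generator $s \in \dsym_A(A)$ to $1 \in A$, and the map to $\fdsym_A(A^n \to A) \otimes_A I^\star A$ is induced by $(0 \to A) \hookrightarrow (A^n \to A)$. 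Verifying this pushout presentation is the principal technical obstacle: without regularity, the derived pushout would contain extra higher cells in weight $1$, but the hypothesis ensures they are killed after $(p,I)$-completion.

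Granting the pushout description, applying $\compenv_I$ (which preserves colimits as a left adjoint) together with Lemma \ref{envelopes of free algebras} to evaluate each of the three free envelopes yields
\[
\compenv_I(J \to A) \simeq \bigl(A \otimes^L_{\dsym_A(A)} \dsym_A(A)\{A^n/I\}\bigr)^\wedge_{(p,I)}.
\]
Unwinding the explicit pushout of $\delta$-$A$-algebras defining $\dsym_A(A)\{A^n/I\}$ in Lemma \ref{envelopes of free algebras} and performing the base change along $s \mapsto 1$ recovers the $(p,I)$-complete $\delta$-$A$-algebra freely adjoining generators $t_1, \ldots, t_n$ with relations $d t_i = x_i$ --- precisely the pushout presentation of $A\{J/I\}^\wedge_{(p,I)}$ used in \cite{Bhatt-Scholze}. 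This identifies $\phi$ with the desired equivalence.
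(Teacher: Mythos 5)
Your overall architecture agrees with the paper's: present $(J\to A)$ as a pushout of free filtered $\delta$-$A$-algebras, use that $\compenv_{I}$ preserves colimits together with Lemma \ref{envelopes of free algebras} to evaluate the envelope of each free term, and match the result with the generators-and-relations description of $A\{\frac{J}{I}\}^{\wedge}_{(p,I)}$ (the paper invokes Remark 2.7 of Li--Liu for this last identification; your unwinding of $\dsym_{A}(A)\{A^{n}/I\}$ after setting $s=1$ plays the same role). The particular free presentation differs only cosmetically: the paper pushes out $I^{\star}A \leftarrow I^{\star}A\{\hat{x}_{1},\dots,\hat{x}_{n}\} \to ((I,\hat{x})\to A\{\hat{x}\})$, placing the generators in weight $0$ and enlarging the filtration, whereas you adjoin a weight-$0$ generator $s$ and weight-$1$ generators $t_{i}$ and then kill $s-1$; both feed into Lemma \ref{envelopes of free algebras} and produce the same answer.

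The gap is that the step you yourself flag as ``the principal technical obstacle'' --- the pushout presentation of $(J\to A)$ --- is where essentially all of the content of the corollary lives, and you neither prove it nor point to a viable mechanism; moreover, the one reason you do offer is wrong. The spurious cells arising from a non-regular sequence are \emph{not} killed by $(p,I)$-completion (completion does not annihilate higher homotopy of the Koszul-type pushout); rather, regularity guarantees they never appear, because the Koszul complex on $(x_{1},\dots,x_{n})$ is a resolution. The paper's verification rewrites each vertex of the span in terms of Hodge-filtered infinitesimal cohomology via Example \ref{infinitesimal of regular sequence} and Lemma \ref{infinitesimal base change}, then commutes the order of colimits in a $3\times 3$ diagram; since $F^{\star}_{H}\mathbbl{\Pi}_{-/A}$ is a left adjoint, the computation collapses to $F^{\star}_{H}\mathbbl{\Pi}_{(\overline{A}\otimes^{L}_{\overline{A}[\hat{x}]}\overline{A})/A}$, and the hypothesis enters exactly once, to identify $\overline{A}\otimes^{L}_{\overline{A}[\hat{x}]}\overline{A}\simeq A/J$, after which Example \ref{infinitesimal of regular sequence} returns $(J\to A)$. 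To complete your argument you would need either this device or a direct computation of the weight-$1$ piece of your Day-convolution pushout via the Koszul resolution; as written, the claim that regularity plus completion yields the presentation does not constitute a proof.
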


\begin{proof} By Lemma \ref{envelopes of free algebras} and Remark 2.7 in \cite{Li-Liu}, it suffices to exhibit the filtered $\delta$-ring $(J \to A)$ as the pushout in $(p,I)$-complete filtered $\delta$-$A$-algebras of the following diagram:
\begin{center}
\begin{tikzcd}

I^{\star}A\{\hat{x}_{1},..., \hat{x}_{n}\} \arrow[r] \arrow[d, "\hat{x}_{i} \to x_{i}"]
	& \left( (I, \hat{x}_{1}, ..., \hat{x}_{n}) \to A\{\hat{x}_{1},..., \hat{x}_{n}\} \right)\\
I^{\star}A 
\end{tikzcd}
\end{center}
Since each term in the pushout is free, it suffices to identify the pushout of the following diagram in filtered derived $A$-algebras:
\begin{center}
\begin{tikzcd}
I^{\star}A[\hat{x}_{1},..., \hat{x}_{n}] \arrow[r] \arrow[d, "\hat{x}_{i} \to x_{i}"]
	& \left( (I, \hat{x}_{1}, ..., \hat{x}_{n}) \to A[\hat{x}_{1},..., \hat{x}_{n}] \right) \\
I^{\star}A
\end{tikzcd}
\end{center}
Each vertex of this diagram can be computed as the pushout of the corresponding row in the following diagram: 
\begin{center}
\begin{tikzcd}
F^{\star}_{H}\mathbbl{\Pi}_{\overline{A}/A}
	& F^{\star}_{H}\mathbbl{\Pi}_{A[\hat{x}_{1},..., \hat{x}_{n}]/A} \arrow[r] \arrow[l]
		& A[\hat{x}_{1},..., \hat{x}_{n}] \\
F^{\star}_{H}\mathbbl{\Pi}_{\overline{A}[\hat{x}_{1},..., \hat{x}_{n}]/A} \arrow[u] \arrow[d]
	& F^{\star}_{H}\mathbbl{\Pi}_{A[\hat{x}_{1},..., \hat{x}_{n}]/A} \arrow[r] \arrow[l] \arrow[u] \arrow[d]
		& A[\hat{x}_{1},..., \hat{x}_{n}] \arrow[u] \arrow[d] \\
F^{\star}_{H}\mathbbl{\Pi}_{\overline{A}/A}
	& 0 \arrow[r] \arrow[l]
		& 0
\end{tikzcd}
\end{center}
Indeed, appealing to Lemma \ref{infinitesimal base change} and Example \ref{infinitesimal of regular sequence}, the pushout of the first row is $F^{\star}_{H} \mathbbl{\Pi}_{\overline{A}/A[\hat{x}_{1},..., \hat{x}_{n}]} \simeq ((I, \hat{x}_{1},..., \hat{x}_{n}) \to A[\hat{x}_{1},..., \hat{x}_{n}])$ whereas the pushout of the second is $F^{\star}_{H}\mathbbl{\Pi}_{\overline{A}[\hat{x}_{1},..., \hat{x}_{n}]/A[\hat{x}_{1},..., \hat{x}_{n}]} \simeq I^{\star}A[\overline{x}_{1},..., \overline{x}_{n}]$.

The colimit of this larger diagram can be computed by first pushing out along the rows and then taking the colimit of the resulting span, or first pushing out the columns and then taking the colimit of the resulting span (see Lemma 1.13 in \cite{Devalapurkar-Haine}). The first procedure thus yields the colimit of the diagram we are interested in analyzing

The pushout of the columns results in applying $F^{\star}_{H} \mathbbl{\Pi}_{-/A}$ to the pushout diagram
\begin{center}
\begin{tikzcd}
\overline{A}[\hat{x}_{1},..., \hat{x}_{n}] \arrow[r, "\hat{x}_{i} \to 0"] \arrow[d, "\hat{x}_{i} \to x_{i}"] 
	& \overline{A} \\
\overline{A}
\end{tikzcd}
\end{center}
and by regularity of the sequence $x_{i}$, the pushout of this diagram is $A/J$. In particular Example \ref{infinitesimal of regular sequence} implies that the colimit of the big diagram is precisely $(J \to A)$, as desired.
\end{proof}

%----------------------------------------------------------------------------------------------------------------------------------------------------------------------------------------------------------------------------------------------------

\subsection{Derived Prismatic Cohomology}

We now formulate the universal property of derived prismatic cohomology. Throughout this section $(A,I)$ will always denote a prism unless explicitly indicated otherwise.

\begin{observation} \label{prismatic cohomology} Let $(A,I)$ be a $\delta$-Cartier pair. Since $I \to A$ is a Cartier divisor, the composite
	\[ \compdalg(\Mod_{A}) \xrightarrow{Forget} \widehat{\calg}(\Mod_{A}) \xrightarrow{ - \otimes_{A} \overline{A}} \widehat{\calg}(\Mod_{\overline{A}}) \]
admits a left adjoint.

\end{observation}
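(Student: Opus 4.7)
The plan is to exhibit the given composite as the composition of three right adjoints, each of whose left adjoint has already been established in the preceding sections, and then invoke functoriality of adjunctions. The Cartier-divisor hypothesis enters twice: it ensures the $I$-adic envelope exists (Variant~\ref{prismatic envelope}), and, via Example~\ref{infinitesimal of regular sequence} applied to the locally length-one regular sequence cutting out $\overline{A}$, it supplies the canonical identification of filtered $A$-algebras $F^{\star}_{H} \mathbbl{\Pi}_{\overline{A}/A} \simeq I^{\star}A$.

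Using this identification, I would chain together the following three adjunctions, each taken in its $(p,I)$-complete version. First, from Lemma~\ref{infinitesimal cohomology of a triple} with base ring $\overline{A}$:
\[
F^{\star}_{H} \mathbbl{\Pi}_{-/A} \colon \widehat{\calg}(\Mod_{\overline{A}}) \rightleftarrows \widehat{\calg}(\fil \Mod_{I^{\star}A}) \colon gr^0.
\]
Second, from Notation~\ref{filtered free delta}:
\[
Free^{\delta,[0]}_{A} \colon \widehat{\calg}(\fil \Mod_{I^{\star}A}) \rightleftarrows \widehat{\dalg}(\fil \Mod_{I^{\star}A}) \colon \text{Forget}.
\]
Third, from Variant~\ref{prismatic envelope}:
\[
\env_{I} \colon \widehat{\dalg}(\fil \Mod_{I^{\star}A}) \rightleftarrows \compdalg(\Mod_{A}) \colon I^{\star}.
\]

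Given $B \in \compdalg(\Mod_{A})$, composing the three right adjoints in the displayed order produces
\[
gr^0\bigl(\text{Forget}(I^{\star}B)\bigr) \simeq gr^0\bigl(ins^0(B) \otimes_{A} I^{\star}A\bigr) \simeq B \otimes_{A} \overline{A},
\]
which is canonically identified with the value at $B$ of the composite $\text{Forget} \circ (-\otimes_{A} \overline{A})$ in the statement. Hence the composite of the three left adjoints,
\[
L\Prism_{-/A} := \env_{I} \circ Free^{\delta,[0]}_{A} \circ F^{\star}_{H} \mathbbl{\Pi}_{-/A},
\]
is a left adjoint to the given composite, thereby also recovering the candidate construction described in the introduction.

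The main technical obstacle will be to set up the three intermediate filtered categories in their $(p,I)$-complete versions and to check that each cited adjunction restricts correctly to completions. These verifications should follow routinely from presentability of the relevant $\infty$-categories together with the compatibility of Bousfield localization with the monadic adjunctions in play, but a careful statement requires bookkeeping that is suppressed in the sketch above.
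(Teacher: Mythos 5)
Your proposal is correct and is essentially the paper's own argument: the Observation is justified there by the same decomposition of the right adjoint, factoring $-\otimes_{A}\overline{A}$ as $gr^{0}\circ I^{\star}$ (whose left adjoint is $\compcalgenv_{I}\circ F^{\star}_{H}\widehat{\mathbbl{\Pi}}_{-/A}$, via Lemma \ref{infinitesimal cohomology of a triple} and the identification $F^{\star}_{H}\mathbbl{\Pi}_{\overline{A}/A}\simeq I^{\star}A$) and composing with the free-$\delta$ adjunction, which is exactly the content of the Lemma and Variant following the Observation. The only cosmetic difference is that you perform the free-$\delta$ step at the filtered level, giving $\env_{I}\circ Free^{\delta,[0]}_{A}\circ F^{\star}_{H}\mathbbl{\Pi}_{-/A}$ rather than $Free^{\delta}_{A}\circ\compcalgenv_{I}\circ F^{\star}_{H}\widehat{\mathbbl{\Pi}}_{-/A}$; this is precisely the bracketing used in the paper's introduction and the two agree.
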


\begin{definition} Let $(A,I)$ be any $\delta$-Cartier pair. We denote by
	\[ L\Prism_{-/A}\colon \compcalg(\Mod_{\overline{A}}) \to \compdalg(\Mod_{A}) \]
the left adjoint of \ref{prismatic cohomology}. Given $R \in \compdalg(\Mod_{\overline{A}})$, we refer to the object $L\Prism_{R/A}$ as the \emph{derived prismatic cohomology} of $R$ relative to $A$. This terminology is justified by Theorem \ref{main comparison} below.

\end{definition}

\begin{variant} Similarly, the functor
	\[ \compcalg(\Mod_{A}) \xrightarrow{ - \otimes_{A} \overline{A}} \compcalg(\Mod_{\overline{A}}) \]
admits a left adjoint, which we will denote by $L\Theta_{-/A}$. Observe that for any $\overline{A}$-algebra $R$, there is a canonical equivalence
	\[ L\Prism_{R/A} \simeq Free^{\delta}_{A} \circ L \Theta_{R/A}. \]

\end{variant}

\begin{example} Both $L\Theta_{R/A}$ and $L \Prism_{R/A}$ can be made slightly more explicit in the case that $R = LSym_{\overline{A}}(\overline{A}) \simeq \overline{A}\left<x\right>$ is a $p$-complete polynomial algebra over $\overline{A}$. For simplicity, we will assume that $I=(d)$ is principal.

Indeed, in this case we have a commutative diagram of right adjoints
\begin{center}
\begin{tikzcd}
\compcalg(\Mod_{\overline{A}}) \arrow[d]
	& \compcalg(\Mod_{A}) \arrow[l, "- \otimes_{A} \overline{A}"] \arrow[d] \\
 \widehat{\Mod_{\overline{A}}} 
 	& \widehat{\Mod_{A}} \arrow[l, " - \otimes_{A} \overline{A}"]
\end{tikzcd}
\end{center}
which implies that, denoting by $L$ the left adjoint to the bottom horizontal arrow, we have
	\[ L\Theta_{R/A} \simeq LSym_{A}(L(\overline{A})). \]
It thus suffices to identify $L(\overline{A})$, which is obtained by appealing to universal properties:
\begin{align*}
Hom_{\widehat{\Mod_{A}}}(L(\overline{A}), M) & \simeq Hom_{\widehat{\Mod_{\overline{A}}}}(\overline{A}, M \otimes_{A} \overline{A}) \\
								& \simeq cofib(Hom_{\widehat{\Mod_{A}}}(A, M) \xrightarrow{d^{*}} Hom_{\widehat{\Mod_{A}}}(A, M)) \\
								& \simeq Hom_{\widehat{\Mod_{A}}}(fib(d), M)
\end{align*}
where $d$ indicates the multiplication map $d: A \to A$. In particular, $L(\overline{A}) \simeq \overline{A}[-1]$ and we see that
	\[ L\Theta_{\overline{A}\left<x\right>/A} \simeq LSym_{A}(\overline{A}[-1]) \]
and
	\[ L\Prism_{\overline{A}\left<x\right>/A} \simeq \dsym_{A}(\overline{A}[-1]). \]

\end{example}

The starting point for our investigation into derived prismatic cohomology is the following expression of $L\Theta_{-/A}$.
\begin{lemma} There is a natural equivalence of functors
	\[ L\Theta_{-/A} \simeq \compcalgenv_{I} \circ F^{\star}_{H} \widehat{\mathbbl{\Pi}}_{-/A} \]
where the symbol $\widehat{\mathbbl{\Pi}}$ refers to Hodge-complete infinitesimal cohomology.

\end{lemma}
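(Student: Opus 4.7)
The plan is to verify that $\compcalgenv_{I} \circ F^{\star}_{H} \widehat{\mathbbl{\Pi}}_{-/A}$ satisfies the universal property defining $L\Theta_{-/A}$, namely that for $R \in \compcalg(\Mod_{\overline{A}})$ and $B \in \compcalg(\Mod_{A})$ there is a natural equivalence
\[
Hom_{\compcalg(\Mod_{A})}(\compcalgenv_{I} F^{\star}_{H} \widehat{\mathbbl{\Pi}}_{R/A}, B) \simeq Hom_{\compcalg(\Mod_{\overline{A}})}(R, B \otimes_{A} \overline{A}).
\]
To make sense of the composition, I first endow $F^{\star}_{H} \widehat{\mathbbl{\Pi}}_{R/A}$ with a canonical $I^{\star}A$-algebra structure. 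Functoriality of $F^{\star}_{H} \widehat{\mathbbl{\Pi}}_{-/A}$ applied to the $\overline{A}$-algebra structure map $\overline{A} \to R$ yields an $F^{\star}_{H} \widehat{\mathbbl{\Pi}}_{\overline{A}/A}$-algebra structure on $F^{\star}_{H} \widehat{\mathbbl{\Pi}}_{R/A}$, which restricts along the canonical map $I^{\star}A \to F^{\star}_{H} \widehat{\mathbbl{\Pi}}_{\overline{A}/A}$ to the desired $I^{\star}A$-algebra structure. This latter map exists by the universal property of $I^{\star}A$ as the free filtered $A$-algebra on $I$ in weight $1$, together with the identifications $F^{0}_{H} \widehat{\mathbbl{\Pi}}_{\overline{A}/A} \simeq A$ and $F^{1}_{H} \widehat{\mathbbl{\Pi}}_{\overline{A}/A} \simeq I$. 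Symmetrically, $I^{\star}B$ is canonically an $F^{\star}_{H} \widehat{\mathbbl{\Pi}}_{\overline{A}/A}$-algebra, since $gr^{0}(I^{\star}B) \simeq B \otimes_{A} \overline{A}$ is an $\overline{A}$-algebra and Lemma \ref{infinitesimal cohomology of a triple} produces the corresponding map $F^{\star}_{H} \widehat{\mathbbl{\Pi}}_{\overline{A}/A} \to I^{\star}B$.

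The proof then runs through the chain of natural equivalences
\begin{align*}
Hom(\compcalgenv_{I} F^{\star}_{H} \widehat{\mathbbl{\Pi}}_{R/A}, B)
&\simeq Hom_{\compcalg(\widehat{\fil} \Mod_{I^{\star}A})}(F^{\star}_{H} \widehat{\mathbbl{\Pi}}_{R/A}, I^{\star}B) \\
&\simeq Hom_{\compcalg(\widehat{\fil} \Mod_{F^{\star}_{H} \widehat{\mathbbl{\Pi}}_{\overline{A}/A}})}(F^{\star}_{H} \widehat{\mathbbl{\Pi}}_{R/A}, I^{\star}B) \\
&\simeq Hom_{\compcalg(\Mod_{\overline{A}})}(R, gr^{0}(I^{\star}B)) \\
&\simeq Hom_{\compcalg(\Mod_{\overline{A}})}(R, B \otimes_{A} \overline{A}).
\end{align*}
The first step is the envelope adjunction. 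The third is the Hodge-complete analogue of Lemma \ref{infinitesimal cohomology of a triple} applied to the triple $A \to \overline{A} \to R$. The fourth is monoidality of $gr^{0}$ together with $gr^{0}(I^{\star}A) \simeq \overline{A}$. The key second step is the base change adjunction along $I^{\star}A \to F^{\star}_{H} \widehat{\mathbbl{\Pi}}_{\overline{A}/A}$, together with the observation that since $F^{\star}_{H} \widehat{\mathbbl{\Pi}}_{R/A}$ is already an $F^{\star}_{H} \widehat{\mathbbl{\Pi}}_{\overline{A}/A}$-algebra, the pushout $F^{\star}_{H} \widehat{\mathbbl{\Pi}}_{R/A} \otimes_{I^{\star}A} F^{\star}_{H} \widehat{\mathbbl{\Pi}}_{\overline{A}/A}$ is equivalent to $F^{\star}_{H} \widehat{\mathbbl{\Pi}}_{R/A}$ (the general fact that $E \otimes_{\alpha} \beta \simeq E$ as $\beta$-algebras whenever $E$ is already a $\beta$-algebra with compatible $\alpha$-structure).

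The main point demanding care is the coherence of the various canonical $I^{\star}A$- and $F^{\star}_{H} \widehat{\mathbbl{\Pi}}_{\overline{A}/A}$-algebra structures: one must verify that the $I^{\star}A$-algebra structure used to invoke the envelope adjunction matches the one obtained by restricting the $F^{\star}_{H} \widehat{\mathbbl{\Pi}}_{\overline{A}/A}$-structure, both for $F^{\star}_{H} \widehat{\mathbbl{\Pi}}_{R/A}$ and for $I^{\star}B$. Both structures on each object are pinned down by the $\overline{A}$-algebra structure on the corresponding $gr^{0}$ together with naturality of all constructions involved, so the required coherence is automatic but should be recorded explicitly.
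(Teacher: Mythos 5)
Your argument is essentially the paper's proof unwound at the level of mapping spaces: the paper simply factors the right adjoint $-\otimes_{A}\overline{A}$ as $gr^{0}\circ I^{\star}$, notes that the left adjoint to $gr^{0}$ is $F^{\star}_{H}\widehat{\mathbbl{\Pi}}_{-/A}$ by (the complete analogue of) Lemma \ref{infinitesimal cohomology of a triple}, and concludes by composing left adjoints; your chain of equivalences is the same computation written out. There is, however, one genuinely flawed justification. The ``general fact'' you invoke for the second step --- that $E\otimes_{\alpha}\beta\simeq E$ whenever $E$ is a $\beta$-algebra with compatible $\alpha$-structure --- is false: one has $E\otimes_{\alpha}\beta\simeq E\otimes_{\beta}(\beta\otimes_{\alpha}\beta)$, so the claim holds precisely when $\alpha\to\beta$ is an epimorphism of algebras (take $\alpha=k$, $\beta=E=k[x]$ for a counterexample, where the pushout is a polynomial ring in two variables). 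The step is nonetheless correct here for a different reason: the comparison map $I^{\star}A\to F^{\star}_{H}\widehat{\mathbbl{\Pi}}_{\overline{A}/A}$ is an equivalence, because for a prism $(A,I)$ the ideal $I$ is invertible and locally generated by a nonzerodivisor, so Example \ref{infinitesimal of regular sequence} identifies $F^{\star}_{H}\mathbbl{\Pi}_{\overline{A}/A}$ with the derived $I$-adic filtration $I^{\star}A$ (and likewise after completion). Once this identification is recorded, your second step becomes a tautology, the coherence worries in your last paragraph evaporate, and the remaining steps --- the envelope adjunction, Lemma \ref{infinitesimal cohomology of a triple} for the triple $A\to\overline{A}\to R$, and $gr^{0}(I^{\star}B)\simeq B\otimes_{A}\overline{A}$ --- reproduce the paper's argument exactly. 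So: replace the false general fact with the explicit equivalence $I^{\star}A\simeq F^{\star}_{H}\widehat{\mathbbl{\Pi}}_{\overline{A}/A}$, which is the one input the paper's proof also (implicitly) relies on.
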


\begin{proof} The functor $ - \otimes_{A} \overline{A}$ factors as
	\[ \compcalg(\Mod_{A}) \xrightarrow{I^{\star}} \calg(\widehat{\fil} \Mod_{I^{\star}A}) \xrightarrow{gr^{0}} \calg(\Mod_{\overline{A}}). \]
Recall from Lemma \ref{infinitesimal cohomology of a triple} that the left adjoint to $gr^{0}$ is precisely $F^{\star}_{H} \widehat{\mathbbl{\Pi}}_{-/A}$. The Lemma now follows from the definition of $\compcalgenv_{I}$ as the left adjoint to $I^{\star}$. 
\end{proof}

\begin{observation} Following Example \ref{cohomology is a delta ring}, we see that for any smooth $A/I$-algebra $R$, $\Prism_{R/A}$ may be viewed as an object of $\compdalg(\Mod_{A})$. Furthermore, the Hodge-Tate complex $\overline{\Prism}_{R/A} \simeq \Prism_{R/A} \otimes_{A} \overline{A}$ is naturally an $R$-algebra, by definition.
By adjunction, we thus obtain a canonical map of derived $\delta$-rings
	\[ comp_{R/A}\colon L\Prism_{R/A} \to \Prism_{R/A}. \]
By left Kan extension, we obtain a comparison map for any animated $\overline{A}$-algebra $R$ (where the right hand side is now understood to be the left Kan extension of the usual prismatic cohomology).	
\end{observation}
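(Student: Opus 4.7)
The observation packages two assertions: (i) for a smooth $\overline{A}$-algebra $R$, there is a canonical comparison map $comp_{R/A}\colon L\Prism_{R/A}\to\Prism_{R/A}$ in $\compdalg(\Mod_A)$; and (ii) this comparison extends to arbitrary animated $\overline{A}$-algebras by left Kan extension. My plan is to produce (i) as the mate of a naturally occurring $R$-algebra structure on the Hodge--Tate complex, and then to obtain (ii) from the fact that both sides preserve sifted colimits.

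For (i), the first point is that $\Prism_{R/A}$ lifts from $\compcalg(\Mod_A)$ to $\compdalg(\Mod_A)$: the prismatic structure sheaf $\mathcal{O}_\Prism$ on the site $(R/A)_\Prism$ is a sheaf of classical, hence derived, $\delta$-$A$-algebras, and since the forgetful functor $\dalg\to\calg$ preserves limits (Proposition \ref{limits and colimits}), derived global sections of a sheaf of $\delta$-rings inherit a canonical derived $\delta$-ring structure, as recorded in Example \ref{cohomology is a delta ring}. Next, the Hodge--Tate complex $\overline{\Prism}_{R/A}\simeq \Prism_{R/A}\otimes_A\overline{A}$ comes with a tautological $R$-algebra structure: every object $(B,IB)$ of the relative prismatic site is equipped with a structure map $R\to B/IB$, and these assemble into a canonical map $R\to\overline{\Prism}_{R/A}$ in $\compcalg(\Mod_{\overline{A}})$. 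The comparison morphism $comp_{R/A}$ is then defined to be the adjoint of $R\to\overline{\Prism}_{R/A}$ under the adjunction $L\Prism_{-/A}\dashv (-)\otimes_A\overline{A}$ of Observation \ref{prismatic cohomology}; by construction this lands in $\compdalg(\Mod_A)$.

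For (ii), I would use that $p$-complete polynomial (in particular smooth) $\overline{A}$-algebras generate $\compcalg(\Mod_{\overline{A}})$ under sifted colimits. The functor $L\Prism_{-/A}$ is a left adjoint and hence preserves all small colimits, while the functor $R\mapsto\Prism_{R/A}$ on animated $\overline{A}$-algebras is defined to be the sifted-colimit-preserving left Kan extension of its restriction to polynomial algebras. Naturality of the construction in (i) in the variable $R$ then produces a natural transformation of sifted-colimit-preserving functors on the subcategory of smooth $\overline{A}$-algebras, which extends uniquely along the Kan extension to all of $\compcalg(\Mod_{\overline{A}})$.

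The main subtlety lies in (i): one must check that the $R$-algebra structure on $\overline{\Prism}_{R/A}$ and the $\delta$-ring structure on $\Prism_{R/A}$ are genuinely compatible in the derived setting, so that the mate of the $R$-algebra map really lives in $\compdalg$ rather than in a weaker category with only an $\mathbb{E}_\infty$-algebra structure and a Frobenius lift. The key point is that both structures are constructed pointwise on the prismatic site and assembled via the same sheaf-theoretic limit, so the limit-preservation properties in Proposition \ref{limits and colimits} guarantee the required compatibility automatically.
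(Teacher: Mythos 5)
Your proposal matches the paper's construction exactly: lift $\Prism_{R/A}$ to $\compdalg(\Mod_{A})$ via Example \ref{cohomology is a delta ring}, use the tautological $R$-algebra structure on the Hodge--Tate complex, take the mate under the adjunction of Observation \ref{prismatic cohomology}, and left Kan extend from the smooth case. The compatibility you worry about at the end is already handled by the fact that the adjunction is formulated between $\compcalg(\Mod_{\overline{A}})$ and $\compdalg(\Mod_{A})$, so no further check is needed.
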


\begin{theorem} \label{main comparison} For any $p$-complete animated $\overline{A}$ algebra $R$, the map
	\[ comp_{R/A}\colon L\Prism_{R/A} \to \Prism_{R/A} \]
is an equivalence.

\end{theorem}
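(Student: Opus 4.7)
The plan is to reduce to the case where $R$ is a $p$-completed polynomial algebra via a sifted-colimit argument, and then identify both sides explicitly with the \v{C}ech--Alexander complex attached to a polynomial cover. Both functors $L\Prism_{-/A}$ and $\Prism_{-/A}$ preserve sifted colimits: the former because it is a left adjoint, the latter by construction as a left Kan extension from smooth $\overline{A}$-algebras. Since every $p$-complete animated $\overline{A}$-algebra is a sifted colimit of $p$-completed finitely generated polynomial algebras over $\overline{A}$, and $comp_{R/A}$ is natural in $R$, it suffices to verify the claim when $R \simeq \overline{A}\langle x_1,\dots,x_n\rangle^{\wedge}_{p}$.

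For such smooth $R$, recall from \cite{Bhatt-Scholze} and Section 1.1 that $\Prism_{R/A}$ is computed by the \v{C}ech--Alexander complex: pick the natural lift $P = A\langle x_1,\dots,x_n\rangle^{\wedge}_{p}$ with kernel $J = \ker(P \to R)$, let $F$ be the free $(p,I)$-completed $\delta$-$A$-algebra on $P$, and form the cosimplicial diagram whose $k$-th term is the classical $(p,I)$-completed prismatic envelope of the $k$-fold coproduct of $(F, J\cdot F)$ in $\compdalg(\Mod_{A})$. To identify $L\Prism_{R/A}$ with the same totalization, I would use the factorization $L\Prism_{R/A} \simeq Free^{\delta}_{A}(L\Theta_{R/A})$ together with the formula $L\Theta_{-/A} \simeq \compcalgenv_{I}\circ F^{\star}_{H}\widehat{\mathbbl{\Pi}}_{-/A}$ established just above. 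Descent for Hodge-filtered infinitesimal cohomology along $P \to R$ presents $F^{\star}_{H}\widehat{\mathbbl{\Pi}}_{R/A}$ as the totalization of a cosimplicial filtered algebra whose $k$-th term is the Hodge pair $(J_{k} \to P^{\otimes (k+1)})$, via the identification $F^{\star}_{H}\mathbbl{\Pi}_{R/P^{\otimes(k+1)}} \simeq (J_{k} \to P^{\otimes(k+1)})$ recalled in the introduction. Applying $\compcalgenv_{I}$ and then $Free^{\delta}_{A}$ term-by-term, I invoke Corollary \ref{envelopes of regular sequences} to identify each derived envelope with the corresponding classical prismatic envelope, since each $J_{k}$ is $(p,I)$-completely generated by a regular sequence in $P^{\otimes(k+1)}$. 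This should recover exactly the \v{C}ech--Alexander complex of $\Prism_{R/A}$.

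The main obstacle is justifying that the composite $Free^{\delta}_{A} \circ \compcalgenv_{I}$, a composition of left adjoints, commutes with the cosimplicial totalization defining the \v{C}ech--Alexander resolution of $F^{\star}_{H}\widehat{\mathbbl{\Pi}}_{R/A}$. This is not a formal consequence; the plan is to obtain it from the finite-totalization-preservation result of Theorem \ref{non-linear techy extension result}, applied level-wise to the excisively polynomial filtered pieces of $\dsym^{\delta}$ supplied by the $\delta$-degree filtration of Section 2.3, together with the base-change naturality of envelopes (Proposition \ref{naturality for envelopes}). The vertical filtration of Theorem \ref{vertical filtration} will likely play an auxiliary role, reducing the comparison at the associated graded level of the Hodge filtration to the explicit formula of Lemma \ref{graded envelope of free algebras}. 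Handling these finiteness/boundedness estimates carefully will be the technical heart of the argument.
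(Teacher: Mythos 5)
Your proposal is correct and follows essentially the same route as the paper: reduce to the smooth/polynomial case by sifted colimits, identify the cosimplicial terms with classical prismatic envelopes via Corollary \ref{envelopes of regular sequences}, and handle the real difficulty — commuting $Free^{\delta}_{A}\circ\compcalgenv_{I}$ past the \v{C}ech--Alexander totalization — via the conjugate/vertical filtration (reducing to cotangent-complex descent through Lemma \ref{graded envelope of free algebras}) together with Theorem \ref{non-linear techy extension result}. These are precisely the ingredients of the paper's Proposition \ref{descent for theta} and Theorem \ref{prismatic Cech-Alexander}.
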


The proof of Theorem \ref{main comparison} will take some preliminaries. We begin by establishing the theorem in a special case:

\begin{lemma} Suppose $R = A/J$ where $J = (I,x_{1}, ... , x_{n})$ is generated by a $(p,I)$-completely regular sequence. Then $comp_{R/A}$ is an equivalence.

\end{lemma}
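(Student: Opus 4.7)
The plan is to compute both sides explicitly and identify each with the $(p,I)$-complete prismatic envelope $A\{J/I\}^{\wedge}_{(p,I)}$. To handle the left-hand side, I will decompose the composite
\[
\compdalg(\Mod_{A}) \xrightarrow{I^{\star}} \dalg(\widehat{F^{\geq 0}} \Mod_{I^{\star}A}) \to \calg(\widehat{F^{\geq 0}} \Mod_{I^{\star}A}) \xrightarrow{gr^{0}} \compcalg(\Mod_{\overline{A}})
\]
into its three constituent right adjoints (the $I$-adic filtration, the forgetful functor from $\delta$- to commutative rings, and weight-zero evaluation) and pass to left adjoints using Variant \ref{prismatic envelope}, Notation \ref{filtered free delta}, and Lemma \ref{infinitesimal cohomology of a triple}. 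This yields
\[
L\Prism_{R/A} \simeq \compenv_{I}\!\left(Free^{\delta, [0]}_{A}(F^{\star}_{H} \widehat{\mathbbl{\Pi}}_{R/A})\right).
\]
For $R = A/J$ with $J$ generated by a regular sequence, Example \ref{infinitesimal of regular sequence} then identifies $F^{\star}_{H} \widehat{\mathbbl{\Pi}}_{R/A}$ with the derived $J$-adic filtration $LSym^{\star}_{A}(J)^{\wedge}_{(p,I)} \simeq p_{!}(J \to A)^{\wedge}_{(p,I)}$, so its filtration is freely induced from weight one.

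Next I will exploit compatibility of left adjoints to streamline this expression. Since the forgetful functor from derived $\delta$-algebras to derived commutative algebras commutes with restriction to $\{0,1\}$-filtrations, passing to left adjoints shows that $Free^{\delta, [0]}_{A}$ commutes with $p_{!}$. Combined with Observation \ref{envelopes of 01 filtered} (which identifies $\env_{I} \circ p_{!}$ with the $\{0,1\}$-envelope), I will arrive at
\[
L\Prism_{R/A} \simeq \compenv_{I}(J \to A),
\]
where the right-hand side is the completed $\{0,1\}$-envelope of the filtered $\delta$-algebra $(J \to A)$ considered in Corollary \ref{envelopes of regular sequences}. Applying that corollary identifies this with $A\{J/I\}^{\wedge}_{(p,I)}$.

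For the right-hand side, the derived prismatic cohomology of $R = A/J$ over the prism $(A,I)$ also coincides with the prismatic envelope $A\{J/I\}^{\wedge}_{(p,I)}$ by the classical theory of \cite{Bhatt-Scholze}: this envelope represents the universal prism under $(A,I)$ equipped with a map to $R$, and this description persists in the derived setting when $J$ is regular. It then remains to verify that the comparison map $comp_{R/A}$---defined via adjunction from the natural $R$-algebra structure on $\Prism_{R/A} \otimes_{A} \overline{A}$---agrees with the above identifications; this reduces to unwinding the universal property of $L\Prism_{R/A}$ against the explicit presentation on the right.

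The main obstacle is the manipulation in the second paragraph, and more specifically the identification of $Free^{\delta, [0]}_{A}(F^{\star}_{H}\widehat{\mathbbl{\Pi}}_{R/A})$ with the $\{0,1\}$-filtered $\delta$-algebra $(J \to A)$ as interpreted in Corollary \ref{envelopes of regular sequences}. The proof of that corollary exhibits this object as a pushout in $(p,I)$-complete filtered $\delta$-$A$-algebras, whose columns compute infinitesimal cohomologies of pushouts of polynomial algebras; this pushout recipe is precisely what one expects from the free $\delta$-filtered algebra on $F^{\star}_{H}\widehat{\mathbbl{\Pi}}_{R/A}$, and matching the two presentations is the key technical task.
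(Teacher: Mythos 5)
Your proposal is correct and follows essentially the same route as the paper: identify $L\Prism_{R/A}$ with $\compenv_{I}$ applied to the free filtered $\delta$-algebra on $F^{\star}_{H}\widehat{\mathbbl{\Pi}}_{R/A}$, use Example \ref{infinitesimal of regular sequence} and Observation \ref{envelopes of 01 filtered} to reduce to $\compenv_{I}(J \to A)$, apply Corollary \ref{envelopes of regular sequences}, and invoke Example 7.9 of \cite{Bhatt-Scholze} for the site-theoretic side. You are in fact more explicit than the paper about the one genuinely delicate point (matching $Free^{\delta,[0]}_{A}$ of the $J$-adic filtration with the filtered $\delta$-ring $(J\to A)$ as it appears in the corollary), which the paper's proof passes over silently.
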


\begin{proof} Combine the description $L\Prism_{R/A} \simeq \compenv_{I}(Free^{\delta}_{A}(F^{\star}_{H} \mathbbl{\Pi}_{R/A}))$ with Example \ref{infinitesimal of regular sequence} and Observation \ref{envelopes of 01 filtered} to identify $L\Prism_{R/A}$ with
	\[ L\Prism_{R/A} \simeq \compenv_{I}(J \to A). \]
Corollary \ref{envelopes of regular sequences} then yields $L\Prism_{R/A} \simeq A\{\frac{J}{I}\}^{\wedge}_{(p,I)}$. The result then follows from Example 7.9 in \cite{Bhatt-Scholze} which identifies the derived prismatic cohomology of $R$ over $A$ with the prismatic envelope.
\end{proof}

Prismatic cohomology (of smooth algebras) can be computed via an appropriate \v{C}ech--Alexander complex, where each of the terms in the complex fits into the context of the preceding lemma. It therefore suffices to prove that $L\Prism_{R/A}$ can be accessed via \v{C}ech--Alexander complexes in the same way. To establish this, we will use the factorization $L\Prism_{R/A} = Free^{\delta}_{A} \circ L\Theta_{R/A}$ to break the problem up into two stages. We will begin by proving that $L\Theta_{R/A}$ can be computed via a \v{C}ech--Alexander complex (see Proposition \ref{descent for theta} below), and then we will appeal to the filtration on $Free^{\delta}_{A}$ by polynomial subfunctors from Section 2 to deduce the analogous claim for $L\Prism_{R/A}$.

\begin{lemma} \label{graded theta} Denote by $L\overline{\Theta}_{-/A} := L\Theta_{-/A} \otimes_{A} \overline{A}$. Then there is a canonical factorization

\begin{center}
\begin{tikzcd}

\calg(\Mod_{\overline{A}}) \arrow[rr, "L\overline{\Theta}_{-/A}"] \arrow[ddr, "gr^{\star}_{H}\mathbbl{\Pi}_{R/A}"]
	&& \compcalg(\Mod_{\overline{A}}) \\ \\
& \calg(\gr \Mod_{gr^{\star}(I^{\star}A)}) \arrow[uur, "\grenv_{I}"]

\end{tikzcd}
\end{center}
\end{lemma}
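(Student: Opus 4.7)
The plan is to deduce the factorization directly from the preceding lemma together with the graded/filtered envelope comparison of Lemma \ref{graded vs filtered envelopes}, adapted to the $(p,I)$-complete setting. The preceding lemma gives
\[ L\Theta_{-/A} \simeq \compcalgenv_I \circ F^\star_H \widehat{\mathbbl{\Pi}}_{-/A}, \]
so after base change along $A \to \overline{A}$, the task reduces to producing a natural equivalence
\[ \compcalgenv_I(F^\star_H \widehat{\mathbbl{\Pi}}_{R/A}) \otimes_A \overline{A} \simeq \grenv_I\bigl(gr^\star_H \mathbbl{\Pi}_{R/A}\bigr). \]
I would split this into two independently easier observations: (a) Hodge completion is a filtered completion, and hence does not affect associated gradeds, giving $gr^\star F^\star_H \widehat{\mathbbl{\Pi}}_{R/A} \simeq gr^\star_H \mathbbl{\Pi}_{R/A}$; and (b) a completed analogue of Lemma \ref{graded vs filtered envelopes}, asserting that for any $F^\star \in \calg(\widehat{\fil} \Mod_{I^\star A})$ one has $\compcalgenv_I(F^\star) \otimes_A \overline{A} \simeq \grenv_I(gr^\star F^\star)$.

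The key step is (b), which I would prove by passing to right adjoints, exactly as in Lemma \ref{graded vs filtered envelopes}. The right adjoint to $\compcalgenv_I$ is the $I$-adic filtration $I^\star$, the right adjoint to base change $-\otimes_A \overline{A}$ is restriction of scalars, and the right adjoint to $\grenv_I$ is $ins^0(-) \otimes_{\overline{A}} gr^\star(I^\star A)$. One then verifies that for $S \in \calg(\Mod_{\overline{A}})$, the $I$-adic filtration $I^\star S$ of $S$ (viewed as an $A$-algebra) agrees in $\calg(\widehat{\fil} \Mod_{I^\star A})$ with the (completion of the) filtered $I^\star A$-algebra obtained by composing the right adjoint $\beta$ of $gr^\star$ with $ins^0(S) \otimes_{\overline{A}} gr^\star(I^\star A)$. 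Pointwise this boils down to the identification $S \otimes_A I^n \simeq S \otimes_{\overline{A}} (I^n/I^{n+1})$ for any $\overline{A}$-algebra $S$, since $I$ acts by zero on $S$ so the transition maps in $I^\star S$ already factor through the next filtration step.

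The main obstacle is ensuring the compatibility with completions in (b): the naive right adjoint $\beta$ from Lemma \ref{graded vs filtered envelopes} need not land in $\calg(\widehat{\fil} \Mod_{I^\star A})$, so one must postcompose with filtered completion and check the resulting adjunction is still well-behaved. An alternative route that sidesteps this issue is to write $\compcalgenv_I$ as the $(p,I)$-completion of $\calgenv_I$ applied to $F^\star$ (viewed as living in uncompleted filtered algebras), apply Lemma \ref{graded vs filtered envelopes} directly, and then observe that $-\otimes_A \overline{A}$ commutes with the relevant completion because $I$ acts trivially on the target; the resulting graded output is automatically complete enough by the bounded-below nature of $gr^\star_H \mathbbl{\Pi}_{R/A}$. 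Either approach yields the claimed factorization, with the $\grenv_I$-output implicitly $p$-completed so as to land in $\compcalg(\Mod_{\overline{A}})$.
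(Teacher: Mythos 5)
Your proposal is correct and follows essentially the same route as the paper: combine the identification $L\Theta_{-/A} \simeq \compcalgenv_{I} \circ F^{\star}_{H}\widehat{\mathbbl{\Pi}}_{-/A}$ with Lemma \ref{graded vs filtered envelopes} and the fact that passing to associated gradeds is insensitive to Hodge completion. The paper's proof is in fact terser and silently elides the completed-versus-uncompleted issue that you address explicitly in step (b), so your extra care there is a refinement rather than a divergence.
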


\begin{proof} 

Recall from Variant \ref{graded envelope}, the base change functor $ins^{0}(-) \otimes_{\overline{A}(0)} gr^{\star}(I^{\star}A)$ admits a left adjoint given by the graded $I$-adic envelope $\grenv_{I}$. We appeal to Lemma \ref{graded vs filtered envelopes} to conclude:
\begin{align*}
	   \grenv_{I}(gr^{\star}_{H} \mathbbl{\Pi}_{-/A}) & \simeq \calgenv_{I}(F^{\star}_{H} \mathbbl{\Pi}_{-/A}) \otimes_{A} \overline{A} \\
				& \simeq L \overline{\Theta}_{-/A}.
\end{align*}
\end{proof}

\begin{construction} \label{conjugate filtration for theta} (The Conjugate Filtration on $L\overline{\Theta}$) Recall from Theorem \ref{vertical filtration} that we can endow $gr^{\star}_{H} \mathbbl{\Pi}_{R/A}$ with a functorial exhaustive increasing filtration, denoted by $F^{v}_{\star} gr^{\star}_{H} \mathbbl{\Pi}_{R/A}$, and thus view it as an object of the $\infty$-category $   \calg(F_{\geq 0} \gr \Mod_{gr^{\star}(I^{\star}A)})$. To be clear, we are viewing $gr^{\star}(I^{\star}A)$ as a filtered graded object concentrated in filtration weight 0 (so the relevant derived algebraic context is $ F_{\geq 0} \gr \Mod_{A}  := Fun(\Z_{\geq 0} \times \Z_{\geq 0}^{disc}, \Mod_{A})$). 

We define the \emph{conjugate filtration} on $L \overline{\Theta}_{-/A}$ to be the composite
	\[ \calg(\Mod_{\overline{A}}) \xrightarrow{ F^{v}_{ \star} gr^{\star}_{H} \mathbbl{\Pi}_{-/A}} \calg(\gr F_{\geq 0} \Mod_{A})_{gr^{\star}(I^{\star}A)} \xrightarrow{\grenv_{I}} \calg(F_{\geq 0} \Mod_{A}) \]	
which we denote by $F^{conj}_{\star} L \overline{\Theta}_{-/A}$. A simple unwinding of definitions combined with Lemma \ref{graded theta} identifies $und(F^{conj}_{\star} L \overline{\Theta}_{R/A}) \simeq L \overline{\Theta}_{R/A}$. We now turn our attention towards identifying the associated graded.
\end{construction}

\begin{theorem} There is a functorial isomorphism of graded algebras
	\[ gr_{\star}^{conj} L \overline{\Theta}_{R/A} \simeq LSym^{\star}_{R}(L_{R/\overline{A}}[-1]\{-1\}(1)). \]
\end{theorem}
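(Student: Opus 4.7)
The plan is to combine Lemma \ref{graded theta}, which identifies $L\overline{\Theta}_{-/A}$ with $\grenv_{I}\circ gr^{\star}_{H}\mathbbl{\Pi}_{-/A}$, with the vertical filtration of Theorem \ref{vertical filtration}. Construction \ref{conjugate filtration for theta} defines $F^{conj}_{\star} L\overline{\Theta}_{R/A}$ as $\grenv_{I}$ applied to the vertical filtration on $gr^{\star}_{H}\mathbbl{\Pi}_{R/A}$. Since $\grenv_{I}$ is a left adjoint---and in particular preserves the cofibers that compute the vertical associated graded---it suffices to identify the graded $R$-algebra
\[
\grenv_{I}\bigl(gr^{v}_{\star}(gr^{\star}_{H}\mathbbl{\Pi}_{R/A})\bigr).
\]

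First I would apply Theorem \ref{vertical filtration} to the triple $A \to \overline{A} \to R$. The formula $gr^{v}_{i}(gr^{\star}_{H}\mathbbl{\Pi}_{R/A}) \simeq LSym^{i}_{R}(L_{R/\overline{A}}[-1]) \otimes_{\overline{A}} LSym^{\star}_{\overline{A}}(L_{\overline{A}/A}[-1])$, combined with the observation (made explicit in the proof of that theorem via the description of $\mathrm{Cone}(\alpha_{i,j})$) that $LSym^{i}_{R}(L_{R/\overline{A}}[-1])$ sits at Hodge weight $i$ inside the total Hodge grading, presents the bi-graded input as a free bi-graded derived commutative $R$-algebra on the generator $L_{R/\overline{A}}[-1]$ in bi-weight $(v,H)=(1,1)$, tensored over $\overline{A}$ with the base $LSym^{\star}_{\overline{A}}(L_{\overline{A}/A}[-1])$ sitting at $v=0$. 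The latter factor is canonically $gr^{\star}(I^{\star}A)$ once we use the prism structure to identify $L_{\overline{A}/A}[-1] \simeq \overline{I} = gr^{1}(I^{\star}A)$.

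Applying $\grenv_{I}$ is then a vertically-enriched instance of Lemma \ref{graded envelope of free algebras}. Testing against a graded $R$-algebra $S$ and running the adjunctions yields, in turn: $\mathrm{Hom}(L_{R/\overline{A}}[-1],\, S^{v=1} \otimes_{\overline{A}} gr^{1}(I^{\star}A))$ (universal property of the free bi-graded algebra), then $\mathrm{Hom}(L_{R/\overline{A}}[-1]\{-1\},\, S^{v=1})$ (tensor invertibility of $gr^{1}(I^{\star}A) = \overline{I}$), and finally $\mathrm{Hom}(LSym^{\star}_{R}(L_{R/\overline{A}}[-1]\{-1\}(1)),\, S)$ (universal property of the free graded $R$-algebra on a generator in weight $1$). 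Yoneda then delivers the claimed equivalence of graded algebras, and the identification is manifestly functorial in $R$.

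The main obstacle is the bookkeeping of the two gradings: establishing with care that $\grenv_{I}$ acts only on the Hodge direction while preserving the vertical grading, and that the enriched version of Lemma \ref{graded envelope of free algebras} outputs exactly the twist $\{-1\}$ and the insertion $(1)$. Both are formal consequences of the colimit-preservation of $\grenv_{I}$ together with the Day-convolution symmetric monoidal structure underlying the bi-graded context, but it is at this step that one must unpack the multi-filtered formalism carefully.
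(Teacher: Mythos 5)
Your proposal is correct and follows essentially the same route as the paper: commute the associated graded of the conjugate filtration past $\grenv_{I}$ (the paper cites Proposition \ref{naturality for envelopes} for this, where you invoke colimit-preservation of the left adjoint), identify the vertical associated graded via Theorem \ref{vertical filtration}, and then compute the graded envelope of the resulting free algebra via Lemma \ref{graded envelope of free algebras}. Your unwinding of the adjunctions in the last step merely re-derives that lemma, so the two arguments coincide in substance.
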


\begin{proof} We know from Proposition \ref{naturality for envelopes} that we can express the left hand side as \begin{align*}
gr_{\star}^{conj} L \overline{\Theta}_{R/A} & \simeq gr_{\star}^{conj} \grenv_{I} \circ F_{\star}^{v}(gr^{\star}_{H} \mathbbl{\Pi}_{R/A}) \\
								& \simeq \grenv_{I} gr_{\star}^{vert} (gr^{\star}_{H} \mathbbl{\Pi}_{R/A})
\end{align*} 
Now appealing to Theorem \ref{vertical filtration}, we can explicitly identify the associated graded of the vertical filtration and thus rewrite the last line as
\[ \grenv_{I} (LSym_{R}^{\star}(L_{R/\overline{A}}[-1](1)) \otimes_{\overline{A}} LSym_{A}(I/I^{2}(1))) \simeq LSym_{R}(L_{R/\overline{A}}[-1]\{-1\}(1))\]
where the last equivalence follows from Lemma \ref{graded envelope of free algebras}.
\end{proof}

\begin{proposition} \label{descent for theta} Let $R$ be a smooth $A/I$-algebra. Denote by $Poly_{A \downarrow R}$ the $1$-category consisting of $(p,I)$-complete polynomial $A$-algebras $P$ equipped with a map $\alpha\colon P \to R$. Then the canonical map
	\[ L\Theta_{R/A} \to \text{lim}_{Poly_{A \downarrow R}} L\Theta_{R/P} \]
is an equivalence.
\end{proposition}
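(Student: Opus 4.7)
The plan is to proceed in three stages: reduce modulo $I$, exploit the conjugate filtration on $L\overline{\Theta}_{R/-}$ to reduce to the cotangent complex, and conclude via a \v{C}ech-nerve argument.

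First, I would observe that each $L\Theta_{R/P}$ is $(p,I)$-complete by construction, hence so is $\lim_{P} L\Theta_{R/P}$. Because $I$ is a tensor-invertible Cartier divisor, the base change $- \otimes_{A} \overline{A}$ is the cofiber of $(-) \otimes_{A} I \to (-)$, a finite colimit which in a stable $\infty$-category commutes with all limits. By derived Nakayama, it suffices to show that
\[ L\overline{\Theta}_{R/A} \to \lim_{P \in Poly_{A \downarrow R}} L\overline{\Theta}_{R/P} \]
is an equivalence.

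Next, I would invoke the conjugate filtration of Construction \ref{conjugate filtration for theta} on $L\overline{\Theta}_{R/-}$, which is functorial in the base (via the vertical filtration of Theorem \ref{vertical filtration}) and exhaustive, with associated graded $LSym^{\star}_{R}(L_{R/\overline{-}}[-1]\{-1\})$. Refining the candidate map to a map of conjugate-filtered algebras and comparing associated gradeds (after passing to conjugate-filtration completions), the problem reduces to showing that
\[ LSym^{n}_{R}(L_{R/\overline{A}}[-1]) \xrightarrow{\simeq} \lim_{P \in Poly_{A \downarrow R}} LSym^{n}_{R}(L_{R/\overline{P}}[-1]) \]
is an equivalence for every $n \geq 0$ (suppressing the Breuil--Kisin twist $\{-1\}$ for brevity).

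Finally, I would choose a surjection $P_{0} \twoheadrightarrow R$ from a polynomial $\overline{A}$-algebra --- available since smooth algebras are Zariski-locally quotients of polynomial rings --- and form its \v{C}ech nerve $P_{\bullet}$ inside $Poly_{A \downarrow R}$. The opposite category $Poly_{A \downarrow R}^{op}$ is sifted (binary coproducts are given by tensor products of polynomial algebras over $A$), so the \v{C}ech nerve is cofinal for the cofiltered limit, converting it to a totalization. Since $LSym^{n}_{R}$ is excisively polynomial and preserves filtered colimits, Lemma \ref{techy extension result} implies it preserves finite totalizations; combined with the finite Tor-amplitude of $L_{R/\overline{A}}$ coming from smoothness of $R$, this reduces the claim to the standard descent $L_{R/\overline{A}} \simeq \mathrm{Tot}(L_{R/\overline{P_{\bullet}}})$, which follows from the transitivity triangle $L_{\overline{P_{\bullet}}/\overline{A}} \otimes_{\overline{P_{\bullet}}} R \to L_{R/\overline{A}} \to L_{R/\overline{P_{\bullet}}}$ together with Quillen's identification of the cotangent complex with a simplicial polynomial resolution. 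The main obstacle I foresee is justifying the compatibility of the conjugate-filtration completion with the cofiltered limit: ensuring the completed filtration correctly computes the limit after exchange with $\lim_{P}$ requires uniform boundedness of the conjugate-filtration pieces, which is available for smooth $R$ precisely because $L_{R/\overline{A}}$ has finite Tor-amplitude.
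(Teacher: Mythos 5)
Your proposal is correct and follows essentially the same route as the paper: reduce modulo $I$ (permissible since $I$ is an invertible Cartier divisor, so base change is a finite colimit commuting with the limit), pass to the conjugate filtration of Construction \ref{conjugate filtration for theta} to reduce to the graded pieces $LSym^{n}_{R}(L_{R/\overline{P}}[-1]\{-1\})$, and then run a \v{C}ech-nerve/cotangent-complex descent argument using that for a surjection $P \to R$ with regular kernel the shifted cotangent complex is finitely presented projective, so $LSym_{R}$ preserves the relevant totalization. The one point where you are slightly more careful than the paper --- justifying the exchange of the exhaustive increasing conjugate filtration with the totalization via the finite Tor-amplitude of $L_{R/\overline{A}}$ --- is a real issue that the paper's proof passes over silently, and your resolution is the right one.
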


\begin{proof}

It suffices to check that the map
	\[ L\Theta_{R/A} \to \text{lim}_{Poly_{A \downarrow R}} L\Theta_{R/P} \]
is an equivalence after reduction modulo $I$, as both sides are $I$-adically complete. Since $I \subset A$ is a Cartier divisor, reduction modulo $I$ preserves the totalization on the right hand side, reducing us to verifying that the map
	\[ L\overline{\Theta}_{R/A} \to \text{lim}_{Poly_{A \downarrow{R}}} L \overline{\Theta}_{R/P} \]
is an equivalence.

Since $R$ is assumed to be $p$-completely smooth, we may choose a surjection $P \to R$ in $Poly_{A \downarrow R}$, and restrict our attention to the cofinal subdiagram given by the \v{C}ech nerve
	\[ L \overline{\Theta}_{R/A} \to \text{lim}_{\Delta} L \overline{\Theta}_{R/P^{\otimes \cdot + 1}}. \]
	
At this point, we can appeal to the conjugate filtration to reduce to showing that $LSym^{i}_{R}(L_{R/A}[-1])$ is computed by the relevant totalization.
The cotangent complex satisfies the desired descent properties by combining Proposition 3.1 of \cite{BMS2} with the ($p$-completed) conormal sequence of the cotangent complex. Furthermore, since $P \to R$ is a surjection whose kernel is generated by a $p$-completely regular sequence (since $R$ is $p$-completely smooth, one can always choose the surjection to satisfy this), $L^{\wedge}_{R/P}\text{[-1]}$ is in fact a finitely presented projective $R$-module. Hence $LSym_{R}$ preserves the desired totalization, as it is the left-right extension of its restriction to the finitely presented projective $R$-modules.
\end{proof}

\begin{theorem} \label{prismatic Cech-Alexander} Denote by $Poly_{A \downarrow R}$ the $1$-category consisting of $(p,I)$-complete polynomial $A$-algebras $P$ equipped with a map $\alpha\colon P \to R$. Let $F_{P} := Free^{\delta}_{A}(P)$ and $S_{P} := F_{P} \otimes_{P} R$. Then the canonical map
	\[ L\Prism_{R/A} \to \text{lim}_{Poly_{A \downarrow R}} L\Prism_{S_{P}/F_{P}} \]
is an equivalence.

\end{theorem}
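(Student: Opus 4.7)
The plan is to reduce the statement to the descent result for $L\Theta$ (Proposition \ref{descent for theta}) by first establishing the identification
\[
L\Prism_{S_P/F_P} \simeq Free^{\delta}_A(L\Theta_{R/P})
\]
in $\compdalg(\Mod_A)$. I would prove this identification by comparing universal properties: a map $L\Prism_{S_P/F_P} \to B$ in $\compdalg_A$ decomposes via the $Free^{\delta}_A \dashv \text{forget}$ adjunction for $F_P = Free^{\delta}_A(P)$ together with the universal property of $L\Prism_{-/F_P}$ into a choice of commutative $A$-algebra map $g \colon P \to B$ along with a $\overline{P}$-algebra map $R \to \overline{B}$ (with $\overline{P}$-structure via $\overline{g}$), where one uses the presentation $S_P = F_P \otimes_P R$ to rewrite maps out of $S_P$ as maps out of $R$. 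This is precisely the universal property of $Free^{\delta}_A(L\Theta_{R/P})$, since $L\Theta_{R/P}$ is left adjoint to $-\otimes_P \overline{P}$.

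Granted this identification, the statement reduces to showing that the canonical map $Free^{\delta}_A(L\Theta_{R/A}) \to \text{lim}_{P} Free^{\delta}_A(L\Theta_{R/P})$ is an equivalence. By Proposition \ref{descent for theta} we already know $L\Theta_{R/A} \simeq \text{lim}_P L\Theta_{R/P}$, so the remaining task is to show that $Free^{\delta}_A$ commutes with this particular limit. To access the limit concretely I would restrict to the cofinal cosimplicial \v{C}ech nerve $P_0^{\otimes_A (\bullet+1)}$ of a polynomial surjection $P_0 \to R$ (exactly as in the proof of Proposition \ref{descent for theta}), converting the limit into a totalization $\text{Tot}_\Delta$. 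The filtered monad structure $Free^{\delta}_A \simeq \text{colim}_n LSym^{\delta, \leq n}_A$ from Section 2.3 then presents $Free^{\delta}_A$ as a filtered colimit of $n$-excisive functors, each of which preserves finite totalizations by Lemma \ref{techy extension result}.

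The main obstacle will be controlling the interchange of the filtered colimit over $n$ with the a priori infinite totalization $\text{Tot}_\Delta$, in order to upgrade preservation of \emph{finite} totalizations to preservation of $\text{Tot}_\Delta$. I expect to handle this by combining smoothness of $R$ with $(p,I)$-completeness: modulo $(p,I)$ each $L\Theta_{R/P_\bullet}$ is controlled through the Hodge and conjugate filtrations by the cotangent complexes $L_{R/P_\bullet}[-1]$, which are locally free in a single degree by smoothness, giving uniform bounded-amplitude control on the cosimplicial terms. In this bounded-amplitude regime, only finitely many pieces $LSym^{\delta, \leq n}$ should contribute in each homotopical degree of each cosimplicial level, which validates the colimit--limit exchange and concludes the proof. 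This convergence argument is the technical heart of the theorem.
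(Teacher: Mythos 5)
Your first two steps coincide with the paper's: the identification $L\Prism_{S_P/F_P} \simeq Free^{\delta}_A(L\Theta_{R/P})$, and the reduction via Proposition \ref{descent for theta} to showing that $Free^{\delta}_A$ preserves the \v{C}ech totalization $Tot_{\Delta}(F^{\star})$, where $F^{0} = L\Theta_{R/P}$ for a chosen polynomial surjection $P \to R$. The gap is in the convergence step, which you correctly flag as the heart of the matter but resolve with the wrong mechanism. The interchange you propose --- commuting $\mathrm{colim}_n LSym^{\delta, \leq n}_{A}$ past $Tot_{\Delta}$ on the grounds that only finitely many polynomial pieces contribute in each homotopical degree --- is not correct as stated: when the input is discrete (which it is here), \emph{every} $LSym^{\delta, \leq n}_{A}$ contributes to $\pi_0$, so filtering the functor by $\delta$-degree does not localize the problem in any homotopical degree. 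Moreover, the colimit-over-$n$ versus finite-totalization interchange is already packaged into Theorem \ref{non-linear techy extension result}; the genuinely new difficulty in this theorem is that $Tot_{\Delta}$ is infinite, and filtering the functor does not address that.

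The paper instead filters the \emph{cosimplicial object}. The key input, which your bounded-amplitude discussion gestures at but does not isolate, is that $F^{0} = L\Theta_{R/P} \simeq A\langle x_{1},\dots,x_{n}\rangle[\tfrac{f_{i}}{I}]^{\wedge}_{(p,I)}$ is a \emph{discrete}, $(p,I)$-completely flat $A$-algebra (by the argument of Corollary \ref{envelopes of regular sequences}), so every term of the \v{C}ech nerve $F^{\star}$ is discrete. One then writes $F^{\star} \simeq \mathrm{colim}_{n}\, sk_{n}F^{\star}$ in cosimplicial $A$-algebras; discreteness of the terms implies that each truncation $\tau_{\geq m}Tot(-)$ depends only on a finite coskeleton, so $Tot(F^{\star}) \simeq \mathrm{colim}_{n} Tot(sk_{n}F^{\star})$, converting the infinite totalization into a filtered colimit of finite ones. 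Since $Free^{\delta}_{A}$ preserves filtered colimits and, by Theorem \ref{non-linear techy extension result}, finite totalizations, the interchange follows. To salvage your route you would first need to establish this discreteness (or at least uniform coconnectivity) of the \v{C}ech nerve: your conjugate-filtration argument can produce discreteness modulo $(p,I)$, but you would still need flatness over $A$ to lift that statement to $L\Theta_{R/P}$ itself.
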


\begin{proof} Recall that $L\Prism_{R/A} \simeq Free^{\delta}_{A} \circ L\Theta_{R/A}$, and observe that $Free^{\delta}_{A} \circ L\Theta_{R/P} \simeq Free^{\delta}_{A} \circ L\Theta_{S_{P}/F_{P}}$. Appealing to Proposition \ref{descent for theta}, we are thus reduced to verifying that
	\[ Free^{\delta}_{A}(\text{lim}_{Poly_{A \downarrow R}} L\Theta_{R/P}) \simeq \text{lim}_{Poly_{A \downarrow R}}(L \Prism_{S_{P}/F_{P}}). \]
Fix $P \in Poly_{A \downarrow R}$ such that $P \to R$ is a surjection and the kernel is generated by a regular sequence $(f_{1},..., f_{k})$ (this is always possible locally on $R$, and the claim in question is local). We then obtain an identification $F^{0} := L\Theta_{R/P} \simeq A\left<x_{1},..., x_{n}\right>\left[ \frac{f_{i}}{I}\right]^{\wedge}_{(p,I)}$ via the same argument as Corollary \ref{envelopes of regular sequences}. In particular, $F^{0}$ and all terms in the \v{C}ech nerve of $A \to F^{0}$ are discrete, since $F^{0}$ is $(p,I)$-completely flat over $A$. We will denote the \v{C}ech nerve by $F^{\star}$.  It suffices to show that $Free^{\delta}_{A}$ preserves the totalization of $F^{\star}$ - i.e. the canonical map
	\[ Free^{\delta}_{A}(Tot(F^{\star})) \to Tot(Free^{\delta}_{A}(F^{\star})) \]
is an equivalence. 

Since each $F^{n}$ is discrete, we may view $F^{\star}$ as a cosimplicial commutative ring (as opposed to a cosimplicial derived commutative ring), and since the inclusion
	\[ CAlg_{A}^{\heartsuit} \to \calg_{A} \]
preserves filtered colimits, we can pass to the cosimplicial skeleta to rewrite
	\[ F^{\star} \simeq colim_{n} sk_{n}F^{\star} \]
where the skeleta are taken in cosimplicial $A$-algebras, and so in particular remain pointwise discrete.
We now claim that we can commute the filtered colimit and the totalization:
	\[ Tot(F^{\star}) \simeq Tot(colim_{n} sk_{n}(F^{\star})) \simeq colim_{n} Tot(sk_{n}(F^{\star})). \]
Indeed, since the forgetful functor $CAlg(\Mod_{A}) \to \Mod_{A}$ is conservative and preserves filtered colimits and all limits, it suffices to check on the level of underlying $A$-modules. Since each term in the cosimplicial object is discrete, the $m^{th}$ truncation (as a cosimplicial $A$-module) $\tau_{\geq m}(Tot(F^{\star}))$ depends only on the $m^{th}$-coskeleton of the appearing totalizations, which reduces us to the case of commuting a filtered colimit with a finite limit.

Recall that $Free^{\delta}_{A} \circ Sym^{\heartsuit}_{A} = Sym^{\delta, \heartsuit}_{A}$ admits an exhaustive filtration by excisively polynomial subfunctors, and thus $Free^{\delta}_{A}$ is the non-linear right-left extension of its restriction to polynomial $A$-algebras. In particular, Theorem \ref{non-linear techy extension result} guarantees that $Free^{\delta}_{A}$ preserves finite totalizations of $A$-algebras. Hence we obtain
\begin{align*}
Free^{\delta}_{A}(Tot(F^{\star})) & \simeq colim_{n} Free^{\delta}_{A}(Tot(sk_{n}(F^{\star}))) \\
	& \simeq colim_{n} Tot(Free^{\delta}_{A}(sk_{n}(F^{\star})) \\
	& \simeq Tot(Free^{\delta}_{A}(F^{\star})) 
\end{align*}
as desired.
\end{proof}

\begin{proof}[Proof of Theorem \ref{main comparison}] It suffices to verify the theorem in the case that $R$ is a $p$-completely smooth $\overline{A}$-algebra, since both $L\Prism_{-/A}$ and $\Prism_{-/A}$ preserve sifted colimits (as always, $\Prism_{-/A}$ is viewed as the left Kan extension of the site-theoretic cohomology from the smooth case). 

The comparison map
	\[ comp_{R/A} \colon L\Prism_{R/A} \to \Prism_{R/A} \]
is functorial both in $R$ and in $A$. In particular, given any $(p,I)$-complete polynomial $A$-algebra $P$ with a surjection
	\[ P \to R \]
$comp_{R/A}$ refines to a map of \v{C}ech--Alexander complexes
	\[comp_{S^{\star}_{P}/F^{\star}_{P}} \colon L\Prism_{S^{\star}_{P}/F^{\star}_{P}} \to \Prism_{S^{\star}_{P}/F^{\star}_{P}} \]
which is a termwise equivalence by Corollary \ref{envelopes of regular sequences}.

We know from Construction 4.18 in \cite{Bhatt-Scholze} that
	\[ \Prism_{R/A} \simeq lim_{\Delta} \Prism_{S^{\star}_{P}/F^{\star}_{P}} \]
from which we conclude.
\end{proof}

As an application of these definitions, we can establish affineness of the relative prismatization of an affine formal scheme. Recall, for any derived $p$-adic formal $\overline{A}$-scheme $X$, the relative prismatization is the presheaf on $(p,I)$-nilpotent animated $A$-algebras defined by
	\[ WCart_{X/A}(B) = X(\overline{W(B)}). \]
Together with Theorem \ref{main comparison}, the following Lemma recovers Theorem 7.17 in \cite{Bhatt-Lurie2}.

\begin{lemma} For any $p$-complete animated commutative $\overline{A}$-algebra $R$, there is a canonical equivalence
	\[ WCart_{Spf(R)/A} \simeq Spf(L\Prism_{R/A}). \]

\end{lemma}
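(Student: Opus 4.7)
The plan is to identify both $WCart_{Spf(R)/A}$ and $Spf(L\Prism_{R/A})$ as presheaves on $(p,I)$-nilpotent animated $A$-algebras by a direct chain of adjunctions. Fixing such a test algebra $B$, one computes
\begin{align*}
WCart_{Spf(R)/A}(B)
 &= Spf(R)(\overline{W(B)}) \\
 &\simeq Hom_{\compcalg(\Mod_{\overline{A}})}(R, \overline{W(B)}) \\
 &\simeq Hom_{\compdalg(\Mod_A)}(L\Prism_{R/A}, W(B)) \\
 &\simeq Hom_{\compcalg(\Mod_A)}(L\Prism_{R/A}, B) \\
 &= Spf(L\Prism_{R/A})(B).
\end{align*}
The first line is the definition of relative prismatization given before the statement. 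The second line unpacks $Spf(R)$ on the $(p,I)$-complete $\overline{A}$-algebra $\overline{W(B)}:=W(B)\otimes_A\overline{A}$; here the $A$-algebra structure on $W(B)$ arises by applying the Witt vector adjunction to the given map $A \to B$, exploiting that $A$ is already a $\delta$-ring (so $A \to B$ transposes to a $\delta$-map $A \to W(B)$). The third line is the universal property of $L\Prism_{R/A}$ from Observation \ref{prismatic cohomology}, applied to the test object $W(B) \in \compdalg(\Mod_A)$. The fourth line invokes the Witt vector adjunction of Definition \ref{derived Witt vectors}, in its $(p,I)$-complete, relative-over-$A$ form. Naturality in $B$ is automatic since each step is a natural adjunction isomorphism, so the pointwise equivalence assembles to an equivalence of functors, hence of formal spectra.

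The essentially only nontrivial point in this chain is step four: one must justify that the adjunction $Free^\delta \dashv Forget \dashv W$ of Definition \ref{derived Witt vectors} descends from $\dalg(\Mod_{\Z_p})$ to the full subcategory $\compdalg(\Mod_A) \subset \dalg(\Mod_{\Z_p})$, and likewise for $\compcalg(\Mod_A)$. This is formal given the tools of Section 2: the inclusions of slice categories along $A$ and of $(p,I)$-complete objects are both right adjoints, so by Lemma \ref{adjunctions on slice cats} and the fact (from Proposition \ref{limits and colimits} and its corollaries noted in Section 2.4) that the forgetful functor $\compdalg(\Mod_A) \to \compcalg(\Mod_A)$ preserves small limits and sifted colimits, it admits a right adjoint, which agrees by transposition with the composite of the ambient $W$ and $(p,I)$-completion. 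Thus $Hom_{\compdalg(\Mod_A)}(L\Prism_{R/A}, W(B)) \simeq Hom_{\compcalg(\Mod_A)}(L\Prism_{R/A}, B)$ holds in the complete relative setting used here.

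The hard part, such as it is, lies not in the categorical manipulations but in confirming that the Witt vector of a $(p,I)$-nilpotent $A$-algebra lands in $\compdalg(\Mod_A)$ with the expected $\overline{A}$-module structure on $\overline{W(B)}$; once this is verified, the proof is purely formal. No further input beyond the universal property of $L\Prism_{-/A}$ established earlier and the Witt vector adjunction is needed — in particular, the comparison Theorem \ref{main comparison} is not used, which is what makes this a clean formal derivation of the affineness result of Section 7.3 of \cite{Bhatt-Lurie2}.
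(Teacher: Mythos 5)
Your proof is correct and is essentially identical to the paper's own argument: the same chain of adjunctions (definition of $Spf$, the derived Witt vector adjunction restricted to the slice over $A$ via Lemma \ref{adjunctions on slice cats}, and the universal property of $L\Prism_{-/A}$), merely written in the opposite direction. The extra care you take in justifying that the Witt vector adjunction descends to the relative, complete setting is a reasonable elaboration of what the paper dispatches in one sentence.
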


\begin{proof} Recall, the derived Witt vectors functor (Definition \ref{derived Witt vectors}) is right adjoint to the forgetful functor $\dalg_{\Z_{p}} \to \calg_{\Z_{p}}$, and on connective objects this recovers the animated Witt vectors (Observation \ref{animated vs derived Witt vectors}). It follows from Lemma \ref{adjunctions on slice cats} that the forgetful functor $\dalg_{A} \to \calg_{A}$ also admits a right adjoint which is once again given by derived Witt vectors.	
Unwinding the various universal properties in play, we see
\begin{align*}
 Spf(L\Prism_{R/A})(S) &\simeq Map_{\calg_{A}}(L\Prism_{R/A}, S) \\
 	& \simeq Map_{\dalg_{A}}(L\Prism_{R/A}, W(S)) \\
	& \simeq Map_{\calg_{\overline{A}}}(R, \overline{W(S)}) \\
	& \simeq WCart_{Spf(R)/A}(S)
\end{align*}
from which we conclude.
\end{proof}

\addcontentsline{toc}{section}{References}
\bibliography{references}

\end{document}